\documentclass{amsart}
\usepackage{a4wide,graphicx,subfigure,hyperref,url}
\usepackage[english]{babel}

\newtheorem{theorem}{Theorem}[section]
\newtheorem{prop}[theorem]{Proposition}
\newtheorem{lemma}[theorem]{Lemma}
\newtheorem{cor}[theorem]{Corollary}
\theoremstyle{remark}
\newtheorem{remark}[theorem]{Remark}
\newtheorem{ex}[theorem]{Example}
\theoremstyle{definition}
\newtheorem{defn}[theorem]{Definition}

\newcommand\decdot{\raisebox{0.1ex}{\textbf{.}}}
\DeclareMathOperator{\inn}{int}

\begin{document}

\title[Beta-expansions, natural extensions and multiple tilings]{Beta-expansions, natural extensions and multiple tilings associated with Pisot units}

\author{Charlene Kalle}
\address{Department of Mathematics, Utrecht University, Postbus 80.000, 3508 TA Utrecht, the Netherlands}
\email{c.c.c.j.kalle@uu.nl}

\author{Wolfgang Steiner}
\address{LIAFA, CNRS, Universit\'e Paris Diderot -- Paris 7, Case 7014, 75205 Paris Cedex 13, France}
\email{steiner@liafa.jussieu.fr}

\thanks{This research was supported by the Agence Nationale de la Recherche, grant ANR--06--JCJC--0073 ``DyCoNum''.}

\date{\today}

\subjclass[2000]{11A63, 11R06, 28A80, 28D05, 37B10, 52C22, 52C23}

\begin{abstract}
{}From the works of Rauzy and Thurston, we know how to construct (multiple) tilings of some Euclidean space using the conjugates of a Pisot unit $\beta$ and the greedy $\beta$-transformation.
In this paper, we consider different transformations generating expansions in base~$\beta$, including cases where the associated subshift is not sofic.
Under certain mild conditions, we show that they give multiple tilings. 
We also give a necessary and sufficient condition for the tiling property, generalizing the weak finiteness property (W) for greedy $\beta$-expansions. 
Remarkably, the symmetric $\beta$-transformation does not satisfy this condition when $\beta$ is the smallest Pisot number or the Tribonacci number.
This means that the Pisot conjecture on tilings cannot be extended to the symmetric $\beta$-transformation.

Closely related to these (multiple) tilings are natural extensions of the transformations, which have many nice properties:
they are invariant under the Lebesgue measure; under certain conditions, they provide Markov partitions of the torus; they characterize the numbers with purely periodic expansion, and they allow determining any digit in an expansion without knowing the other digits. 
\end{abstract}

\maketitle

\section{Introduction}

Tilings generated by substitutions and the $\beta$-transformation are well-studied objects from various points of view. Tilings from substitutions were first introduced by Rauzy in the seminal paper~\cite{Rauzy82}. For the $\beta$-transformation, $T_{\beta} x = \beta x \bmod{1}$, $\beta > 1$, Thurston laid the ground work in~\cite{Thurston89}. The transformation $T_{\beta}$ can be used to obtain the greedy $\beta$-expansion of every $x \in [0,1)$ by iteration. The expansions obtained in this way are expressions of the form
\begin{equation}\label{q:exp}
x = \sum_{k=1}^{\infty} \frac{b_k}{\beta^k},
\end{equation}
where the digits $b_k$ are all elements of the set $\{0,1,\ldots, \lceil \beta \rceil - 1\}$. Here, $\lceil x \rceil$ denotes the smallest integer larger than or equal to~$x$. The expansions that $T_{\beta}$ produces are greedy in the sense that, for each $n \ge 1$, $b_n$ is the largest element of the set $\{0,1,\ldots, \lceil \beta \rceil - 1\}$ such that $\sum_{k=1}^{n} b_k \beta^{-k} \le x$. 
Thurston defined tiles in $\mathbb{R}^{d-1}$ when $\beta$ is a Pisot unit of degree~$d$.
Akiyama (\cite{Akiyama99}) and Praggastis~(\cite{Praggastis99}), independently of one another, showed that these tiles form a tiling of $\mathbb{R}^{d-1}$ when $\beta$ satisfies the finiteness property (F) defined in~\cite{FrougnySolomyak92} by Frougny and Solomyak, which means that the set of numbers with finite greedy expansion is exactly $\mathbb{Z}[\beta^{-1}] \cap [0,1)$. They also showed that the origin is an inner point of the central tile in this case. 

The tiles can be constructed by using two-sided admissible sequences. These are sequences $\cdots w_{-1} w_0 w_1 w_2 \cdots$ of elements from the digit set $\{0,1,\ldots, \lceil \beta \rceil - 1\}$ such that each right-sided truncation $w_k w_{k+1} \cdots$ corresponds to an expansion generated by $T_{\beta}$. In some sense, this construction makes the non-invertible transformation $T_{\beta}$ invertible at the level of sequences. In ergodic theory, a way to replace a non-invertible transformation by an invertible one, without losing its dynamics, is by constructing a version of the natural extension. A natural extension of a non-invertible dynamical system is an invertible dynamical system that contains the original dynamics as a subsystem and that is minimal in a measure theoretical sense. Much theory about natural extensions was developed by Rohlin (\cite{Rohlin61}). He gave a canonical way to construct a natural extension and showed that the natural extension is unique up to isomorphism. Many properties of the original dynamical system can be obtained through the natural extension, for example all mixing properties of the natural extension are inherited by the original system. The tilings described above and natural extensions of $T_{\beta}$ are thus closely related concepts.

The question of whether or not Thurston's construction gives a tiling when conditions are relaxed, is equivalent to a number of questions in different fields in mathematics and computer science, like spectral theory (see Siegel \cite{Siegel04}), the theory of quasicrystals (Arnoux et al.~\cite{ArnouxBertheEiIto01}), discrete geometry (Ito and Rao \cite{ItoRao06}) and automata (\cite{Siegel04}). 
In \cite{Akiyama02}, Akiyama defined a weak finiteness property (W) and proved that it is equivalent to the tiling property. 
He also stated there that it is likely that all Pisot units satisfy this condition~(W). 
It is thus conjectured that we get a tiling of the appropriate Euclidean space for all Pisot units~$\beta$. This is a version of the Pisot conjecture, which is discussed at length in the survey paper \cite{BertheSiegel05} by Berth\'e and Siegel.
Classes of Pisot numbers~$\beta$ satisfying (W) are given in~\cite{FrougnySolomyak92,Hollander96,AkiyamaRaoSteiner04,BakerBargeKwapisz06}. 

The transformation $T_{\beta}$ is not the only transformation that can be used to generate number expansions of the form (\ref{q:exp}) dynamically. In~\cite{ErdosJooKomornik90}, Erd\H{o}s et al.\ defined the lazy algorithm that also gives expansions with digits in the set $\{0,1,\ldots, \lceil \beta \rceil - 1\}$. In~\cite{DajaniKraaikamp02}, Dajani and Kraaikamp gave a transformation, which they called the lazy transformation, that generates exactly these expansions in a dynamical way. This transformation is defined on the extended interval $\big[0,\frac{\lceil\beta\rceil-1}{\beta-1}\big]$. Pedicini (\cite{Pedicini05}) introduced an algorithm that produces number expansions of the form (\ref{q:exp}), but with digits in an arbitrary finite set of real numbers~$A$. He showed that if the difference between two consecutive elements in $A$ is not too big, then every $x$ in a certain interval has an expansion with digits in~$A$. These expansions generalize the greedy expansions with digits in $\{0,1,\ldots, \lceil \beta \rceil - 1\}$, and are thus called greedy $\beta$-expansions with arbitrary digits. In~\cite{DajaniKalle08}, a~lazy algorithm is given by Dajani and the first author, that can be used to get lazy expansions with arbitrary digits. In the same article, both a greedy and lazy transformation are defined to generate expansions with arbitrary digits dynamically. Another type of transformations that generate expansions like (\ref{q:exp}), but with digits in $\{-1,0,1\}$, is given in \cite{FrougnySteiner08} by Frougny and the second author. It is shown there, among other things, that for specific $\beta > 1$ and $\alpha > 0$, the transformation $T:\, [-\beta\alpha, \beta\alpha) \to [-\beta\alpha, \beta\alpha)$ defined by $T x = \beta x - \lfloor \frac{x}{2\alpha} + \frac{1}{2} \rfloor$ provides $\beta$-expansions of minimal weight, i.e., expansions in which the number of non-zero digits is as small as possible.
These expansions are interesting e.g.\ for applications to cryptography.

\smallskip
In this paper, we consider a class of piecewise linear transformations with constant slope, that contains all the transformations mentioned above. By putting some restrictions on $\beta$ and on the digit set, we can mimic the construction of a tiling of a Euclidean space, as it is given in~\cite{Akiyama02}. We establish some properties of the tiles we obtain by this construction and state conditions under which these tiles give a multiple tiling.

In Section~\ref{sec:admiss-expans}, we define the class of transformations that we will be considering and characterize of the set of admissible sequences for these transformations. 
For the greedy $\beta$-transformation, this characterization was first given by Parry (\cite{Parry60}) and depends only on the expansion of~1. In our case, we have to consider the orbits of all the endpoints of the transformation. In this section, we impose only very mild restrictions on $\beta$ and the digit set. In Section~\ref{s:natext}, we construct a version of the natural extension of the non-invertible transformation under consideration. 
We look in detail at the domain of the natural extension and give some examples of natural extensions. All this is done under the assumption that $\beta$ is a Pisot unit and that the digit set is contained in $\mathbb{Q}(\beta)$. 
We also show that the set of points with eventually periodic expansion is $\mathbb{Q}(\beta)$, and that the points with purely periodic expansion are characterized by the natural extension domain.
In Section~\ref{s:tilings}, we define tiles in $\mathbb{R}^{d-1}$, where $d$ is the degree of $\beta$, and show that almost every point is contained in the same finite number of tiles, i.e., the construction gives a multiple tiling. We give a necessary and sufficient condition for the multiple tiling to be a tiling, generalizing the (W) property.
The tiling property is also equivalent to the fact that the natural extension domain gives a tiling of the torus~$\mathbb{T}^d$, which in turn allows to determine any digit in the expansion of a number without knowing the previous digits.
For the examples of natural extensions defined in Section~\ref{s:natext}, we discuss whether they form tilings or multiple tilings. 
We also find quasi-periodic tilings where the underlying shift space is non-sofic and which are not self-affine in the sense of~\cite{Praggastis99} and~\cite{Solomyak97}.
In some examples, some tiles consist of a single point or of countably many points and have therefore zero Lebesgue measure.

Remarkable examples of double tilings come from the symmetric $\beta$-transformation defined by Akiyama and Scheicher (\cite{AkiyamaScheicher07}), for two Pisot units~$\beta$: the Tribonacci number and the smallest Pisot number.
This means that the Pisot conjecture cannot be extended to the symmetric $\beta$-transformation.
It is unclear why the Tribonacci number and the smallest Pisot number give double tilings while many other Pisot units give tilings, and it is possible that getting more insight into this question may lead to a proof or a disproof of the Pisot conjecture.

\section{Admissible sequences} \label{sec:admiss-expans}
Throughout the paper, we consider transformations $T:\, X \to X$ defined by $T x = \beta x - a$ for $x \in X_a$, $a \in A$, where $A$ is a finite subset of~$\mathbb{R}$, $X$~is the disjoint union of non-empty bounded sets $X_a \subset \mathbb{R}$, and $\beta > 1$.
We are interested in the digital expansions generated by~$T$, as defined in Definition~\ref{d:Texp}.
We denote by $A^{\omega}$ the set of right infinite sequences with elements in~$A$, and by $\preceq$ the lexicographical order on~$A^{\omega}$.

\begin{defn}[$T$-expansion, $T$-admissible sequence] \label{d:Texp}
Let $T$ be as in the preceding paragraph.
For $x \in X$, the sequence $b(x) = b_1(x) b_2(x) \cdots \in A^{\omega}$ satisfying $b_k(x) = a$ if $T^{k-1}(x) \in X_a$, $a \in A$, is called the \emph{$T$-expansion} of~$x$.
A sequence $u \in A^{\omega}$ is called \emph{$T$-admissible} if $u = b(x)$ for some~$x \in X$.
\end{defn}

Note that $T x = \beta x - b_1(x)$, so $x = (b_1(x) + T x)/\beta$, and inductively
\begin{equation}\label{q:xtonT}
x = \sum_{k=1}^{n} \frac{b_k(x)}{\beta^k} + \frac{T^n x}{\beta^n}
\end{equation}
for all $n \ge 1$. Since $X$ is bounded, we have $\lim_{n \to \infty} (T^n x) \beta^{-n} = 0$ and thus $x = \sum_{k=1}^{\infty} b_k(x) \beta^{-k}$.
Therefore, we define the \emph{value} of a sequence $u = u_1 u_2 \cdots \in A^{\omega}$ by $\decdot u = \sum_{k\ge1} u_k \beta^{-k}$.

A~first characterization of $T$-admissible sequences is given by the following lemma.

\begin{lemma} \label{l:Xuk}
A~sequence $u = u_1 u_2 \cdots \in A^{\omega}$ is $T$-admissible if and only if $\decdot u_k u_{k+1} \cdots \in X_{u_k}$ for all $k \ge 1$.
\end{lemma}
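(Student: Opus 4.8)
The plan is to link the two conditions through the identity $T^{k-1}x = \decdot b_k(x)\,b_{k+1}(x)\cdots$, valid for every $x \in X$ and every $k \ge 1$. This is essentially a restatement of the discussion preceding the lemma: applying $T$ shifts the expansion, since $T^{j-1}(Tx) = T^j x$ gives $b_j(Tx) = b_{j+1}(x)$ for all $j \ge 1$, and a point equals the value of its own expansion, as established in~\eqref{q:xtonT}. Consequently $T^{k-1}x = \decdot b_1(T^{k-1}x)\,b_2(T^{k-1}x)\cdots = \decdot b_k(x)\,b_{k+1}(x)\cdots$.

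For the ``only if'' direction, suppose $u = b(x)$ with $x \in X$. By Definition~\ref{d:Texp}, the equality $b_k(x) = u_k$ means exactly that $T^{k-1}x \in X_{u_k}$; combined with the identity above this yields $\decdot u_k u_{k+1}\cdots = T^{k-1}x \in X_{u_k}$ for every $k \ge 1$, as required.

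For the ``if'' direction, assume $\decdot u_k u_{k+1}\cdots \in X_{u_k}$ for all $k \ge 1$ and put $x := \decdot u_1 u_2 \cdots$, which lies in $X_{u_1} \subseteq X$. I would show by induction on $k$ that $T^{k-1}x = \decdot u_k u_{k+1}\cdots$. The base case $k = 1$ is the definition of $x$. For the inductive step, assuming $T^{k-1}x = \decdot u_k u_{k+1}\cdots \in X_{u_k}$, the formula for $T$ on $X_{u_k}$ gives $T^k x = T(T^{k-1}x) = \beta\,(\decdot u_k u_{k+1}\cdots) - u_k = \decdot u_{k+1}u_{k+2}\cdots$, the last equality being a one-line manipulation of the geometric series defining the map $v \mapsto \decdot v$; by hypothesis this lies in $X_{u_{k+1}}$, completing the induction. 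Now each $T^{k-1}x$ lies in $X$ (because $x \in X$ and $T$ maps $X$ into $X$) and, as just shown, lies in the piece $X_{u_k}$; since the sets $X_a$ are pairwise disjoint, the unique digit $b_k(x)$ with $T^{k-1}x \in X_{b_k(x)}$ equals $u_k$. Hence $b(x) = u$, so $u$ is $T$-admissible.

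I do not anticipate a genuine obstacle: the whole argument rests on the shift identity and on~\eqref{q:xtonT}, both already available. The only points deserving a little care are the inductive bookkeeping of the membership $T^{k-1}x \in X_{u_k}$ in the converse direction, and the appeal to the disjointness of the pieces $X_a$ to conclude $b_k(x) = u_k$ from $T^{k-1}x \in X_{u_k}$.
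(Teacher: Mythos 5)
Your proof is correct and follows essentially the same route as the paper's: the forward direction identifies $T^{k-1}x$ with $\decdot u_k u_{k+1}\cdots$ via~(\ref{q:xtonT}), and the converse sets $x_k = \decdot u_k u_{k+1}\cdots$ and verifies $T x_k = \beta x_k - u_k = x_{k+1}$. Your version merely spells out the induction and the appeal to disjointness of the $X_a$ a bit more explicitly.
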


\begin{proof}
For each $k \ge 1$, set $x_k = \decdot u_k u_{k+1} \cdots$.
Suppose first that $u = b(x)$ for some $x \in X$. 
Then we have $x = \decdot b(x) = \decdot u = x_1$, hence $x_k = T^{k-1} x \in X_{u_k}$ for all $k \ge 1$.
Now suppose that $x_k \in X_{u_k}$ for all $k \ge 1$. 
Then $T x_k = \beta x_k -u_k = x_{k+1}$ for each $k \ge 1$, hence $u = b(x_1)$. 
\end{proof}

Theorem~\ref{t:admissible} provides a simpler characterization, when $T$ satisfies some additional conditions.

\begin{lemma} \label{l:xlessy}
Let $x, y \in X$ and assume that $\sup X_a \le \inf X_{a'}$ for all $a < a'$. Then we have
\[ 
x < y \quad\mbox{if and only if}\quad b(x) \prec b(y). 
\] 
\end{lemma}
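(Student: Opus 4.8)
The plan is to reduce the biconditional to a comparison between the $T$-expansions digit by digit, using the hypothesis $\sup X_a \le \inf X_{a'}$ for $a < a'$ to control how the first digit of $b(x)$ compares to that of $b(y)$. The key observation is that under this hypothesis the sets $X_a$ are ordered along the real line in the same way as the digits $a \in A$ are ordered, so that membership of a point in $X_a$ versus $X_{a'}$ is governed by the position of the point relative to the (essentially disjoint) intervals they span. Concretely, I would first record the following one-step statement: if $x, y \in X$ with $x < y$, then $b_1(x) \le b_1(y)$; and moreover if $b_1(x) = b_1(y) = a$, then $x, y \in X_a$ and $Tx = \beta x - a < \beta y - a = Ty$. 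This is immediate from the hypothesis: if we had $b_1(x) = a' > a = b_1(y)$, then $x \in X_{a'}$ and $y \in X_a$ would force $x \ge \inf X_{a'} \ge \sup X_a \ge y$, contradicting $x < y$.

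Next I would prove the forward direction, that $x < y$ implies $b(x) \prec b(y)$, by iterating the one-step statement. Set $x_k = T^{k-1}x$, $y_k = T^{k-1}y$, so $b_k(x) = b_1(x_k)$ and $b_k(y) = b_1(y_k)$. If $b(x) \ne b(y)$, let $n$ be the smallest index with $b_n(x) \ne b_n(y)$. By the one-step statement applied $n-1$ times (starting from $x_1 = x < y = y_1$ and using that equality of the first digit is preserved together with the strict inequality on the images), we get $x_n < y_n$, and then $b_n(x) = b_1(x_n) < b_1(y_n) = b_n(y)$ since the digits differ and the one-step statement gives $b_1(x_n) \le b_1(y_n)$. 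Hence $b(x) \prec b(y)$. It remains to rule out $b(x) = b(y)$: by \eqref{q:xtonT}, or rather by $x = \decdot b(x)$, equal expansions would force $x = y$, contradicting $x < y$.

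For the converse, I would argue by contraposition: if $x \ge y$ then $b(x) \succeq b(y)$. If $x = y$ this is clear, and if $x > y$ the forward direction (with the roles swapped) gives $b(y) \prec b(x)$, hence $b(x) \succ b(y)$; in either case $b(x) \not\prec b(y)$. The main point requiring care is the bookkeeping in the iteration — specifically, that as long as the digits agree we may keep both the equality of digits and the strict inequality $x_k < y_k$ propagating, and that at the first disagreement the hypothesis $\sup X_a \le \inf X_{a'}$ forces the inequality between the differing digits to have the correct sign. I do not expect any serious obstacle beyond this; the hypothesis is exactly tailored so that the geometric ordering of the pieces $X_a$ mirrors the numerical ordering of the digits.
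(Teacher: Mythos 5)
Your proof is correct and follows essentially the same route as the paper: both reduce to the first index $k$ of disagreement of the digit strings, use $b(x)=b(y)\Leftrightarrow x=y$, and exploit the hypothesis $\sup X_a\le\inf X_{a'}$ to translate between the order of $T^{k-1}x,\,T^{k-1}y$ and the order of the differing digits. The only cosmetic difference is that the paper obtains the equivalence $x<y\Leftrightarrow T^{k-1}x<T^{k-1}y$ in one stroke from \eqref{q:xtonT}, whereas you iterate a one-step statement and handle the converse by contraposition.
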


\begin{proof}
Clearly, $b(x) = b(y)$ is equivalent to $x = y$. So we can assume that there exists some $k \ge 1$ such that $b_1(x) \cdots b_{k-1}(x) = b_1(y) \cdots b_{k-1}(y)$ and $b_k(x) \neq b_k(y)$. Then $x < y$ is equivalent to $T^{k-1} x < T^{k-1} y$ by~(\ref{q:xtonT}). Since we have $T^{k-1} x \in X_{b_k(x)}$, $T^{k-1} y \in X_{b_k(y)}$, we obtain that $T^{k-1} x < T^{k-1} y$, $b_k(x) \ne b_k(y)$, is equivalent to $b_k(x) < b_k(y)$. This proves the lemma.
\end{proof}

{}From now on, we assume that the sets $X_a$ are finite unions of left-closed, right-open intervals and that $T X = X$, i.e., that $\bigcup_{a\in A} T X_a = \bigcup_{a\in A} (\beta X_a - a) = X$.

\begin{defn}[Left-, right-continuous $\beta$-transformation] \label{d:trfm}
Let $\beta > 1$, let $A$ be a finite subset of $\mathbb{R}$, $X$~be the disjoint union of non-empty sets $X_a$, $a \in A$, where each $X_a$ is a finite union of intervals $[\ell_i,r_i) \subset \mathbb{R}$, $i \in I_a$, and $\bigcup_{a\in A} (\beta X_a - a) = X$.
Then we call the map $T:\, X \to X$ defined by $T x = \beta x - a$ for all $x \in X_a$, $a \in A$, a \emph{right-continuous $\beta$-transformation}.

The corresponding \emph{left-continuous $\beta$-transformation} $\widetilde{T}:\, \widetilde{X} \to \widetilde{X}$ is defined by $\widetilde{T} x = \beta x - a$ for $x \in \widetilde{X}_a$, where $\widetilde{X}_a = \bigcup_{i\in I_a} (\ell_i,r_i]$ for each $a \in A$ and $\widetilde{X} = \bigcup_{a \in A} \widetilde{X}_a$.
For $x \in \widetilde{X}$, the $\widetilde{T}$-expansion of $x$ is denoted by $\tilde{b}(x)$.
\end{defn}

Note that Lemma~\ref{l:xlessy} also holds for $\widetilde{X}$ and the sequences~$\tilde{b}(x)$. 

\begin{theorem} \label{t:admissible}
Let $T$ be a right-continuous $\beta$-transformation, where each $X_a$, $a \in A$, is a single interval $[\ell_a, r_a)$, and $r_a \le \ell_{a'}$ if $a < a'$.
Let $\widetilde{T}$ be the corresponding left-continuous $\beta$-transformation. 
Then a sequence $u = u_1 u_2 \cdots \in A^{\omega}$ is $T$-admissible if and only if 
\begin{equation} \label{q:lex}
b(\ell_{u_k}) \preceq u_k u_{k+1} \cdots \prec \tilde{b}(r_{u_k}) \quad\mbox{for all}\ k \ge 1.
\end{equation}
A~sequence $u = u_1 u_2 \cdots \in A^{\omega}$ is $\widetilde{T}$-admissible if and only if 
\[ 
b(\ell_{u_k}) \prec u_k u_{k+1} \cdots \preceq \tilde{b}(r_{u_k}) \quad\mbox{for all}\ k \ge 1. 
\]
\end{theorem}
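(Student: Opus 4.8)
The plan is to prove the first equivalence (the characterization of $T$-admissible sequences) in full and to obtain the second one, for $\widetilde{T}$, by the mirror-image argument, interchanging throughout the roles of $b$ and $\tilde{b}$, of $\ell_a$ and $r_a$, and of $\preceq$ and $\succeq$. For the easy implication (``$T$-admissible $\Rightarrow$ (\ref{q:lex})''), suppose $u = b(x)$ with $x \in X$; then for each $k$ the point $x_k := T^{k-1}x$ lies in $X_{u_k} = [\ell_{u_k}, r_{u_k})$ and satisfies $b(x_k) = u_k u_{k+1}\cdots$, so $\ell_{u_k} \le x_k$ together with Lemma~\ref{l:xlessy} gives $b(\ell_{u_k}) \preceq u_k u_{k+1}\cdots$. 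For the right inequality I would first establish a mixed version of Lemma~\ref{l:xlessy}: if $x \in X$, $y \in \widetilde{X}$ and $x < y$, then $b(x) \prec \tilde{b}(y)$. Indeed $b(x) = \tilde{b}(y)$ would force $x = \decdot b(x) = \decdot \tilde{b}(y) = y$; and at the first index $j$ with $b_j(x) \ne \tilde{b}_j(y)$ one has $T^{j-1}x < \widetilde{T}^{j-1}y$ by~(\ref{q:xtonT}), while $T^{j-1}x \in [\ell_{b_j(x)}, r_{b_j(x)})$ and $\widetilde{T}^{j-1}y \in (\ell_{\tilde{b}_j(y)}, r_{\tilde{b}_j(y)}]$, so $r_a \le \ell_{a'}$ for $a < a'$ rules out $b_j(x) > \tilde{b}_j(y)$. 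Applying this to $x_k < r_{u_k}$ yields $u_k u_{k+1}\cdots \prec \tilde{b}(r_{u_k})$, which is~(\ref{q:lex}).

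For the converse, let $\overline{Y}$ be the set of $u \in A^{\omega}$ with $b(\ell_{u_k}) \preceq u_k u_{k+1}\cdots \preceq \tilde{b}(r_{u_k})$ for all $k \ge 1$. By the necessity part (and its $\widetilde{T}$-analogue) $\overline{Y}$ contains all $b(x)$, $x \in X$, and all $\tilde{b}(y)$, $y \in \widetilde{X}$, so it is non-empty; it is obviously shift-invariant. The crux is the claim that $\ell_{u_1} \le \decdot u \le r_{u_1}$ for every $u \in \overline{Y}$. Granting this, take $u$ satisfying~(\ref{q:lex}); then $u$ and all its shifts lie in $\overline{Y}$, so $\decdot(u_k u_{k+1}\cdots) \in [\ell_{u_k}, r_{u_k}]$ for all $k$, and the strict inequality $u_k u_{k+1}\cdots \prec \tilde{b}(r_{u_k})$ improves the right endpoint: if $p \ge 2$ is the first index where $u_k u_{k+1}\cdots$ and $\tilde{b}(r_{u_k})$ differ, then~(\ref{q:xtonT}) gives $r_{u_k} - \decdot(u_k u_{k+1}\cdots) = \beta^{-(p-1)}\bigl(\widetilde{T}^{p-1}r_{u_k} - \decdot(u_{k+p-1}u_{k+p}\cdots)\bigr)$, and the claim applied to the shift $u_{k+p-1}u_{k+p}\cdots \in \overline{Y}$ together with $r_a \le \ell_{a'}$ makes this positive. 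Hence $\decdot(u_k u_{k+1}\cdots) \in X_{u_k}$ for all $k$, and Lemma~\ref{l:Xuk} shows $u$ is $T$-admissible.

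To prove the claim, set $\eta := \inf\{\decdot u - \ell_{u_1} : u \in \overline{Y}\}$, which is finite since $\decdot A^{\omega}$ and $\{\ell_a : a \in A\}$ are bounded. Fix $u \in \overline{Y}$. If $u = b(\ell_{u_1})$ then $\decdot u - \ell_{u_1} = 0$. Otherwise $u \succ b(\ell_{u_1})$, and as they share the first letter $u_1$ they first differ at some $m \ge 2$ with $b_m(\ell_{u_1}) < u_m$; writing $z := T^{m-1}\ell_{u_1} \in [\ell_{b_m(\ell_{u_1})}, r_{b_m(\ell_{u_1})})$, equation~(\ref{q:xtonT}) yields $\decdot u - \ell_{u_1} = \beta^{-(m-1)}\bigl(\decdot(u_m u_{m+1}\cdots) - z\bigr)$. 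Now $u_m u_{m+1}\cdots \in \overline{Y}$, so $\decdot(u_m u_{m+1}\cdots) \ge \ell_{u_m} + \eta$ by definition of $\eta$, while $z < r_{b_m(\ell_{u_1})} \le \ell_{u_m}$ because $b_m(\ell_{u_1}) < u_m$; hence $\decdot(u_m u_{m+1}\cdots) - z > \eta$, and since $\beta^{-(m-1)} \le \beta^{-1} < 1$ we get $\decdot u - \ell_{u_1} > \beta^{-1}\eta$ whenever $\eta < 0$ (and $0 > \beta^{-1}\eta$ in the case $u = b(\ell_{u_1})$). Taking the infimum, $\eta \ge \beta^{-1}\eta$, which forces $\eta \ge 0$, i.e.\ $\decdot u \ge \ell_{u_1}$; the inequality $\decdot u \le r_{u_1}$ follows in the same way, comparing $u$ with $\tilde{b}(r_{u_1}) \in \overline{Y}$.

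The step I expect to be the main obstacle is exactly this claim: a lexicographic inequality $v \preceq w$ does not in general imply $\decdot v \le \decdot w$ (it can fail outright), so~(\ref{q:lex}) cannot be ``decoded'' into $\decdot u \in X_{u_1}$ by a direct comparison. What makes the infimum argument go through is that the comparison sequences $b(\ell_{u_1})$ and $\tilde{b}(r_{u_1})$ are themselves admissible — so their shifts realise genuine points of $X$ and $\widetilde{X}$, whose values are controlled by Lemma~\ref{l:xlessy} — and that $u$ already agrees with them in the first digit, which is precisely what supplies the uniform contraction factor $\beta^{-(m-1)} \le \beta^{-1}$ needed to close the self-referential inequality $\eta \ge \beta^{-1}\eta$.
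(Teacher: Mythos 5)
Your proof is correct. The forward implication coincides with the paper's argument (the ``mixed'' order lemma you isolate is exactly what the paper proves inline when it deduces $b_n(T^{k-1}x) < \tilde{b}_n(r_{u_k})$ from $T^{n+k-2}x < \widetilde{T}^{n-1}r_{u_k}$). The converse is where you genuinely diverge. The paper fixes $u$ and, for each $k$, iterates the map $k \mapsto s(k)$ to the first index where $u_k u_{k+1}\cdots$ drops below $\tilde{b}(r_{u_k})$, obtaining the telescoped bound $r_{u_k} - x_k > \beta^{-(s^n(k)-k)}\bigl(r_{u_{s^n(k)}} - x_{s^n(k)}\bigr) \to 0$ with no auxiliary set and no infimum (and disposes of the lower bound $x_k \ge \ell_{u_k}$ by a ``similarly''). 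You instead introduce the closed, shift-invariant set $\overline{Y}$ — in substance the one-sided analogue of $\bar{\mathcal{S}}$ in~(\ref{e:barS}) — take the global infimum $\eta$ of $\decdot u - \ell_{u_1}$ over it, and close the recursion via the fixed-point inequality $\eta \ge \beta^{-1}\eta$, afterwards upgrading to the strict inequality $\decdot(u_k u_{k+1}\cdots) < r_{u_k}$ by one further application of the same estimate. The one-step inequality driving both proofs is identical — first index of lexicographic disagreement, the hypothesis $r_a \le \ell_{a'}$ for $a < a'$, and the contraction factor $\beta^{-(m-1)} \le \beta^{-1}$ — so the difference lies only in how that estimate is closed: the paper unrolls it along a single orbit of indices, while you quantify over the whole closure at once. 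Your route buys a clean intermediate statement ($\decdot u \in [\ell_{u_1}, r_{u_1}]$ for every $u$ in the closure, which is of independent use when one passes to $\bar{\mathcal{S}}$ and $\widehat{Y}$ later in the paper) and treats the two endpoint bounds symmetrically, at the modest cost of having to observe that $\eta$ is finite; the paper's route is more economical for the theorem as stated. Both arguments are complete.
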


\begin{proof}
We will prove only the first statement, since the proof of the second one is very similar. 

If $u = b(x)$ for some $x \in X$, then $T^{k-1} x \in X_{u_k} = [\ell_{u_k},r_{u_k})$ and $b(T^{k-1} x) = u_k u_{k+1} \cdots$ for all $k \ge 1$. By Lemma~\ref{l:xlessy}, we obtain immediately that $b(\ell_{u_k}) \preceq b(T^{k-1} x)$. We show that $b(T^{k-1} x) \prec \tilde{b}(r_{u_k})$. Since $T^{k-1} x < r_{u_k}$, there must be an index $n$ such that $b_n(T^{k-1} x) \neq \tilde{b}_n(r_{u_k})$ and $b_i(T^{k-1} x) = \tilde{b}_i(r_{u_k})$ for all $i < n$. Thus, $T^{n+k-2} x < \widetilde{T}^{n-1} r_{u_k}$, which implies that $b_n(T^{k-1} x) < \tilde{b}_n(r_{u_k})$. Hence, $b(T^{k-1} x) \prec \tilde{b}(r_{u_k})$ and (\ref{q:lex}) holds.

For the other implication, suppose that $u$ satisfies (\ref{q:lex}) and set $x_k = \decdot u_k u_{k+1} \cdots$ for all $k \ge 1$. 
By Lemma~\ref{l:Xuk}, it suffices to show that $x_k \in [\ell_{u_k}, r_{u_k})$ for all $k \ge 1$.
Since $u_k = \tilde{b}_1(r_{u_k})$, there exists some $s(k) > k$ such that $u_k \cdots u_{s(k)-1} = \tilde{b}_1(r_{u_k}) \cdots \tilde{b}_{s(k)-k}(r_{u_k})$ and $u_{s(k)} < \tilde{b}_{s(k)-k+1}(r_{u_k})$.
Then we have
\[ 
r_{u_k} - x_k = \frac{\widetilde{T}^{s(k)-k}r_{u_k}-x_{s(k)}}{\beta^{s(k)-k}} > \frac{\ell_{\tilde{b}_{s(k)-k+1}(r_{u_k})}-x_{s(k)}}{\beta^{s(k)-k}} \ge \frac{r_{u_{s(k)}}-x_{s(k)}}{\beta^{s(k)-k}} 
\]
for all $k \ge 1$. 
By iterating $s^n(k) = s(s^{n-1}(k)) > s^{n-1}(k)$, $n \ge 1$, we obtain 
\[ 
r_{u_k} - x_k > \lim_{n\to\infty} \frac{r_{u_{s^n(k)}}-x_{s^n(k)}}{\beta^{s^n(k)-k}} = 0, 
\]
where we have used that $\{x_k:\, k \ge 1\}$ is bounded and that $\lim_{n\to\infty} s^n(k) = \infty$. 
Similarly, we can show that $x_k \ge \ell_{u_k}$ for all $k \ge 1$, hence the theorem is proved.
\end{proof}


\begin{remark} \label{r:fullintervals}
With the assumptions of Theorem~\ref{t:admissible}, we have $b_1(\ell_{u_k}) = \tilde{b}_1(r_{u_k}) = u_k$ for all $k \ge 1$.
If $T \ell_{u_k} = \min X$, then the condition $b(\ell_{u_k}) \preceq u_k u_{k+1} \cdots$ follows from $b(\ell_{u_{k+1}}) \preceq u_{k+1} u_{k+2} \cdots$. 
Similarly, $\widetilde{T} r_{u_k} = \max \widetilde{X}$ implies that $u_k u_{k+1} \cdots \prec \tilde{b}(r_{u_k})$ follows from $u_{k+1} u_{k+2} \cdots \prec \tilde{b}(r_{u_{k+1}})$.
\end{remark}

\begin{remark}
In all our examples, the conditions of Theorem~\ref{t:admissible} are fulfilled and $X$ is a half-open interval, the sets $X_a$ are thus consecutive half-open intervals. 
However, the more general Definition~\ref{d:trfm} is needed at several points in this paper, e.g.\ in the proof of Proposition~\ref{p:boundary}, for the transformation $T_Y$ in Section~\ref{sec:cubic-pisot-units} and when we restrict $T$ to the support of its invariant measure.
\end{remark}

\begin{ex}\label{x:clgreedylazy}
Consider the classical greedy $\beta$-transformation, $T_{\beta} x = \beta x \bmod{1}$. This fits in the above framework if we take $A = \{0,1,\ldots, \lceil \beta \rceil - 1\}$, $X_a = \big[\frac{a}{\beta}, \frac{a+1}{\beta}\big)$ for $a \le \lceil \beta \rceil - 2$, $X_{\lceil\beta\rceil-1} = \big[\frac{\lceil\beta\rceil-1}{\beta},1\big)$. Parry gave a characterization of the $T_{\beta}$-admissible sequences in~\cite{Parry60}. It only depends on the $\widetilde{T}$-expansion of~$1$, since $T \ell_a = 0$ for every $a \in A$ and $\widetilde{T} r_a = 1$ for $a \le \lceil \beta \rceil - 2$.
The transformation $\widetilde{T}$ is sometimes called quasi-greedy $\beta$-transformation.

The lazy $\beta$-transformation with digit set $A = \{0,1,\ldots, \lceil \beta \rceil - 1\}$ is given by~$\widetilde{T}$, where $\ell_0 = 0$, $r_a = \ell_{a+1} = \frac{1}{\beta} \big(\frac{\lceil\beta\rceil-1}{\beta-1} + a\big)$ for $0 \le a \le \lceil \beta \rceil - 2$ and $r_{\lceil\beta\rceil-1} = \frac{\lceil\beta\rceil-1}{\beta-1}$. For the lazy $\beta$-transformation, the characterization of the expansions depends only on the $T$-expansion of $\frac{\lceil\beta\rceil-\beta}{\beta-1}$.
\end{ex}

\begin{ex}
Let $\beta > 1$, $A = \{a_0,a_1,\ldots,a_m\}$, with $0 = a_0 < a_1 < \cdots < a_m$ satisfying
\begin{equation} \label{q:pedicini}
\max_{0\le i<m} (a_{i+1} - a_i) \le \frac{a_m}{\beta-1}.
\end{equation}
The greedy $\beta$-transformation with digit set $A$, as defined in~\cite{DajaniKalle08}, is obtained by setting $X_{a_i} = \big[\frac{a_i}{\beta}, \frac{a_{i+1}}{\beta}\big)$ for $0 \le i < m$ and $X_m = \big[\frac{a_m}{\beta}, \frac{a_m}{\beta-1}\big)$. Condition (\ref{q:pedicini}) was given by Pedicini in~\cite{Pedicini05} and guarantees that $\bigcup_{a\in A} (\beta X_a - a) = X$. The expansions given by this transformation are exactly the expansions obtained from the recursive algorithm that Pedicini introduced in \cite{Pedicini05}. He also gave a characterization of all these expansions, similar to Theorem~\ref{t:admissible}. 
Similarly to Example~\ref{x:clgreedylazy}, only the expansions $\tilde{b}(a_{i+1}-a_i)$, $0 \le i < m$, play a role in this characterization.
\end{ex}

\begin{ex} \label{x:linmod1}
The \emph{linear mod~$1$ transformations} are maps from the interval $[0,1)$ to itself, given by $T x = \beta x + \alpha \bmod{1}$ with $\beta > 1$ and $0 \le \alpha < 1$. They are well-studied, see for example \cite{Hofbauer81,FlattoLagarias96,FlattoLagarias97a,FlattoLagarias97b}. 
We obtain these transformations by taking $A = \{-\alpha, 1-\alpha, \ldots, \linebreak \lceil \beta + \alpha \rceil - 1 - \alpha\}$, $X_{-\alpha} = \big[0,\frac{1-\alpha}{\beta}\big)$, $X_{i-\alpha} = \big[\frac{i-\alpha}{\beta}, \frac{i+1-\alpha}{\beta}\big)$ for $1 \le i \le \lceil \beta + \alpha \rceil - 2$, $X_{\lceil\beta+\alpha\rceil-1-\alpha} = \big[\frac{\lceil\beta+\alpha\rceil-1-\alpha}{\beta},1\big)$. 
If we set $\alpha = 0$, then this is the classical greedy $\beta$-transformation.
\end{ex}

\begin{ex} \label{x:minweight}
In~\cite{FrougnySteiner08}, some examples of specific transformations generating minimal weight expansions are given. These transformations are symmetric (up to the endpoints of the intervals) and depend on two parameters, $\beta > 1$ and~$\alpha$, which lies in an interval depending on~$\beta$. They fit into the above framework by taking $A = \{-1,0,1\}$ and setting $X_{-1} = [-\beta \alpha, -\alpha)$, $X_0 = [-\alpha, \alpha)$ and $X_1 = [\alpha, \beta \alpha)$. Suppose that an $x \in X$ has a finite expansion, i.e., that there is an $N \ge 1$ such that  $b_n(x) = 0$ for all $n > N$. Then the absolute sum of digits of $x$ is $\sum_{k=1}^N |b_k(x)|$ and $x \in \mathbb Z[\beta^{-1}]$. The transformations $T$ from \cite{FrougnySteiner08} generate expansions of minimal weight in the sense that if $x \in \mathbb{Z}[\beta^{-1}]\cap X$, then the absolute sum of digits of its $T$-expansion is less than or equal to that of all possible other expansions of $x$ in base $\beta$ with integer digits.
\end{ex}

\begin{ex} \label{x:sbetad}
In~\cite{AkiyamaScheicher07}, Akiyama and Scheicher define \emph{symmetric $\beta$-transformations} for $\beta > 1$ by setting $T x = \beta x - \lfloor \beta x + 1/2 \rfloor$ for $x \in [-1/2, 1/2)$. 
With our notation, this means that $A = \big\{\big\lfloor \frac{1-\beta}{2} \big\rfloor, \ldots, \big\lceil\frac{\beta-1}{2} \big\rceil\big\}$, $X_{\lfloor(1-\beta)/2\rfloor} = \big[-\frac{1}{2}, \frac{\lfloor(1-\beta)/2\rfloor}{\beta} + \frac{1}{2\beta}\big)$, $X_i = \big[\frac{i}{\beta} -\frac{1}{2\beta}, \frac{i}{\beta} + \frac{1}{2\beta}\big)$ for $\big\lfloor \frac{1-\beta}{2} \big\rfloor < i < \big\lceil\frac{\beta-1}{2} \big\rceil$, and $X_{\lceil(\beta-1)/2\rceil} = \big[\frac{\lceil(\beta-1)/2\rceil}{\beta} - \frac{1}{2\beta}, \frac{1}{2}\big)$. 
\end{ex}

\begin{figure}[ht]
\centering
\subfigure[$\beta = \pi$, lazy]{\qquad\includegraphics{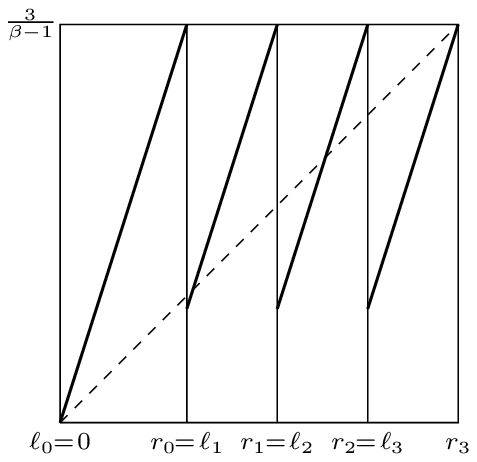}\qquad}
\quad
\subfigure[$\beta = \frac{1+\sqrt{5}}{2}$, $A = \{0, 2 \beta, 5\}$, greedy]{\qquad\includegraphics{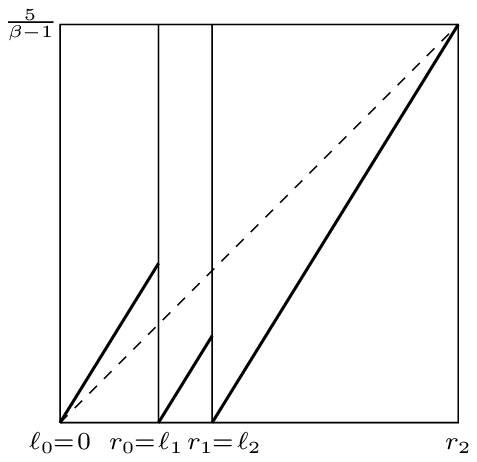}\qquad}
\\
\quad
\subfigure[$\beta = \frac{1+\sqrt{5}}2$, $\alpha = 1/2$, minimal weight]{\qquad\includegraphics{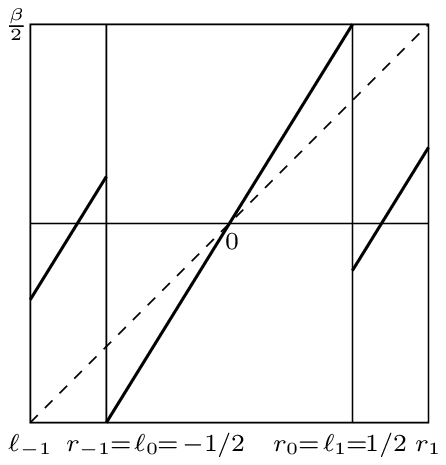}\qquad}
\qquad
\subfigure[$\beta = \frac{1+\sqrt{5}}2$, symmetric]{\quad\includegraphics{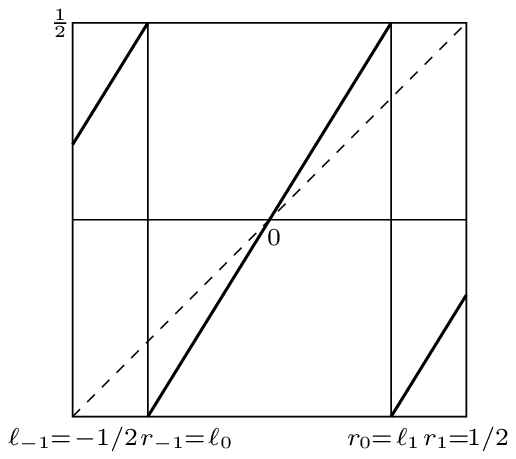}\quad}
\caption{In~(a), we see a lazy $\beta$-transformation, (b)~shows a greedy transformation with arbitrary digits, there is a minimal weight transformation in~(c) and a symmetric $\beta$-transformation in~(d).}
\label{f:genexs}
\end{figure}

\begin{remark}
{}From now on, we will consider only right-continuous $\beta$-transformations. By symmetry, all results can be easily adapted to the corresponding left-continuous $\beta$-transformations.
\end{remark}

We will use the set of $T$-expansions to construct a natural extension and a multiple tiling for~$T$. 
Therefore, we define the set
\begin{equation} \label{e:S}
\mathcal{S} = \{(u_k)_{k\in\mathbb{Z}} \in A^{\mathbb{Z}}:\, u_k u_{k+1} \cdots \ \mbox{is $T$-admissible for all}\ k \in \mathbb{Z}\},
\end{equation}
which is invariant under the shift but not closed. 
The closure of $\mathcal{S}$, which we denote by $\bar{\mathcal{S}}$, is the shift space consisting of the two-sided infinite sequences $u \in A^{\mathbb{Z}}$ such that every finite sequence $u_k u_{k+1}\cdots u_n$ is the prefix of some $T$-expansion.
Hence, $\bar{\mathcal{S}}$~is similar to the $\beta$-shift.
If the conditions of Theorem~\ref{t:admissible} are satisfied, then we have
\begin{align}
\mathcal{S} & = \{(u_k)_{k\in\mathbb{Z}} \in A^{\mathbb{Z}}:\, b(\ell_{u_k}) \preceq u_k u_{k+1} \cdots \prec \tilde{b}(r_{u_k})\ \mbox{for all}\ k \in \mathbb{Z}\}, \nonumber \\
\bar{\mathcal{S}} & = \{(u_k)_{k\in\mathbb{Z}} \in A^{\mathbb{Z}}:\, b(\ell_{u_k}) \preceq u_k u_{k+1} \cdots \preceq \tilde{b}(r_{u_k})\ \mbox{for all}\ k \in \mathbb{Z}\}, \label{e:barS}
\end{align}
and the following condition for $\bar{\mathcal{S}}$ being sofic.
Recall that a shift is sofic if its elements are the labels of the two-sided infinite walks in a finite graph, see~\cite{LindMarcus95}.

\begin{prop} \label{p:sofic}
Let $T$ be as in Theorem~\ref{t:admissible}. 
Then $\bar{\mathcal{S}}$ is a sofic shift if and only if $b(\ell_a)$ and $\tilde{b}(r_a)$ are eventually periodic for all $a \in A$.
\end{prop}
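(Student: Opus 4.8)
The plan is to use the classical criterion that a subshift is sofic if and only if it has only finitely many \emph{follower sets}, i.e.\ finitely many sets of the form
\[
\Phi(w) = \bigl\{v \in A^{\omega} :\ wv_1\cdots v_n \in \mathcal{L}(\bar{\mathcal{S}})\ \text{for all}\ n \ge 0\bigr\},
\]
where $\mathcal{L}(\bar{\mathcal{S}})$ is the set of finite subwords of elements of $\bar{\mathcal{S}}$ and $w$ ranges over $\mathcal{L}(\bar{\mathcal{S}})$; throughout I write $\sigma$ for the shift. The heart of the argument is to read off $\Phi(w)$ from the description~(\ref{e:barS}): a finite word $w = w_1\cdots w_n \in \mathcal{L}(\bar{\mathcal{S}})$ imposes on its admissible continuations $v$ exactly that $v$ be itself a (one-sided) admissible sequence and that $\lambda(w) \preceq v \preceq \rho(w)$, where
\[
\lambda(w) = \max\bigl\{\sigma^{n-k+1} b(\ell_{w_k}) :\ 1 \le k \le n,\ w_k\cdots w_n = b_1(\ell_{w_k})\cdots b_{n-k+1}(\ell_{w_k})\bigr\}
\]
and $\rho(w) = \min\bigl\{\sigma^{n-k+1}\tilde{b}(r_{w_k}) :\ 1 \le k \le n,\ w_k\cdots w_n = \tilde{b}_1(r_{w_k})\cdots\tilde{b}_{n-k+1}(r_{w_k})\bigr\}$. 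The index $k=n$ always occurs in both sets, because $b_1(\ell_{w_n}) = \tilde{b}_1(r_{w_n}) = w_n$ by Remark~\ref{r:fullintervals}; hence $\lambda(w)$ and $\rho(w)$ lie in the set $\Sigma := \{\sigma^{j} b(\ell_a) :\ a\in A,\ j\ge1\} \cup \{\sigma^{j}\tilde{b}(r_a) :\ a\in A,\ j\ge1\}$, and $\Phi(w)$ depends only on the pair $(\lambda(w),\rho(w))\in\Sigma\times\Sigma$.

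Given this, the ``if'' direction is immediate: when every $b(\ell_a)$ and $\tilde{b}(r_a)$ is eventually periodic, each of these sequences has only finitely many shifts, so $\Sigma$ is finite, there are only finitely many follower sets, and $\bar{\mathcal{S}}$ is sofic. (Equivalently, one can exhibit a finite presentation whose vertices are the pairs in $\Sigma\times\Sigma$, with an edge labelled $a$ from $(\lambda,\rho)$ whenever $\lambda_1\le a\le\rho_1$, leading to the pair obtained by shifting the active constraints and adjoining the new ones coming from the digit $a$; the labels of its bi-infinite walks are exactly the elements of $\bar{\mathcal{S}}$.)

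For the ``only if'' direction, assume $\bar{\mathcal{S}}$ is sofic and fix $a\in A$; I will show $\tilde{b}(r_a)$ is eventually periodic, the case of $b(\ell_a)$ being symmetric. Put $w^{(n)} = \tilde{b}_1(r_a)\cdots\tilde{b}_n(r_a)$; since $b(x)\to\tilde{b}(r_a)$ as $x\uparrow r_a$ and each $b(x)$ is a genuine $T$-expansion, $w^{(n)}\in\mathcal{L}(\bar{\mathcal{S}})$ for all $n$. The key point is that $\sup_{\preceq}\Phi(w^{(n)}) = \sigma^{n}\tilde{b}(r_a)$: every $v\in\Phi(w^{(n)})$ extends $w^{(n)}$ to an admissible sequence beginning with the digit $a$, so $w^{(n)}v\preceq\tilde{b}(r_a)$ by~(\ref{e:barS}) and hence $v\preceq\sigma^{n}\tilde{b}(r_a)$; conversely the tails $\sigma^{n}b(x)$ for $x\uparrow r_a$ lie in $\Phi(w^{(n)})$ and tend to $\sigma^{n}\tilde{b}(r_a)$. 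Therefore $\Phi(w^{(n)})=\Phi(w^{(m)})$ forces $\sigma^{n}\tilde{b}(r_a)=\sigma^{m}\tilde{b}(r_a)$; since only finitely many follower sets occur, some $n<m$ give $\Phi(w^{(n)})=\Phi(w^{(m)})$, so $\tilde{b}(r_a)$ is eventually periodic with period dividing $m-n$.

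I expect the main obstacle to be the first paragraph, namely making the description of $\Phi(w)$ rigorous: one must check against~(\ref{e:barS}) that (a) each upper (resp.\ lower) constraint that a position $k\le n$ imposes on the continuation is either void or one of the sequences $\sigma^{n-k+1}\tilde{b}(r_{w_k})$ (resp.\ $\sigma^{n-k+1}b(\ell_{w_k})$) appearing in the definition of $\rho(w)$ (resp.\ $\lambda(w)$), and (b) conversely, once $v$ is admissible on its own and satisfies $\lambda(w)\preceq v\preceq\rho(w)$, the concatenation $wv$ does extend to an element of $\bar{\mathcal{S}}$ — so that the state set really is finite precisely when the boundary orbits are. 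A minor additional point, used in the ``only if'' direction and its symmetric counterpart, is the continuity statement $b(x)\to\tilde{b}(r_a)$ as $x\uparrow r_a$ (and $b(x)\to b(\ell_a)$ as $x\downarrow\ell_a$), which is where the hypothesis $TX=X$ and the pairing of left- and right-continuous transformations from Definition~\ref{d:trfm} enter.
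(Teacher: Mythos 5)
Your proof is correct; it coincides with the paper's on one direction and takes a genuinely different route on the other. For sufficiency, both you and the paper invoke the follower-set criterion for soficness: the paper simply asserts that (\ref{e:barS}) forces finitely many follower sets, while you supply the missing detail, namely that the follower set of $w\in\mathcal{L}(\bar{\mathcal{S}})$ is the lexicographic interval between $\lambda(w)$ and $\rho(w)$ inside the set of one-sided sequences satisfying the closed inequalities of (\ref{e:barS}), with $\lambda(w),\rho(w)$ drawn from the finitely many shifts of the boundary expansions; your point (b) --- that any such $v$ really gives $wv_1\cdots v_n\in\mathcal{L}(\bar{\mathcal{S}})$ for all $n$ --- is discharged by the surjectivity hypothesis $TX=X$, which makes every admissible word left-extendable. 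For necessity the two arguments diverge: the paper passes to a right-resolving presentation and uses that $b(\ell_a)$ (resp.\ $\tilde{b}(r_a)$) is the lexicographically smallest (resp.\ largest) label of a right-infinite walk in a finite deterministic graph, hence eventually periodic; you instead stay within the follower-set framework, observe that the follower set of the length-$n$ prefix of $\tilde{b}(r_a)$ has supremum $\sigma^n\tilde{b}(r_a)$ (using $b(x)\to\tilde{b}(r_a)$ as $x\uparrow r_a$, which is indeed where the left-/right-continuous pairing enters), and conclude by pigeonhole on the finitely many follower sets of a sofic shift. Your variant buys a uniform argument resting on a single characterization of soficness and avoids the (small but not entirely trivial) lemma that extremal infinite labels in a finite right-resolving graph are eventually periodic; the paper's variant avoids the limit computation identifying the supremum of a follower set. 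Both are complete proofs of the proposition.
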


\begin{proof}
Let $\bar{\mathcal{S}}$ be sofic and $\mathcal{G}$ the corresponding finite graph.
By Theorem~3.3.2 in \cite{LindMarcus95}, we can assume, w.l.o.g., that $\mathcal{G}$ is right-resolving, i.e., that, for every vertex $v$ in $\mathcal{G}$ and any $a \in A$, there exists at most one edge labeled with $a$ starting in~$v$.
(In the language of automata, this means that the automaton is deterministic.)
For every $a \in A$, $b(\ell_a)$ is the lexicographically smallest $T$-admissible sequence starting with~$a$. 
Since $\mathcal{G}$ is finite, the lexicographically smallest label  of a right-infinite walk in $\mathcal{G}$ is eventually periodic, thus $b(\ell_a)$ is eventually periodic.
Similarly, $\tilde{b}(r_a)$ is eventually periodic as the lexicographically largest label of a right-infinite walk in~$\mathcal{G}$.
 
Now, let $b(\ell_a)$ and $\tilde{b}(r_a)$ be eventually periodic for all $a \in A$.
Then (\ref{e:barS}) implies that the collection of all follower sets in $\bar{\mathcal{S}}$ is finite, thus $\bar{\mathcal{S}}$ is sofic by Proposition~3.2.9 in \cite{LindMarcus95}. 
\end{proof}

\section{Natural extensions} \label{s:natext}

\subsection{Geometric realization of the natural extension and periodic expansions} \label{sec:geom-real-natur}

Our goal in this section is to define a measure theoretical natural extension for the class of transformations defined in the previous section, under suitable assumptions on $\beta$ and the digit set. This natural extension will allow us to define a multiple tiling of some Euclidean space. The set-up for this multiple tiling is similar to the one Thurston gave in \cite{Thurston89} for the classical greedy $\beta$-transformation.

Let $\beta > 1$ be a Pisot unit with minimal polynomial $x^d - c_1 x^{d-1} - \cdots - c_d \in \mathbb{Z}[x]$ and $\beta_2, \ldots, \beta_d$ its Galois conjugates. Thus, $|\beta_j| < 1$ and $|c_d| = 1$. Set $\beta_1 = \beta$. Let $M_{\beta}$ be the companion matrix
\[ 
M_{\beta} = \begin{pmatrix}c_1 & c_2 & \cdots & c_{d-1} & c_d \\ 1 & 0 & \cdots & 0 & 0 \\ 0 & 1 & \cdots & 0 & 0 \\ \vdots & \vdots & \ddots & \vdots & \vdots \\ 0 & 0 & \cdots & 1 & 0\end{pmatrix}, 
\]
and let $\mathbf{v}_j = \nu_j (\beta_j^{d-1}, \ldots, \beta_j,1)^t$ with $\nu_j \in \mathbb{C}$, $1 \le j \le d$, be right eigenvectors of $M_{\beta}$ such that $\mathbf{v}_1 + \cdots + \mathbf{v}_d = \mathbf{e}_1 = (1,0,\ldots,0)^t$.
(If $\beta_j \in \mathbb{R}$, then $\mathbf{v}_j \in \mathbb{R}^d$, and if $\beta_k$ is the complex conjugate of $\beta_j$, then the entries of $\mathbf{v}_k$ are the complex conjugates of the entries of $\mathbf{v}_j$.)
Let $H$ be the hyperplane of $\mathbb{R}^d$ which is spanned by the real and imaginary parts of $\mathbf{v}_2, \ldots, \mathbf{v}_d$. Then $H \simeq \mathbb{R}^{d-1}$.

Assume that $A \subset \mathbb{Q}(\beta)$, where $\mathbb{Q}(\beta)$ denotes, as usual, the smallest field containing $\mathbb{Q}$ and~$\beta$.
Let $\Gamma_j:\, \mathbb{Q}(\beta) \to \mathbb{Q}(\beta_j)$, $1 \le j \le d$, be the isomorphism defined by $\Gamma_j(\beta) = \beta_j$.
Then we define
\[ 
\psi(u) = \sum_{k\ge1} u_k \beta^{-k} \mathbf{v}_1 - \sum_{k\le0} \sum_{j=2}^d \Gamma_j(u_k) \beta_j^{-k} \mathbf{v}_j \in \mathbb{R}^d 
\]
for two-sided infinite sequences $u = (u_k)_{k\in\mathbb{Z}} \in A^{\mathbb{Z}}$.
The set $\widehat{X} = \psi(\mathcal{S})$, with $\mathcal{S}$ as defined in~(\ref{e:S}), will be our natural extension domain.

Define the maps $\Phi:\, \mathbb{Q}(\beta) \to H$ and $\Psi:\, \mathbb{Q}(\beta) \to \mathbb{Q}^d$ by
\begin{equation} \label{e:PhiPsi}
\Phi(x) = \sum_{j=2}^d \Gamma_j(x) \mathbf{v}_j, \qquad \Psi(x) = \sum_{j=1}^d \Gamma_j(x) \mathbf{v}_j = x \mathbf{v}_1 + \Phi(x). 
\end{equation}
For left-infinite sequences $w = (w_k)_{k \le 0} \in {\vphantom{A}}^{\omega}\!A$, we set
\begin{equation} \label{q:phiseq}
\varphi(w) = \sum_{k\le0} \Phi(w_k \beta^{-k}) = \sum_{k\le0} M_{\beta}^{-k} \Phi(w_k) \in H.
\end{equation}
Then we have
\[ 
\psi(\cdots u_{-1} u_0 u_1 u_2 \cdots) = (\decdot u_1 u_2 \cdots) \mathbf{v}_1 - \varphi(\cdots u_{-1}u_0). 
\]

The set $\widehat{X} = \psi(\mathcal{S})$ is the disjoint union of the sets $\widehat{X}_a = \psi(\{(u_k)_{k\in\mathbb{Z}} \in \mathcal{S}:\, u_1 = a\})$, $a \in A$. 
Therefore, we can define a transformation $\widehat{T}:\, \widehat{X} \to \widehat{X}$ by 
\[ 
\widehat{T} \mathbf{x} = M_{\beta} \mathbf{x} - \Psi(a) \quad\mbox{if}\ \mathbf{x} \in \widehat{X}_a. 
\] 
We have indeed $\widehat{T}\widehat{X}=\widehat{X}$ since, if we denote by $\sigma$ the left-shift, then $\sigma\mathcal{S} = \mathcal{S}$ and
\begin{align*}
\psi(\sigma u) & = (\decdot u_2 u_3 \cdots) \mathbf{v}_1 - \varphi(\cdots u_0 u_1) \\
& = M_\beta (\decdot u_1 u_2 \cdots) \mathbf{v}_1 - u_1 \mathbf{v}_1 - M_{\beta} \varphi(\cdots u_{-1} u_0) - \Phi(u_1) \\
& = M_\beta \psi(u) - \Psi(u_1) \\
& = \widehat{T} \psi(u) 
\end{align*}
for every $u = (u_k)_{k\in\mathbb{Z}} \in \mathcal{S}$.
Note that $\Psi(a) = a\, \mathbf{e}_1$ if $a \in \mathbb{Q}$. 

Define the projection $\pi_1:\, \mathbb{R}^d \to \mathbb{R}$ by $\pi_1(\mathbf{x}) = x$ if $\mathbf{x} = x \mathbf{v}_1 + \mathbf{y}$ for some $\mathbf{y} \in H$, and let $\pi:\, \widehat{X} \to \mathbb{R}$ be the restriction of $\pi_1$ to $\widehat{X}$.
Then we have 
\[ 
\pi(\widehat{T} \mathbf{x}) = \beta \pi(\mathbf{x}) - a = T \pi(\mathbf{x}) \quad\mbox{for all}\ \mathbf{x} \in \widehat{X}_a,
\]
thus $\pi \circ \widehat{T} = T \circ \pi$.

Next we show that $\lambda^d(\widehat{X}) > 0$, where $\lambda^d$ denotes the $d$-dimensional Lebesgue measure.
Since $\mathcal{S}$ is not closed, we consider $\widehat{Y} = \psi(\bar{\mathcal{S}})$. 

\begin{lemma} \label{l:compact}
The set $\widehat{Y}$ is compact, and $\lambda^d(\widehat{X}) = \lambda^d(\widehat{Y})$.
\end{lemma}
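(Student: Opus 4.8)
The plan is to prove two things: that $\widehat{Y} = \psi(\bar{\mathcal{S}})$ is compact, and that $\lambda^d(\widehat{X}) = \lambda^d(\widehat{Y})$. For compactness, I would first observe that $\bar{\mathcal{S}}$ is a closed subset of the compact space $A^{\mathbb{Z}}$ (equipped with the product topology), hence compact. Then it suffices to check that $\psi$ is continuous on $A^{\mathbb{Z}}$. This is where one must be slightly careful: the series $\sum_{k\ge1} u_k \beta^{-k}\mathbf{v}_1$ converges uniformly in $u$ because $\beta>1$ and $A$ is finite (bounded), and the series $\sum_{k\le0}\sum_{j=2}^d \Gamma_j(u_k)\beta_j^{-k}\mathbf{v}_j$ converges uniformly because $|\beta_j|<1$ for $2\le j\le d$, again using that $A$ is finite so that $\{\Gamma_j(a) : a\in A\}$ is a finite (hence bounded) set. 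A uniformly convergent series of continuous functions is continuous, so $\psi$ is continuous and $\widehat{Y} = \psi(\bar{\mathcal{S}})$ is compact, hence in particular Lebesgue measurable with finite measure.

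For the measure equality, I would use that $\widehat{X} = \psi(\mathcal{S}) \subseteq \psi(\bar{\mathcal{S}}) = \widehat{Y}$, so $\lambda^d(\widehat{X}) \le \lambda^d(\widehat{Y})$; the content is the reverse inequality, i.e.\ that $\widehat{Y}\setminus\widehat{X}$ is Lebesgue-null. The natural approach is to show $\widehat{Y} = \overline{\widehat{X}}$ together with the fact that $\widehat{Y}\setminus\widehat{X}$ lies in a countable union of images of ``boundary'' sequences of lower-dimensional nature. Concretely, a sequence $u\in\bar{\mathcal{S}}\setminus\mathcal{S}$ is, by the characterizations in~(\ref{e:barS}), one for which some right-tail $u_k u_{k+1}\cdots$ equals $\tilde{b}(r_{u_k})$ exactly (the extremal case of the lexicographic inequality that is strict in $\mathcal{S}$). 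I would argue that for each fixed position $k$ and each $a\in A$, the set of $u\in\bar{\mathcal{S}}$ with $u_k u_{k+1}\cdots = \tilde{b}(r_a)$ has image under $\psi$ contained in a set of the form $\Phi(\text{something}) + \{\text{fixed point}\}$ shifted appropriately — more precisely, fixing the right tail fixes the $\mathbf{v}_1$-coordinate $\decdot u_k u_{k+1}\cdots$, so $\psi$ of this set lies in a single translate of $H\simeq\mathbb{R}^{d-1}$, which has $\lambda^d$-measure zero. Taking the countable union over $k\in\mathbb{Z}$ and over finitely many $a\in A$ (and similarly handling the $b(\ell_a)$ boundary, although that inequality is non-strict in both $\mathcal{S}$ and $\bar{\mathcal{S}}$, so only the $\tilde b$ side contributes the discrepancy), we conclude $\lambda^d(\widehat{Y}\setminus\widehat{X}) = 0$.

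The main obstacle I anticipate is the second part: one must make precise the claim that fixing a right-tail of the sequence pins down the $\mathbf{v}_1$-component and hence confines $\psi(u)$ to a measure-zero affine subspace, and then verify that these are genuinely \emph{all} the sequences in $\bar{\mathcal{S}}\setminus\mathcal{S}$ (not merely a subset). The decomposition $\psi(u) = (\decdot u_1 u_2\cdots)\mathbf{v}_1 - \varphi(\cdots u_{-1}u_0)$ makes the first point essentially immediate once the tail is fixed, so the real work is the bookkeeping: a two-sided sequence fails to lie in $\mathcal{S}$ iff for some $k$ the admissibility of $u_k u_{k+1}\cdots$ holds only in the closed sense, i.e.\ $u_k u_{k+1}\cdots = \tilde{b}(r_{u_k})$; one then notes that shifting by $\sigma$ intertwines $\psi$ with $\widehat{T}$ (as shown just above the lemma), so it is enough to bound the measure of the ``bad set at position $1$'' and spread it over all integer positions by countable additivity. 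I would also want to remark that $\widehat{Y}$ being compact and $\widehat{X}$ being shift-invariant and dense in $\widehat{Y}$ gives $\widehat{Y} = \overline{\widehat{X}}$, which is the cleanest way to organize the argument.
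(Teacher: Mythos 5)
Your proposal is correct and follows essentially the same route as the paper: compactness of $\widehat{Y}$ via $\bar{\mathcal{S}}$ closed in the compact space $A^{\mathbb{Z}}$ and continuity of $\psi$, and $\lambda^d(\widehat{Y}\setminus\widehat{X})=0$ because sequences in $\bar{\mathcal{S}}\setminus\mathcal{S}$ have some tail $\decdot u_k u_{k+1}\cdots$ equal to one of finitely many right boundary points, so the right-infinite parts $u_1u_2\cdots$ form a countable set and $\widehat{Y}\setminus\widehat{X}$ sits in countably many translates of $H$. The only caveat is that your appeal to~(\ref{e:barS}) implicitly assumes the hypotheses of Theorem~\ref{t:admissible} (each $X_a$ a single interval), whereas the lemma is stated for finite unions of intervals; the paper phrases the same idea directly in terms of $\decdot u_ku_{k+1}\cdots$ lying on the (right) boundary of $X_{u_k}$, which covers the general case.
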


\begin{proof}
The set $\bar{\mathcal{S}}$ is a closed subset of the compact metric space $A^{\mathbb{Z}}$, hence $\bar{\mathcal{S}}$ is compact. Since $\psi:\, \bar{\mathcal{S}} \to \mathbb{R}^d$ is a continuous function, $\widehat{Y} = \psi(\bar{\mathcal{S}})$ is compact and thus Lebesgue measurable.

A sequence $(u_k)_{k\in\mathbb{Z}} \in \bar{\mathcal{S}}$ is not in $\mathcal{S}$ if and only if there exists some $k \in \mathbb{Z}$ such that $u_k u_{k+1} \cdots$ is the limit of $T$-admissible sequences and $u_k u_{k+1} \cdots$ is not $T$-admissible, i.e., there exists $k \in \mathbb{Z}$ such that $\decdot u_k u_{k+1} \cdots$ is on the (right) boundary of $X_{u_k}$.
Since there are only finitely many such points, the set of sequences $u_1 u_2 \cdots$ with this property is countable, which implies that $\pi_1(\widehat{Y} \setminus \widehat{X})$ is countable, thus $\lambda^d(\widehat{Y} \setminus \widehat{X}) = 0$, and $\widehat{X}$ is a Lebesgue measurable set with $\lambda^d(\widehat{X}) = \lambda^d(\widehat{Y})$.
\end{proof}

There are several ways to show that $\bigcup_{\mathbf{x}\in q^{-1}\mathbb{Z}^d} (\mathbf{x} + \widehat{Y}) = \mathbb{R}^d$ when $A \subset q^{-1} \mathbb{Z}[\beta]$, $q \in \mathbb{Z}$, and thus that $\lambda(\widehat{Y}) \ge q^{-d}$.
We use the following two theorems, which are interesting in their own right.
The first theorem generalizes a result by Ito and Rao (\cite{ItoRao05}).

\begin{theorem} \label{t:purelyperiodic}
Let $T$ be a right-continuous $\beta$-transformation as in Definition~\ref{d:trfm} with a Pisot unit~$\beta$ and $A \subset \mathbb{Q}(\beta)$. 
Then the $T$-expansion of $x \in X$ is purely periodic if and only if $x \in \mathbb{Q}(\beta)$ and $\Psi(x) \in \widehat{X}$.
\end{theorem}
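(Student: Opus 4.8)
The plan is to prove the two implications separately, using throughout the three compatibility properties already at hand: $\pi\circ\widehat T = T\circ\pi$; $\psi\circ\sigma = \widehat T\circ\psi$ on $\mathcal{S}$ (hence $\psi\circ\sigma^k = \widehat T^k\circ\psi$ for $k\ge0$); and the fact that $\Psi$ intertwines multiplication by powers of $\beta$ with powers of $M_\beta$, i.e.\ $\Psi(\beta^m y) = M_\beta^m\Psi(y)$ for all $m\in\mathbb{Z}$, which follows from $M_\beta\mathbf{v}_j = \beta_j\mathbf{v}_j$, $\Gamma_j(\beta)=\beta_j$, and the $\mathbb{Q}$-linearity of $\Psi$. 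For the ``only if'' direction, suppose $b(x)$ is purely periodic; equivalently $T^p x = x$ for some $p\ge1$. Then (\ref{q:xtonT}) with $n=p$ gives $(\beta^p-1)\,x = \sum_{i=1}^p b_i(x)\,\beta^{p-i}$, and since $b_i(x)\in A\subset\mathbb{Q}(\beta)$ this shows $x\in\mathbb{Q}(\beta)$. Let $u=(u_k)_{k\in\mathbb{Z}}$ be the $p$-periodic two-sided extension of $b(x)$; every right tail $u_k u_{k+1}\cdots$ is a cyclic shift of $b(x)$, hence equals $b(T^j x)$ for a suitable $j$ and is $T$-admissible, so $u\in\mathcal{S}$ and $\psi(u)\in\widehat X$. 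It remains to identify $\psi(u)$ with $\Psi(x)$: iterating $\psi(\sigma u)=M_\beta\psi(u)-\Psi(u_1)$ gives $(M_\beta^p-I)\psi(u)=\sum_{i=1}^p M_\beta^{p-i}\Psi(u_i)$, whereas applying $\Psi$ to $(\beta^p-1)x=\sum_{i=1}^p u_i\beta^{p-i}$ gives $(M_\beta^p-I)\Psi(x)=\sum_{i=1}^p M_\beta^{p-i}\Psi(u_i)$; since $\beta_j^p\neq1$ for all $j$ (as $\beta>1$ and $|\beta_j|<1$ for $j\ge2$), the matrix $M_\beta^p-I$ is invertible, whence $\Psi(x)=\psi(u)\in\widehat X$.

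For the converse, assume $x\in\mathbb{Q}(\beta)\cap X$ and $\Psi(x)\in\widehat X$, and fix $u\in\mathcal{S}$ with $\psi(u)=\Psi(x)$. Applying $\pi_1$ gives $\decdot u_1 u_2\cdots = x$, and since $u_1u_2\cdots$ is $T$-admissible with value $x$ it must equal $b(x)$; in particular $\Psi(x)\in\widehat X_{b_1(x)}$, so $\widehat T\Psi(x)=M_\beta\Psi(x)-\Psi(b_1(x))=\Psi(Tx)\in\widehat X$, and inductively $\psi(\sigma^n u)=\widehat T^n\Psi(x)=\Psi(T^n x)$ for all $n\ge0$. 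In the opposite direction, $\widehat T\psi(\sigma^{-1}u)=\psi(u)$ together with $\psi(\sigma^{-1}u)\in\widehat X_{u_0}$ yields $\psi(\sigma^{-1}u)=M_\beta^{-1}\Psi(x+u_0)=\Psi(y_{-1})$, where $y_{-1}=\decdot u_0 u_1\cdots$ satisfies $T y_{-1}=x$ and lies in $X$; iterating, $\psi(\sigma^{-m}u)=\Psi(y_{-m})$ with $y_{-m}=\decdot u_{-m+1}u_{-m+2}\cdots\in X$ for all $m\ge0$. Thus every $\psi(\sigma^n u)$, $n\in\mathbb{Z}$, equals $\Psi(y_n)$ for a bi-infinite $T$-orbit $(y_n)_{n\in\mathbb{Z}}$ in $X$ with $y_0=x$ and $T y_n = y_{n+1}$.

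Next I would show that $F=\{\psi(\sigma^n u):\,n\in\mathbb{Z}\}$ is finite and then conclude. Boundedness: $\pi_1(\psi(\sigma^n u))=y_n$ lies in the bounded set $X$, and the $H$-component of $\psi(\sigma^n u)$ is $-\varphi(\cdots u_{n-1}u_n)=-\sum_{k\le0}M_\beta^{-k}\Phi(u_{n+k})$, which is bounded uniformly in $n$ because $M_\beta$ acts on $H$ with spectral radius $\max_{2\le j\le d}|\beta_j|<1$ and $\{\Phi(a):\,a\in A\}$ is finite. Lattice membership: choose $q\in\mathbb{Z}_{>0}$ with $qx\in\mathbb{Z}[\beta]$ and $qa\in\mathbb{Z}[\beta]$ for all $a\in A$; since $\beta$ is a Pisot \emph{unit} we have $\beta^{-1}\in\mathbb{Z}[\beta]$, so the recursions $y_{n+1}=\beta y_n-u_{n+1}$ (for $n\ge0$) and $y_n=\beta^{-1}(y_{n+1}+u_{n+1})$ (for $n<0$) give $q y_n\in\mathbb{Z}[\beta]$ for all $n\in\mathbb{Z}$, and since $\det M_\beta=\pm1$ we have $\Psi(\mathbb{Z}[\beta])\subseteq\mathbb{Z}^d$, so $\psi(\sigma^n u)=\Psi(y_n)\in q^{-1}\mathbb{Z}^d$. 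A bounded subset of the lattice $q^{-1}\mathbb{Z}^d$ is finite, hence $F$ is finite. By the pigeonhole principle applied to $(\psi(\sigma^{-m}u))_{m\ge0}$, there are $0\le a<b$ with $\psi(\sigma^{-a}u)=\psi(\sigma^{-b}u)$; applying $\widehat T^a$ gives $\psi(u)=\psi(\sigma^{-(b-a)}u)$, and then applying $\widehat T^{b-a}$ gives $\psi(\sigma^{b-a}u)=\psi(u)$, i.e.\ $\Psi(T^{b-a}x)=\Psi(x)$, hence $T^{b-a}x=x$ with $b-a\ge1$; thus $b(x)$ is purely periodic.

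The hard part is obtaining \emph{pure} rather than merely eventual periodicity. The forward orbit $\{\Psi(T^n x):\,n\ge0\}$ is bounded and contained in a lattice, hence finite, but by itself this only produces $\Psi(T^{m+p}x)=\Psi(T^m x)$ for some $m\ge0$, and one cannot descend to $m=0$ directly because $\widehat T$ need not be injective. The resolution is that $u\in\mathcal{S}$ supplies a genuinely \emph{bi-infinite} $\widehat T$-orbit lying in the finite set $F$, and a bi-infinite orbit in a finite set is periodic through every one of its points, which pins the period to $\Psi(x)$ itself. The accompanying technical work is the uniform bound on the $H$-components (contraction of $M_\beta$ on $H$) and the common lattice for all $\psi(\sigma^n u)$, where the unit hypothesis is used twice: as $\beta^{-1}\in\mathbb{Z}[\beta]$ on the negative indices, and as $\det M_\beta=\pm1$ to get $\Psi(\mathbb{Z}[\beta])\subseteq\mathbb{Z}^d$.
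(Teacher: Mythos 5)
Your proof is correct and follows essentially the same route as the paper's: the forward direction realizes $\Psi(x)$ as $\psi$ of the periodic bi-infinite extension of $b(x)$, and the converse builds a backward $\widehat{T}$-orbit of $\Psi(x)$ inside the bounded set $\widehat{X}\cap q^{-1}\mathbb{Z}^d$, concludes finiteness, and uses that a bi-infinite orbit in a finite set is periodic through every one of its points. The only cosmetic differences are that you identify $\psi(u)=\Psi(x)$ via invertibility of $M_\beta^p-I$ where the paper computes $\varphi\big({\vphantom{(}}^{\omega}(b_1\cdots b_n)\big)=-\Phi(x)$ directly by summing a geometric series, and that you obtain the preimages along the explicit two-sided sequence $u$ where the paper invokes surjectivity of $\widehat{T}$.
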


In the proof of this theorem and later, $(b_1 \cdots b_n)^{\omega}$ denotes a block of digits repeated to the right, ${\vphantom{()}}^{\omega}(b_1 \cdots b_n)$ denotes a block of digits repeated to the left.
For a left-infinite sequence $w = (w_k)_{k\le0} \in {\vphantom{A}}^{\omega}\!A$ and a right-infinite sequence $v = (v_k)_{k\ge1} \in A^{\omega}$, let $w \cdot v$ denote the sequence $(u_k)_{k\in\mathbb{Z}}$ defined by $u_k = w_k$ for all $k \le 0$ and $u_k = v_k$ for all $k \ge 1$. 

\begin{proof}
Assume that $b(x) = (b_1 \cdots b_n)^{\omega}$ for some $n \ge 1$. 
Then ${\vphantom{(}}^{\omega}(b_1 \cdots b_n) \cdot (b_1 \cdots b_n)^{\omega} \in \mathcal{S}$ and 
\begin{align*}
\varphi\big({\vphantom{(}}^{\omega}(b_1 \cdots b_n)\big) & = \sum_{j=2}^d \big(\Gamma_j(b_1) \beta_j^{n-1} + \cdots + \Gamma_j(b_n)\big) (1 + \beta_j^n + \beta_j^{2n} + \cdots) \mathbf{v}_j \\
& = \sum_{j=2}^d \Gamma_j\bigg(\frac{b_1\beta^{n-1}+\cdots+b_n}{1-\beta^n}\bigg) \mathbf{v}_j = -\sum_{j=2}^d \Gamma_j(x) \mathbf{v}_j = -\Phi(x),
\end{align*}
thus
\[
\Psi(x) = x \mathbf{v}_1 + \Phi(x) = \psi\big({\vphantom{(}}^{\omega}(b_1 \cdots b_n) \cdot (b_1 \cdots b_n)^{\omega}\big) \in \psi(\mathcal{S}) = \widehat{X}.
\]

Now, take an $x \in \mathbb{Q}(\beta)$ with $\Psi(x) \in \widehat{X}$, and set $\mathbf{x}_0 = \Psi(x)$.
Since $\widehat{T}$ is surjective, for each $k \ge 0$ there exists an $\mathbf{x}_{k+1} \in \widehat{X}$ with $\widehat{T} \mathbf{x}_{k+1} = \mathbf{x}_k$. Let $q \in \mathbb{Z}$ be such that $\Psi(x)$ and $\Psi(a)$ are in $q^{-1} \mathbb{Z}^d$ for all $a \in A$.
Since $|\!\det M_\beta| = |c_d| = 1$, $M_{\beta}^{-1}$ is an integer matrix, and we obtain $\mathbf{x}_k \in q^{-1} \mathbb{Z}^d$ for all $k \ge 0$.
The set $\widehat{X}$ is bounded, hence we must have $\mathbf{x}_{k+n} = \mathbf{x}_k$ for some $k \ge 0$, $n \ge 1$.
This yields $\widehat{T}^n \mathbf{x}_{k+n} = \mathbf{x}_{k+n}$, which implies $\widehat{T}^n \mathbf{x}_0 = \mathbf{x}_0$ because $\mathbf{x}_0 = \widehat{T}^{k+n} \mathbf{x}_{k+n}$. 
For every $k \ge 1$, $b_k(x)$~is determined by $T^{k-1} x = T^{k-1} \pi(\mathbf{x}_0) = \pi(\widehat{T}^{k-1} \mathbf{x}_0)$, hence $b(x)$ is purely periodic. 
\end{proof}

Note that this theorem gives a nice characterization of rational numbers with purely periodic $T$-expansions, since we have $\Gamma_j(x) = x$ for $x \in \mathbb{Q}$ and thus $\Psi(x) = x \mathbf{e}_1 = (x,0,\ldots,0)^t$.

The following theorem was proved by Frank and Robinson (\cite{FrankRobinson08}) for a slightly smaller class of transformations, and generalizes the result by Bertrand (\cite{Bertrand77}) and Schmidt (\cite{Schmidt80}) for the classical $\beta$-transformation.
Note that we do not need here that $|\!\det M_\beta| = 1$ since we are only looking at the forward orbit of $x$ under~$T$.

\begin{theorem} \label{t:frankrobinson}
Let $T$ be a right-continuous $\beta$-transformation as in Definition~\ref{d:trfm} with a Pisot number~$\beta$ and $A \subset \mathbb{Q}(\beta)$. Then the $T$-expansion of $x \in X$ is eventually periodic if and only if $x \in \mathbb{Q}(\beta)$.
\end{theorem}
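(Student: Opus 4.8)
The plan is to follow the classical scheme of Bertrand~\cite{Bertrand77} and Schmidt~\cite{Schmidt80}, the only new point being that a common denominator has to be introduced to absorb the fact that the digit set lies in $\mathbb{Q}(\beta)$ rather than in $\mathbb{Z}$. One implication is immediate: if $b(x)$ is eventually periodic, then $x = \decdot b(x)$ is the value of an eventually periodic sequence with entries in the field $\mathbb{Q}(\beta)$, i.e.\ a finite head plus a geometric series for the tail, so $x \in \mathbb{Q}(\beta)$.

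For the converse, take $x \in \mathbb{Q}(\beta) \cap X$ and write $b(x) = b_1 b_2 \cdots$. Since $\mathbb{Q}(\beta)$ is spanned over $\mathbb{Q}$ by $1, \beta, \dots, \beta^{d-1}$ and $A$ is finite, there is a $q \in \mathbb{Z}_{>0}$ with $q x \in \mathbb{Z}[\beta]$ and $q a \in \mathbb{Z}[\beta]$ for all $a \in A$. As $\beta$ is an algebraic integer, $\beta\, \mathbb{Z}[\beta] \subseteq \mathbb{Z}[\beta]$, so rewriting~(\ref{q:xtonT}) as $T^n x = \beta^n x - \sum_{k=1}^n b_k\, \beta^{n-k}$ and multiplying by $q$ gives $q\, T^n x \in \mathbb{Z}[\beta]$ for every $n \ge 0$; note that only the integrality of $\beta$ is used here, not that it is a unit. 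The key step is to show that the orbit $\{q\, T^n x : n \ge 0\}$ is a \emph{finite} subset of $\mathbb{Z}[\beta]$. Its first coordinate $q\, T^n x$ stays bounded because $X$ is bounded; and for $2 \le j \le d$, applying $\Gamma_j$ to the above expression and using $|\beta_j| < 1$ yields
\[ |\Gamma_j(q\, T^n x)| \le q\, |\beta_j|^n\, |\Gamma_j(x)| + q \max_{a \in A} |\Gamma_j(a)| \sum_{k=1}^n |\beta_j|^{n-k} \le q\, |\Gamma_j(x)| + \frac{q \max_{a \in A} |\Gamma_j(a)|}{1 - |\beta_j|}, \]
which is bounded independently of $n$. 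Writing $q\, T^n x = \sum_{i=0}^{d-1} c_{n,i}\, \beta^i$ with $c_{n,i} \in \mathbb{Z}$, the vector $(c_{n,0}, \dots, c_{n,d-1})^t$ equals $V^{-1} (\Gamma_1(q\, T^n x), \dots, \Gamma_d(q\, T^n x))^t$, where $V = (\beta_j^i)_{1 \le j \le d,\, 0 \le i \le d-1}$ is an invertible Vandermonde matrix; since the right-hand side stays in a fixed bounded region while the left-hand side has integer entries, only finitely many such vectors occur, so $\{q\, T^n x : n \ge 0\}$, and therefore $\{T^n x : n \ge 0\}$, is finite.

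To conclude, combine finiteness of the orbit with the fact that $T$ acts deterministically on expansions: pick $m < n$ with $T^m x = T^n x$; then by Definition~\ref{d:Texp} we have $b_{m+k}(x) = b_k(T^m x) = b_k(T^n x) = b_{n+k}(x)$ for all $k \ge 1$, so $b(x)$ is eventually periodic with preperiod at most $m$ and period $n - m$. The only real work is the middle paragraph — choosing $q$ and verifying that the conjugates $\Gamma_j(q\, T^n x)$ are uniformly bounded; everything else repeats the Pisot argument of Bertrand and Schmidt verbatim, and, as already remarked, the unit hypothesis of Theorem~\ref{t:purelyperiodic} is not needed here.
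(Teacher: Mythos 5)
Your proof is correct and follows essentially the same route as the paper: the paper runs the identical Pisot argument by tracking the orbit of $\Psi(x)$ under the extended map $\widehat{T}$ inside the lattice $q^{-1}\mathbb{Z}^d$, where boundedness comes from $M_\beta$ contracting on $H$, which is just a repackaging of your direct bound on the Galois conjugates $\Gamma_j(q\,T^n x)$ combined with the discreteness of integer coefficient vectors. Your observation that the unit hypothesis is not needed also matches the paper's explicit remark to that effect.
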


\begin{proof}
If $b(x) = b_1 \cdots b_m (b_{m+1} \cdots b_{m+n})^{\omega}$ is eventually periodic, then
\[ 
x = \frac{b_1}{\beta} + \cdots + \frac{b_m}{\beta^m} + \frac{1}{\beta^n-1} \bigg(\frac{b_{m+1}}{\beta^{m-n+1}} + \frac{b_{m+2}}{\beta^{m-n+2}} + \cdots + \frac{b_{m+n}}{\beta^m}\bigg), 
\]
which is clearly in $\mathbb{Q}(\beta)$.

For the other implication, let $x \in \mathbb{Q}(\beta) \cap X$.
Extend the transformation $\widehat{T}$ to $X \mathbf{v}_1 + H$ by setting $\widehat{T} \mathbf{x} = M_{\beta} \mathbf{x} - \Psi(a)$ if $\pi_1(\mathbf{x}) \in X_a$.
Let $q \in \mathbb{Z}$ be such that $\Psi(x)$ and $\Psi(a)$ are in $q^{-1} \mathbb{Z}^d$ for all $a \in A$. 
Then we have $\widehat{T}^k \Psi(x) \in q^{-1} \mathbb{Z}^d$ for each $k \ge 0$. 
Furthermore, since $M_{\beta}$ is contracting on $H$ and $\widehat{T}^k \Psi(x) \in X \mathbf{v}_1 + H$, the set $\{\widehat{T}^k \Psi(x):\, k \ge 0\}$ is bounded, hence finite, thus $(\widehat{T}^k \Psi(x))_{k\ge0}$ is eventually periodic. 
Since $b_k(x)$ is determined by $T^{k-1} x = T^{k-1} \pi_1(\Psi(x)) = \pi_1(\widehat{T}^{k-1} \Psi(x))$, $b(x)$~is eventually periodic.
\end{proof}

We assume now again that $\beta$ is a Pisot unit.

\begin{lemma} \label{l:zd}
The map $\Psi:\, \mathbb{Q}(\beta) \to \mathbb{Q}^d$ is bijective, $\Psi^{-1}(\mathbf{x}) = \pi_1(\mathbf{x})$ for all $\mathbf{x} \in \mathbb{Q}^d$, and $\Psi(\mathbb{Z}[\beta]) = \mathbb{Z}^d$.
\end{lemma}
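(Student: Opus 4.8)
The plan is to establish the three claims in the order they are stated, using the structure of $\Psi$ as a sum over all Galois embeddings and the fact that $\beta$ is a Pisot \emph{unit}. First I would recall that, by definition, $\Psi(x) = \sum_{j=1}^d \Gamma_j(x)\,\mathbf{v}_j$ with $\mathbf{v}_j = \nu_j(\beta_j^{d-1},\ldots,\beta_j,1)^t$, and that $\mathbf{v}_1+\cdots+\mathbf{v}_d = \mathbf{e}_1$. For injectivity: if $\Psi(x)=\Psi(y)$ then $\sum_j \Gamma_j(x-y)\mathbf{v}_j = \mathbf{0}$; since the $\mathbf{v}_j$ are eigenvectors of $M_\beta$ for the distinct eigenvalues $\beta_j$, they are linearly independent over $\mathbb{C}$ (this is essentially a Vandermonde argument, as each $\nu_j\neq0$), so $\Gamma_j(x-y)=0$ for every $j$, in particular $x-y=\Gamma_1(x-y)=0$. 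Hence $\Psi$ is injective, and the same computation shows $\pi_1(\Psi(x)) = \Gamma_1(x) = x$, i.e.\ $\Psi^{-1}(\mathbf{x}) = \pi_1(\mathbf{x})$ on the image.

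Next I would pin down the image. The key identity is $\Psi(\beta x) = M_\beta\,\Psi(x)$: indeed $\Psi(\beta x) = \sum_j \Gamma_j(\beta x)\mathbf{v}_j = \sum_j \beta_j\,\Gamma_j(x)\mathbf{v}_j = M_\beta\sum_j\Gamma_j(x)\mathbf{v}_j = M_\beta\Psi(x)$, using $M_\beta\mathbf{v}_j=\beta_j\mathbf{v}_j$. Combined with $\Psi(1) = \sum_j\mathbf{v}_j = \mathbf{e}_1$ and $\mathbb{Z}$-linearity of $\Psi$, this gives $\Psi(\beta^k) = M_\beta^k\mathbf{e}_1$ for all $k\ge0$. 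A direct check (or the Cayley--Hamilton / companion-matrix structure) shows that $\{\mathbf{e}_1, M_\beta\mathbf{e}_1,\ldots,M_\beta^{d-1}\mathbf{e}_1\}$ equals $\{\mathbf{e}_1,\mathbf{e}_2+c_1\mathbf{e}_1,\ldots\}$ and, more usefully, that $M_\beta^{-1}$ is the integer matrix sending $\mathbf{e}_i\mapsto\mathbf{e}_{i+1}$ for $i<d$ and $\mathbf{e}_d \mapsto c_d^{-1}(\mathbf{e}_1 - c_1\mathbf{e}_2 - \cdots)$, which is integral precisely because $|c_d|=1$, i.e.\ because $\beta$ is a unit. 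Since $\mathbb{Z}[\beta] = \{\sum_{k=0}^{d-1} m_k\beta^k : m_k\in\mathbb{Z}\}$ as a $\mathbb{Z}$-module (the $\beta^k$, $0\le k<d$, form a $\mathbb{Z}$-basis because the minimal polynomial is monic of degree $d$), we get $\Psi(\mathbb{Z}[\beta]) = \mathbb{Z}\text{-span}\{\mathbf{e}_1, M_\beta\mathbf{e}_1,\ldots,M_\beta^{d-1}\mathbf{e}_1\}$, and this $\mathbb{Z}$-module equals all of $\mathbb{Z}^d$ because the change-of-basis matrix $(\mathbf{e}_1\mid M_\beta\mathbf{e}_1\mid\cdots\mid M_\beta^{d-1}\mathbf{e}_1)$ is unimodular --- its columns are $\mathbf{e}_1, \mathbf{e}_2+c_1\mathbf{e}_1,\ldots$, an upper-unitriangular system up to reordering rows, hence determinant $\pm1$.

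Finally, for surjectivity of $\Psi:\mathbb{Q}(\beta)\to\mathbb{Q}^d$: every element of $\mathbb{Q}(\beta)$ can be written $\beta^{-k}z$ with $z\in\mathbb{Z}[\beta]$, $k\ge0$, so $\Psi(\mathbb{Q}(\beta)) = \bigcup_{k\ge0} M_\beta^{-k}\,\Psi(\mathbb{Z}[\beta]) = \bigcup_{k\ge0} M_\beta^{-k}\mathbb{Z}^d = \mathbb{Z}[\frac1{c_d}][\ldots]$; but since $M_\beta^{-1}$ is an integer matrix with $|\!\det|=1$, it is in $GL_d(\mathbb{Z})$, so $M_\beta^{-k}\mathbb{Z}^d = \mathbb{Z}^d$ for every $k$, giving only $\mathbb{Z}^d$ --- which is not yet all of $\mathbb{Q}^d$. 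So I would instead argue surjectivity onto $\mathbb{Q}^d$ using $\mathbb{Q}$-linearity: $\Psi$ extends to a $\mathbb{Q}$-linear map $\mathbb{Q}(\beta)\to\mathbb{Q}^d$ between $d$-dimensional $\mathbb{Q}$-vector spaces (note $\Psi(\mathbb{Q}(\beta))\subset\mathbb{Q}^d$ since $\sum_j\Gamma_j(x)\mathbf{v}_j$ is fixed by every Galois automorphism and hence has rational coordinates), and it is injective by the first paragraph, hence bijective by dimension count. The main obstacle is keeping the roles of ``unit'' ($|c_d|=1$, needed for $M_\beta^{-1}\in M_d(\mathbb{Z})$ and hence $\Psi(\mathbb{Z}[\beta])=\mathbb{Z}^d$) and ``$\mathbb{Q}$-linear dimension count'' (needed for surjectivity onto $\mathbb{Q}^d$) clearly separated, and verifying that the matrix $(M_\beta^k\mathbf{e}_1)_{k=0}^{d-1}$ is unimodular; both are short once set up carefully.
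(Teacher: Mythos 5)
Your proof is correct and follows essentially the same route as the paper: both rest on the identity $\Psi(\beta^k)=M_\beta^k\mathbf{e}_1$ and the fact that $\mathbf{e}_1, M_\beta\mathbf{e}_1,\ldots,M_\beta^{d-1}\mathbf{e}_1$ form a (unitriangular, hence unimodular) basis of $\mathbb{Z}^d$, the paper merely packaging injectivity and surjectivity together via the unique representations $x=\sum_{k=0}^{d-1}z_k\beta^k$ and $\mathbf{x}=\sum_{k=0}^{d-1}z_kM_\beta^k\mathbf{e}_1$ instead of your separate eigenvector-independence and dimension-count arguments. One small correction to your commentary (not to the proof itself): the unit hypothesis $|c_d|=1$ is not actually needed for $\Psi(\mathbb{Z}[\beta])=\mathbb{Z}^d$ --- your own unimodularity argument already yields it for any algebraic integer $\beta$ of degree $d$ --- so the role you assign to $M_\beta^{-1}\in M_d(\mathbb{Z})$ in that step is spurious; the integrality of $M_\beta^{-1}$ is used elsewhere in the paper (e.g.\ in the proof of Theorem~\ref{t:purelyperiodic}), not here.
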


\begin{proof}
It can be easily seen from the structure of $M_{\beta}$ that every $\mathbf{x} \in \mathbb{Q}^d$ can be written in a unique way as $\mathbf{x} = \sum_{k=0}^{d-1} z_k M_{\beta}^k \mathbf{e}_1$ with $z_k \in \mathbb{Q}$.
Since $M_\beta^k \mathbf{e}_1 = M_\beta^k (\mathbf{v}_1 + \cdots + \mathbf{v}_d) = \beta_1^k \mathbf{v}_1 + \cdots + \beta_d^k \mathbf{v}_d$, we obtain $\mathbf{x} = \Psi(\sum_{k=0}^{d-1} z_k \beta^k)$.
Every $x \in \mathbb{Z}[\beta]$ can be written in a unique way as $x = \sum_{k=0}^{d-1} z_k \beta^k$, thus $\Psi$ is bijective.
Since $\Psi(x) = x \mathbf{v}_1 + \Phi(x)$ with $\Phi(x) \in H$, we obtain that $\Psi^{-1}(\mathbf{x}) = \pi_1(\mathbf{x})$.
If $\mathbf{x} \in \mathbb{Z}^d$ and $x \in \mathbb{Z}[\beta]$, respectively, then we have $z_k \in \mathbb{Z}$ in the above decomposition. 
\end{proof}

\begin{lemma} \label{l:positivemeasure}
If $A \subset q^{-1} \mathbb{Z}[\beta]$, $q \in \mathbb{Z}$, then $\bigcup_{\mathbf{x}\in q^{-1}\mathbb{Z}^d} (\mathbf{x} + \widehat{Y}) = \mathbb{R}^d$, thus $\lambda^d(\widehat{Y}) \ge q^{-d}$.
\end{lemma}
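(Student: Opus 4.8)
The plan is to set $\Lambda = q^{-1}\mathbb{Z}^d$ and $U = \bigcup_{\mathbf{p}\in\Lambda}(\mathbf{p}+\widehat{Y})$, and to prove the stronger statement $U = \mathbb{R}^d$; the bound $\lambda^d(\widehat{Y}) \ge q^{-d}$ then follows by comparing $\lambda^d(\widehat{Y})$ with the sum of the masses it puts on the cells of the tiling of $\mathbb{R}^d$ by $\Lambda$-translates of the fundamental domain $[0,1/q)^d$ of $\Lambda$. Note first that $\widehat{Y}$ is compact by Lemma~\ref{l:compact}, so $U$ is a closed, $\Lambda$-periodic subset of $\mathbb{R}^d$; that $\beta$ is a \emph{unit} gives $\beta^{-1}\in\mathbb{Z}[\beta]$ and $\det M_{\beta} = \pm1$, so $M_{\beta}^{-1}$ is an integer matrix and $M_{\beta}^{-1}\Lambda = \Lambda$; and that $A\subset q^{-1}\mathbb{Z}[\beta]$ gives $\Psi(a)\in\Lambda$ for every $a\in A$ by Lemma~\ref{l:zd}.

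The key structural fact I would establish is $M_{\beta}^{-1}\widehat{Y} \subseteq U$, hence $M_{\beta}^{-m}U \subseteq U$ for all $m\ge0$. To see this, take $\mathbf{u} = (u_k)_{k\in\mathbb{Z}}\in\bar{\mathcal{S}}$; since $\bar{\mathcal{S}}$ is a subshift we have $\sigma^{-1}\mathbf{u}\in\bar{\mathcal{S}}$, and applying the identity $\psi(\sigma\mathbf{v}) = M_{\beta}\psi(\mathbf{v}) - \Psi(v_1)$ (purely formal, hence valid for every $\mathbf{v}\in A^{\mathbb{Z}}$) to $\mathbf{v} = \sigma^{-1}\mathbf{u}$ gives $\psi(\mathbf{u}) = M_{\beta}\psi(\sigma^{-1}\mathbf{u}) - \Psi(u_0)$, so that $M_{\beta}^{-1}\psi(\mathbf{u}) = \psi(\sigma^{-1}\mathbf{u}) - M_{\beta}^{-1}\Psi(u_0) \in \widehat{Y} + \Lambda = U$. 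Consequently $M_{\beta}^{-1}U = \bigcup_{\mathbf{p}\in\Lambda}\big(M_{\beta}^{-1}\mathbf{p} + M_{\beta}^{-1}\widehat{Y}\big) \subseteq U$, using $M_{\beta}^{-1}\Lambda = \Lambda$ and the $\Lambda$-periodicity of $U$; iterating yields $M_{\beta}^{-m}U\subseteq U$.

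Next I would show $\Psi(\mathbb{Q}(\beta)) \subseteq U$. Given $x\in\mathbb{Q}(\beta)$, since $\beta$ is a Pisot unit and hence of degree $d\ge2$, the set $q^{-1}\mathbb{Z}[\beta]$ is dense in $\mathbb{R}$, so one can pick $y\in q^{-1}\mathbb{Z}[\beta]$ with $x-y$ lying in one of the half-open intervals constituting $X$; as $\Psi(y)\in\Lambda$ and $U$ is $\Lambda$-periodic, it suffices to treat $x\in\mathbb{Q}(\beta)\cap X$. For such $x$, Theorem~\ref{t:frankrobinson} gives that $b(x)$ is eventually periodic, say $b(x) = b_1\cdots b_m\,(b_{m+1}\cdots b_{m+n})^{\omega}$; then $T^m x\in\mathbb{Q}(\beta)\cap X$ has the purely periodic $T$-expansion $(b_{m+1}\cdots b_{m+n})^{\omega}$, so $\Psi(T^m x)\in\widehat{X}\subseteq\widehat{Y}$ by Theorem~\ref{t:purelyperiodic}. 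From $T^m x = \beta^m x - \sum_{i=1}^m b_i\beta^{m-i}$ we get $x = \beta^{-m}(T^m x) + \sum_{i=1}^m b_i\beta^{-i}$ with $\sum_{i=1}^m b_i\beta^{-i}\in q^{-1}\mathbb{Z}[\beta]$, hence
\[ \Psi(x) = M_{\beta}^{-m}\Psi(T^m x) + \Psi\Big(\textstyle\sum_{i=1}^m b_i\beta^{-i}\Big) \in M_{\beta}^{-m}\widehat{Y} + \Lambda \subseteq U. \]
Finally $\Psi(\mathbb{Q}(\beta)) = \mathbb{Q}^d$ by Lemma~\ref{l:zd}, so $U$ contains the dense set $\mathbb{Q}^d$; being closed, $U = \mathbb{R}^d$.

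The main obstacle, I expect, is choosing this route rather than the calculations: a direct attempt to cover $\mathbb{R}^d$ by following an orbit of the extended transformation $\widehat{T}$ founders because $M_{\beta}$ contracts $H$ but dilates the $\mathbf{v}_1$-direction, so one never returns inside $\widehat{Y}$. Passing instead through the arithmetically distinguished points $\Psi(\mathbb{Q}(\beta))$ — which is precisely why Theorems~\ref{t:purelyperiodic} and~\ref{t:frankrobinson} were proved first — together with the $M_{\beta}^{-1}$-invariance of $U$, circumvents this; the one point that needs care is checking that this invariance uses only the unit hypothesis (through $M_{\beta}^{-1}\Lambda = \Lambda$ and $\beta^{-1}\in\mathbb{Z}[\beta]$) and the shift-invariance of $\bar{\mathcal{S}}$.
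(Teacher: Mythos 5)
Your proof is correct, and it reaches the result by a route that differs from the paper's in one substantive step. Both arguments share the same skeleton: reduce (by compactness of $\widehat{Y}$ and density of $q^{-1}\mathbb{Z}[\beta]$) to showing that $\Psi(x) \in \widehat{Y} + q^{-1}\mathbb{Z}^d$ for $x \in \mathbb{Q}(\beta) \cap X$, then invoke Theorem~\ref{t:frankrobinson} for eventual periodicity and Theorem~\ref{t:purelyperiodic} to place $\Psi(T^m x)$ inside $\widehat{X}$. Where you diverge is in transporting that point back: the paper takes a backward $T$-orbit $z_0, z_1, \ldots$ of $\pi_1(\mathbf{z})$, applies a pigeonhole argument in the finite group $r^{-1}\mathbb{Z}^d / q^{-1}\mathbb{Z}^d$ to produce a congruence $\widehat{T}^n \mathbf{z} \equiv \mathbf{z} \pmod{q^{-1}\mathbb{Z}^d}$, and then pushes forward along the orbit until it enters $\widehat{X}$; you instead isolate the containment $M_{\beta}^{-1}\widehat{Y} \subseteq \widehat{Y} + q^{-1}\mathbb{Z}^d$, read off directly from $\psi(u) = M_{\beta}\psi(\sigma^{-1}u) - \Psi(u_0)$ and the shift-invariance of $\bar{\mathcal{S}}$, and then write $\Psi(x) = M_{\beta}^{-m}\Psi(T^m x) + \Psi\big(\sum_{i=1}^m b_i \beta^{-i}\big)$ explicitly. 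Your mechanism avoids the auxiliary denominator $r$, the backward orbit (and hence the surjectivity of $T$), and the pigeonhole entirely, at the price of making explicit the $M_{\beta}^{-1}$-stability of $\widehat{Y}$ modulo the lattice — a fact the paper uses only implicitly through $\widehat{T}$. Both correctly use the unit hypothesis only through $M_{\beta}^{-1}$ being an integer matrix (equivalently $\beta^{-1} \in \mathbb{Z}[\beta]$), and your concluding volume estimate via a fundamental domain of $q^{-1}\mathbb{Z}^d$ is the standard one the paper leaves implicit.
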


\begin{proof}
Since $\widehat{Y}$ is compact and $q^{-1} \mathbb{Z}^d$ is a lattice, it suffices to show that $\mathbb{Q}^d \subset \bigcup_{\mathbf{x}\in q^{-1}\mathbb{Z}^d} (\mathbf{x} + \widehat{Y})$.

Take a $\mathbf{y} \in \mathbb{Q}^d$.
Since $\pi_1(q^{-1} \mathbb{Z}^d) = q^{-1} \mathbb{Z}[\beta]$ is dense in~$\mathbb{R}$, there exists a $\mathbf{z} \in \mathbb{Q}^d$ with $\mathbf{z} \equiv \mathbf{y} \pmod{q^{-1} \mathbb{Z}^d}$ and $\pi_1(\mathbf{z}) \in X$.
Set $z_0 = \pi_1(\mathbf{z})$.
Since $T$ is surjective, there exist $z_k \in X$, $k \ge 0$, with $T z_{k+1} = z_k$ for all $k \ge 0$. 
Extend $\widehat{T}$ to $X \mathbf{v}_1 + H$, as in the proof of Theorem~\ref{t:frankrobinson}. 
By the bijectivity of~$\Psi$, we have $\Psi \circ T = \widehat{T} \circ \Psi$ on $X \cap \mathbb{Q}(\beta)$, thus $\widehat{T} \Psi(z_{k+1}) = \Psi(z_k)$.

Let $r$ be a multiple of $q$ such that $\mathbf{y} \in r^{-1} \mathbb{Z}^d$.
Since $|\!\det M_\beta| = 1$, we have $\Psi(z_k) \in r^{-1} \mathbb{Z}^d$ for every $k \ge 0$, hence $\Psi(z_{k+n}) \equiv \Psi(z_k) \pmod{q^{-1} \mathbb{Z}^d}$ for some $k \ge 0$, $n \ge 1$.
The assumption $A \subset q^{-1} \mathbb{Z}[\beta]$ implies that $\Psi(A) \subset q^{-1} \mathbb{Z}^d$, thus $\widehat{T} \mathbf{x} \equiv M_\beta \mathbf{x} \pmod{q^{-1} \mathbb{Z}^d}$, which yields $\widehat{T}^n \mathbf{z} \equiv \mathbf{z} \pmod{q^{-1} \mathbb{Z}^d}$.
By Theorem~\ref{t:frankrobinson}, $b(z_0)$ is eventually periodic.
Together with Theorem~\ref{t:purelyperiodic}, this gives $\widehat{T}^{k n} \mathbf{z} = \Psi(T^{k n} z_0) \in \widehat{X}$ for some $k \ge 0$.
Since $\widehat{T}^{k n} \mathbf{z} \in \widehat{X}$ and $\widehat{T}^{k n} \mathbf{z} \equiv \mathbf{z} \pmod{q^{-1} \mathbb{Z}^d}$, we have $\mathbf{z} \in \bigcup_{\mathbf{x}\in q^{-1}\mathbb{Z}^d} (\mathbf{x} + \widehat{X})$, and the same clearly holds for~$\mathbf{y}$. 
Since $\widehat{X} \subset \widehat{Y}$, the lemma is proved.
\end{proof}

Let $\mathcal{B}$ be the Lebesgue $\sigma$-algebra on $X$ and $\widehat{\mathcal{B}}$ the Lebesgue $\sigma$-algebra on $\widehat{X}$. We want to prove that the system $(\widehat{X}, \widehat{\mathcal{B}}, \lambda^d, \widehat{T})$ is a version of the natural extension of the system $(X,\mathcal{B},\mu,T)$, where the measure $\mu$ on $(X,\mathcal{B})$ is defined by $\mu = \lambda^d \circ \pi^{-1}$. In order to do this, we need to show that there are sets $\widehat{N} \in \widehat{\mathcal{B}}$ and $M \in \mathcal{B}$, such that all the following hold. 
\begin{itemize}
\itemsep5pt
\item[(ne1)] 
$\lambda^d(\widehat{N}) = \mu(M) =0$, $\widehat{T} (\widehat{X} \setminus \widehat{N}) \subseteq \widehat{X} \setminus \widehat{N}$ and $T(X \setminus M) \subseteq X \setminus M$.
\item[(ne2)] 
The projection map $\pi:\, \widehat{X} \setminus \widehat{N} \to X \setminus M$ is measurable, measure preserving and surjective.
\item[(ne3)] 
$\pi (\widehat{T} \mathbf{x}) = T \pi (\mathbf{x})$ for all $\mathbf{x} \in \widehat{X} \setminus \widehat{N}$.
\item[(ne4)] 
The transformation $\widehat{T}:\, \widehat{X} \setminus \widehat{N} \to \widehat{X} \setminus \widehat{N}$ is invertible.
\item[(ne5)] 
$\bigvee_{k=0}^{\infty} \widehat{T}^k \pi^{-1} \mathcal{B} = \widehat{\mathcal{B}}$, where $\bigvee_{k=0}^{\infty} \widehat{T}^k \pi^{-1} \mathcal{B}$ is the smallest $\sigma$-algebra containing the $\sigma$-algebras $\widehat{T}^k \pi^{-1} \mathcal{B}$ for all $k \ge 0$.
\end{itemize}
(A~map that satisfies (ne1)--(ne3) is called a \emph{factor map}.)

\begin{lemma}\label{l:disjoint}
For all $a, a' \in A$ with $a \ne a'$, we have $\lambda^d(\widehat{T} \widehat{X}_a \cap \widehat{T} \widehat{X}_{a'}) = 0$. 
\end{lemma}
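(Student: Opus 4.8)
The plan is to deduce the statement from a volume count that uses the Pisot \emph{unit} hypothesis. The key observation is that $\widehat{T}$ restricted to $\widehat{X}_a$ is the invertible affine map $\mathbf{x} \mapsto M_\beta \mathbf{x} - \Psi(a)$ of $\mathbb{R}^d$, whose Jacobian has absolute value $|\!\det M_\beta| = |c_d| = 1$; hence $\widehat{T}$ preserves $\lambda^d$ on each piece, so the total volume of the images $\widehat{T}\widehat{X}_a$ equals that of the $\widehat{X}_a$, while at the same time these images must cover all of $\widehat{X}$ again because $\widehat{T}\widehat{X} = \widehat{X}$. An overlap of positive measure between two images would then make the total volume of the images strictly smaller than $\lambda^d(\widehat{X})$, a contradiction.

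First I would record that the $\widehat{X}_a$ are bounded, Lebesgue measurable and pairwise disjoint. For $u \in \mathcal{S}$ with $u_1 = a$ one has $\pi_1(\psi(u)) = \decdot u_1 u_2 \cdots \in X_{u_1} = X_a$ by Lemma~\ref{l:Xuk} (and $\varphi(\cdots u_{-1}u_0) \in H$), and conversely any point of $\widehat{X}$ whose first coordinate lies in $X_a$ must arise from such a $u$ since the $X_a$ are disjoint; thus $\widehat{X}_a = \widehat{X} \cap \pi_1^{-1}(X_a)$. Now $\widehat{X}$ is Lebesgue measurable and bounded by Lemma~\ref{l:compact} (it is contained in the compact set $\widehat{Y}$), and $\pi_1^{-1}(X_a)$ is Borel, so each $\widehat{X}_a$ is bounded and Lebesgue measurable; disjointness of the $X_a$ forces the $\widehat{X}_a$ to be disjoint. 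Hence $\sum_{a\in A}\lambda^d(\widehat{X}_a) = \lambda^d(\widehat{X}) < \infty$. Since $\mathbf{x}\mapsto M_\beta \mathbf{x} - \Psi(a)$ is an invertible affine map with $|\!\det M_\beta| = 1$, it sends Lebesgue measurable sets to Lebesgue measurable sets and preserves $\lambda^d$, so each $\widehat{T}\widehat{X}_a$ is measurable with $\lambda^d(\widehat{T}\widehat{X}_a) = \lambda^d(\widehat{X}_a)$.

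Next, using $\widehat{X} = \bigcup_{a\in A}\widehat{X}_a$ and $\widehat{T}\widehat{X} = \widehat{X}$ (both established above), one gets $\bigcup_{a\in A}\widehat{T}\widehat{X}_a = \widehat{X}$, whence
\[
\sum_{a\in A}\lambda^d(\widehat{X}_a) = \lambda^d(\widehat{X}) = \lambda^d\Big(\bigcup_{a\in A}\widehat{T}\widehat{X}_a\Big) \le \sum_{a\in A}\lambda^d(\widehat{T}\widehat{X}_a) = \sum_{a\in A}\lambda^d(\widehat{X}_a),
\]
so equality holds throughout. Finally, to extract the claim I would fix $a \ne a'$ and combine subadditivity with inclusion--exclusion for this pair: writing $U = \widehat{T}\widehat{X}_a \cup \widehat{T}\widehat{X}_{a'}$,
\[
\sum_{b\in A}\lambda^d(\widehat{T}\widehat{X}_b) = \lambda^d\Big(\bigcup_{b\in A}\widehat{T}\widehat{X}_b\Big) \le \lambda^d(U) + \sum_{b\in A\setminus\{a,a'\}}\lambda^d(\widehat{T}\widehat{X}_b) = \sum_{b\in A}\lambda^d(\widehat{T}\widehat{X}_b) - \lambda^d(\widehat{T}\widehat{X}_a \cap \widehat{T}\widehat{X}_{a'}),
\]
where all quantities are finite; this forces $\lambda^d(\widehat{T}\widehat{X}_a \cap \widehat{T}\widehat{X}_{a'}) \le 0$, i.e.\ $=0$.

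I do not expect a genuine obstacle here: the whole argument is a volume count. The only points requiring care are the measurability bookkeeping (that $\widehat{X}_a$ and its image under the affine map are Lebesgue measurable) and the finiteness of all measures involved, which legitimizes the cancellation in the last display; both rest on $\widehat{X}$ being bounded and Lebesgue measurable and on $|\!\det M_\beta| = 1$, all available above.
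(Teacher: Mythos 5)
Your proof is correct and follows essentially the same route as the paper: the paper's argument is exactly this volume count, observing that $|\!\det M_\beta|=1$ and $\widehat{T}\widehat{X}=\widehat{X}$ force $\sum_{a\in A}\lambda^d(\widehat{T}\widehat{X}_a)=\lambda^d\big(\bigcup_{a\in A}\widehat{T}\widehat{X}_a\big)$, which rules out overlaps of positive measure. You have merely made the measurability bookkeeping and the final inclusion--exclusion step explicit.
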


\begin{proof}
Since $|\!\det M_{\beta}| = |c_d| = 1$ and $\widehat{T} \widehat{X} = \widehat{X}$, we have
\[ 
\sum_{a\in A} \lambda^d(\widehat{T} \widehat{X}_a) = \sum_{a\in A} \lambda^d(M_{\beta} \widehat{X}_a) = \sum_{a\in A} \lambda^d(\widehat{X}_a) = \lambda^d(\widehat{X}) = \lambda^d(\widehat{T} \widehat{X}) = \lambda^d\Big(\bigcup_{a\in A} \widehat{T} \widehat{X}_a\Big), 
\]
which proves the lemma. 
\end{proof}

Let
\[ 
\widehat{N} = \bigcup_{n\in\mathbb{Z}} \widehat{T}^n \bigg(\bigcup_{a,a'\in A,\,a\neq a'} \widehat{T} \widehat{X}_a \cap \widehat{T} \widehat{X}_{a'}\bigg). 
\]
Then by Lemma~\ref{l:disjoint}, $\lambda^d(\widehat{N}) = 0$. Note that $\widehat{T}$ is a bijection on $\widehat{X} \setminus \widehat{N}$. 
Hence, $\widehat{T}$ is an a.e.\ invertible, measure preserving transformation on $(\widehat{X}, \widehat{\mathcal{B}}, \lambda^d)$, which proves (ne4). 
The measure $\mu = \lambda^d \circ \pi^{-1}$, defined on $(X,\mathcal{B})$, satisfies $\mu(X) > 0$ by Lemmas~\ref{l:compact} and~\ref{l:positivemeasure}, and has its support contained in~$X$. 
Hence, $\mu$~is an invariant measure for~$T$, that is absolutely continuous with respect to the Lebesgue measure. 
The projection map $\pi: \widehat{X} \to X$ is measurable and measure preserving, and $T \circ \pi = \pi \circ \widehat{T}$. 
Set $M = \{x \in X:\, \pi^{-1} \{x\} \subseteq N\}$. 
Then $T(X \setminus M) \subseteq X \setminus M$ and $\mu(M) = (\lambda^d \circ \pi^{-1})(M) \le \lambda^d(\widehat{N}) = 0$. 
Since $\pi$ is surjective from $\widehat{X} \setminus \widehat{N}$ to $X \setminus M$, $\pi$ is a factor map from $(\widehat{X}, \widehat{\mathcal{B}}, \lambda^d, \widehat{T})$ to $(X, \mathcal{B}, \mu, T)$. 
This gives (ne1)--(ne3). 
In the next theorem, we prove (ne5).

\begin{theorem} \label{t:natex}
Let $T$ be a right-continuous $\beta$-transformation as in Definition~\ref{d:trfm} with a Pisot unit~$\beta$ and $A \subset \mathbb{Q}(\beta)$.
Then the dynamical system $(\widehat{X}, \widehat{\mathcal{B}}, \lambda^d, \widehat{T})$ is a natural extension of the dynamical system $(X,\mathcal{B},\mu,T)$.
\end{theorem}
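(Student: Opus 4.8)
The plan is as follows. All of the conditions (ne1)--(ne4) have already been verified in the discussion preceding the statement of the theorem (using Lemmas~\ref{l:compact}, \ref{l:positivemeasure} and~\ref{l:disjoint}, and the $\widehat{T}$-invariant $\lambda^d$-null set~$\widehat{N}$), so the only remaining task is to prove (ne5), i.e.,
\[
\bigvee_{k=0}^{\infty} \widehat{T}^k\, \pi^{-1} \mathcal{B} = \widehat{\mathcal{B}}.
\]
Since $\pi$ is measurable and $\widehat{T}$ is bimeasurable on $\widehat{X} \setminus \widehat{N}$, the inclusion ``$\subseteq$'' is immediate, so the content is the reverse inclusion.

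For ``$\supseteq$'', I would argue on $\widehat{X} \setminus \widehat{N}$, where $\widehat{T}$ is invertible. Because $\widehat{N}$ is $\widehat{T}$-invariant, one has $\widehat{T}^k\,\pi^{-1}\mathcal{B} = (\pi \circ \widehat{T}^{-k})^{-1}\mathcal{B}$ there, so this $\sigma$-algebra is generated by the function $\mathbf{x} \mapsto \pi(\widehat{T}^{-k}\mathbf{x})$. If $\mathbf{x} = \psi(u)$ with $u = (u_j)_{j\in\mathbb{Z}} \in \mathcal{S}$, then $\widehat{T}^{-k}\psi(u) = \psi(\sigma^{-k}u)$, so by Lemma~\ref{l:Xuk} we get $\pi(\widehat{T}^{-k}\mathbf{x}) = \decdot u_{1-k} u_{2-k} \cdots \in X_{u_{1-k}}$. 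As the map $X \to A$ sending $y$ to the unique $a$ with $y \in X_a$ is $\mathcal{B}$-measurable, the digit $u_{1-k}$ is a $\widehat{T}^k\,\pi^{-1}\mathcal{B}$-measurable function of~$\mathbf{x}$. Here one uses that, off $\widehat{N}$, the digits $(u_j)_{j\le0}$ are genuine single-valued functions of~$\mathbf{x}$: since $\mathbf{x} \notin \widehat{T}\widehat{X}_a \cap \widehat{T}\widehat{X}_{a'}$ for $a \ne a'$, there is a unique $a$ with $\mathbf{x} \in \widehat{T}\widehat{X}_a$, namely $a = u_0$, and applying the same to $\widehat{T}^{-1}\mathbf{x}, \widehat{T}^{-2}\mathbf{x}, \dots \in \widehat{X} \setminus \widehat{N}$ determines $u_{-1}, u_{-2}, \dots$. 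Hence all digits $u_{1-k}$, $k \ge 1$, together with $\pi(\mathbf{x}) \in \pi^{-1}\mathcal{B}$, are measurable with respect to $\bigvee_{k\ge0} \widehat{T}^k\,\pi^{-1}\mathcal{B}$.

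To finish, I would use the identity
\[
\mathbf{x} = \psi(u) = \pi(\mathbf{x})\, \mathbf{v}_1 - \varphi(\cdots u_{-1} u_0) = \pi(\mathbf{x})\, \mathbf{v}_1 - \sum_{k\le0} M_\beta^{-k}\, \Phi(u_k),
\]
where the series converges absolutely because $M_\beta$ contracts the subspace~$H$ and the vectors $\Phi(u_k) \in H$ take only finitely many values. By the previous paragraph, each partial sum $\sum_{k=-n}^{0} M_\beta^{-k}\,\Phi(u_k)$ is $\bigvee_{k\ge0}\widehat{T}^k\,\pi^{-1}\mathcal{B}$-measurable, hence so is its pointwise limit; combined with the measurability of $\mathbf{x} \mapsto \pi(\mathbf{x})\,\mathbf{v}_1$, this shows that the identity map on $\widehat{X} \setminus \widehat{N}$ is $\bigvee_{k\ge0}\widehat{T}^k\,\pi^{-1}\mathcal{B}$-measurable. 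Therefore $\widehat{\mathcal{B}} \subseteq \bigvee_{k\ge0}\widehat{T}^k\,\pi^{-1}\mathcal{B}$ modulo $\lambda^d$-null sets, which is (ne5), and together with (ne1)--(ne4) this proves that $(\widehat{X}, \widehat{\mathcal{B}}, \lambda^d, \widehat{T})$ is a natural extension of $(X, \mathcal{B}, \mu, T)$.

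The delicate point, and the one I expect to cost the most care, is the bookkeeping around the null set~$\widehat{N}$: one must make sure that $\widehat{T}$ is truly invertible off $\widehat{N}$, that the functions $\pi \circ \widehat{T}^{-k}$ --- equivalently, the past digits $(u_j)_{j\le0}$ --- are well-defined and measurable there, and that ``measurable modulo $\lambda^d$-null sets'' is enough to conclude the equality of the completed $\sigma$-algebras in (ne5). The absolute convergence of the series and the recovery of individual digits from the $X$-coordinates are then routine, using only that $M_\beta$ acts as a contraction on~$H$.
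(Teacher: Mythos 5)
Your proof is correct, and it rests on the same underlying mechanism as the paper's argument for (ne5), but it is packaged differently. The paper proceeds by separating points: given $\mathbf{x} \ne \mathbf{x}'$ in $\widehat{X}$, it either separates them by $\pi^{-1}(B)$, $\pi^{-1}(B')$ when $\pi(\mathbf{x}) \ne \pi(\mathbf{x}')$, or, when the projections agree, it uses the first index $n$ at which the pasts $w, w'$ differ to produce distinct values $x_n = \decdot w_{-n+1} \cdots w_0\, b(x) \ne x'_n$ lying in different pieces $X_a$, and then separates $\mathbf{x}, \mathbf{x}'$ by the a.e.\ disjoint sets $\widehat{T}^n \pi^{-1}(B)$, $\widehat{T}^n \pi^{-1}(B')$; from the fact that $\bigvee_{k\ge0} \widehat{T}^k \pi^{-1}\mathcal{B}$ separates almost all pairs of points it concludes that this join equals $\widehat{\mathcal{B}}$. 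You instead reconstruct the point: you note that $\pi(\widehat{T}^{-k}\mathbf{x}) \in X_{u_{1-k}}$ makes each past digit a $\widehat{T}^k \pi^{-1}\mathcal{B}$-measurable function of $\mathbf{x}$, and then recover $\mathbf{x} = \pi(\mathbf{x})\,\mathbf{v}_1 - \sum_{k\le0} M_\beta^{-k}\Phi(u_k)$ as a pointwise limit of measurable functions, so that the identity map on $\widehat{X}\setminus\widehat{N}$ is measurable with respect to the join. Both arguments exploit exactly the same fact --- the quantities $\pi(\widehat{T}^{-k}\mathbf{x})$, $k \ge 0$, determine $\mathbf{x}$ off $\widehat{N}$ --- but your constructive version has the small advantage of not relying, even implicitly, on the Blackwell-type principle that a countably generated, point-separating sub-$\sigma$-algebra of a standard Borel space is the whole Borel $\sigma$-algebra, which the paper's final sentence tacitly invokes; in exchange you must verify the convergence of the series and the single-valuedness of the past digits, both of which you handle correctly (the full $\widehat{T}$-invariance of $\widehat{N}$, the contraction of $M_\beta$ on $H$, and the finiteness of the digit set are precisely what is needed). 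Your bookkeeping of ``equality modulo $\lambda^d$-null sets'' is at the same level of care as the paper's own treatment.
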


\begin{proof}
We have already shown (ne1)--(ne4). The only thing that remains in order to get the theorem is that
\[ 
\bigvee_{k \ge 0} \widehat{T}^k \pi^{-1} (\mathcal{B}) = \widehat{\mathcal{B}}.
\]
By the definition of $\mathcal{S}$, it is clear that $\bigvee_{k\ge0} \widehat{T}^k \pi^{-1}(\mathcal{B}) \subseteq \widehat{\mathcal{B}}$.
To show the other inclusion, take $\mathbf{x}, \mathbf{x}' \in \widehat{X}$, $\mathbf{x} \ne \mathbf{x}'$. Suppose first that $\pi(\mathbf{x}) \neq \pi(\mathbf{x}')$. Then there are two disjoint intervals $B, B' \subset X$ with $\pi(\mathbf{x}) \in B$ and $\pi(\mathbf{x}') \in B'$, thus $\mathbf{x} \in \pi^{-1} (B)$ and $\mathbf{x}' \in \pi^{-1} (B')$.
Now, suppose that $\pi(\mathbf{x}) = \pi(\mathbf{x}') = x$. 
There exist sequences $w, w' \in {\vphantom{A}}^{\omega}\!A$ with $w \cdot b(x), w'\cdot b(x) \in \mathcal{S}$ such that $\mathbf{x} = x \mathbf{v}_1 - \varphi(w)$, $\mathbf{x}' = x \mathbf{v}_1 - \varphi(w')$. Since $\mathbf{x} \ne \mathbf{x}'$, we have $w \ne w'$. Let $n \ge 1$ be the first index such that $w_{-n+1} \neq w'_{-n+1}$, and set
\[
x_n = \sum_{k=1}^n \frac{w_{-n+k}}{\beta^k} + \frac{x}{\beta^n}, \qquad x'_n = \sum_{k=1}^n \frac{w'_{-n+k}}{\beta^k} + \frac{x}{\beta^n}.
\]
Then $x_n \neq x'_n$, so there exist two disjoint intervals $B, B' \subset X$, such that $x_n \in B$ and $x'_n \in B'$. Moreover, $\mathbf{x} \in \widehat{T}^n \pi^{-1} (B)$ and $\mathbf{x}' \in \widehat{T}^n \pi^{-1} (B')$. By the invertibility of $\widehat{T}$, the sets $\widehat{T}^n \pi^{-1} (B)$ and $\widehat{T}^n \pi^{-1} (B')$ are disjoint a.e., hence, for almost all points $\mathbf{x}, \mathbf{x}' \in \widehat{X}$, we can find two disjoint elements of $\widehat{T}^n \pi^{-1} (\mathcal{B})$ such that one point is contained in one element and the other element contains the other point. This shows that $\bigvee_{k\ge0} \widehat{T}^k \pi^{-1} (\mathcal{B}) = \widehat{\mathcal{B}}$ and thus that $(\widehat{X}, \widehat{\mathcal{B}}, \lambda^d, \widehat{T})$ is a natural extension of $(X,\mathcal{B},\mu,T)$.
\end{proof}

\subsection{Shape of the natural extension domain} \label{s:shape}

We can write 
\begin{equation} \label{e:Dx}
\widehat{X} = \bigcup_{x\in X} (x \mathbf{v}_1 - \mathcal{D}_x) \quad\mbox{with}\quad \mathcal{D}_x = \big\{\varphi(w):\, w \cdot b(x) \in \mathcal{S}\big\},
\end{equation}
where $\varphi$ is as in (\ref{q:phiseq}) and $\mathcal{S}$ as in (\ref{e:S}). For the multiple tiling we will construct later on, the prototiles will be the sets $\mathcal{D}_x$ for $x \in \mathbb{Z}[\beta] \cap X$.
In this section, we show some properties of these sets.

\begin{lemma} \label{l:tilecompact}
Every set $\mathcal{D}_x$, $x\in X$, is compact.
\end{lemma}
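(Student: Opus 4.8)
The plan is to show that $\mathcal{D}_x$ is a bounded, closed subset of the finite-dimensional space $H \simeq \mathbb{R}^{d-1}$, hence compact. For boundedness, I would rewrite the series in (\ref{q:phiseq}) as $\varphi(w) = \sum_{m\ge0} M_{\beta}^{m}\,\Phi(w_{-m})$ and use that $M_{\beta}$ is contracting on~$H$: fixing any norm on~$H$, there are constants $C>0$ and $\rho\in(0,1)$ with $\|M_{\beta}^{m}\mathbf{y}\| \le C\rho^{m}\|\mathbf{y}\|$ for all $\mathbf{y}\in H$ and $m\ge0$. Since $A$ is finite, this yields $\|\varphi(w)\| \le C(1-\rho)^{-1}\max_{a\in A}\|\Phi(a)\|$ for every $w\in{\vphantom{A}}^{\omega}\!A$, so $\mathcal{D}_x$ is bounded. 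The same estimate shows that the partial sums $w\mapsto\sum_{m=0}^{M}M_{\beta}^{m}\Phi(w_{-m})$, each of which is locally constant on ${\vphantom{A}}^{\omega}\!A$ equipped with the product topology, converge to $\varphi$ uniformly in~$w$; hence $\varphi\colon{\vphantom{A}}^{\omega}\!A\to H$ is continuous.

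For closedness, let $\mathbf{y}$ lie in the closure of $\mathcal{D}_x$ and pick $w^{(n)}\in{\vphantom{A}}^{\omega}\!A$ with $w^{(n)}\cdot b(x)\in\mathcal{S}$ and $\varphi(w^{(n)})\to\mathbf{y}$. Since ${\vphantom{A}}^{\omega}\!A$ is compact, after passing to a subsequence I may assume $w^{(n)}\to w$; continuity of $\varphi$ then gives $\varphi(w)=\mathbf{y}$, so it remains to check that $w\cdot b(x)\in\mathcal{S}$, i.e.\ that every suffix of $w\cdot b(x)$ is $T$-admissible. A suffix starting at a position $k\ge1$ equals $b(T^{k-1}x)$ and is thus $T$-admissible. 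For a suffix starting at a position $-m+1\le0$, the point is that $w^{(n)}\to w$ means $w^{(n)}_{-m+1}\cdots w^{(n)}_{0}=w_{-m+1}\cdots w_{0}$ for all large~$n$, so that for such~$n$ the word $w_{-m+1}\cdots w_{0}\,b(x)$ is exactly the suffix of $w^{(n)}\cdot b(x)$ at position $-m+1$, which is $T$-admissible because $w^{(n)}\cdot b(x)\in\mathcal{S}$. Letting $m$ run over all positive integers shows $w\cdot b(x)\in\mathcal{S}$, hence $\mathbf{y}=\varphi(w)\in\mathcal{D}_x$, and $\mathcal{D}_x$ is closed.

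The step I expect to be the only real subtlety is this last one. A priori the limit sequence $w$ only satisfies $w\cdot b(x)\in\bar{\mathcal{S}}$, and because $\mathcal{S}$ is not closed one must rule out that $w\cdot b(x)$ lies on the ``boundary'', i.e.\ that some suffix has value equal to a right endpoint of one of the intervals~$X_a$. The prefix-stabilization observation above avoids any such case analysis: the admissibility of each finite block $w_{-m+1}\cdots w_0\,b(x)$ is inherited verbatim from the $w^{(n)}$, so the limit sequence automatically stays in $\mathcal{S}$ rather than merely in $\bar{\mathcal{S}}$. The remaining ingredients (the uniform bound on $\|\varphi(w)\|$, continuity of $\varphi$, and compactness of ${\vphantom{A}}^{\omega}\!A$) are routine.
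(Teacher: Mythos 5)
Your proof is correct and follows essentially the same route as the paper: the paper shows that $\mathcal{W}=\{w\in{\vphantom{A}}^{\omega}\!A:\,w\cdot b(x)\in\mathcal{S}\}$ is closed (hence compact) via exactly your prefix-stabilization argument and then takes $\mathcal{D}_x=\varphi(\mathcal{W})$ as the continuous image of a compact set, whereas you verify closedness and boundedness of $\mathcal{D}_x$ directly. The only difference is that you spell out the boundedness and continuity of $\varphi$, which the paper leaves implicit; your identification of the $\mathcal{S}$-versus-$\bar{\mathcal{S}}$ issue as the one real subtlety matches the paper's reasoning.
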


\begin{proof}
Let $x \in X$ and consider the subset $\mathcal{W} = \{w \in {\vphantom{A}}^{\omega}\!A:\, w \cdot b(x) \in \mathcal{S}\}$ of the compact space~${\vphantom{A}}^{\omega}\!A$. We want to show that $\mathcal{W}$ is closed and, hence, compact. Therefore, take some converging sequence $(w^{(n)})_{n \ge 0} \subseteq \mathcal{W}$ and let $\lim_{n\to\infty} w^{(n)} = w$. For every $k \ge 0$, we can find some $n_k \ge 0$ such that $w^{(n_k)}_{-k} \cdots w^{(n_k)}_0 = w_{-k} \cdots w_0$. This implies that $w_{-k} \cdots w_0b(x)$ is $T$-admissible for every $k \ge 0$, thus $w \cdot b(x) \in \mathcal{S}$, and $\mathcal{W}$ is closed. 
Since $\mathcal{D}_x$ is the image of the compact set $\mathcal{W}$ under the continuous map~$\varphi$, it is compact as well.
\end{proof}

To distinguish different sets $\mathcal{D}_x$, we introduce the set
\begin{equation}\label{q:v}
\mathcal{V} = \big(X \setminus \widetilde{X}\big) \cup \bigcup_{x\in X\cap\widetilde{X}} \bigcup_{1\le k<m_x} \big\{T^k x,\, \widetilde{T}^k x\big\} \cap X,
\end{equation}
where $m_x$ is the minimal positive integer such that
\[ 
\widetilde{T}^{m_x} x = T^{m_x} x, 
\]
with $m_x = \infty$ if $\widetilde{T}^k x \neq T^k x$ for all $k \ge 1$.
Note that $m_x > 1$ only if $x$ is a point of discontinuity of~$T$ (and~$\widetilde{T}$), and that the set of these points is finite.
Furthermore, $X \setminus \widetilde{X}$ is the (finite) set of left boundary points of~$X$. 
Therefore, $\mathcal{V}$ is a finite set if and only if, for every $x \in X \cap \widetilde{X}$, $m_x < \infty$ or $x \in \mathbb{Q}(\beta)$.
(Recall that $x \in \mathbb{Q}(\beta)$ is equivalent with the fact that $b(x)$ and $\tilde{b}(x)$ are eventually periodic by Theorem~\ref{t:frankrobinson}.)
We define furthermore, for every $x \in X$,
\[
J_x = \big\{y \in X:\, y \ge x,\, (x, y\,] \cap \mathcal{V} = \emptyset\big\},
\]
i.e., $J_x = [x,z)$, where $z$ is the smallest value in $\mathcal{V}$ or on the boundary of $X$ with $z > x$.
We will prove the following proposition.

\begin{prop} \label{p:S}
If $x \in X$ and $y \in J_x$, then $\mathcal{D}_x = \mathcal{D}_y$.
If $\mathcal{V}$ is a finite set, then
\begin{equation} \label{e:shape}
\widehat{X} = \bigcup_{x\in\mathcal{V}} (J_x \mathbf{v}_1 - \mathcal{D}_x).
\end{equation}
\end{prop}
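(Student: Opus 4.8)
The plan is to prove the two assertions separately, relying on the characterizations of $T$- and $\widetilde{T}$-admissibility in Theorem~\ref{t:admissible} (or rather on the description of $\mathcal{S}$ in~(\ref{e:S}) together with Lemma~\ref{l:xlessy}). For the first assertion, suppose $x \in X$ and $y \in J_x$, so that $x \le y$ and $(x,y\,] \cap \mathcal{V} = \emptyset$. I want to show $\{w \in {\vphantom{A}}^{\omega}\!A:\, w \cdot b(x) \in \mathcal{S}\} = \{w \in {\vphantom{A}}^{\omega}\!A:\, w \cdot b(y) \in \mathcal{S}\}$, which gives $\mathcal{D}_x = \mathcal{D}_y$ immediately from~(\ref{e:Dx}). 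The key observation is that membership of $w \cdot b(x)$ in $\mathcal{S}$ is governed, for the negative indices, by whether each left-truncation $w_{-k} \cdots w_0 \cdot b(x)$ is the expansion of a point lying in the correct interval $X_{w_{-k}}$; by Lemma~\ref{l:Xuk} this amounts to asking that the real numbers $x_{-k} := \decdot w_{-k} \cdots w_0\, b(x)$ satisfy $x_{-k} \in X_{w_{-k}}$ for all $k \ge 0$. Since $(x,y\,]$ contains no point of $\mathcal{V}$, no point of $X \setminus \widetilde{X}$ and no orbit point $T^j z, \widetilde{T}^j z$ of a discontinuity lies strictly between $x$ and $y$; I then argue by backward induction that $x_{-k}$ and $y_{-k}$ (the analogous quantity built from $b(y)$) lie in the same half-open interval $X_a$ for every $k$, because prepending a digit $w_{-k}$ is an affine contraction $t \mapsto (w_{-k} + t)/\beta$ that maps the gap-free interval $[x,y)$ to another interval containing no boundary point of the partition (the endpoints of the $X_a$ and the $\widetilde{X}_a$ are exactly the points tracked by $\mathcal{V}$). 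Hence $w \cdot b(x) \in \mathcal{S} \iff w\cdot b(y) \in \mathcal{S}$ and $\mathcal{D}_x = \mathcal{D}_y$.

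For the second assertion, assume $\mathcal{V}$ is finite. The inclusion $\supseteq$ in~(\ref{e:shape}) is immediate: from~(\ref{e:Dx}), $\widehat{X} = \bigcup_{x \in X}(x\mathbf{v}_1 - \mathcal{D}_x)$, and for each $x \in \mathcal{V}$ and each $y \in J_x$ we have $\mathcal{D}_y = \mathcal{D}_x$ by the first part, so $J_x \mathbf{v}_1 - \mathcal{D}_x = \bigcup_{y \in J_x}(y\mathbf{v}_1 - \mathcal{D}_y) \subseteq \widehat{X}$. For $\subseteq$, I must show that the intervals $J_x$, $x \in \mathcal{V}$, cover $X$. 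Here I use that $\mathcal{V}$ is finite and that $\min X \in X \setminus \widetilde{X} \subseteq \mathcal{V}$: ordering the finitely many points of $\mathcal{V}$ together with the right boundary point(s) of $X$ as $\min X = v_0 < v_1 < \cdots < v_N$, each half-open interval $[v_i, v_{i+1})$ is exactly $J_{v_i}$, and their union is all of $X$ (using that $X$ is a finite union of left-closed right-open intervals and that every left endpoint of $X$ lies in $X \setminus \widetilde{X} \subseteq \mathcal{V}$). Then for arbitrary $x \in X$ we pick the $v_i \in \mathcal{V}$ with $x \in J_{v_i}$, whence $\mathcal{D}_x = \mathcal{D}_{v_i}$ and $x\mathbf{v}_1 - \mathcal{D}_x \subseteq J_{v_i}\mathbf{v}_1 - \mathcal{D}_{v_i}$, giving $\widehat{X} \subseteq \bigcup_{x\in\mathcal{V}}(J_x\mathbf{v}_1 - \mathcal{D}_x)$.

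The main obstacle I anticipate is the backward-induction step in the first assertion: I need to be careful that prepending digits never splits $[x,y)$ across a partition boundary. The point is that the relevant boundaries are precisely the points of $\widetilde{X} \setminus X$-type (left endpoints) and the forward orbits of the discontinuities under both $T$ and $\widetilde{T}$ up to the coalescence time $m_x$ — and these are exactly what $\mathcal{V}$ records. One has to check that if $[x', y')$ with $y' \in J_{x'}$ is an interval free of $\mathcal{V}$, then for any admissible digit $a$ prepended, the image interval $\{(a+t)/\beta : t \in [x',y')\}$ is again of the form $[x'', y'')$ with $y'' \in J_{x''}$; this uses the definition of $\widetilde{T}$ versus $T$ to handle the two endpoints (the left endpoint maps via $T$, the "virtual" right endpoint via $\widetilde{T}$) and the minimality of $m_x$ to ensure that once the two orbits coalesce one no longer needs to track that point. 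Getting this bookkeeping exactly right, especially at the boundary points of $X$ itself and at the discontinuities where $m_x > 1$, is the delicate part; everything else is a routine unwinding of the definitions~(\ref{e:S}), (\ref{e:Dx}), (\ref{q:v}) and Lemmas~\ref{l:Xuk} and~\ref{l:xlessy}.
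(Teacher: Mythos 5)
Your proof of the second assertion (the covering $X = \bigcup_{x\in\mathcal{V}} J_x$ and the resulting formula for $\widehat{X}$) is fine and matches the paper. The first assertion, however, contains a genuine gap: your key claim -- that the set $\{w \in {\vphantom{A}}^{\omega}\!A:\, w \cdot b(x) \in \mathcal{S}\}$ is literally equal to $\{w:\, w \cdot b(y) \in \mathcal{S}\}$ for $y \in J_x$ -- is false in general, and the parenthetical justification ``the endpoints of the $X_a$ and the $\widetilde{X}_a$ are exactly the points tracked by $\mathcal{V}$'' misreads the definition~(\ref{q:v}). The set $\mathcal{V}$ records the forward orbits $T^k z, \widetilde{T}^k z$ of a discontinuity point $z$ only for $1 \le k < m_z$, i.e.\ \emph{until the two orbits coalesce}; for $k \ge m_z$ the common point $T^k z = \widetilde{T}^k z$ is not in $\mathcal{V}$, so the interval $(x,y\,]$ may well contain such a point. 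In that case some preimage interval $(x_{-n}, y_{-n}]$ does straddle a partition boundary, and the admissible left-extensions of $b(x)$ and $b(y)$ genuinely differ. The paper's own Example~\ref{x:gm2} exhibits exactly this: at $\widetilde{T}^5\alpha = T^5\alpha = \decdot(0001)^{\omega} \notin \mathcal{V}$ the decomposition of $\widehat{T}\widehat{X}$ into $\widehat{T}\widehat{X}_0$ and $\widehat{T}\widehat{X}_1$ changes, so the set of admissible prepended words changes inside a single $J_x$, even though $\mathcal{D}_x$ does not. Moreover, a second splitting mechanism is not tracked by $\mathcal{V}$ at all: the union in~(\ref{q:v}) runs only over $x \in X \cap \widetilde{X}$, so right boundary points of~$X$ and their $\widetilde{T}$-orbits can also cut the preimage intervals.

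What is actually true, and what the paper proves (Lemmas~\ref{l:finitew} and~\ref{r:finitew}), is weaker but sufficient: for every admissible prefix $v_1\cdots v_n b(x)$ there is an admissible prefix $v_1'\cdots v_n' b(y)$ with $\varphi(v_1'\cdots v_n') = \varphi(v_1\cdots v_n) + \mathcal{O}(\rho^n)$. The repair at a coalesced splitting point $z_k$ is to replace the $\widetilde{T}$-expansion prefix of $z_k$ by its $T$-expansion prefix; Lemma~\ref{l:replace} shows these two different words have the \emph{same} $\varphi$-value precisely because $\widetilde{T}^{n-k} z_k = T^{n-k} z_k$. The case where the splitting point is a right boundary point of $X$ requires pulling back by some $\widetilde{T}^{-h}$ and introduces the $\mathcal{O}(\rho^n)$ error. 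One then concludes $\mathcal{D}_x \subseteq \mathcal{D}_y$ by letting $n \to \infty$ and invoking the compactness of $\mathcal{D}_y$ (Lemma~\ref{l:tilecompact}) to absorb the error -- a limiting step your argument has no need for but which is unavoidable once the sets of words are allowed to differ. To fix your proof you would need to replace the backward induction ``the interval never splits'' by this word-replacement argument; as written, the induction step fails at the first coalesced orbit point or right boundary point of $X$ encountered in a preimage.
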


The main ingredient of the proof of Proposition~\ref{p:S} is the following simple lemma.
We extend the definition of $\varphi$ to finite sequences $v_1 \cdots v_n \in A^n$, $n \ge 0$, by
\[ 
\varphi(v_1 \cdots v_n) = \sum_{k=1}^n \Phi\big(v_k \beta^{n-k}\big). 
\] 

\begin{lemma} \label{l:replace}
If $x \in X \cap \widetilde{X}$ and $\widetilde{T}^k x = T^k x$, $k \ge 1$, then $\varphi(b_1(x) \cdots b_k(x)) = \varphi(\tilde{b}_1(x) \cdots \tilde{b}_k(x))$.
\end{lemma}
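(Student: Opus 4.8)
The plan is to compare the two finite sequences $b_1(x)\cdots b_k(x)$ and $\tilde b_1(x)\cdots\tilde b_k(x)$ digit by digit and show that, although they may differ, the values $\varphi$ assigns to them coincide. The point is that $\varphi$ of a finite block $v_1\cdots v_k$ depends only on the element $\sum_{i=1}^k v_i\beta^{k-i}\in\mathbb{Q}(\beta)$ through the Galois conjugates, since by definition $\varphi(v_1\cdots v_k)=\sum_{i=1}^k\Phi(v_i\beta^{k-i})=\Phi\bigl(\sum_{i=1}^k v_i\beta^{k-i}\bigr)$, using the additivity of $\Phi=\sum_{j=2}^d\Gamma_j(\cdot)\mathbf{v}_j$ on $\mathbb{Q}(\beta)$. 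So it suffices to prove the scalar identity
\[
\sum_{i=1}^k b_i(x)\,\beta^{k-i} \;=\; \sum_{i=1}^k \tilde b_i(x)\,\beta^{k-i}.
\]

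To establish this, I would use the representation~(\ref{q:xtonT}) applied to both $T$ and $\widetilde{T}$: iterating $Tx=\beta x - b_1(x)$ gives $x=\sum_{i=1}^k b_i(x)\beta^{-i}+\beta^{-k}T^k x$, and likewise $x=\sum_{i=1}^k\tilde b_i(x)\beta^{-i}+\beta^{-k}\widetilde{T}^k x$ (the analogue of~(\ref{q:xtonT}) for $\widetilde{T}$ holds by the same inductive argument, valid since $x\in\widetilde{X}$ and hence $\widetilde{T}^j x\in\widetilde{X}$ for $0\le j\le k$). Subtracting the two identities and using the hypothesis $\widetilde{T}^k x = T^k x$, the tail terms $\beta^{-k}T^k x$ and $\beta^{-k}\widetilde{T}^k x$ cancel, leaving $\sum_{i=1}^k\bigl(b_i(x)-\tilde b_i(x)\bigr)\beta^{-i}=0$. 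Multiplying through by $\beta^k$ yields exactly the scalar identity above.

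Finally I apply the Galois isomorphisms: from $\sum_{i=1}^k b_i(x)\beta^{k-i}=\sum_{i=1}^k\tilde b_i(x)\beta^{k-i}$ in $\mathbb{Q}(\beta)$, apply $\Gamma_j$ for $2\le j\le d$ to get $\sum_{i=1}^k\Gamma_j(b_i(x))\beta_j^{k-i}=\sum_{i=1}^k\Gamma_j(\tilde b_i(x))\beta_j^{k-i}$ (here $\Gamma_j$ is a ring homomorphism fixing $\mathbb{Q}$, and $b_i(x),\tilde b_i(x)\in A\subset\mathbb{Q}(\beta)$), multiply by $\mathbf{v}_j$ and sum over $j$ to conclude $\varphi(b_1(x)\cdots b_k(x))=\varphi(\tilde b_1(x)\cdots\tilde b_k(x))$. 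There is no real obstacle here; the only mild care needed is to make sure the finite version of~(\ref{q:xtonT}) is invoked correctly for $\widetilde{T}$ and that $\varphi$ on finite blocks factors through $\Phi$ of a single field element, both of which are immediate from the definitions.
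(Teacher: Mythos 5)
Your argument is correct and is essentially the paper's proof: the paper writes $\sum_{i=1}^k \tilde{b}_i(x)\beta^{k-i} = \beta^k x - \widetilde{T}^k x = \beta^k x - T^k x = \sum_{i=1}^k b_i(x)\beta^{k-i}$ (the iterated expansion formula for both transformations, with the tails cancelling by hypothesis) and then applies $\Phi$. Your version just spells out the additivity of $\Phi$ and the role of the $\Gamma_j$ in slightly more detail.
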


\begin{proof}
We have
\[ 
\sum_{i=1}^k \tilde{b}_i(x) \beta^{k-i} = \beta^k x - \widetilde{T}^k x = \beta^k x - T^k x = \sum_{i=1}^k b_i(x) \beta^{k-i}. 
\]
By applying $\Phi$ to this equation, the lemma is proved.
\end{proof}

\begin{lemma}\label{l:finitew}
Let $x \in X$, $y \in J_x$, and $v_1 \cdots v_n b(x)$ be a $T$-admissible sequence.
Then there exists a $T$-admissible sequence $v_1' \cdots v_n' b(y)$ with
\begin{equation}\label{e:replace}
\varphi(v_1' \cdots v_n') = \varphi(v_1 \cdots v_n) + \mathcal{O}(\rho^n),
\end{equation}
where $\rho = \max_{2\le j\le d} |\beta_j| < 1$ and the constant implied by the $\mathcal{O}$-symbol depends only on~$T$.
\end{lemma}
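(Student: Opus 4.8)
The plan is to prove Lemma~\ref{l:finitew} by induction on~$n$, peeling off the \emph{last} digit of the prefix at each step. For $n = 0$ there is nothing to do: take the empty prefix, so that $b(y)$ is trivially $T$-admissible and the estimate (\ref{e:replace}) holds trivially. So assume the statement holds for prefixes of length $n-1$, and let $v_1 \cdots v_n b(x)$ be a $T$-admissible sequence with $y \in J_x$. Since every suffix of a $T$-admissible sequence is again $T$-admissible, $v_n b(x)$ is $T$-admissible, that is, $z := \decdot v_n b(x) = (v_n + x)/\beta \in X_{v_n}$ and $T z = x$. I first claim that the last digit may be kept, i.e.\ that $v_n b(y)$ is $T$-admissible, equivalently that $z' := \decdot v_n b(y) = (v_n + y)/\beta \in X_{v_n}$. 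The lower bound $z' \ge z \ge \ell_{v_n}$ is immediate from $y \ge x$; for the upper bound $z' < r_{v_n}$ one has to check $y < \beta r_{v_n} - v_n = \widetilde{T} r_{v_n}$. But $\widetilde{T} r_{v_n} = \beta r_{v_n} - v_n > \beta z - v_n = T z = x$, and $\widetilde{T} r_{v_n}$ is either a boundary point of~$X$ or, when $r_{v_n}$ is a point of discontinuity of~$T$ (hence $r_{v_n} \in X \cap \widetilde{X}$ with $m_{r_{v_n}} > 1$), it equals $\widetilde{T}^{1} r_{v_n} \in \mathcal{V}$; in either case $y \in J_x$ forces $y < \widetilde{T} r_{v_n}$, proving the claim.

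The idea is now to apply the induction hypothesis to the prefix $v_1 \cdots v_{n-1}$ with $x, y$ replaced by $z, z'$. This would produce a $T$-admissible sequence $v_1' \cdots v_{n-1}' b(z') = v_1' \cdots v_{n-1}' v_n b(y)$ with $\varphi(v_1' \cdots v_{n-1}') = \varphi(v_1 \cdots v_{n-1}) + \mathcal{O}(\rho^{n-1})$, and then $v_1' \cdots v_n' := v_1' \cdots v_{n-1}' v_n$ is the desired sequence: using $\varphi(u_1 \cdots u_n) = M_{\beta}\, \varphi(u_1 \cdots u_{n-1}) + \Phi(u_n)$ we get $\varphi(v_1' \cdots v_n') - \varphi(v_1 \cdots v_n) = M_{\beta}\big(\varphi(v_1' \cdots v_{n-1}') - \varphi(v_1 \cdots v_{n-1})\big)$, and since $\varphi$ takes values in $H$ and $M_{\beta}$ is a contraction on $H$ with spectral radius~$\rho$ (fix a norm on $H$ realising this), an error of size $\mathcal{O}(\rho^{n-1})$ at level $n-1$ yields an error of size $\mathcal{O}(\rho^{n})$ at level~$n$, with a constant depending only on~$T$. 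The one point to verify before applying the induction hypothesis is $z' \in J_z$. Now $T$ restricted to $X_{v_n}$ is the affine increasing map $t \mapsto \beta t - v_n$, which sends $(z, z']$ onto $(x, y]$; moreover it maps every point of $\mathcal{V} \cap (z, z']$ back into~$\mathcal{V}$, with one exception: the $T$-image of $T^{m_p - 1} p$ or $\widetilde{T}^{m_p - 1} p$, for a point of discontinuity~$p$, is the reconvergence point $T^{m_p} p = \widetilde{T}^{m_p} p$, which need not lie in~$\mathcal{V}$. Hence $(z, z'] \cap \mathcal{V} = \emptyset$, i.e.\ $z' \in J_z$, and the induction goes through without any extra contribution to the error --- \emph{provided} no $T$- and $\widetilde{T}$-orbit of a point of discontinuity reconverges inside $(x, y]$.

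In the remaining exceptional case the $T$- and $\widetilde{T}$-orbits of some point of discontinuity~$p$ first meet at a point of $(x, y]$, and then $z' \notin J_z$. Here I would invoke Lemma~\ref{l:replace}: since $\widetilde{T}^{m_p} p = T^{m_p} p$, the block $\tilde{b}_1(p) \cdots \tilde{b}_{m_p}(p)$ may be replaced by $b_1(p) \cdots b_{m_p}(p)$ without changing the value of~$\varphi$ on the corresponding block of digits of~$v$; carrying out this substitution in~$v$, at the position where that block occurs, removes the obstruction and lets the recursion proceed. Because $T$ and $\widetilde{T}$ have only finitely many points of discontinuity and all the relevant $m_p$ are finite (hence bounded by a constant depending only on~$T$), each such correction concerns only a bounded number of digits, so it does not disturb the bookkeeping of the $\mathcal{O}(\rho^n)$-term: any residual discrepancy is confined to a bounded number of digits, and since altering the $k$-th digit of the prefix changes $\varphi(v_1 \cdots v_n) = \sum_{k=1}^{n} M_{\beta}^{n-k} \Phi(v_k)$ by $\mathcal{O}(\rho^{n-k})$ (with $M_{\beta}$ contracting on~$H$ by at most~$\rho$), it contributes only $\mathcal{O}(\rho^{n})$.

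The step I expect to be the main obstacle is the exceptional case: making precise how the $\varphi$-preserving block replacement of Lemma~\ref{l:replace} dovetails with the peeling recursion, in particular verifying that after the substitution the hypothesis $z' \in J_z$ (or an appropriate substitute that lets the induction restart) is genuinely restored, and that corrections triggered at different levels of the recursion do not interact so as to destroy the geometric bound. A minor additional point is the careful choice of a norm on~$H$ making $M_{\beta}$ a genuine contraction with ratio~$\rho$ (possible since the minimal polynomial of~$\beta$ is irreducible, so $M_{\beta}$ has no nontrivial Jordan block on~$H$), so that the constant in $\mathcal{O}(\rho^n)$ does not blow up through the induction.
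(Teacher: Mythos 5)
Your induction peels the prefix from the right, which is the opposite orientation to the paper's proof (the paper locates the \emph{leftmost} position where admissibility fails and replaces the whole suffix of the prefix from there as a block), and this orientation is where the argument breaks. The key claim of your first paragraph --- that the last digit can always be kept, i.e.\ that $y < \widetilde{T} r_{v_n}$ --- is justified only when $r_{v_n}$ is a point of discontinuity lying in $X \cap \widetilde{X}$ (so that $\widetilde{T} r_{v_n} = \widetilde{T}^1 r_{v_n} \in \mathcal{V}$ because $1 < m_{r_{v_n}}$). When $r_{v_n}$ is a right boundary point of $X$, your asserted dichotomy fails: $\widetilde{T}$ does not map boundary points of $X$ to boundary points of $X$, and since $r_{v_n} \notin X \cap \widetilde{X}$ its forward $\widetilde{T}$-orbit is not covered by the definition (\ref{q:v}) of $\mathcal{V}$. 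One can show (as the paper does) that $r_{v_n} = \widetilde{T}^h z^*$ for some discontinuity point $z^* \in X \cap \widetilde{X}$, but then $\widetilde{T} r_{v_n} = \widetilde{T}^{h+1} z^*$ lies in $\mathcal{V}$ only if $h+1 < m_{z^*}$; if $\widetilde{T}^{h+1} z^* = T^{h+1} z^*$ is the reconvergence point, it need not lie in $\mathcal{V} \cup \partial X$, so $y \in J_x$ does not force $y < \widetilde{T} r_{v_n}$ and the last digit may genuinely have to change.

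This is not a repairable detail within your scheme: a forced change of the last digit alters $\varphi(v_1 \cdots v_n)$ by the fixed nonzero vector $\Phi(v_n') - \Phi(v_n)$, which is $\mathcal{O}(1)$ and not $\mathcal{O}(\rho^n)$. The only way to recover the stated bound is to compensate by coordinated changes of the \emph{earlier} digits so that the $\varphi$-value of a whole block is (nearly) preserved --- exactly what Lemma~\ref{l:replace} delivers for the block $\tilde{b}_1(z_k) \cdots \tilde{b}_{n-k}(z_k) \mapsto b_1(z_k) \cdots b_{n-k}(z_k)$ --- but your induction hypothesis at level $n-1$ only controls $\varphi(v_1' \cdots v_{n-1}')$ up to an error, and cannot be asked to hit a prescribed offset cancelling the last-digit change. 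The paper's left-to-right organization sidesteps this: there, the ``right boundary point of $X$'' case is forced to occur at position $k=0$ (since $z_k \notin X$ would contradict $T X_{v_k} \subseteq X$ at any $k \ge 1$), where the cost of the modification is genuinely $\mathcal{O}(\rho^n)$. Your error bookkeeping in the third paragraph has the same flaw: a correction affecting ``a bounded number of digits'' contributes $\mathcal{O}(\rho^{n-k})$ with $k$ the \emph{position}, and for digits near the right end of the prefix (which your recursion treats first) this is $\mathcal{O}(1)$. The separate issue you flag about $z' \in J_z$ and reconvergence points is real as well, but it is secondary to the gap above.
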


\begin{proof}
If $v_1 \cdots v_n b(y)$ is $T$-admissible, then the lemma clearly holds with $v_1' \cdots v_n' = v_1 \cdots v_n$.

Otherwise, let $z \in \widetilde{X}$ be maximal such that $v_1 \cdots v_n \tilde{b}(z)$ is $\widetilde{T}$-admissible.
We have $x < z \le y$ because $z = \sup\{z' \in X:\, v_1 \cdots v_n b(z')\ \mbox{is $T$-admissible}\}$, thus $z \in J_x \subseteq X$ and $z \not\in \mathcal{V}$.
Moreover, $z_k = \decdot v_{k+1} \cdots v_n \tilde{b}(z) \not\in X_{v_{k+1}}$ for some $0 \le k < n$.
Let $k$ be minimal with this property. 

Suppose first that $z_k \in X$.
Then we show
\begin{equation} \label{e:vk}
\widetilde{T}^{n-k} z_k = T^{n-k} z_k.
\end{equation}
Let $i < n-k$ be maximal with $\widetilde{T}^i z_k = T^i z_k$. 
Then (\ref{e:vk}) holds or $\widetilde{T}^{i+1} z_k \ne T^{i+1} z_k$.
In the latter case, we must have $\widetilde{T}^i z_k \in X$.
Since $\widetilde{T}^{n-k-i} \widetilde{T}^i z_k = z \in X \setminus \mathcal{V}$, we obtain that $m_{\widetilde{T}^i z_k} \le n-k-i$.
By the maximality of~$i$, we get $m_{\widetilde{T}^i z_k} = n-k-i$, thus (\ref{e:vk}) holds in this case as well.
By Lemma~\ref{l:replace}, we have $\varphi(b_1(z_k) \cdots b_{n-k}(z_k)) = \varphi(v_{k+1} \cdots v_n)$ and thus
\[
\varphi\big(v_1 \cdots v_k b_1(z_k) \cdots b_{n-k}(z_k)\big) = \varphi(v_1 \cdots v_n).
\]
Since $k$ is chosen minimally, we have $\decdot v_i \cdots v_k b(z_k) \in X_{v_i}$ for all $1 \le i \le k$, hence $v_1 \cdots v_k b(z_k)$ is $T$-admissible by Lemma~\ref{l:Xuk}. 
If $v_1 \cdots v_k b_1(z_k) \cdots b_{n-k}(z_k) b(y)$ is $T$-admissible as well, then Lemma~\ref{l:finitew} holds with 
$v_1' \cdots v_n' = v_1 \cdots v_k b_1(z_k) \cdots b_{n-k}(z_k)$.

Now, suppose that $z_k \not\in X$, i.e., that $z_k$ is a right boundary point of~$X$.
Since $\widetilde{T}^{-1} \{z_k\}$ consists only of right boundary points of sets $X_a$, the minimality of $k$ implies that $k = 0$.
We show that there exists some $z' \in X \cap \widetilde{X}$ and $h \ge 1$ such that $\widetilde{T}^h z' = z_0$.
If such $z'$ and $h$, then every set $\widetilde{T}^{-h} \{z_0\}$, $h \ge 1$, would consist only of right boundary points of~$X$.
Since there are only finitely many of those points, $\tilde{b}(z_0)$ would be purely periodic and $\widetilde{T}^n z_0 \not\in X$, contradicting that $\widetilde{T}^n z_0 = z \in X$.
Therefore, we have $\widetilde{T}^h z' = z_0$ for some $z' \in X \cap \widetilde{X}$ and $h \ge 1$.
As in the preceding paragraph, we obtain $\widetilde{T}^{h+n} z' = T^{h+n} z'$, which yields $\varphi(b_1(z') \cdots b_{h+n}(z')) = \varphi(\tilde{b}_1(z') \cdots \tilde{b}_h(z') v_1 \cdots v_n)$ and 
\[
\varphi\big(b_{h+1}(z') \cdots b_{h+n}(z')\big) = \varphi(v_1 \cdots v_n) + \mathcal{O}(\rho^n).
\]
If $b_{h+1}(z') \cdots b_{h+n}(z') b(y)$ is $T$-admissible, then the lemma is proved.

If $v_1 \cdots v_k b_1(z_k) \cdots b_{n-k}(z_k)$ and $b_{h+1}(z') \cdots b_{h+n}(z')$, respectively, is not the desired sequence $v_1' \cdots v_n'$, then we iterate with this sequence as new $v_1 \cdots v_n$ and $z$ as new~$x$.
Since $z > x$ and $A^n$ is a finite set, the algorithm terminates. 
The number of instances of $k = 0$ is bounded by the number of right boundary points of~$X$, thus the error term only depends on $T$ and is $\mathcal{O}(\rho^n)$.
\end{proof}

Lemma~\ref{l:finitew} holds in the other direction too.

\begin{lemma} \label{r:finitew}
Let $x \in X$, $y \in J_x$, and $v_1 \cdots v_n b(y)$ be a $T$-admissible sequence.
Then there exists a $T$-admissible sequence $v_1' \cdots v_n' b(x)$ with $\varphi(v_1' \cdots v_n') = \varphi(v_1 \cdots v_n) + \mathcal{O}(\rho^n)$.
\end{lemma}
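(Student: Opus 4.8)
The plan is to mimic the proof of Lemma~\ref{l:finitew}, running the replacement argument in the reverse direction. Given a $T$-admissible sequence $v_1 \cdots v_n b(y)$ with $x \le y$ and $(x,y] \cap \mathcal V = \emptyset$, the goal is to "lower" the suffix $b(y)$ to $b(x)$ while only perturbing the prefix $\varphi(v_1\cdots v_n)$ by $\mathcal O(\rho^n)$. First I would dispose of the trivial case: if $v_1 \cdots v_n b(x)$ happens to be $T$-admissible, take $v_1' \cdots v_n' = v_1 \cdots v_n$ and we are done. Otherwise, since $v_1 \cdots v_n b(y)$ is $T$-admissible but $v_1 \cdots v_n b(x)$ is not, and $x < y$ with no point of $\mathcal V$ strictly between, there is a smallest index $0 \le k < n$ for which $z_k := {\decdot}\, v_{k+1} \cdots v_n b(x) \notin X_{v_{k+1}}$. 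Here the roles of $T$ and $\widetilde T$ (equivalently, of left and right boundaries) are swapped relative to Lemma~\ref{l:finitew}: now $z_k$ can fail to lie in $X_{v_{k+1}}$ because it is too small, so one should compare against the left-continuous orbit, i.e.\ examine whether ${\decdot}\, v_{k+1}\cdots v_n b(x)$ sits on (or below) the left boundary $\ell$ of the relevant interval.

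The key step is the analogue of equation~(\ref{e:vk}): letting $z \in X$ be minimal such that $v_1 \cdots v_n b(z)$ is $T$-admissible (so $x \le z < y$, whence $z \in J_x$ and $z \notin \mathcal V$), one shows $\widetilde T^{\,n-k} z_k = T^{\,n-k} z_k$ by taking $i < n-k$ maximal with $\widetilde T^{\,i} z_k = T^{\,i} z_k$ and using that $\widetilde T^{\,n-k-i}(\widetilde T^{\,i} z_k) = z \in X \setminus \mathcal V$ forces $m_{\widetilde T^{\,i} z_k} \le n-k-i$, which by maximality of $i$ is an equality. Lemma~\ref{l:replace} then gives $\varphi(b_1(z_k)\cdots b_{n-k}(z_k)) = \varphi(v_{k+1}\cdots v_n)$, so replacing the tail block $v_{k+1}\cdots v_n$ by $b_1(z_k)\cdots b_{n-k}(z_k)$ changes nothing in $\varphi(v_1 \cdots v_n)$, and $v_1 \cdots v_k b(z_k)$ is $T$-admissible by minimality of $k$ together with Lemma~\ref{l:Xuk}. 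As in Lemma~\ref{l:finitew}, the subcase where $z_k$ falls on a boundary point of $X$ (rather than inside $X$) needs the separate argument pulling back along $\widetilde T$ to some $z' \in X \cap \widetilde X$ with $\widetilde T^{\,h} z' = z_k$, $h \ge 1$; the finiteness of the set of boundary points rules out this happening indefinitely and keeps the accumulated error at $\mathcal O(\rho^n)$.

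Finally, if the sequence produced ($v_1 \cdots v_k b_1(z_k)\cdots b_{n-k}(z_k)$, or its boundary-case variant) is still not admissible when $b(x)$ is appended on the right, one iterates with this sequence as the new $v_1\cdots v_n$ and $z$ as the new $y$ — here the monotonicity runs the other way, with the new value strictly below the old $y$ but still $\ge x$ — so termination follows from the finiteness of $A^n$, and the number of boundary-case instances is bounded by the number of right (resp.\ left) boundary points of $X$, keeping the total error $\mathcal O(\rho^n)$ with an implied constant depending only on~$T$. The main obstacle is getting the left/right (i.e.\ $T$ versus $\widetilde T$) bookkeeping exactly right in the boundary subcase, since the asymmetry between left-closed and right-open intervals means the "reversed" argument is not a verbatim mirror of Lemma~\ref{l:finitew}; everything else is a routine transcription of that proof with the inequality $x < z$ replaced by $x \le z < y$.
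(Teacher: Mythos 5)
Your overall strategy --- mirroring the proof of Lemma~\ref{l:finitew} with the roles of $T$ and $\widetilde T$ exchanged --- is exactly the paper's, and you correctly identify that the whole difficulty is the left/right bookkeeping. But that bookkeeping is precisely where your write-up breaks down, at the central step. You set $z_k = \decdot v_{k+1}\cdots v_n b(x)$ and take the minimal $k$ with $z_k \notin X_{v_{k+1}}$; such a $k$ does exist (by Lemma~\ref{l:Xuk}), but it is the wrong object. The paper instead sets $z_k = \decdot v_{k+1}\cdots v_n b(z)$, where $z$ is the \emph{minimal} point with $v_1\cdots v_n b(z)$ $T$-admissible, and takes the minimal $k$ with $z_k \notin \widetilde X_{v_{k+1}}$. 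This is not cosmetic. With the paper's $z_k$ one has $b_i(z_k) = v_{k+i}$ and $T^{n-k} z_k = z$, so the analogue of~(\ref{e:vk}), $\widetilde T^{n-k} z_k = T^{n-k} z_k$, is meaningful, Lemma~\ref{l:replace} applies, and the useful replacement block is the \emph{left-continuous} expansion $\tilde b_1(z_k)\cdots\tilde b_{n-k}(z_k)$ --- the one that can be followed by $b(z')$ for $z'$ slightly below $z$. With your $z_k$, built from $b(x)$, the word $v_{k+1}\cdots v_n b(x)$ is by assumption \emph{not} the $T$-expansion of $z_k$, so neither $T^{n-k}z_k$ nor $\widetilde T^{n-k}z_k$ has anything to do with $z$, and the identity you quote from Lemma~\ref{l:replace}, $\varphi(b_1(z_k)\cdots b_{n-k}(z_k)) = \varphi(v_{k+1}\cdots v_n)$, is unjustified; for the correct $z_k$ that identity is vacuous (there $b_i(z_k)=v_{k+i}$), and the non-trivial statement needed is $\varphi(\tilde b_1(z_k)\cdots\tilde b_{n-k}(z_k)) = \varphi(v_{k+1}\cdots v_n)$.

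The remaining swaps also go the wrong way. In the boundary subcase the failure is $z_0 \notin \widetilde X$, i.e.\ $z_0$ is a \emph{left} boundary point of $X$, and one pulls back along $T$ (finding $z'$ with $T^h z' = z_0$), not along $\widetilde T$ as you wrote. Moreover the minimal admissible $z$ satisfies $x < z \le y$ (strict on the left because $v_1\cdots v_n b(x)$ is not admissible, with equality possible on the right), not $x \le z < y$; you need $z \in (x,y]$ to deduce $z \notin \mathcal V$ from $y \in J_x$. None of this invalidates the strategy, but as written your replacement step neither produces an admissible sequence nor preserves $\varphi$ up to $\mathcal O(\rho^n)$, so the proof has a genuine gap until the $T$/$\widetilde T$ exchanges are carried out consistently.
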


\begin{proof}
The proof is similar to the proof of Lemma~\ref{l:finitew}.
If $v_1 \cdots v_n b(x)$ is not $T$-admissible, then let $z \in X$ be minimal such that $v_1 \cdots v_n b(z)$ is $T$-admissible.
We have $x < z \le y$, thus $z \in X \cap \widetilde{X}$ and $z \not\in \mathcal{V}$.
Let $k \ge 0$ be minimal such that $z_k = \decdot v_{k+1} \cdots v_n b(z) \not\in \widetilde{X}_{v_{k+1}}$.

If $z_k \in \widetilde{X}$, then we get $\widetilde{T}^{n-k} z_k = T^{n-k} z_k$ and $\varphi(v_1 \cdots v_k \tilde{b}_1(z_k) \cdots \tilde{b}_{n-k}(z_k)) = \varphi(v_1 \cdots v_n)$, as above.
Furthermore, for every $z' < z$ which is sufficiently close to~$z$, $v_1 \cdots v_k \tilde{b}_1(z_k) \cdots \tilde{b}_{n-k}(z_k) b(z')$ is $T$-admissible.
If $z_k \not\in \widetilde{X}$, then $k = 0$ and there exists some $z' \in X \cap \widetilde{X}$, $h \ge 1$, such that $T^h z' = z_0$.
We obtain $\widetilde{T}^{h+n} z' = T^{h+n} z'$ and $\varphi(\tilde{b}_{h+1}(z') \cdots \tilde{b}_{h+n}(z')) = \varphi(v_1 \cdots v_n) + \mathcal{O}(\rho^n)$.

If the sequence $v_1 \cdots v_k \tilde{b}_1(z_k) \cdots \tilde{b}_{n-k}(\ell_{v_k}) b(x)$ and $\tilde{b}_{h+1}(z') \cdots \tilde{b}_{h+n}(z') b(x)$, respectively, is not $T$-admissible, then we iterate with this sequence as new $v_1 \cdots v_n$ and $z$ as new~$y$.
Note that the new sequence $v_1 \cdots v_n b(y)$ is not $T$-admissible, but $v_1 \cdots v_n b(z')$ is $T$-admissible for every $z'$ in a non-empty interval $[z,y)$.
As above, the algorithm terminates.
\end{proof}

\begin{proof}[Proof of Proposition~\ref{p:S}]
Let $x, y \in X$ satisfy the conditions of the proposition, and take $w = (w_k)_{k\le0}$ with $w \cdot b(x) \in \mathcal{S}$.
For $n \ge 1$, let $w_{-n+1}^{(n)} \cdots w_0^{(n)} = v_1' \cdots v_n'$ be the sequence given by Lemma~\ref{l:finitew} for $v_1 \cdots v_n = w_{-n+1} \cdots w_0$.
By the surjectivity of~$T$, we can extend this sequence to a sequence $w^{(n)} = (w_k^{(n)})_{k\le0}$ with $w^{(n)} \cdot b(y) \in \mathcal{S}$.
Then we have $\varphi(w^{(n)}) = \varphi(w) + \mathcal{O}(\rho^n)$, hence $\lim_{n\to\infty} \varphi(w^{(n)}) = \varphi(w)$. 
Since $\varphi(w^{(n)}) \in \mathcal{D}_y$ and $\mathcal{D}_y$ is compact, we obtain that $\varphi(w) \in \mathcal{D}_y$, hence $\mathcal{D}_x \subseteq \mathcal{D}_y$.
By Lemma~\ref{r:finitew}, we also obtain that $\mathcal{D}_y \subseteq \mathcal{D}_x$.
Therefore, $\mathcal{D}_x = \mathcal{D}_y$ for all $y \in J_x$. 
If $\mathcal{V}$ is finite, then $X = \bigcup_{x\in\mathcal{V}} J_x$, and (\ref{e:shape}) follows from (\ref{e:Dx}).
\end{proof}

The sets $\mathcal{D}_x$ can be subdivided according to the following lemma.

\begin{lemma} \label{l:subdivision}
For every $x \in X$, we have
\begin{equation} \label{e:decomposition}
\mathcal{D}_x = \bigcup_{y\in T^{-1}\{x\}} \Big(M_{\beta} \mathcal{D}_y + \Phi\big(b_1(y)\big)\Big).
\end{equation}
If $\mathcal{V}$ is finite, then this union is disjoint up to sets of measure zero (with respect to $\lambda^{d-1}$). 
\end{lemma}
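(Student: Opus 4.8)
The plan is to establish the set identity (\ref{e:decomposition}) by ``re-cutting'' a two-sided sequence one position further to the left, and then to deduce the measure statement from a short volume computation combined with Proposition~\ref{p:S}.

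\emph{The set identity.} Fix $x \in X$. Any left-infinite $w = (w_k)_{k\le0}$ with $w \cdot b(x) \in \mathcal{S}$ factors uniquely as $w = w'\,w_0$; write $a = w_0$ and $y = \decdot a\, b(x)$. The word $a\, b(x)$ is $T$-admissible, being a right-truncation of $w\cdot b(x)$, so by Lemma~\ref{l:Xuk} we get $y \in X_a$, $Ty = x$, and $b(y) = a\, b(x)$; moreover $a$ alone determines $y$ among the preimages of $x$, since $Ty = Ty'$ and $b_1(y) = b_1(y')$ force $y = y'$. Conversely, for every $y \in T^{-1}\{x\}$ and every $w'$ with $w'\cdot b(y) \in \mathcal{S}$, the sequence $w'\,b_1(y)$ prepends validly to $b(x)$, because $b(y) = b_1(y)\,b(x)$. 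So for each $y \in T^{-1}\{x\}$ there is a bijection $\{w : w\cdot b(x)\in\mathcal S,\ w_0 = b_1(y)\} \leftrightarrow \{w' : w'\cdot b(y)\in\mathcal S\}$, under which $\varphi(w) = M_\beta\,\varphi(w') + \Phi(b_1(y))$ directly from (\ref{q:phiseq}). As $TX = X$ and $A$ is finite, $T^{-1}\{x\}$ is nonempty and finite, and since every $w$ has a last letter, applying $\varphi$ and taking the union over $y \in T^{-1}\{x\}$ gives (\ref{e:decomposition}).

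\emph{Disjointness for almost every $x$.} Because $M_\beta$ acts on $H$ with Jacobian $|\beta_2\cdots\beta_d| = \beta^{-1}$ (recall $|c_d| = 1$), (\ref{e:decomposition}) yields the pointwise bound $\lambda^{d-1}(\mathcal D_x) \le \beta^{-1}\sum_{y\in T^{-1}\{x\}}\lambda^{d-1}(\mathcal D_y)$ for every $x$, with equality precisely when the union in (\ref{e:decomposition}) is $\lambda^{d-1}$-a.e.\ disjoint. Integrating over $x \in X$ and substituting $y = (x+a)/\beta$ on each $T(X_a)$ (Jacobian $\beta$) transforms the right-hand side into $\int_X\lambda^{d-1}(\mathcal D_y)\,dy$, the same as the left-hand side; and this common value is finite, because identifying $\mathbb R^d$ with $\mathbb R\mathbf v_1 \oplus H$ and using (\ref{e:Dx}) and Fubini shows $\int_X\lambda^{d-1}(\mathcal D_x)\,dx$ to be a positive multiple of $\lambda^d(\widehat X) \le \lambda^d(\widehat Y) < \infty$. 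Hence the pointwise bound is an equality for Lebesgue-a.e.\ $x$, which is the asserted a.e.-disjointness for a.e.\ $x$. (The same conclusion can be read off Lemma~\ref{l:disjoint} by slicing $\widehat T\widehat X_a$ and $\widehat T\widehat X_{a'}$ over $x$; note that $\mathcal V$ has played no role so far.)

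\emph{From a.e.\ $x$ to every $x$ when $\mathcal V$ is finite.} This is where the hypothesis on $\mathcal V$ enters, through Proposition~\ref{p:S}. Fix $x \in X$. For each of the finitely many $a \in A$ with $x \in T(X_a)$, the point $(x+a)/\beta$ lies in $X_a$, hence in one member of the finite partition $\{J_{x_0}\}_{x_0\in\mathcal V}$ of $X$; choose $y > x$ in $X$ so close to $x$ that each $(y+a)/\beta$ lies in the \emph{same} member of this partition as $(x+a)/\beta$, and so that $y$ belongs to the full-measure set provided by the previous paragraph. Since the intervals comprising $T(X_a)$ are left-closed, $\{a : x\in T(X_a)\} \subseteq \{a : y\in T(X_a)\}$; and for $a$ in the smaller set, Proposition~\ref{p:S} gives $\mathcal D_{(x+a)/\beta} = \mathcal D_{(y+a)/\beta}$, whence $M_\beta\mathcal D_{(x+a)/\beta} + \Phi(a) = M_\beta\mathcal D_{(y+a)/\beta} + \Phi(a)$. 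Thus the decomposition (\ref{e:decomposition}) at $x$ is a sub-family of the one at $y$, which is $\lambda^{d-1}$-a.e.\ disjoint by the choice of $y$; a sub-family of a $\lambda^{d-1}$-a.e.\ disjoint family is again $\lambda^{d-1}$-a.e.\ disjoint, so the same holds at $x$. I expect this passage from ``a.e.\ $x$'' to ``every $x$'' to be the step demanding the most care, since it is the only place where finiteness of $\mathcal V$ is used and it relies on the precise piecewise structure furnished by Proposition~\ref{p:S}.
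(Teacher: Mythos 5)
Your proof is correct, and both halves rest on the same ingredients as the paper's. The set identity is argued exactly as in the paper (re-cutting the two-sided sequence one letter to the left and using $\varphi(w a)=M_\beta\varphi(w)+\Phi(a)$), just with more detail. For the disjointness statement the paper's route is slightly different in organization: it takes the two cylinders $(J_y\cap X_{b_1(y)})\mathbf{v}_1-\mathcal{D}_y\subseteq\widehat{X}_{b_1(y)}$ and $(J_{y'}\cap X_{b_1(y')})\mathbf{v}_1-\mathcal{D}_{y'}\subseteq\widehat{X}_{b_1(y')}$ furnished by Proposition~\ref{p:S}, applies Lemma~\ref{l:disjoint} to see that their images under $\widehat{T}$ meet in a $\lambda^d$-null set, and then exploits that these images are product sets over the common nondegenerate interval $TJ\cap TJ'=[x,z)$ to read off $\lambda^{d-1}$-nullity of the fibre intersection at the given $x$ in one step --- no separate passage from ``almost every $x$'' to ``every $x$'' is required. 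Your fibre-wise volume count is the same unimodularity argument ($|\det M_\beta|=1$, so $M_\beta$ contracts $\lambda^{d-1}$ on $H$ by exactly $\beta^{-1}$) disintegrated over $x$, and your final approximation step is sound: since the sets $T(X_a)$ and the intervals $J_{x_0}$ are left-closed and right-open, moving slightly to the right of $x$ only enlarges the index set of the decomposition and, by Proposition~\ref{p:S}, does not change the sets $\mathcal{D}_{(x+a)/\beta}$. So you redo by hand, via an a.e.\ statement plus an upgrade, what the paper obtains directly from the product structure; the mathematical content is the same.
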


\begin{proof}
Let $x \in X$. Then for every $a \in A$ for which $a b(x)$ is $T$-admissible, there is a unique $y \in T^{-1}\{x\}$ with $b(y) = a b(x)$. Moreover, for each $w \in {\vphantom{A}}^{\omega}\!A$, we have $w a \cdot b(x) \in \mathcal{S}$ if and only if $w \cdot a b(x) \in \mathcal{S}$. Since $\varphi(w a) = M_{\beta} \varphi(w) + \Phi(a)$, we obtain~(\ref{e:decomposition}).

Assume now that $\mathcal{V}$ is finite.
Let $y, y' \in T^{-1}\{x\}$, $J = J_y \cap X_{b_1(y)}$ and $J' = J_{y'} \cap X_{b_1(y')}$, hence $J - \mathcal{D}_y \subseteq \widehat{X}_{b_1(y)}$ and $J' - \mathcal{D}_{y'} \subseteq \widehat{X}_{b_1(y')}$, by Proposition~\ref{p:S}. 
If $y \ne y'$, then we have $b_1(y) \ne b_1(y')$, thus $\lambda^d(\widehat{T}(J - \mathcal{D}_y) \cap \widehat{T}(J' - \mathcal{D}_{y'})) = 0$ by Lemma~\ref{l:disjoint}. 
We have $T J \cap T J' = [x,z)$ for some $z > x$, hence 
\[
\widehat{T}(J - \mathcal{D}_y) \cap \widehat{T}(J' - \mathcal{D}_{y'}) = (T J \cap T J') \mathbf{v}_1 - \Big(M_{\beta} \mathcal{D}_y + \Phi\big(b_1(y)\big)\Big) \cap \Big(M_{\beta} \mathcal{D}_{y'} + \Phi\big(b_1(y')\big)\Big)
\]
yields that $\lambda^{d-1}((M_{\beta} \mathcal{D}_y + \Phi(b_1(y))) \cap (M_{\beta} \mathcal{D}_{y'} + \Phi(b_1(y')))) = 0$.
\end{proof}

Using Lemma~\ref{l:subdivision}, we will show in Proposition~\ref{p:boundary} that the boundary of every $\mathcal{D}_x$, $x \in X$, has zero measure if $\mathcal{V}$ is finite.
Furthermore, (\ref{e:decomposition}) provides a graph-directed iterated function system (GIFS) in the sense of \cite{MauldinWilliams88,Falconer97} for the sets $\mathcal{D}_x$, $x \in \mathcal{V}$, if $\mathcal{V}$ is finite.
More precisely, there exists a labeled directed graph with set of vertices $\mathcal{V}$ and set of edges $\mathcal{E}$ such that 
\begin{equation} \label{e:GIFS}
\mathcal{D}_x = \bigcup_{(x,x',a) \in \mathcal{E}} \big(M_\beta \mathcal{D}_{x'} + \Phi(a)\big)\quad \mbox{for all}\ x \in \mathcal{V},
\end{equation}
where $(x,x',a)$ is  in $\mathcal{E}$ for $x, x' \in \mathcal{V}$, $a \in A$, if and only  if $(x + a)/\beta \in J_{x'}$.
Note that  the multiplication by $M_\beta$ is a contracting map on~$H$.
Every GIFS has a unique solution with non-empty compact sets, see \cite{MauldinWilliams88,Falconer97}.
The sets $\mathcal{D}_x$, $x \in \mathcal{V}$, form this solution since they are compact by Lemma~\ref{l:compact} and non-empty by the surjectivity of~$T$.

Now, consider the measure $\mu$, defined by $\mu(E) = (\lambda^d \circ \pi^{-1})(E)$ for all measurable sets~$E$. 
If $\mathcal{V}$ is finite, then there exists some constant $c > 0$ such that
\[
\mu(E) = \big(\lambda^d \circ \pi^{-1}\big) \bigg(\bigcup_{x\in\mathcal{V}} J_x \cap E\bigg) = \sum_{x\in\mathcal{V}} c\, \lambda(J_x \cap E)\, \lambda^{d-1}(\mathcal{D}_x) = c \int_E \sum_{x\in\mathcal{V}} \lambda^{d-1}(\mathcal{D}_x)\, 1_{J_x} d\lambda\,.
\]
Hence, the support of $\mu$ is the union of the intervals $J_x$, $x \in \mathcal{V}$, with $\lambda^{d-1}(\mathcal{D}_x) > 0$.
On the support, $\mu$~is absolutely continuous with respect to the Lebesgue measure. 
If the transformation $T$ has a unique invariant measure that is absolutely continuous with respect to the Lebesgue measure, then $\mu$ must be this measure. This is the case for the classical greedy and lazy $\beta$-transformations, and for the greedy and lazy $\beta$-transformations with arbitrary digits as well as for the symmetric $\beta$-transformations from Example~\ref{x:sbetad}.

\subsection{Examples of natural extensions} \label{sec:exampl-natur-extens}

We will discuss some examples. For each example, there is a figure containing the graph of the transformation and the natural extension domain. In the graph of the transformation, dotted lines indicate the orbits of points of importance.

\begin{ex}\label{x:gm}
Let $\beta$ be the golden ratio, i.e., the positive solution of the equation $x^2 - x - 1 = 0$. The other solution of this equation is $\beta_2 = -1/\beta$. Then
\[ 
M_{\beta} = \begin{pmatrix}1 & 1 \\ 1 & 0\end{pmatrix},\quad \mathbf{v}_1 = \frac{1}{\beta^2+1} \begin{pmatrix}\beta^2 \\ \beta\end{pmatrix},\quad \mathbf{v}_2 = \frac{1}{\beta^2+1} \begin{pmatrix}1 \\ -\beta\end{pmatrix}. 
\]
The greedy $\beta$-transformation is given by $A = \{0,1\}$, $X_0 = [0,1/\beta)$, $X_1 = [1/\beta,1)$. 
We have $\widetilde{T} x = T x$ for all $x \in (0,1) \setminus \{1/\beta\}$, $T^k (1/\beta) = 0$ for all $k \ge 1$, $\widetilde{T} (1/\beta) = 1 \not\in X$, $\widetilde{T}(1) = 1/\beta$, thus $\mathcal{V} = \{0, 1/\beta\}$.
The transformation and its natural extension are depicted in Figure~\ref{f:xhat}.

\begin{figure}[ht]
\centering
\includegraphics{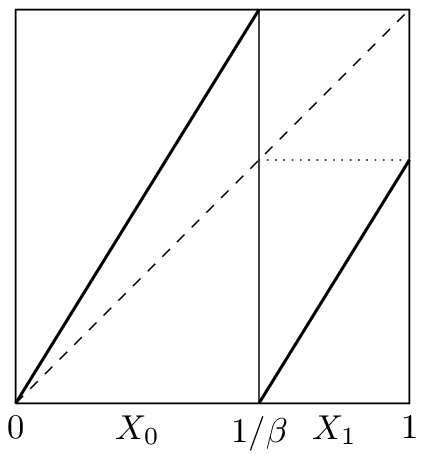}
\quad
\includegraphics{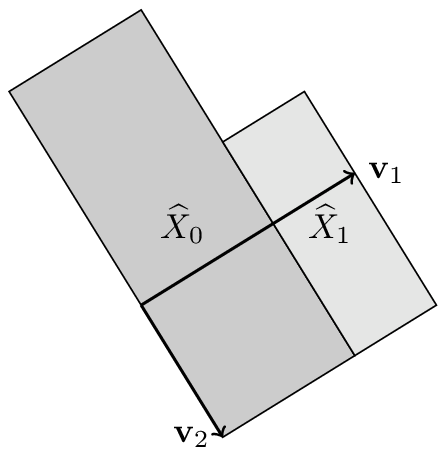}
\includegraphics{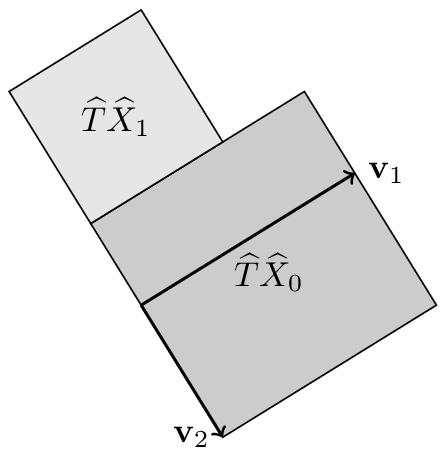}
\caption{The (greedy) $\beta$-transformation, $\beta = (1+\sqrt{5})/2$, and its natural extension.}
\label{f:xhat}
\end{figure}

In general, for the classical greedy $\beta$-transformation, we have $\mathcal{V} = \{0\} \cup \{\widetilde{T}^k(1):\, k \ge 1\} \setminus \{1\}$.
\end{ex}

\begin{ex}\label{x:gm2}
Let $\beta$ be again the golden ratio, but $A = \{-1,0,1\}$, 
\[ 
X_{-1} = \bigg[-\frac{\beta^2+\beta^{-3}}{\beta^2+1}, -\frac{\beta+\beta^{-4}}{\beta^2+1}\bigg),\ X_0 = \bigg[-\frac{\beta+\beta^{-4}}{\beta^2+1}, \frac{\beta+\beta^{-4}}{\beta^2+1}\bigg),\ X_1 = \bigg[\frac{\beta+\beta^{-4}}{\beta^2+1}, \frac{\beta^2+\beta^{-3}}{\beta^2+1}\bigg).
\]
This is an example of a minimal weight transformation with $\alpha = \frac{\beta+\beta^{-4}}{\beta^2+1}$, see Example~\ref{x:minweight}.
The points of discontinuity have the expansions
\begin{gather*}
\tilde{b}(-\alpha) = \bar{1}0010(000\bar{1})^{\omega},\ b(-\alpha) = 0\bar{1}(00\bar{1}0)^{\omega},\
\tilde{b}(\alpha) = 01(0010)^{\omega},\ b(\alpha) = 100\bar{1}0(0001)^{\omega},
\end{gather*}
where we write $\bar{1}$ instead of~$-1$, thus $m_{-\alpha} = m_\alpha = 5$.
Furthermore, $X \setminus \widetilde{X} = \{-\beta \alpha\} = \{T (-\alpha)\}$ and $\widetilde{T} \alpha = \beta \alpha \not\in X$, thus $\mathcal{V}$ consists of 15 of the 16 points $\widetilde{T}^k (-\alpha)$, $T^k (-\alpha)$, $\widetilde{T}^k \alpha$, $T^k \alpha$, $1 \le k < 5$,
\begin{align*}
\mathcal{V} & = \big\{\decdot \bar{1}(00\bar{1}0)^{\omega}, \decdot (\bar{1}000)^{\omega}, \decdot \bar{1}0(0001)^{\omega}, \decdot (0\bar{1}00)^{\omega}, \decdot 0\bar{1}0(0001)^{\omega}, \decdot(00\bar{1}0)^{\omega}, \decdot00\bar{1}0(0001)^{\omega}, \decdot0(000\bar{1})^{\omega}, \\
& \qquad \decdot 0(0001)^{\omega}, \decdot 0010(000\bar{1})^{\omega}, \decdot (0010)^{\omega}, \decdot 010(000\bar{1})^{\omega}, \decdot (0100)^{\omega}, \decdot10(000\bar{1})^{\omega}, \decdot(1000)^{\omega}\big\}.
\end{align*}
In Figure~\ref{f:xhat2}, we can see the transformation and the orbits of $-\alpha, \alpha$ on the left, the natural extension domain $\widehat{X}$ and its decomposition into $\widehat{X}_a$, $a \in A$, as well as the sets $J_x - \mathcal{D}_x$, $x \in \mathcal{V}$, in the middle, and the decomposition of $\widehat{T} \widehat{X}$ on the right.
Since $\widetilde{T}^5 \alpha = T^5 \alpha = \decdot(0001)^{\omega} \not\in \mathcal{V}$, there is no dotted line for this point in the natural extension domain. The point lies between the first and second dotted line, counted from the origin in the direction of $\mathbf v_1$ and is the point above which the boundary between $\widehat T \widehat{X}_1$ and $\widehat T \widehat {X}_0$ makes the step. That this point is not in $\mathcal V$ implies that $\mathcal{D}_x$ does not change here, but we can see that the shape of the sets $\widehat{T} \widehat{X}_1$ and $\widehat{T} \widehat{X}_0$ changes. This means that the decomposition of $\mathcal{D}_x$ according to (\ref{e:decomposition}) changes here.
The same happens at $\widetilde{T}^5 (-\alpha) = T^5 (-\alpha) = \decdot(000\bar{1})^{\omega}$ with $\widehat{T} \widehat{X}_{-1}$ and $\widehat{T} \widehat{X}_0$.
Note also that $-\alpha$ and $\alpha$ are not in~$\mathcal{V}$. 

\begin{figure}[ht]
\centering
\includegraphics{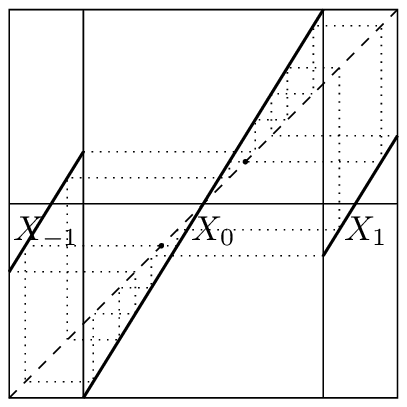}
\quad
\includegraphics{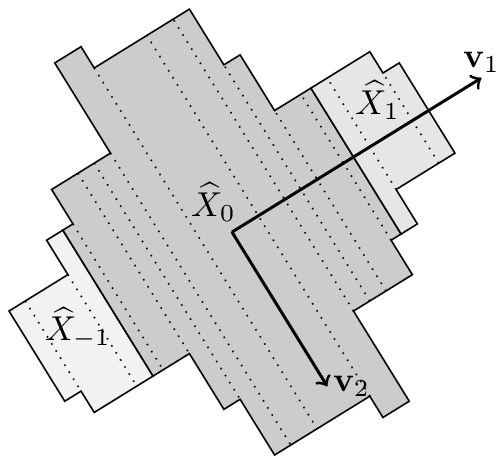}
\includegraphics{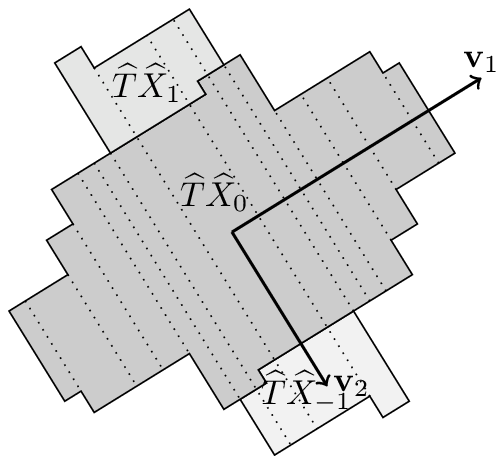}
\caption{The transformation from Example~\ref{x:gm2} and its natural extension.}
\label{f:xhat2}
\end{figure}
\end{ex}

\begin{ex} \label{x:gm3}
Let $T$ be the symmetric $\beta$-transformation for the golden ratio~$\beta$, i.e., $A = \{-1,0,1\}$, $X_{-1} = \big[-\frac{1}{2}, -\frac{1}{2\beta}\big)$, $X_0 = \big[-\frac{1}{2\beta}, \frac{1}{2\beta}\big)$ $X_1 = \big[\frac{1}{2\beta}, \frac{1}{2}\big)$, see Example~\ref{x:sbetad}.
We have 
\[
\textstyle{T^4 \big(\frac{-1}{2\beta}\big) = T^3 \big(\frac{-1}{2}\big) = T^2 \big(\frac{1}{2\beta^2}\big) = T \big(\frac{1}{2\beta}\big) = \frac{-1}{2},\quad \widetilde{T}^4 \big(\frac{1}{2\beta}\big) = \widetilde{T}^3 \big(\frac{1}{2}\big) = \widetilde{T}^2 \big(\frac{-1}{2\beta^2}\big) = \widetilde{T} \big(\frac{-1}{2\beta}\big) = \frac{1}{2},}
\] 
thus $\mathcal{V} = \big\{-\frac{1}{2}, -\frac{1}{2\beta}, -\frac{1}{2\beta^2}, \frac{1}{2\beta^2}, \frac{1}{2\beta}\big\}$.
The transformation and its natural extension are depicted in Figure~\ref{f:xhat3}. Note that $\mathbf{0}$ is a repelling fixed point of the transformation. Here, this implies that, for all $x \in [-\frac{1}{2\beta^2}, \frac{1}{2\beta^2})$, the only sequence $w \in {\vphantom{A}}^{\omega}\!A$ such that $w \cdot b(x)$ is $T$-admissible is $\cdots 00$. Hence, $\mathcal{D}_{x} = \{\mathbf{0}\}$ for all $x \in [-\frac{1}{2\beta^2}, \frac{1}{2\beta^2})$. 

When we construct a multiple tiling for $T$, we want to disregard these sets and we achieve this by restricting $T$ to the support of its invariant measure $\mu$, which is the set $[-\frac{1}{2}, -\frac{1}{2\beta^2}) \cup [\frac{1}{2\beta^2}, \frac{1}{2})$. If we restrict $T$ to this set, $T$ is a right-continuous $\beta$-transformation of which the domain $X$ is not a half-open interval. The set $X_0$ is split into two parts as well. 

For other values of~$\beta$, we refer to Section~\ref{sec:symm-beta-transf}.
\end{ex}

\begin{figure}[ht]
\centering
\includegraphics{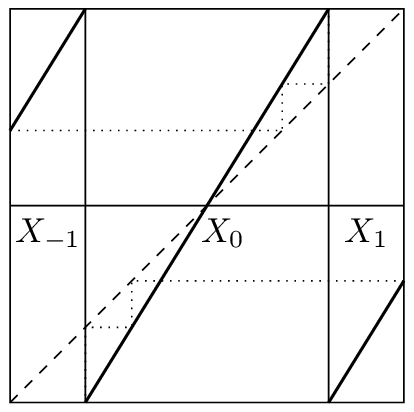}
\quad
\includegraphics{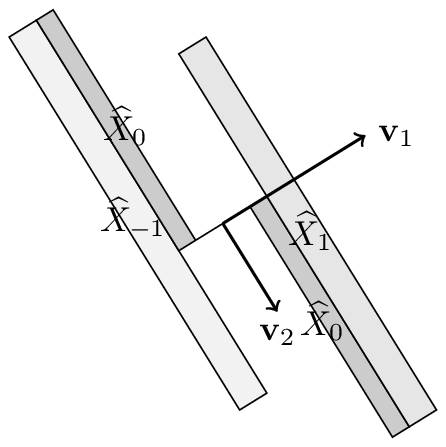}
\includegraphics{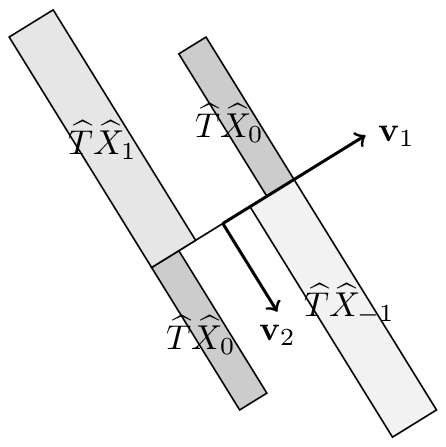}
\caption{The transformation from Example~\ref{x:gm3} and its natural extension.}
\label{f:xhat3}
\end{figure}

\begin{ex} \label{x:gm4}
We will see in Section~\ref{s:tilings} that $\lambda^2(\widehat{X}) = 1$ in Examples~\ref{x:gm}--\ref{x:gm3}.
For a transformation with $\lambda^2(\widehat{X}) > 1$, let $\beta$ be again the golden ratio, now $A = \{-1,1\}$, $X_{-1} = [-1,0)$, $X_1 = [0,1)$. 
Then $\widetilde{T}^3(0) = \widetilde{T}^2(1) = \widetilde{T}(1/\beta) = 0$, $T^3(0) = T^2(-1) = T(-1/\beta) = 0$, thus
$\mathcal{V} = \{-1, -1/\beta, 1/\beta\}$, see Figure~\ref{f:xhat4}.

\begin{figure}[ht]
\centering
\includegraphics{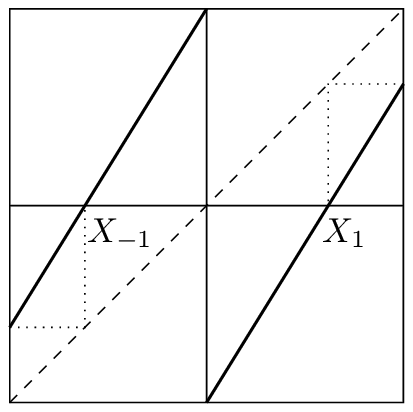}
\quad
\includegraphics{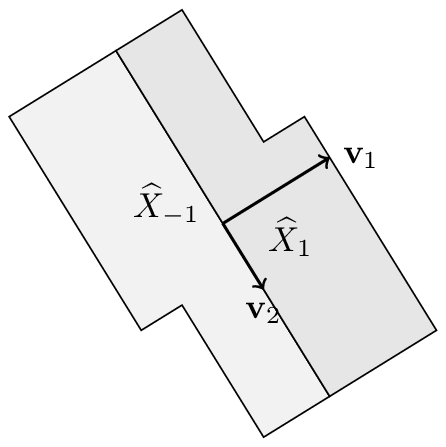}
\includegraphics{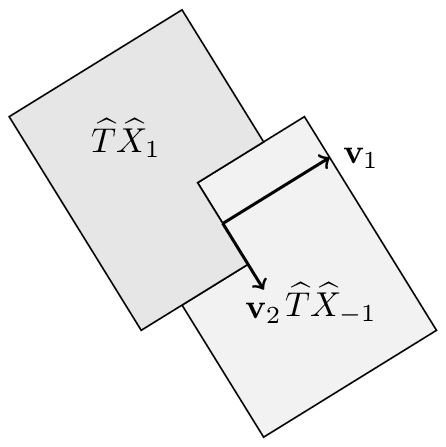}
\caption{The transformation from Example~\ref{x:gm4} and its natural extension.}
\label{f:xhat4}
\end{figure}
\end{ex}

\begin{ex}\label{x:mtribo}
Consider now a minimal weight transformation with the Tribonacci number~$\beta$, which is the real solution of the equation $x^3 - x^2 - x - 1 = 0$. 
Choose $\alpha = \frac{1}{\beta+1}$, i.e., let $A = \{-1,0,1\}$, $X_{-1} = \big[\frac{-\beta}{\beta+1}, \frac{-1}{\beta+1}\big)$, $X_0 = \big[\frac{-1}{\beta+1}, \frac1{\beta+1}\big)$, $X_1 = \big[\frac1{\beta+1}, \frac\beta{\beta+1}\big)$.
Then $\tilde{b}(-\alpha) = \bar{1}(010)^{\omega}$, $b(-\alpha) = (0\bar{1}0)^{\omega}$, $\tilde{b}(\alpha) = (010)^{\omega}$, $b(\alpha) = 1(0\bar{1}0)^{\omega}$, thus $\mathcal{V} = \big\{\frac{-\beta}{\beta+1}, \frac{-1}{\beta+1}, \frac{-1/\beta}{\beta+1}, \frac{1/\beta}{\beta+1}, \frac1{\beta+1}\big\}$.
We see the transformation and $\widehat{X}$ in Figure~\ref{f:mtribo}.

\begin{figure}[ht]
\centering
\includegraphics{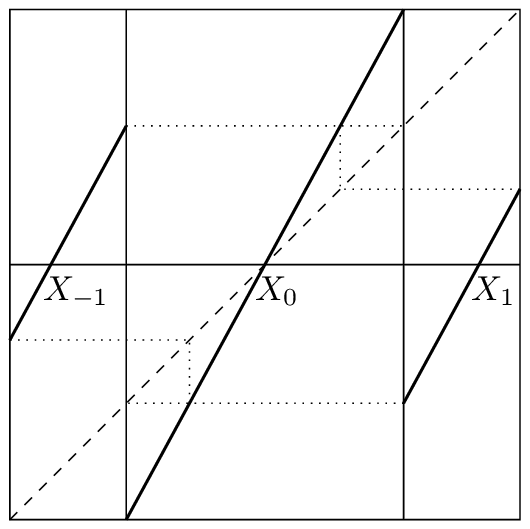}
\qquad
\includegraphics{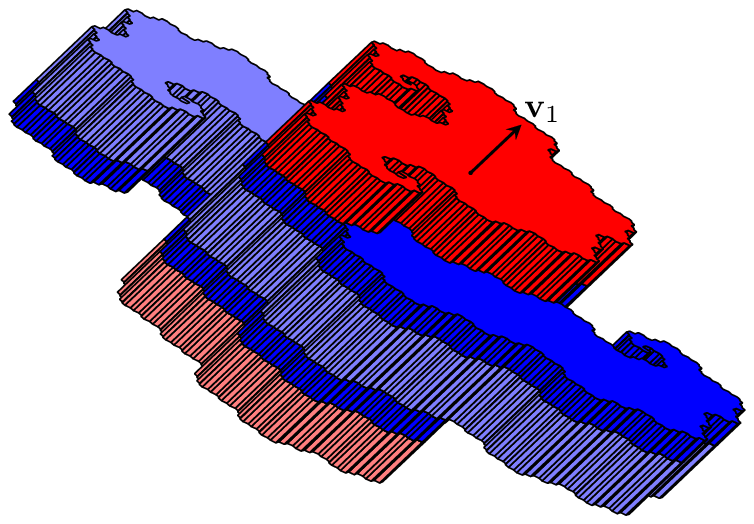}
\caption{The transformation from Example~\ref{x:mtribo} and its natural extension domain.}
\label{f:mtribo}
\end{figure}
\end{ex}

\begin{ex}\label{x:mpisot}
If $\beta$ is the smallest Pisot number, i.e., the real solution of the equation $x^3 - x - 1 = 0$, then $\alpha = \frac{\beta^6}{\beta^8-1}$ provides a minimal weight transformation as in Example~\ref{x:mtribo}.
We have $\tilde{b}(\alpha) = (010^6)^{\omega}$, $b(\alpha) = 1(0^6\bar{1}0)^{\omega}$, thus $\mathcal{V} = \big\{\frac{\pm\beta^k}{\beta^8-1}:\, 0 \le k \le 7\} \setminus \big\{\frac{\beta^7}{\beta^8-1}\}$ since $\tilde{b}(-\alpha)$ and $b(-\alpha)$ are obtained by symmetry from $b(\alpha)$ and $\tilde{b}(\alpha)$. 
See Figure~\ref{f:mpisotnatex}.

\begin{figure}[ht]
\centering
\includegraphics{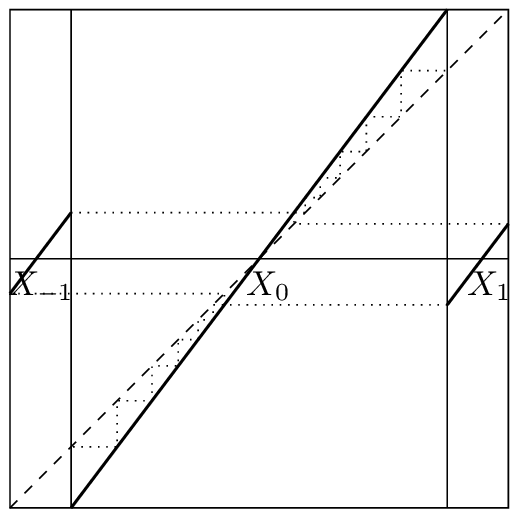}
\qquad
\includegraphics{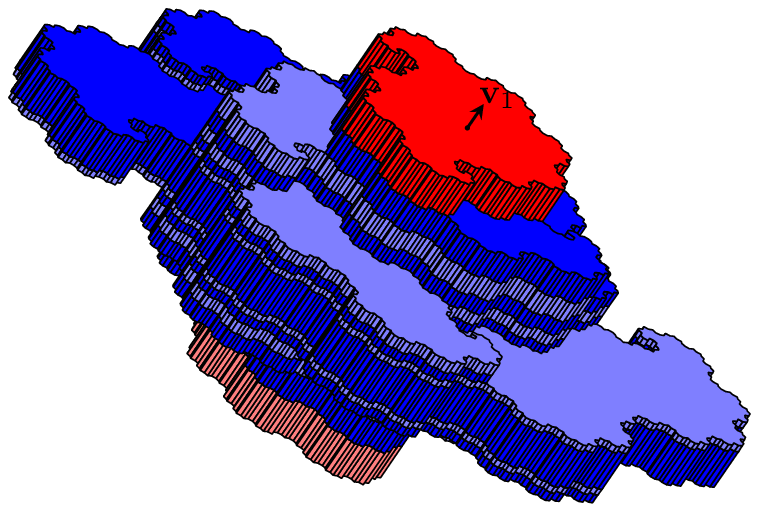}
\caption{The transformation from Example~\ref{x:mpisot} and its natural extension domain.}
\label{f:mpisotnatex}
\end{figure}
\end{ex}

\section{Tilings} \label{s:tilings}

In this section, we consider two types of (multiple) tilings which are closely related. The first one is an aperiodic (multiple) tiling of the hyperplane $H$ by sets $\mathcal{D}_x$ defined in~(\ref{e:Dx}). The second one is a periodic (multiple) tiling of $\mathbb{R}^d$ by the closure $\widehat{Y}$ of the natural extension domain $\widehat{X}$.

As for the natural extensions, $T$~is a right-continuous $\beta$-transformation and $\beta$~a Pisot unit.
We furthermore assume that $A \subset \mathbb{Z}[\beta]$, i.e., that $q = 1$ in Lemma~\ref{l:positivemeasure}, and that the set $\mathcal{V}$, which is defined in (\ref{q:v}), is finite.

\subsection{Tiling of the contracting hyperplane} \label{sec:tiling-contr-hyperpl}

We define tiles in the hyperplane $H$ by 
\[ 
\mathcal{T}_x = \Phi(x) + \mathcal{D}_x \quad\mbox{for all}\quad x \in \mathbb{Z}[\beta] \cap X, 
\] 
with $\Phi$ as in~(\ref{e:PhiPsi}) and $\mathcal{D}_x$ as in~(\ref{e:Dx}).

\begin{remark}
The tiles are often defined in $\mathbb{R}^r \times \mathbb{C}^s$, where $r$ is the number of real conjugates and $2s$ is the number of complex conjugates of~$\beta$.
We clearly have $\mathbb{R}^r \times \mathbb{C}^s \simeq H$.
We choose to work in $H$ because many statements are easier to formulate in $H$ than in $\mathbb{R}^r \times \mathbb{C}^s$.
\end{remark}

The family $\mathcal{T} = \{\mathcal{T}_x\}_{x\in\mathbb{Z}[\beta]\cap X}$ is a \emph{multiple tiling} of the space $H$ if the following properties hold.
\begin{itemize}
\itemsep5pt
\item[(mt1)] 
There are only finitely many different sets $\mathcal{D}_x$, and these sets are compact.
\item[(mt2)] 
The family $\mathcal{T}$ is \emph{locally finite}, i.e., for every $\mathbf{y} \in H$, there is a positive $r$ such that the set $\{x \in \mathbb{Z}[\beta] \cap X:\, \mathcal{T}_x \cap B(\mathbf{y},r) \neq \emptyset\}$ is finite. 
\item[(mt3)] 
$\mathcal{T}$ gives a covering of $H$: for every $\mathbf{y} \in H$, there is a tile $\mathcal{T}_x$, such that $\mathbf{y} \in \mathcal{T}_x$.
\item[(mt4)] 
Every set $\mathcal{D}_x$, $x \in \mathbb{Z}[\beta] \cap X$, is the closure of its interior.
\item[(mt5)] 
There is an integer $m \ge 1$ such that almost all points of $H$ are in exactly $m$ different tiles. The number $m$ is called the \emph{covering degree} of the multiple tiling.
\end{itemize}
A~\emph{tiling} is a multiple tiling with covering degree~$1$. 

The tiles $\mathcal{T}_x$, $x \in \mathbb{Z}[\beta] \cap X$, are translates of a finite collection of compact sets by Lemma~\ref{l:tilecompact}, Proposition~\ref{p:S} and the finiteness of~$\mathcal{V}$. 
This proves~(mt1). 
An important tool for showing (mt2)--(mt5) will be that the set of translation vectors, $\Phi(\mathbb{Z}[\beta] \cap X)$, is a \emph{Delone set} in~$H$, i.e., that it is uniformly discrete and relatively dense in~$H$. Precisely,
\begin{itemize}
\itemsep5pt
\item 
A~set $Z$ is \emph{relatively dense} in $H$ if there is an $R>0$, such that, for every $\mathbf{y} \in H$, $B(\mathbf{y}, R) \cap Z \neq \emptyset$.
\item 
A~set $Z$ is \emph{uniformly discrete} if there is an $r>0$ such that, for every $\mathbf{y} \in Z$, the set $B(\mathbf{y}, r) \cap Z$ contains only one element.
\end{itemize}
In \cite{Moody97}, Moody studied, among other things, Delone sets. He gave a detailed exposition of Meyer's theory, which was developed in~\cite{Meyer72}. According to Meyer, model sets for cut and project schemes are Delone. We will use this to prove Lemma~\ref{l:udrd}. 
We also need the following lemma.

\begin{lemma} \label{l:phi}
The map $\Phi:\, \mathbb{Q}(\beta) \to H$ is injective.
\end{lemma}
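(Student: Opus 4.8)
The plan is to reduce injectivity to the statement that $\Phi(z)=0$ forces $z=0$, and then to invoke the linear independence of the conjugate eigenvectors. First I would note that, because each Galois isomorphism $\Gamma_j$ is additive, so is $\Phi$; hence $\Phi(x)=\Phi(y)$ is equivalent to $\Phi(x-y)=0$, and it suffices to prove that the kernel of $\Phi$ on $\mathbb{Q}(\beta)$ is trivial.

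Next I would take $z\in\mathbb{Q}(\beta)$ with $\Phi(z)=\sum_{j=2}^{d}\Gamma_j(z)\,\mathbf{v}_j=0$ and assume, for contradiction, that $z\neq0$. Since $\beta$ is a Pisot unit there is no algebraic unit of degree one exceeding~$1$, so $d\geq2$ and the sum is non-empty; moreover the minimal polynomial of $\beta$ is irreducible over~$\mathbb{Q}$ and therefore separable, so the conjugates $\beta_1,\dots,\beta_d$ are pairwise distinct. Consequently the eigenvectors $\mathbf{v}_1,\dots,\mathbf{v}_d$ of the companion matrix $M_\beta$ — which are non-zero by construction, as they sum to $\mathbf{e}_1$ — are linearly independent over~$\mathbb{C}$, and in particular $\mathbf{v}_2,\dots,\mathbf{v}_d$ are. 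On the other hand each $\Gamma_j\colon\mathbb{Q}(\beta)\to\mathbb{Q}(\beta_j)$ is a field isomorphism, so $z\neq0$ implies $\Gamma_j(z)\neq0$ for every $j$; thus $\Phi(z)$ is a non-trivial $\mathbb{C}$-linear combination of $\mathbf{v}_2,\dots,\mathbf{v}_d$ and cannot vanish. This contradiction shows $z=0$, hence $\Phi$ is injective.

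Alternatively, one may deduce the lemma directly from Lemma~\ref{l:zd}: if $\Phi(x)=\Phi(y)$, then $\Psi(x-y)=\Psi(x)-\Psi(y)=(x-y)\,\mathbf{v}_1\in\mathbb{Q}^d$, and since $\beta$ is irrational and $\nu_1\neq0$ the line $\mathbb{R}\mathbf{v}_1$ meets $\mathbb{Q}^d$ only at the origin, forcing $x=y$. I do not expect a genuine obstacle here; the only points needing care are the separability of the minimal polynomial (which gives distinctness of the $\beta_j$, hence linear independence of the $\mathbf{v}_j$) and the fact that the $\mathbf{v}_j$ are bona fide non-zero eigenvectors — both of which are already part of the set-up in Section~\ref{sec:geom-real-natur}.
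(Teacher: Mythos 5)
Your proposal is correct, and your primary argument takes a genuinely different route from the paper. The paper's proof is the one you sketch as an ``alternative'': it writes $\Phi(x)=\Psi(x)-x\mathbf{v}_1$, uses the bijectivity of $\Psi:\mathbb{Q}(\beta)\to\mathbb{Q}^d$ from Lemma~\ref{l:zd}, and concludes from the $\mathbb{Q}$-linear independence of the entries of $\mathbf{v}_1$ that $(x-y)\mathbf{v}_1\in\mathbb{Q}^d$ forces $x=y$ --- exactly your second paragraph. Your main argument instead works entirely on the conjugate side: additivity reduces the claim to triviality of the kernel, the separability of the minimal polynomial gives distinct $\beta_j$ and hence $\mathbb{C}$-linear independence of the eigenvectors $\mathbf{v}_2,\dots,\mathbf{v}_d$, and the fact that each $\Gamma_j$ is a field embedding gives $\Gamma_j(z)\neq0$ for $z\neq0$, so $\Phi(z)$ is a nontrivial combination of independent vectors. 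This version is self-contained (it does not need Lemma~\ref{l:zd}) and makes visible why $d\ge2$ matters, whereas the paper's version is shorter because it recycles machinery already established for $\Psi$. One tiny imprecision: the $\mathbf{v}_j$ are non-zero because they are eigenvectors (equivalently $\nu_j\neq0$), not merely because they sum to $\mathbf{e}_1$; summing to $\mathbf{e}_1$ alone would not by itself rule out a vanishing summand. This does not affect the validity of the argument.
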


\begin{proof}
Recall that $\Phi(x) = \Psi(x) - x \mathbf{v}_1$, and that $\Psi:\, \mathbb{Q}(\beta) \to \mathbb{Q}^d$ is bijective by Lemma~\ref{l:zd}.
Since the coefficients of $\mathbf{v}_1$ are linearly independent over~$\mathbb{Q}$, $\Phi$~is injective.
\end{proof}

\begin{lemma}\label{l:udrd}
Every set $\Phi(\mathbb{Z}[\beta] \cap E)$, where $E \subset \mathbb{R}$ is bounded and has non-empty interior, is uniformly discrete and relatively dense. 
In particular, this holds for $\Phi(\mathbb{Z}[\beta] \cap X)$. 
\end{lemma}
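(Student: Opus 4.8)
The plan is to realize $\Phi(\mathbb{Z}[\beta]\cap E)$ as a model set coming from a cut-and-project scheme and invoke Meyer's theorem (as exposed by Moody in \cite{Moody97}) to conclude that it is Delone. First I would set up the scheme: the ambient lattice is $\mathbb{Z}^d = \Psi(\mathbb{Z}[\beta])$ sitting inside $\mathbb{R}^d = \mathbb{R}\mathbf{v}_1 \oplus H$ by Lemma~\ref{l:zd}, the ``physical space'' projection is $\pi_1:\mathbb{R}^d\to\mathbb{R}\mathbf{v}_1\simeq\mathbb{R}$, and the ``internal space'' projection is the projection $p$ onto $H$ along $\mathbf{v}_1$, so that $p(\Psi(x)) = \Phi(x)$. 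Since $\beta$ is a Pisot unit, $\pi_1$ restricted to $\mathbb{Z}^d$ is injective with dense image $\mathbb{Z}[\beta]$ in $\mathbb{R}$, and $p$ restricted to $\mathbb{Z}^d$ is also injective by Lemma~\ref{l:phi} and has dense image (the conjugates span $H$). Thus $(\mathbb{R}, H, \mathbb{Z}^d)$ is a genuine cut-and-project scheme.

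Next I would take the window to be $W = \overline{E}$ (or the closure of a slightly larger bounded set with non-empty interior containing $E$); since $E$ is bounded with non-empty interior, $W$ is compact with non-empty interior, which is exactly the hypothesis needed for the associated model set to be Delone. The model set is $\{\Phi(x) : x\in\mathbb{Z}[\beta],\ x\in W\}$, and $\Phi(\mathbb{Z}[\beta]\cap E)$ differs from the model set attached to the window $\overline{E}$ by at most the boundary contribution $\Phi(\mathbb{Z}[\beta]\cap(\overline{E}\setminus E))$. Relative density is inherited from the model set with window the interior of $E$ (a model set with non-empty-interior window is relatively dense), and uniform discreteness is inherited because $\Phi(\mathbb{Z}[\beta]\cap E)\subseteq\Phi(\mathbb{Z}[\beta]\cap\overline{E})$ and a subset of a uniformly discrete set is uniformly discrete. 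Finally, the last sentence follows by applying this to $E = X$, which is a finite union of bounded intervals, hence bounded with non-empty interior.

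The main obstacle, such as it is, is bookkeeping rather than mathematics: one must check that the cut-and-project scheme is non-degenerate, i.e., that $p|_{\mathbb{Z}^d}$ is injective (Lemma~\ref{l:phi}) and that $\pi_1(\mathbb{Z}^d) = \mathbb{Z}[\beta]$ is dense in $\mathbb{R}$ and $p(\mathbb{Z}^d)$ is dense in $H$; the density statements are standard consequences of $\beta$ being an algebraic number of degree $d$ whose conjugates generate $H$. One should also be slightly careful that the statement allows $E$ to be an arbitrary bounded set with non-empty interior, not necessarily an interval or even measurable: this is harmless, since uniform discreteness only uses $E\subseteq\overline{E}$ and relative density only uses $\operatorname{int}(E)\neq\emptyset$, both of which go through a sandwich between the model sets with windows $\operatorname{int}(E)$ and $\overline{E}$.
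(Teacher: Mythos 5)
Your proposal is correct and follows essentially the same route as the paper: realize $\Phi(\mathbb{Z}[\beta]\cap E)$ as a model set for the cut-and-project scheme built on the lattice $\mathbb{Z}^d=\Psi(\mathbb{Z}[\beta])$ split along $\mathbb{R}\mathbf{v}_1\oplus H$, and invoke Proposition~2.6 of Moody. The only differences are cosmetic: you swap the conventional labels (the model set lives in $H$, so $H$ is the physical space and $\mathbb{R}$ the internal space carrying the window), and your sandwich between the windows $\inn(E)$ and $\overline{E}$ is unnecessary since a bounded set with non-empty interior is already an admissible window, exactly as the paper uses $E$ directly.
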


\begin{proof}
By Proposition~2.6 from~\cite{Moody97}, it suffices to show that $\Phi(\mathbb{Z}[\beta] \cap E)$ is a model set.

Define the projection $\pi_H:\, \mathbb{R}^d \to H$ by $\pi_H(\mathbf{x}) = \mathbf{x} - \pi_1(\mathbf{x}) \mathbf{v}_1$, with $\pi_1$ as in Section~\ref{sec:geom-real-natur}, and set $\iota(\mathbf{x}) = (\pi_H(\mathbf{x}), \pi_1(\mathbf{x}))$. 
Then the pair $(H \times \mathbb{R}, \iota(\mathbb{Z}^d))$ is a cut and project scheme, since $\iota(\mathbb{Z}^d)$ is a lattice, $\pi_H$ is injective on $\mathbb{Z}^d$, $\pi_1(\mathbb{Z}^d) = \mathbb{Z}[\beta]$ is dense in $\mathbb{R}$ and $H \simeq \mathbb{R}^{d-1}$.
The injectivity of $\pi_H$ on $\mathbb{Z}^d$ follows again from the linear independence of the coefficients of $\mathbf{v}_1$ over $\mathbb{Q}$.

Now, let $E \subset \mathbb{R}$ be a bounded set with non-empty interior. Then
\[ 
\Phi(\mathbb{Z}[\beta] \cap E) = \big\{\pi_H(\mathbf{x}):\, \mathbf{x} \in \mathbb{Z}^d,\ \pi_1(\mathbf{x}) \in E\big\} 
\]
is a model set, and therefore a Delone set by Proposition~2.6 from~\cite{Moody97}.
\end{proof}

\begin{cor}\label{c:locallyfinite}
The family $\{\mathcal{T}_x\}_{x\in\mathbb{Z}[\beta]\cap X}$ is locally finite.
\end{cor}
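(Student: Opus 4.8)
The plan is to deduce Corollary~\ref{c:locallyfinite} directly from the fact, established in Lemma~\ref{l:udrd}, that the translation set $\Phi(\mathbb{Z}[\beta]\cap X)$ is relatively dense and, more importantly for this purpose, uniformly discrete. First I would recall from (mt1) that the tiles $\mathcal{T}_x = \Phi(x) + \mathcal{D}_x$ are translates of the finitely many compact sets $\mathcal{D}_x$, $x\in\mathcal{V}$; hence there is a single bounded set, say a ball $B(\mathbf{0},C)$ with $C = \max_{x\in\mathcal{V}}\sup\{|\mathbf{z}|:\mathbf{z}\in\mathcal{D}_x\}$, containing every $\mathcal{D}_x$. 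Consequently, if $\mathcal{T}_x\cap B(\mathbf{y},r)\neq\emptyset$, then $\Phi(x)\in B(\mathbf{y},r+C)$.

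The key step is then purely set-theoretic: fix $\mathbf{y}\in H$ and take $r>0$ arbitrary. The set $\{x\in\mathbb{Z}[\beta]\cap X:\mathcal{T}_x\cap B(\mathbf{y},r)\neq\emptyset\}$ injects, via $\Phi$ (injective by Lemma~\ref{l:phi}), into $\Phi(\mathbb{Z}[\beta]\cap X)\cap B(\mathbf{y},r+C)$. Since $\Phi(\mathbb{Z}[\beta]\cap X)$ is uniformly discrete by Lemma~\ref{l:udrd}, there is $r_0>0$ such that any two distinct points of $\Phi(\mathbb{Z}[\beta]\cap X)$ are at distance at least $r_0$; a standard volume argument (the balls of radius $r_0/2$ around these points are pairwise disjoint and all lie in the bounded ball $B(\mathbf{y},r+C+r_0/2)$) shows that a uniformly discrete set meets any bounded set in only finitely many points. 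Therefore $\{x\in\mathbb{Z}[\beta]\cap X:\mathcal{T}_x\cap B(\mathbf{y},r)\neq\emptyset\}$ is finite, which is exactly local finiteness (mt2).

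There is essentially no obstacle here: all the substantive work — that the tiles come in finitely many congruence classes of compact sets, that $\Phi$ is injective, and that $\Phi(\mathbb{Z}[\beta]\cap X)$ is a Delone set — has already been done in Lemmas~\ref{l:tilecompact}, \ref{l:phi} and~\ref{l:udrd} and Proposition~\ref{p:S}. The only thing to be careful about is bookkeeping the two radii: the radius $C$ needed to swallow all the prototiles $\mathcal{D}_x$, and the separation radius coming from uniform discreteness. I would write the argument in two or three sentences, citing Lemma~\ref{l:udrd} for uniform discreteness and noting that a uniformly discrete subset of $H\simeq\mathbb{R}^{d-1}$ has finite intersection with every bounded set.

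\begin{proof}
By (mt1), the sets $\mathcal{D}_x$, $x \in \mathbb{Z}[\beta] \cap X$, are translates of the finitely many compact sets $\mathcal{D}_x$, $x \in \mathcal{V}$.
Hence there is a constant $C > 0$ such that $\mathcal{D}_x \subseteq B(\mathbf{0}, C)$ for all $x \in \mathbb{Z}[\beta] \cap X$, and therefore $\mathcal{T}_x = \Phi(x) + \mathcal{D}_x \subseteq B(\Phi(x), C)$.
Fix $\mathbf{y} \in H$ and $r > 0$.
If $\mathcal{T}_x \cap B(\mathbf{y}, r) \neq \emptyset$, then $\Phi(x) \in B(\mathbf{y}, r + C)$.
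Since $\Phi$ is injective by Lemma~\ref{l:phi}, the set $\{x \in \mathbb{Z}[\beta] \cap X:\, \mathcal{T}_x \cap B(\mathbf{y}, r) \neq \emptyset\}$ injects into $\Phi(\mathbb{Z}[\beta] \cap X) \cap B(\mathbf{y}, r + C)$.
By Lemma~\ref{l:udrd}, $\Phi(\mathbb{Z}[\beta] \cap X)$ is uniformly discrete, so there is $r_0 > 0$ such that any two distinct points of this set are at distance at least $r_0$.
The open balls of radius $r_0/2$ centered at the points of $\Phi(\mathbb{Z}[\beta] \cap X) \cap B(\mathbf{y}, r + C)$ are then pairwise disjoint and contained in the bounded set $B(\mathbf{y}, r + C + r_0/2)$; comparing Lebesgue measures in $H \simeq \mathbb{R}^{d-1}$ shows that there are only finitely many such points.
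Hence $\{x \in \mathbb{Z}[\beta] \cap X:\, \mathcal{T}_x \cap B(\mathbf{y}, r) \neq \emptyset\}$ is finite, which proves that $\{\mathcal{T}_x\}_{x\in\mathbb{Z}[\beta]\cap X}$ is locally finite.
\end{proof}
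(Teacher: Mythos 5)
Your proof is correct and follows essentially the same route as the paper, which deduces the corollary in one line from the uniform discreteness of $\Phi(\mathbb{Z}[\beta]\cap X)$, the injectivity of~$\Phi$, and the boundedness of $\|\varphi(w)\|$ over $w\in{\vphantom{A}}^{\omega}\!A$ (the latter giving the same uniform bound on the diameters of the $\mathcal{D}_x$ that you obtain from (mt1)). You have merely spelled out the standard volume argument that a uniformly discrete set meets a bounded set in finitely many points, which the paper leaves implicit.
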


\begin{proof}
This follows immediately from the uniform discreteness of $\Phi(\mathbb{Z}[\beta] \cap X)$, the injectivity of~$\Phi$ and the fact that $\|\varphi(w)\|$, $w \in {\vphantom{A}}^{\omega}\!A$, is bounded. 
\end{proof}

Hence we have~(mt2). The next lemma gives~(mt3). 

\begin{lemma} \label{l:cover}
We have $H = \bigcup_{x\in\mathbb{Z}[\beta]\cap X} \mathcal{T}_x$.
\end{lemma}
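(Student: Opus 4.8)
The plan is to show that every point $\mathbf{y} \in H$ lies in some tile $\mathcal{T}_x = \Phi(x) + \mathcal{D}_x$, exploiting the fact that $\widehat{X}$ has positive Lebesgue measure (Lemmas~\ref{l:compact} and~\ref{l:positivemeasure}) and that it is ``large enough'' in the $\mathbf{v}_1$-direction via Proposition~\ref{p:S}. First I would recall from~(\ref{e:Dx}) and the decomposition $\psi(u) = (\decdot u_1 u_2 \cdots)\mathbf{v}_1 - \varphi(\cdots u_{-1} u_0)$ that $\mathbf{y} \in \mathcal{T}_x$ is equivalent to $-\varphi(w) = \mathbf{y} - \Phi(x)$ for some left-infinite $w$ with $w \cdot b(x) \in \mathcal{S}$; equivalently, writing things in terms of $\psi$, the point $x\mathbf{v}_1 + \mathbf{y} - \Phi(x) = \Psi(x) + (\mathbf{y} - \Phi(x)) - \Phi(x)$\ldots actually the cleanest reformulation is: $\mathbf{y} \in \bigcup_x \mathcal{T}_x$ if and only if the ``vertical'' line $\mathbb{R}\mathbf{v}_1 + \mathbf{y}$ (translated appropriately) meets $\widehat{X}$, because $\widehat{X} = \bigcup_{x \in X}(x\mathbf{v}_1 - \mathcal{D}_x)$ and $\mathcal{D}_x$ depends on $x$ only through $J_x$.

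The key steps, in order. (1) Translate the covering statement into a statement about $\widehat{X}$: using $\widehat{X} = \bigcup_{x\in X}(x\mathbf{v}_1 - \mathcal{D}_x)$ and the relation $\mathcal{T}_x = \Phi(x) + \mathcal{D}_x = \Psi(x) - x\mathbf{v}_1 + \mathcal{D}_x$, reduce $H = \bigcup_{x \in \mathbb{Z}[\beta]\cap X}\mathcal{T}_x$ to showing that for every $\mathbf{z} \in H$ there exists $x \in \mathbb{Z}[\beta]\cap X$ with $\Psi(x) - \mathbf{z} \in \widehat{X}$, i.e.\ that $\widehat{X}$ together with the lattice-like set $\{\Psi(x) : x \in \mathbb{Z}[\beta]\cap X\}$ covers all of $\mathbb{R}\mathbf{v}_1 + H = \mathbb{R}^d$ in the relevant fibered sense. (2) Invoke Lemma~\ref{l:positivemeasure} with $q = 1$ (legitimate since we assumed $A \subset \mathbb{Z}[\beta]$ in this section): $\bigcup_{\mathbf{x}\in\mathbb{Z}^d}(\mathbf{x} + \widehat{Y}) = \mathbb{R}^d$, and by Lemma~\ref{l:zd}, $\mathbb{Z}^d = \Psi(\mathbb{Z}[\beta])$, so $\bigcup_{x\in\mathbb{Z}[\beta]}(\Psi(x) + \widehat{Y}) = \mathbb{R}^d$. (3) Project this covering onto $H$ along $\mathbf{v}_1$ using $\pi_H$: since $\pi_H(\widehat{Y}) = \bigcup_{x\in X}(-\mathcal{T}_x \text{ up to the right translations})$\,---\,more precisely, $\pi_H(x\mathbf{v}_1 - \mathcal{D}_x) = -\mathcal{D}_x$ and $\pi_H(\Psi(x)) = \Phi(x)$\,---\,one gets $H = \bigcup_{x\in\mathbb{Z}[\beta]} (\Phi(x) - \mathcal{D}_x')$ where $\mathcal{D}_x'$ ranges over the finitely many shapes; a sign/symmetry bookkeeping then converts this into the desired $H = \bigcup_{x\in\mathbb{Z}[\beta]\cap X}(\Phi(x) + \mathcal{D}_x)$. (4) Finally restrict from $x \in \mathbb{Z}[\beta]$ to $x \in \mathbb{Z}[\beta]\cap X$: points $x \notin X$ contribute nothing since $\mathcal{D}_x$ is only defined for $x \in X$, and one uses that $\mathbb{Z}[\beta]\cap X$ is still dense enough\,---\,together with compactness of $\widehat{Y}$ and local finiteness (Corollary~\ref{c:locallyfinite})\,---\,so that the covering survives the restriction.

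The main obstacle I expect is step~(3)--(4): matching up the lattice translation covering of $\mathbb{R}^d$ by $\widehat{Y}$ with a covering of $H$ by the tiles, because the fibers of $\pi_H$ over a point of $H$ meet $\Psi(x) + \widehat{Y}$ for a whole range of $x$, and one must ensure that at least one such $x$ lies in $\mathbb{Z}[\beta]\cap X$ (not merely in $\mathbb{Z}[\beta]$) with the contribution landing in the genuine $\mathcal{D}_x$ rather than in a ``virtual'' extension of it. I would handle this by fixing $\mathbf{y} \in H$, using that $\pi_1(\Psi(\mathbb{Z}[\beta])) = \mathbb{Z}[\beta]$ is dense in $\mathbb{R}$ to find $x \in \mathbb{Z}[\beta]$ with $\pi_1(\Psi(x) - \text{(a lift of }\mathbf{y})) \in X$ and $\Psi(x) - (\text{lift}) \in \widehat{Y}$ (possible by the $\mathbb{Z}^d$-covering and compactness), then reading off from $\widehat{X} = \bigcup_{x'\in X}(x'\mathbf{v}_1 - \mathcal{D}_{x'})$ that $\mathbf{y} \in \Phi(x) + \mathcal{D}_{x'}$ for the relevant $x' \in X$, and invoking Proposition~\ref{p:S} ($\mathcal{D}_{x'}$ is constant on $J_{x'}$, and the $J$'s cover $X$ when $\mathcal{V}$ is finite) to reduce to finitely many shapes and conclude $\mathbf{y} \in \bigcup_{x\in\mathbb{Z}[\beta]\cap X}\mathcal{T}_x$. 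A technically cleaner alternative, which I would pursue if the above gets tangled, is to prove the equivalent statement $\bigcup_{x\in\mathbb{Z}[\beta]\cap X}(\Psi(x) + \widehat{Y}) \supseteq \mathbb{R}^d$ directly (rather than over $\mathbb{Z}[\beta]$), by showing the missing translates $x \in \mathbb{Z}[\beta]\setminus X$ are redundant because $T$-surjectivity lets one replace any such $x$ by a $T$-preimage chain landing in $X$, exactly as in the proof of Lemma~\ref{l:positivemeasure}.
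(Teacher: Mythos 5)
Your overall strategy -- deducing the covering of $H$ from the lattice covering $\bigcup_{\mathbf{x}\in\mathbb{Z}^d}(\mathbf{x}+\widehat{Y})=\mathbb{R}^d$ of Lemma~\ref{l:positivemeasure} by slicing along $H$ -- is not circular, and the fiber computation $(\widehat{X}-\Psi(x))\cap H=-\mathcal{T}_x$ for $x\in\mathbb{Z}[\beta]\cap X$ is correct (the paper itself uses this identity in Proposition~\ref{p:tilings}). But there is a genuine gap exactly where you anticipate trouble, and your proposed fixes do not close it. The covering lemma is stated for $\widehat{Y}=\psi(\bar{\mathcal{S}})$, which is the \emph{closure} of $\widehat{X}$, and the fiber $\widehat{Y}\cap(x_0\mathbf{v}_1+H)$ can be strictly larger than $x_0\mathbf{v}_1-\mathcal{D}_{x_0}$: a point of $\widehat{Y}$ over $x_0$ is $\psi(u)$ with $u\in\bar{\mathcal{S}}$, and $-\pi_H(\psi(u))=\varphi(\cdots u_{-1}u_0)$ is only guaranteed to lie in $\mathcal{D}_v$ for some $v$ with $x_0=\sup J_v$ (a limit ``from the left''), since $x\mapsto\mathcal{D}_x$ is constant only on the half-open intervals $J_v=[v,\cdot)$ and need not be left-continuous. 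Such a point gives $\mathbf{y}\in\Phi(x_0)+\mathcal{D}_v$, which is neither $\mathcal{T}_{x_0}$ nor $\mathcal{T}_v$, so membership of $\mathbf{y}-\Psi(x)$ in $\widehat{Y}$ does not yield membership of $\mathbf{y}$ in any tile. Your fallback -- replace $\widehat{Y}$ by $\widehat{X}$ and use density plus closedness of $\bigcup_x\mathcal{T}_x$ -- also fails: what the proof of Lemma~\ref{l:positivemeasure} actually delivers is $\mathbb{Q}^d\subset\bigcup_{\mathbf{x}\in\mathbb{Z}^d}(\mathbf{x}+\widehat{X})$, and $\mathbb{Q}^d$ meets each fiber $y\mathbf{v}_1+H$, $y\in\mathbb{Z}[\beta]$, in the single point $\Psi(y)$; a set dense in $\mathbb{R}^d$ need not have dense trace on a fixed hyperplane, so you cannot conclude that $\bigcup_x(-\mathcal{T}_x)$ is dense in $H$ this way. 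Your ``cleaner alternative'' (discarding translates with $x\notin X$ via $T$-surjectivity) addresses a non-issue -- the fiber condition already forces the relevant index into $\overline{X}$ -- and leaves the $\widehat{Y}$-versus-$\widehat{X}$ problem untouched.

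The paper avoids all of this with a short self-contained argument that you should compare with: set $H'=\bigcup_{x\in\mathbb{Z}[\beta]\cap X}\mathcal{T}_x$ and observe that $M_\beta H'\subseteq H'$ (because $M_\beta(\Phi(x)+\varphi(w))=\Phi(Tx)+\varphi(wb_1(x))$ and $Tx\in\mathbb{Z}[\beta]\cap X$), that $H'$ is relatively dense in $H$ (every tile is non-empty by surjectivity of $T$, the translation set $\Phi(\mathbb{Z}[\beta]\cap X)$ is relatively dense by Lemma~\ref{l:udrd}, and $\varphi$ is bounded), hence $H'$ is dense because $M_\beta$ contracts $H$, and finally $H'$ is closed by compactness of the tiles and local finiteness. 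If you want to keep your route, you would essentially have to prove that the $\mathbb{Z}^d$-translates of $\widehat{X}$ itself (not $\widehat{Y}$) cover each fiber $y\mathbf{v}_1+H$, and the natural way to do that is the paper's density-plus-invariance argument anyway.
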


\begin{proof}
Let $H' = \bigcup_{x\in\mathbb{Z}[\beta]\cap X} \mathcal{T}_x$.
Every point $\mathbf{y} \in H'$ is of the form $\mathbf{y} = \Phi(x) + \varphi(w)$, with $x \in \mathbb{Z}[\beta] \cap X$, $w \in {\vphantom{A}}^{\omega}\!A$, $w \cdot b(x) \in\mathcal{S}$.
We have $M_{\beta} \mathbf{y} = \varphi(w b_1(x)) + \Phi(T x)$, with $T x \in \mathbb{Z}[\beta] \cap X$ since $b_1(x) \in \mathbb{Z}[\beta]$, and thus $M_{\beta} \mathbf{y} \in H'$. Hence, $M_{\beta} H' \subseteq H'$.
Since $T$ is surjective, every tile $\mathcal{T}_x$, $x \in \mathbb{Z}[\beta] \cap X$, is non-empty.
By Lemma~\ref{l:udrd} and since $\varphi(w)$ is bounded, $H'$~is relatively dense in~$H$.
Since $M_{\beta} H' \subseteq H'$ and $M_{\beta}$ is contracting on $H$, $H'$~is dense in~$H$.
By the compactness of the tiles and the local finiteness of $\mathcal{T}$, we obtain $H' = H$.
\end{proof}

Now, we can prove that the boundary of every tile has zero measure. 
This generalizes Theorem~3 in~\cite{Akiyama99}. 

\begin{prop} \label{p:boundary}
We have $\lambda^{d-1}(\partial \mathcal{D}_x) = 0$ for every $x \in X$.
\end{prop}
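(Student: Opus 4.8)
The plan is to reduce to finitely many prototiles, turn the subdivision of Lemma~\ref{l:subdivision} into a self-referential covering of the boundaries, and then use the global covering of $H$ by the tiles $\mathcal{T}_x$ to control the boundary measure.

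\emph{Reduction and set-up.} By Proposition~\ref{p:S} and the standing assumption that $\mathcal{V}$ is finite, there are only finitely many distinct sets $\mathcal{D}_x$, each compact by Lemma~\ref{l:tilecompact}, so it suffices to treat $x \in \mathcal{V}$; and since $\mathcal{D}_x$ is compact, $\lambda^{d-1}(\partial \mathcal{D}_x) = \lambda^{d-1}(\mathcal{D}_x) - \lambda^{d-1}(\mathrm{int}\,\mathcal{D}_x)$, so the claim says $\mathcal{D}_x$ and its interior have the same measure. Iterating~(\ref{e:decomposition}) $n$ times writes $\mathcal{D}_x$ as the union of the cylinder pieces $M_\beta^{\,n}\mathcal{D}_y + \varphi(b_1(y)\cdots b_n(y))$, $y \in T^{-n}\{x\}$, each a contracted translate of one of the finitely many $\mathcal{D}_{x'}$, $x'\in\mathcal{V}$, with diameter $\mathcal{O}(\rho^n)$; by Lemma~\ref{l:subdivision} this union is measure-disjoint for $\lambda^{d-1}$, the pairwise intersections of the pieces forming a $\lambda^{d-1}$-null set. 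As the boundary of a finite union lies in the union of the boundaries together with the pairwise intersections, we get, up to a null set, $\partial\mathcal{D}_x \subseteq \bigcup_{y \in T^{-n}\{x\}}\big(M_\beta^{\,n}\partial\mathcal{D}_y + \varphi(b_1(y)\cdots b_n(y))\big)$ for every $n$.

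\emph{The global covering.} The tiles $\mathcal{T}_x = \Phi(x) + \mathcal{D}_x$, $x \in \mathbb{Z}[\beta]\cap X$, cover $H$ (Lemma~\ref{l:cover}), form a locally finite family (Corollary~\ref{c:locallyfinite}), and are translates of the finitely many prototiles $\mathcal{D}_{x'}$, $x'\in\mathcal{V}$, by the Delone set $\Phi(\mathbb{Z}[\beta]\cap X)$ (Lemma~\ref{l:udrd}); hence the covering multiplicity $m(\mathbf{y}) = \#\{x\in\mathbb{Z}[\beta]\cap X : \mathbf{y}\in\mathcal{T}_x\}$ is bounded. Using the $\mathbb{Q}$-linearity of $\Phi$ and $M_\beta\mathbf{v}_j = \beta_j\mathbf{v}_j$, one has $\Phi(x)+\Phi(b_1(y)) = \Phi(\beta y) = M_\beta\Phi(y)$ whenever $Ty=x$, so~(\ref{e:decomposition}) lifts to $M_\beta^{-1}\mathcal{T}_x = \bigcup_{y\in T^{-1}\{x\}}\mathcal{T}_y$, again measure-disjoint for $\lambda^{d-1}$ by Lemma~\ref{l:subdivision}. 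Consequently $N = \bigcup_{x\in\mathbb{Z}[\beta]\cap X}\partial\mathcal{T}_x$ satisfies $M_\beta^{-1}N \subseteq N\cup N'$ with $\lambda^{d-1}(N')=0$ (the $N'$ being the union of pairwise tile intersections), and $m$ is $M_\beta$-invariant $\lambda^{d-1}$-almost everywhere. From the boundedness and $M_\beta$-invariance of $m$, together with the almost periodicity of the tiling provided by the model-set structure of $\Phi(\mathbb{Z}[\beta]\cap X)$ in Lemma~\ref{l:udrd}, one deduces that $m$ equals a constant $\lambda^{d-1}$-a.e., i.e.\ $\lambda^{d-1}(N)=0$. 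Since $\mathbb{Z}[\beta]$ is dense and each interval $J_v$, $v\in\mathcal{V}$, is right-open, $J_v$ meets $\mathbb{Z}[\beta]$, so by Proposition~\ref{p:S} every $\mathcal{D}_v$ equals some $\mathcal{D}_{x_0}$ with $x_0\in\mathbb{Z}[\beta]\cap X$, whence $\partial\mathcal{D}_v$ is a translate of $\partial\mathcal{T}_{x_0}$ and has measure zero.

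\emph{Where the difficulty sits.} The substantive step is passing from ``$m$ bounded and a.e.\ $M_\beta$-invariant'' to ``$m$ a.e.\ constant'': a naive scaling argument fails, because $M_\beta$ contracts $\lambda^{d-1}$ on $H$ by the fixed factor $\beta^{-1}$ irrespective of the region, so $M_\beta$-invariance alone does not squeeze the density of $N$; one genuinely needs the almost periodicity of the cut-and-project set $\Phi(\mathbb{Z}[\beta]\cap X)$ — equivalently, one runs the self-refinement inside a single prototile and combines the measure-disjointness of the cylinder pieces with the uniform discreteness of the translation set in play to exclude that $N$ carries positive density. An alternative route, closer to Theorem~3 in~\cite{Akiyama99}, is to first note that some $\mathcal{D}_v$ has positive $\lambda^{d-1}$-measure by Lemmas~\ref{l:compact} and~\ref{l:positivemeasure}, hence nonempty interior (the pieces of~(\ref{e:GIFS}) satisfying an open set condition), then propagate interiors along the graph~(\ref{e:GIFS}), and finally show that the subgraph recording only those cylinder pieces that actually meet $\partial\mathcal{D}_x$ has spectral radius strictly less than $\beta$, so that the boundary estimate from the first paragraph tends to $0$.
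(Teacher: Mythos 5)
Your reduction to finitely many prototiles and the self-replication identity $M_\beta^{-1}\mathcal{T}_x = \bigcup_{y\in T^{-1}\{x\}}\mathcal{T}_y$ are fine, but the decisive step is missing. The chain ``$m$ bounded and a.e.\ $M_\beta$-invariant, plus almost periodicity, hence $m$ is a.e.\ constant, i.e.\ $\lambda^{d-1}(N)=0$'' contains two unjustified jumps. First, the deduction that $m$ is a.e.\ constant is essentially Proposition~\ref{p:itorao}, whose proof in the paper \emph{uses} Proposition~\ref{p:boundary} (one needs $H\setminus N$ to be open and of full measure in order to find a ball lying in a fixed set of tiles before quasi-periodicity can spread anything), so invoking it here is circular. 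Second, even granted that $m$ is a.e.\ constant, this does not imply $\lambda^{d-1}(N)=0$: a point of $\partial\mathcal{T}_x$ still lies in the closed set $\mathcal{T}_x$, so boundary points are not distinguished by the multiplicity function, and constancy of $m$ is a priori compatible with fat boundaries. Your ``alternative route'' is likewise not a proof: positive $\lambda^{d-1}$-measure of $\mathcal{D}_v$ does not yield a nonempty interior (a fat Cantor set is a counterexample), and the claim that the boundary subgraph has spectral radius strictly less than $\beta$ is exactly the hard part, asserted rather than proved --- as you note yourself, the naive estimate $\lambda^{d-1}(\partial\mathcal{D}_x)\le\beta^{-n}\sum_{y\in T^{-n}\{x\}}\lambda^{d-1}(\partial\mathcal{D}_y)$ is useless because $\#\, T^{-n}\{x\}$ grows like $\beta^n$.

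The paper closes this gap with three ingredients, none of which appears in your proposal. By Baire's theorem applied to the covering $H=\bigcup_z\mathcal{T}_z$ of Lemma~\ref{l:cover}, some $\mathcal{D}_z$ has an inner point; since $M_\beta^{-1}$ is expanding on $H$, for $k$ large enough some cylinder piece $M_\beta^k\mathcal{D}_y+\varphi(b_1(y)\cdots b_k(y))$ with $\lambda^{d-1}(\mathcal{D}_y)>0$ is contained in $\inn(\mathcal{D}_z)$, so every point of its boundary must also lie in a \emph{different} piece of the same measure-disjoint subdivision, forcing $\lambda^{d-1}(\partial\mathcal{D}_y)=0$ for that one prototile. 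This is then propagated: in the subdivision of $\mathcal{D}_y$, a boundary point of a piece lies either in $\partial\mathcal{D}_y$ or in the overlap with a sibling piece, both null, so every $\mathcal{D}_z$ with $z\in T^{-1}\{y\}$ inherits a null boundary; finally, a strong-connectivity argument on the graph of prototiles of positive measure (using that $\widehat{T}$ preserves $\lambda^d$, so the in- and out-weights at each vertex balance) guarantees that the propagation reaches every $x\in X$. Without the Baire step, the nesting of a piece strictly inside an interior, and the connectivity argument, your outline does not close.
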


\begin{proof}
If $\lambda^{d-1}(\mathcal{D}_x) = 0$, then $\lambda^{d-1}(\partial \mathcal{D}_x) = 0$ since $\mathcal{D}_x$ is compact. 
Therefore, we can assume $\lambda^{d-1}(\mathcal{D}_x) > 0$.
We first show that $\lambda^{d-1}(\partial \mathcal{D}_y) = 0$ for some $y \in X$ with $\lambda^{d-1}(\mathcal{D}_y) > 0$, and then extend this property to arbitrary $x \in X$.

Since $H = \bigcup_{z\in\mathbb{Z}[\beta]\cap X} \mathcal{T}_z$, there exists, by Baire's theorem, some $z \in \mathbb{Z}[\beta] \cap X$ such that $\mathcal{T}_z$, and thus $\mathcal{D}_z$, has an inner point (with respect to~$H$).
By iterating (\ref{e:decomposition}), we obtain for all $k \ge 1$ that $\mathcal{D}_z$ is the (up to sets of measure zero) disjoint union of $M_{\beta}^k \mathcal{D}_y + \varphi(b_1(y) \cdots b_k(y))$, $y\in T^{-k}\{z\}$.
Since $M_{\beta}^{-1}$ is expanding on~$H$, there must be some $y \in T^{-k} \{z\}$ for sufficiently large $k$ such that $M_{\beta}^k \mathcal{D}_y + \varphi(b_1(y) \cdots b_k(y))$ is contained in the interior of~$\mathcal{D}_z$, and $\lambda^{d-1}(\mathcal{D}_y) > 0$.
Then every point in $\partial(M_{\beta}^k \mathcal{D}_y + \varphi(b_1(y) \cdots b_k(y)))$ lies also in $M_{\beta}^k \mathcal{D}_{y'} + \varphi(b_1(y')\cdots b_k(y'))$ for some $y' \in T^{-k} \{x\}$, $y'\ne y$. Since the intersection of these sets has zero measure, we obtain $\lambda^{d-1}(\partial \mathcal{D}_y) = 0$.

Now, consider a set $M_{\beta} \mathcal{D}_z + \varphi(b_1(z))$, $z \in T^{-1} \{y\}$, in the subdivision of~$\mathcal{D}_y$. Every point on the boundary of this set is either also on the boundary of another set from the subdivision, or not. If not, then the point is in~$\partial \mathcal{D}_y$. Therefore, we have $\lambda^{d-1}(\partial \mathcal{D}_z) = 0$ for every $z \in T^{-1} \{y\}$.
It remains to show that, when iterating this argument, every $\mathcal{D}_x$ with $\lambda^{d-1}(\mathcal{D}_x) > 0$ occurs eventually in one of these subdivisions.
By Proposition~\ref{p:S}, it is sufficient to consider $x \in \mathcal{V}$. 

Similarly to the graph of the GIFS, let $\mathcal{G}$ be the weighted directed graph with set of vertices $\mathcal{V}' = \{x \in \mathcal{V}:\, \lambda^{d-1}(\mathcal{D}_x) > 0\}$ and an edge from $x$ to $x'$ if and only if $T^{-1} J_x \cap J_{x'} \ne \emptyset$.
Then we have $\lambda^{d-1}(\partial \mathcal{D}_{x'}) = 0$ for every $x' \in \mathcal{V}$ which can be reached from the vertex $x$ satisfying $y \in J_x$. 
Let the weight of an edge be $\lambda^d(\widehat{T}^{-1}(J_x \mathbf{v}_1 - \mathcal{D}_x) \cap (J_{x'} \mathbf{v}_1 - \mathcal{D}_{x'})) > 0$.
Since $\widehat{T}$ is bijective off of a set of Lebesgue measure zero, the sum of the weights of the outgoing edges as well as the sum of the weights of the ingoing edges must equal $\lambda^d(J_x \mathbf{v}_1 - \mathcal{D}_x)$ for every $x \in \mathcal{V}'$.
This implies that every connected component of $\mathcal{G}$ must be strongly connected, in the sense that if two vertices are in the same connected component, then there is a path from one vertex to the other and the other way around. 

Let $\mathcal{C}$ be a (strongly) connected component of~$\mathcal{G}$ and $X' = \bigcup_{x\in\mathcal{C}} J_x$. 
For every $z \in J_x$, $x \in \mathcal{C}$, $\mathcal{D}_{T z}$ contains $M_\beta \mathcal{D}_z + b_1(z)$, thus $\lambda^{d-1}(\mathcal{D}_{T z}) > 0$, hence $T z \in X'$.
Therefore, the restriction of $T$ to $X'$ is a right-continuous $\beta$-transformation, and this restriction changes every~$\mathcal{D}_x$, $x \in X'$, only by a set of measure zero.
The arguments of the preceding paragraph provide some $y \in X'$ with $\lambda^{d-1}(\partial \mathcal{D}_y) = 0$, and we obtain $\lambda^{d-1}(\partial \mathcal{D}_x) = 0$ for all $x \in X'$, thus for all $x \in X$.
\end{proof}

If we want that (mt4) holds, then we clearly have to exclude non-empty tiles of measure zero. 
This means that $X$ has to be the support of the invariant measure $\mu = \lambda^d \circ \pi^{-1}$, which is defined in Section~\ref{sec:geom-real-natur}.
Note that restricting $T$ to the support of $\mu$ changes the tiles only by sets of measure zero.

For a subset $E \subseteq H$, let $\inn(E)$ denote the interior of~$E$ (in~$H$), and let $\overline{E}$ be the closure of~$E$.

\begin{lemma} \label{l:closureinterior}
If the support of the invariant measure $\mu$ is~$X$, then $\mathcal{D}_x = \overline{\inn(\mathcal{D}_x)}$ for every $x \in X$.
\end{lemma}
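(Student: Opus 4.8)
The plan is to show that $\mathcal{D}_x$ has no "lower-dimensional pieces sticking out", by exploiting the GIFS decomposition~(\ref{e:decomposition}) together with Proposition~\ref{p:boundary} and the fact that, on the support of~$\mu$, every $\mathcal{D}_x$ has positive $\lambda^{d-1}$-measure. The inclusion $\overline{\inn(\mathcal{D}_x)} \subseteq \mathcal{D}_x$ is immediate since $\mathcal{D}_x$ is compact (Lemma~\ref{l:tilecompact}), so the content is the reverse inclusion: every point of $\mathcal{D}_x$ is a limit of interior points.

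First I would observe that, as in the proof of Proposition~\ref{p:boundary}, there exists at least one $z \in \mathbb{Z}[\beta] \cap X$ whose tile $\mathcal{T}_z$ (hence $\mathcal{D}_z$) has an interior point in~$H$; this comes from Baire's theorem applied to the covering $H = \bigcup_{z} \mathcal{T}_z$ of Lemma~\ref{l:cover}. Iterating the subdivision~(\ref{e:decomposition}) $k$~times and using that $M_\beta^{-1}$ is expanding on~$H$, for $k$ large enough some piece $M_\beta^k \mathcal{D}_y + \varphi(b_1(y)\cdots b_k(y))$ with $y \in T^{-k}\{z\}$ lies entirely inside $\inn(\mathcal{D}_z)$; since $M_\beta^k$ is a homeomorphism of~$H$, this forces $\inn(\mathcal{D}_y) \ne \emptyset$. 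The key spreading step is then: if $\inn(\mathcal{D}_y) \ne \emptyset$ for some~$y$, then for every $x$ with $y \in T^{-1}\{x\}$ the set $M_\beta \mathcal{D}_y + \Phi(b_1(y))$ is a piece of $\mathcal{D}_x$ with non-empty interior (again because $M_\beta$ is a homeomorphism of~$H$), so $\inn(\mathcal{D}_x) \ne \emptyset$; and conversely, reading~(\ref{e:decomposition}) the other way, if $\inn(\mathcal{D}_x) \ne \emptyset$ and $\lambda^{d-1}(\partial\mathcal{D}_y) = 0$ for all $y \in T^{-1}\{x\}$ (which holds by Proposition~\ref{p:boundary}), then at least one piece $M_\beta\mathcal{D}_y + \Phi(b_1(y))$ must meet $\inn(\mathcal{D}_x)$ in a set of positive measure, and since that piece differs from its interior only on a null set, $\inn(\mathcal{D}_y) \ne \emptyset$. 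Combining these two directions with the strong-connectedness of the graph~$\mathcal{G}$ (restricted to vertices with $\lambda^{d-1}(\mathcal{D}_x) > 0$, which is all of~$\mathcal{V}$ here since $X$ is the support of~$\mu$), established in the proof of Proposition~\ref{p:boundary}, I get $\inn(\mathcal{D}_x) \ne \emptyset$ for every $x \in X$; by Proposition~\ref{p:S} it suffices to check this for $x \in \mathcal{V}$.

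Having non-empty interior is not yet $\mathcal{D}_x = \overline{\inn(\mathcal{D}_x)}$, so the final and main step is to upgrade "$\inn \ne \emptyset$" to "$=\overline{\inn}$". Here I would again iterate~(\ref{e:decomposition}): fix $\mathbf{p} \in \mathcal{D}_x$, say $\mathbf{p} = \varphi(w)$ with $w\cdot b(x) \in \mathcal{S}$. For each $k$, the point $M_\beta^{-k}$ applied appropriately puts $\mathbf{p}$ inside a piece $M_\beta^k\mathcal{D}_{y_k} + \varphi(b_1(y_k)\cdots b_k(y_k))$ of $\mathcal{D}_x$ corresponding to the truncation $w_{-k+1}\cdots w_0$ of~$w$; since $\inn(\mathcal{D}_{y_k}) \ne \emptyset$ and $M_\beta^k$ is a homeomorphism of~$H$, this piece has an interior point of~$\mathcal{D}_x$ within $H$-distance $\mathcal{O}(\rho^k \operatorname{diam} \mathcal{D}_{y_k}) = \mathcal{O}(\rho^k)$ of~$\mathbf{p}$, where $\rho = \max_{2\le j\le d}|\beta_j| < 1$ and the diameters are uniformly bounded by (mt1). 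Letting $k \to \infty$ shows $\mathbf{p} \in \overline{\inn(\mathcal{D}_x)}$, which finishes the proof.

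The step I expect to be the main obstacle is the backward spreading direction — deducing $\inn(\mathcal{D}_y) \ne \emptyset$ from $\inn(\mathcal{D}_x) \ne \emptyset$ for $y \in T^{-1}\{x\}$ — since a priori the interior of $\mathcal{D}_x$ could be covered by boundaries of the pieces; this is exactly why Proposition~\ref{p:boundary} (the pieces have null boundary) and the strong-connectedness of~$\mathcal{G}$ are essential, and care is needed to make sure the restriction-to-a-connected-component argument used there transfers here without circularity.
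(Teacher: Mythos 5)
Your proposal is correct, and its decisive step --- approximating a point $\varphi(w)$ of $\mathcal{D}_x$ by the pieces $M_\beta^k\mathcal{D}_{y_k}+\varphi(w_{-k+1}\cdots w_0)$ coming from truncations of $w$, each of which contains an interior point of $\mathcal{D}_x$ because it has positive $\lambda^{d-1}$-measure and null boundary --- is exactly the paper's proof. The entire middle section (Baire category plus forward/backward spreading through the graph $\mathcal{G}$) is superfluous: since the support of $\mu$ is $X$, every $\mathcal{D}_y$ already satisfies $\lambda^{d-1}(\mathcal{D}_y)>0$, and together with Proposition~\ref{p:boundary} this gives $\inn(\mathcal{D}_y)\neq\emptyset$ directly, so there is no circularity issue to worry about.
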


\begin{proof}
Since the tiles are compact, we have $\overline{\inn(\mathcal{D}_x)} \subseteq \mathcal{D}_x$ for every $x \in X$. For the other inclusion, we show that $\mathcal{T}_x \subseteq \overline{\inn(\mathcal{T}_x)}$ for every $x \in \mathbb{Z}[\beta] \cap X$.
If $\mathbf{y} \in \mathcal{T}_x$, then $\mathbf{y} = \Phi(x) + \varphi(w)$ for some $w = (w_k)_{k\le0} \in {\vphantom{A}}^{\omega}\!A$ with $w \cdot b(x) \in \mathcal{S}$. 
Set $x_k = \decdot w_{-k+1} \cdots w_0 b(x)$ for $k \ge 0$. 
Then $M_{\beta}^k (\Phi(x_k) + \mathcal{D}_{x_k}) \subseteq \mathcal{T}_x$.
We have $\lambda^{d-1}(\mathcal{D}_{x_k}) > 0$ since the support of $\mu$ is~$X$. Also, $\lambda^{d-1}(\partial \mathcal{D}_{x_k}) = 0$ by Proposition~\ref{p:boundary}.
Therefore, there exists a point $\mathbf{y}_k \in M_{\beta}^k (\Phi(x_k) + \inn(\mathcal{D}_{x_k})) \subseteq \inn(\mathcal{T}_x)$.
Since $\lim_{k\to\infty} \mathbf{y}_k = \mathbf{y}$, we have $\mathbf{y} \in  \overline{\inn(\mathcal{T}_x)}$.
\end{proof}

The family $\mathcal{T}$ is \emph{self-replicating}, i.e., for each element $\mathcal{T}_x$ of $\mathcal{T}$, we can write $M^{-1}_{\beta} \mathcal{T}_x$ as the (up to sets of measure zero) disjoint union of elements from $\mathcal{T}$.
By Lemma~\ref{l:subdivision}, we have 
\begin{equation} \label{e:subdivision}
M_{\beta}^{-1} \mathcal{T}_x = M_{\beta}^{-1} \bigg(\Phi(x) + \bigcup_{y\in T^{-1}\{x\}} \Big(M_{\beta} \mathcal{D}_y + \Phi\big(b_1(y)\big)\Big)\bigg) = \bigcup_{y\in T^{-1}\{x\}} \mathcal{T}_y\,. 
\end{equation}

For $\mathbf{y} \in H$ and $r > 0$, the \emph{local arrangement} in $B(\mathbf{y},r)$ is the set
\[ 
\mathcal{P}\big(B(\mathbf{y},r)\big) = \big\{\mathcal{T}_x \in \mathcal{T}:\, \mathcal{T}_x \cap B(\mathbf{y},r) \neq \emptyset\big\}. 
\]
As a next step in proving that the family $\mathcal{T}$ is a multiple tiling, we will show that $\mathcal{T}$ is \emph{quasi-periodic}, i.e., for any $r > 0$, there is an $R > 0$ such that, for any $\mathbf{y}, \mathbf{y}' \in H$, the local arrangement in $B(\mathbf{y},r)$ appears up to translation in the ball $B(\mathbf{y}',R)$. 

\begin{prop} \label{p:quasiperiodic}
The family $\mathcal{T}$ is quasi-periodic.
\end{prop}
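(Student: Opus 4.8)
The plan is to reduce quasi-periodicity to a statement about the two-sided shift space $\bar{\mathcal S}$ together with the Delone property established in Lemma~\ref{l:udrd}. The key observation is that a tile $\mathcal T_x$ with $x \in \mathbb Z[\beta]\cap X$ and its translates are completely determined, in a local-to-global way, by finite pieces of admissible two-sided sequences, so that if two regions of $H$ see the same relevant finite data, their local arrangements agree up to translation.

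First I would fix $r>0$ and, using Corollary~\ref{c:locallyfinite} (local finiteness) together with the boundedness of $\|\varphi(w)\|$ for $w \in {}^{\omega}\!A$, observe that there is a uniform bound $\delta$ (depending only on $T$ and $r$) such that every tile $\mathcal T_x$ meeting $B(\mathbf y,r)$ satisfies $\Phi(x) \in B(\mathbf y, \delta)$. Hence the local arrangement $\mathcal P(B(\mathbf y,r))$ is determined by the finite set $\Phi(\mathbb Z[\beta]\cap X) \cap B(\mathbf y,\delta)$ together with, for each such $x$, the set of left-infinite sequences $w$ with $w\cdot b(x)\in\mathcal S$; the latter only enters through $\varphi(w)$, which up to an error $\mathcal O(\rho^N)$ depends only on the finite suffix $w_{-N+1}\cdots w_0$. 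So, modulo an error that can be absorbed by shrinking $r$ slightly, the local arrangement is encoded by finitely many finite words over $A$ and their admissibility, i.e.\ by a bounded window of the one-sided admissible sequences $b(x)$ and by which finite prefixes are legal continuations to the left.

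Next I would invoke the quasi-periodicity (more precisely the uniform recurrence of finite factors) of the admissibility conditions~(\ref{e:barS}): since $\bar{\mathcal S}$ is a subshift of $A^{\mathbb Z}$ defined by the lexicographic conditions $b(\ell_{u_k}) \preceq u_k u_{k+1}\cdots \preceq \tilde b(r_{u_k})$, and since $\Phi(\mathbb Z[\beta]\cap X)$ is a Delone set, I can translate a given local configuration of tiles to a configuration of admissible finite words, find (by the pigeonhole/compactness argument used for sofic shifts, or directly from the Delone property applied to the lattice $\iota(\mathbb Z^d)$ in the cut-and-project scheme of Lemma~\ref{l:udrd}) a relatively dense set of translation vectors $\mathbf t\in\Phi(\mathbb Z[\beta])$ such that translating by $\mathbf t$ maps the window of data around $\mathbf y$ onto an identical window around $\mathbf y' + \mathbf t$. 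Concretely: the set of $x'\in\mathbb Z[\beta]\cap X$ whose relevant finite data agree with a fixed pattern is, after applying $\Phi$, a translate-invariant-up-to-finite-error subset of the Delone set, hence relatively dense; choosing $R$ to be the covering radius of this subset gives the required $R$ for the given $r$.

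The main obstacle I expect is handling the boundary/non-lattice nature of the translations cleanly: unlike the classical self-affine case, $\mathbb Z[\beta]\cap X$ is not a group, the left-infinite tails $w$ are genuinely infinite (so one must control the $\mathcal O(\rho^N)$ truncation error uniformly and argue that for $N$ large enough the set of legal finite prefixes $w_{-N+1}\cdots w_0$ attached to a given $b(x)$-window is itself a "local" datum), and one must make sure that translating the finite word-pattern actually produces an admissible — not merely locally admissible — configuration, which is where the structure of $\mathcal S$ versus $\bar{\mathcal S}$ and the fact that $\mathcal D_x$ is the same on each interval $J_x$ (Proposition~\ref{p:S}) must be used to pass from finite patterns back to genuine tiles. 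I would organize the write-up so that this truncation-and-recurrence argument is the single technical lemma, after which quasi-periodicity follows formally.
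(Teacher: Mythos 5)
Your plan assembles most of the right ingredients (local finiteness, the Delone property from Lemma~\ref{l:udrd}, the constancy of $\mathcal{D}_x$ on the intervals $J_x$ from Proposition~\ref{p:S}), but the core step is asserted rather than proved, and as stated it does not go through. The claim that ``the set of $x'$ whose relevant finite data agree with a fixed pattern is, after applying $\Phi$, a translate-invariant-up-to-finite-error subset of the Delone set, hence relatively dense'' is the whole content of the proposition, and no mechanism is given for it: a subset of a Delone set is not relatively dense for such reasons, and the ``pigeonhole/compactness argument used for sofic shifts'' is unavailable here since $\bar{\mathcal{S}}$ is in general not sofic (the paper stresses exactly such examples). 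Moreover, the $\mathcal{O}(\rho^N)$ truncation of the left-infinite tails $\varphi(w)$ that you build your argument around creates a matching problem (local arrangements would only agree approximately, which is not what quasi-periodicity requires) that you then have to undo; it is a detour, not a necessary difficulty.

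The missing idea is the choice of translation vectors. The paper takes them of the form $\Phi(z)$ with $z \in \mathbb{Z}[\beta] \cap [0,\varepsilon)$ for a suitably small \emph{real} $\varepsilon > 0$: since only finitely many relative positions $y \in X - X$ with $\|\Phi(y)\|$ bounded can occur in a local arrangement of radius $r$ (uniform discreteness plus injectivity of $\Phi$), one can choose $\varepsilon$ so small that for every relevant point $x + y$ either $x+y+z$ stays in the same interval $J_{x+y}$ (whence $\mathcal{D}_{x+y+z} = \mathcal{D}_{x+y}$ \emph{exactly} by Proposition~\ref{p:S}, so $\mathcal{T}_{x+y+z} = \mathcal{T}_{x+y} + \Phi(z)$ with no error term) or $x+y+z$ stays outside $X$. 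Then Lemma~\ref{l:udrd} applied to the bounded set $E = [0,\varepsilon)$ --- this is precisely why that lemma is stated for arbitrary bounded $E$ with non-empty interior rather than just for $X$ --- gives that $\Phi(\mathbb{Z}[\beta] \cap [0,\varepsilon))$ is relatively dense in $H$, so each local arrangement recurs relatively densely; combined with the finiteness of the number of local arrangements up to translation, quasi-periodicity follows. Without identifying this specific relatively dense set of exact translations, your recurrence step remains a gap.
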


\begin{proof}
Note first that, for each $r > 0$, there are only finitely many different local arrangements up to translation, since $\Phi(\mathbb{Z}[\beta] \cap X)$ is uniformly discrete and there are only finitely many different sets~$\mathcal{D}_x$.

Let $r>0$. If two tiles $\mathcal{T}_x$ and $\mathcal{T}_{x+y}$ are in the same local arrangement, then
\begin{equation} \label{q:smally}
\|\Phi(y)\| < 2 (r + \max_{w\in{\vphantom{A}}^{\omega}\!A} \|\varphi(w)\|\big) \quad\mbox{and}\quad y \in X - X.
\end{equation}
By Lemma~\ref{l:udrd} and since $\Phi$ is injective, the set of elements $y \in \mathbb{Z}[\beta]$ satisfying (\ref{q:smally}) is finite. Call this set~$F$, and note that $0 \in F$.

Now, take some local arrangement $\mathcal{P}(B(\mathbf{y},r))$ and $x \in \mathbb{Z}[\beta] \cap X$ such that $\mathcal{T}_x \in  \mathcal{P}(B(\mathbf{y},r))$.
For any $y \in F$, if $x + y \in X$, then there is an $\varepsilon_y > 0$ such that $J_{x+y} = [x + y, x + y + \varepsilon_y)$, and if $x + y \not\in X$, then there is an $\varepsilon_y > 0$ such that $[x + y, x + y + \varepsilon_y) \cap X = \emptyset$. 
Let $\varepsilon = \min_{y\in F} \varepsilon_y$ and consider some $z \in \mathbb{Z}[\beta] \cap [0,\varepsilon)$.
For any $y \in F$, we have either $x + y \in X$ and $\mathcal{T}_{x+y+z} = \mathcal{T}_{x+y} + \Phi(z)$ since $\mathcal{D}_{x+y+z} = \mathcal{D}_{x+y}$, or $x + y \not\in X$ and $x + y + z \not\in X$.
Therefore, $\mathcal{P}(B(\mathbf{y} + \Phi(z),r))$ is equal to $\mathcal{P}(B(\mathbf{y},r))$, up to translation by $\Phi(z)$.

By Lemma~\ref{l:udrd}, $\Phi(\mathbb{Z}[\beta] \cap [0,\varepsilon))$ is relatively dense in~$H$. Thus, every local arrangement occurs relatively densely in~$H$. Since the number of local arrangements is finite, the lemma is proved.
\end{proof}

Lemma~\ref{l:cover} implies that every $\mathbf{y} \in H$ lies in at least one tile. 
By the local finiteness of $\mathcal T$, there exists an $m \ge 1$ such that every element of $H$ is contained in at least $m$ tiles in $\mathcal{T}$ and there exist elements of $H$ that are not contained in $m + 1$ tiles. 
For this~$m$, a point $\mathbf{y} \in H$ lying in exactly $m$ tiles is called an \emph{$m$-exclusive point}. 
A point lying in exactly one tile is called an \emph{exclusive point}.
Similarly to \cite{ItoRao06}, we obtain the following proposition, which gives~(mt5).

\begin{prop} \label{p:itorao}
There exists an $m \ge 1$ such that almost every $\mathbf{y} \in H$ is contained in exactly $m$ tiles.
\end{prop}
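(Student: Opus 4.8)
The plan is to use the quasi-periodicity of $\mathcal{T}$ (Proposition~\ref{p:quasiperiodic}) together with an ergodicity-type argument to show that the covering multiplicity is almost everywhere constant. First I would define, for almost every $\mathbf{y} \in H$, the \emph{covering number} $m(\mathbf{y}) = \#\{x \in \mathbb{Z}[\beta]\cap X:\, \mathbf{y} \in \mathcal{T}_x\}$; by local finiteness (Corollary~\ref{c:locallyfinite}) this is a finite number for every $\mathbf{y}$, and by Proposition~\ref{p:boundary} the set of $\mathbf{y}$ lying on the boundary $\partial \mathcal{D}_x$ of some tile has measure zero, so off a null set $m(\mathbf{y})$ is locally constant (it does not change on the interior of the intersection pattern of the finitely many tiles meeting a small ball around $\mathbf{y}$). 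Let $m$ be the minimal value of $m(\mathbf{y})$ over $\mathbf{y} \in H$ (an integer $\geq 1$ by Lemma~\ref{l:cover}), and fix an $m$-exclusive point $\mathbf{y}_0$, which then has an open neighbourhood $B(\mathbf{y}_0, r)$ consisting entirely of points lying in exactly $m$ tiles (shrinking $r$ so that the ball meets only tiles containing $\mathbf{y}_0$ and avoids their boundaries).

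The key step is then a density argument: I want to show that the set $\mathcal{M}_m = \{\mathbf{y} \in H:\, \mathbf{y}$ lies in exactly $m$ tiles$\}$ has full measure. By quasi-periodicity, for the chosen $r$ there is an $R$ such that the local arrangement $\mathcal{P}(B(\mathbf{y}_0, r))$ reappears, up to translation, in every ball $B(\mathbf{y}', R)$, $\mathbf{y}' \in H$. Crucially, if a translate $\mathbf{t} + \mathcal{P}(B(\mathbf{y}_0,r))$ of this arrangement appears inside $B(\mathbf{y}',R)$, then by local finiteness the points of $B(\mathbf{y}_0 + \mathbf{t}, r)$ that are covered by the tiles of this translated arrangement are covered exactly $m$ times \emph{by those tiles}; since $m$ is the global minimum and every point lies in at least $m$ tiles, no further tiles can pass through this region (a point covered by the $m$ tiles of the translated arrangement plus one more would be $(m+1)$-covered, contradicting... no — here is the subtle point). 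This is where I expect the main obstacle: one must argue that the $m$ tiles of the translated local arrangement are in fact \emph{all} the tiles meeting $B(\mathbf{y}_0 + \mathbf{t}, r)$, i.e. that the "local arrangement" genuinely transports rigidly, so that the ball $B(\mathbf{y}_0+\mathbf{t},r)$ consists of $m$-exclusive points. This follows because the translation $\mathbf{t}$ can be taken in $\Phi(\mathbb{Z}[\beta])$ — more precisely, the reappearances produced in the proof of Proposition~\ref{p:quasiperiodic} are translations by $\Phi(z)$ with $z \in \mathbb{Z}[\beta]\cap[0,\varepsilon)$, and such a translation is a symmetry of the whole tiling $\mathcal{T}$ restricted to the relevant region (it maps $\mathcal{T}_x$ to $\mathcal{T}_{x+z}$), hence maps $m$-exclusive points to $m$-exclusive points. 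So every ball of radius $R$ in $H$ contains a ball of radius $r$ consisting of $m$-exclusive points.

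From this it follows that $\mathcal{M}_m$ is relatively dense and, more quantitatively, has positive lower density in every large ball: there is $\delta > 0$ (namely $\delta = \lambda^{d-1}(B(\mathbf{0},r))/\lambda^{d-1}(B(\mathbf{0},R))$, roughly) such that $\lambda^{d-1}(\mathcal{M}_m \cap B(\mathbf{y}', R)) \geq \delta\, \lambda^{d-1}(B(\mathbf{y}',R))$ for every $\mathbf{y}'$. To upgrade this to full measure, I would invoke a Lebesgue density argument: the complement $H \setminus \mathcal{M}_m$ is, up to a null set, the union of the open sets where $m(\mathbf{y}) \geq m+1$ (it is open off the tile boundaries), so if it had positive measure it would have a point of density $1$, contradicting the uniform lower bound $\delta$ on the density of $\mathcal{M}_m$ around \emph{every} point of $H$. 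Hence $\lambda^{d-1}(H \setminus \mathcal{M}_m) = 0$, which is exactly (mt5) with covering degree $m$. I would close by remarking that combining this with (mt1)--(mt4) established in the preceding lemmas shows $\mathcal{T}$ is a multiple tiling, and a tiling precisely when $m = 1$.
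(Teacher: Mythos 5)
Your set-up matches the paper's: the union $C=\bigcup_x\partial\mathcal{T}_x$ of tile boundaries is a closed null set by local finiteness and Proposition~\ref{p:boundary}, the covering number is locally constant off $C$, and one fixes the minimal value $m$ together with a ball $B(\mathbf{y}_0,r)$ of $m$-exclusive points. Your resolution of the ``subtle point'' is also sound: the translations produced in the proof of Proposition~\ref{p:quasiperiodic} are by $\Phi(z)$ with $\mathcal{T}_{x+y+z}=\mathcal{T}_{x+y}+\Phi(z)$, so they carry the \emph{entire} local arrangement rigidly and hence map $m$-exclusive points to $m$-exclusive points. The gap is in your final step. Knowing that $\mathcal{M}_m$ meets every ball $B(\mathbf{y}',R)$ in measure at least $\delta\,\lambda^{d-1}(B(\mathbf{y}',R))$ is a lower density bound at the \emph{fixed} scale $R$, whereas the Lebesgue density theorem concerns radii tending to $0$; a density point of the complement is perfectly compatible with such a bound (in $\mathbb{R}$, the set $\bigcup_n[n,n+\frac12]$ has density at least $\frac14$ in every interval of length $2$, yet its open complement has positive measure). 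Openness of $\{m(\mathbf{y})\ge m+1\}$ off $C$ does not help for the same reason. The underlying problem is directional: you transport the known $m$-exclusive ball \emph{to near} an arbitrary point, which only yields relative denseness of $\mathcal{M}_m$, not that the arbitrary point itself is $m$-covered.

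The missing ingredient is the self-replicating property~(\ref{e:subdivision}). Since $B(\mathbf{y}_0,r)\subseteq\mathcal{T}_{x_1}\cap\cdots\cap\mathcal{T}_{x_m}$ and each $M_{\beta}^{-1}\mathcal{T}_{x_k}$ subdivides into tiles with a.e.\ disjoint interiors, almost every point of $M_{\beta}^{-n}B(\mathbf{y}_0,r)$ lies in exactly $m$ tiles, for every $n\ge1$; as $M_{\beta}^{-1}$ is expanding on $H$, these sets contain arbitrarily large balls. Now reverse the direction of transport: given $\mathbf{y}\in H\setminus C$, choose $\rho>0$ with $B(\mathbf{y},\rho)\subseteq H\setminus C$, so that all points of $B(\mathbf{y},\rho)$ lie in the same set of tiles, and use quasi-periodicity to find a translate of $\mathcal{P}(B(\mathbf{y},\rho))$ \emph{inside} some $M_{\beta}^{-n}B(\mathbf{y}_0,r)$. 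The translated ball has positive measure and sits in a region where a.e.\ point is exactly $m$-covered, so the common covering number on $B(\mathbf{y},\rho)$ must equal $m$. This is precisely how the paper closes the argument; with this replacement your proof goes through.
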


\begin{proof}
We first show that the set of points that do not lie on the boundary of a tile is open and of full measure. Let $C = \bigcup_{x\in \mathbb{Z}[\beta] \cap X} \partial \mathcal{T}_x$ denote the union of the boundaries of all the tiles in~$\mathcal{T}$. This set is closed, since it is the countable union of closed sets that are locally finite. Hence, $H \setminus C$ is open. By Proposition~\ref{p:boundary}, we also have $\lambda^{d-1}(C) = 0$.

Let $m$ be as in the paragraph preceding the proposition, and $\mathbf{x} \in H$ be an $m$-exclusive point, lying in the tiles $\mathcal{T}_{x_1}, \ldots, \mathcal{T}_{x_m}$. Since the tiles are closed, there is an $\varepsilon > 0$ such that $B(\mathbf{x},\varepsilon) \cap \mathcal{T}_x = \emptyset$ for all $x \in (\mathbb{Z}[\beta] \cap X) \setminus \{x_1,\ldots,x_m\}$. Since every point lies in at least $m$ tiles, we have $B(\mathbf{x},\varepsilon) \subseteq \mathcal{T}_{x_k}$ for $1 \le k \le m$.
By the self-replicating property, $M_{\beta}^{-1} \mathcal{T}_{x_k}$ subdivides into tiles from $\mathcal{T}$ with disjoint interior, hence almost every point in $M^{-1}_{\beta} B(\mathbf{x},\varepsilon)$ is also contained in exactly $m$ tiles. The same holds for almost every point in $M^{-n}_{\beta} B(\mathbf{x},\varepsilon)$, for all $n \ge 1$.

Now take a point $\mathbf{y} \in H \setminus C$. Since $H \setminus C$ is open, there is an $r > 0$ such that $B(\mathbf{y},r) \subseteq H \setminus C$, i.e., every point in $B(\mathbf{y},r)$ lies in the same set of tiles. By the quasi-periodicity, translations of the local arrangement $\mathcal{P}(B(\mathbf{y}, r))$ occur relatively densely in~$H$. Since the matrix $M_{\beta}^{-1}$ is expanding on~$H$, there is therefore an $n \ge 1$ such that $M^{-n}_{\beta} B(\mathbf{x},\varepsilon)$ contains a translation of $\mathcal{P}(B(\mathbf{y},r))$, which implies that $B(\mathbf{y},r)$ and thus $\mathbf{y}$ lies in exactly $m$ tiles.
\end{proof}

We have now established all the properties of a multiple tiling.

\begin{theorem} \label{t:mtiling}
Let $T:\, X \to X$ be a right-continuous $\beta$-transformation as in Definition~\ref{d:trfm} with a Pisot unit~$\beta$ and $A \subset \mathbb{Z}[\beta]$.
Assume that the invariant measure~$\mu$, given by the natural extension in Section~\ref{sec:geom-real-natur}, has support~$X$, and that the set~$\mathcal{V}$, defined by (\ref{q:v}), is finite.
Then the family $\{\mathcal{T}_x\}_{x\in\mathbb{Z}[\beta]\cap X}$ forms a multiple tiling of the hyperplane~$H$.
\end{theorem}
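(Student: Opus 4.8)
The plan is to read off the five defining properties (mt1)--(mt5) of a multiple tiling from results already in place, checking at each step that the standing hypotheses of Theorem~\ref{t:mtiling} ($\beta$ a Pisot unit, $A\subset\mathbb{Z}[\beta]$, $\mathcal{V}$ finite, and the support of $\mu=\lambda^d\circ\pi^{-1}$ equal to~$X$) are exactly what the cited statement needs. So the proof is essentially an assembly argument; the genuine work has been distributed among the lemmas and propositions of this section.

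First I would verify (mt1): by Lemma~\ref{l:tilecompact} each $\mathcal{D}_x$ is compact and $\mathcal{T}_x=\Phi(x)+\mathcal{D}_x$ is a translate of it, while Proposition~\ref{p:S} shows $\mathcal{D}_x$ depends only on the interval $J_x$, hence only on the element of $\mathcal{V}$ determining $J_x$; since $\mathcal{V}$ is finite, there are finitely many distinct $\mathcal{D}_x$. Next, (mt2) is Corollary~\ref{c:locallyfinite}, which uses the uniform discreteness of $\Phi(\mathbb{Z}[\beta]\cap X)$ from Lemma~\ref{l:udrd} (here the assumption $A\subset\mathbb{Z}[\beta]$ is what lets one use Moody's model-set machinery), the injectivity of $\Phi$ (Lemma~\ref{l:phi}), and the uniform boundedness of $\|\varphi(w)\|$. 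Then (mt3), the covering of $H$, is precisely Lemma~\ref{l:cover}.

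For (mt4) I would invoke Lemma~\ref{l:closureinterior}: each $\mathcal{D}_x$ is the closure of its interior, but only under the hypothesis that the support of $\mu$ is all of~$X$ — this is the point where restricting $T$ to the support of its invariant measure is genuinely needed, since otherwise some $\mathcal{D}_x$ could be a single point (cf.\ Example~\ref{x:gm3}) and (mt4) would fail. Finally, (mt5) is Proposition~\ref{p:itorao}: there is an integer $m\ge1$, the covering degree, such that almost every $\mathbf{y}\in H$ lies in exactly $m$ tiles. Collecting (mt1)--(mt5) against the definition given before the theorem yields the statement.

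I do not expect an obstacle inside the proof of Theorem~\ref{t:mtiling} itself beyond bookkeeping — making sure every invoked result's hypotheses are among the standing assumptions. The real content sits in (mt5)/Proposition~\ref{p:itorao}, whose proof rests on quasi-periodicity of $\mathcal{T}$ (Proposition~\ref{p:quasiperiodic}, itself built on the Delone property of $\Phi(\mathbb{Z}[\beta]\cap X)$), the self-replicating relation~(\ref{e:subdivision}), and $\lambda^{d-1}(\partial\mathcal{D}_x)=0$ from Proposition~\ref{p:boundary}: the Ito--Rao argument propagates the local covering degree from one $m$-exclusive ball to almost all of $H$ by expanding under $M_\beta^{-1}$ and translating along quasi-periods. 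At the level of the theorem, the only subtlety worth flagging explicitly is that (mt4) forces the restriction to $\mathrm{supp}\,\mu$, which changes the tiles only by sets of $\lambda^{d-1}$-measure zero and hence does not disturb (mt1)--(mt3) or (mt5).
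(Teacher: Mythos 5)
Your proposal is correct and follows essentially the same route as the paper, which likewise proves Theorem~\ref{t:mtiling} by assembling (mt1) from Lemma~\ref{l:tilecompact}, Proposition~\ref{p:S} and the finiteness of~$\mathcal{V}$, (mt2) from Corollary~\ref{c:locallyfinite}, (mt3) from Lemma~\ref{l:cover}, (mt4) from Lemma~\ref{l:closureinterior} under the hypothesis $\mathrm{supp}\,\mu = X$, and (mt5) from Proposition~\ref{p:itorao}. The only minor inaccuracy is the aside attributing the Delone property of $\Phi(\mathbb{Z}[\beta]\cap X)$ to the assumption $A\subset\mathbb{Z}[\beta]$ --- Lemma~\ref{l:udrd} concerns $\mathbb{Z}[\beta]$ itself and is independent of~$A$; the hypothesis $A\subset\mathbb{Z}[\beta]$ is instead what keeps the index set $\mathbb{Z}[\beta]\cap X$ stable under~$T$ (as used in Lemma~\ref{l:cover} and the self-replication~(\ref{e:subdivision})) and gives $q=1$ in Lemma~\ref{l:positivemeasure}.
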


\begin{remark} \label{r:tiling}
Let $T$ satisfy the assumptions of Theorem~\ref{t:mtiling}.
Then the family $\{\mathcal{T}_x\}_{x\in\mathbb{Z}[\beta]\cap X}$ forms a tiling of $H$ if and only if there exists a point in $H$ which lies in exactly one tile.
\end{remark}

\begin{remark}
If the support of $\mu$ is not equal to~$X$, then there exist tiles of measure zero, and (mt4) does not hold, see Example~\ref{x:gm3} and Remark~\ref{r:zerotiles}. However, we have $\lambda^{d-1}(\partial \mathcal{D}_x) = 0$ for every $x \in \mathbb{Z}[\beta] \cap X$, and the conditions (mt1)--(mt3) and (mt5) are still satisfied.
\end{remark}

\subsection{To find an exclusive point} \label{sec:find-an-m}

Theorem~\ref{t:mtiling} gives conditions under which $\mathcal{T}$ is a multiple tiling, but it gives no information about the covering degree. 
Indeed, it is quite difficult to determine this degree if $m \ge 2$, because one has two find a set of positive measure lying in $m$ tiles, cf.\ Section~\ref{sec:symm-beta-transf}.
It is much easier to prove that $\mathcal{T}$ is a tiling, by Remark~\ref{r:tiling}.

Denote by $P$ the set of points $x \in \mathbb{Z}[\beta] \cap X$ with purely periodic $T$-expansion, i.e., $T^n x = x$ for some $n \ge 1$.
Using the characterization of purely periodic points in Theorem~\ref{t:purelyperiodic}, we obtain the following lemma.

\begin{lemma} \label{l:P}
The origin $\mathbf{0}$ belongs to $\mathcal{T}_x$, $x \in \mathbb{Z}[\beta] \cap X$, if and only if $x \in P$.
The set $P$ is finite.
\end{lemma}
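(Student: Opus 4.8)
The plan is to reduce the first assertion to Theorem~\ref{t:purelyperiodic} by establishing the equivalence
\[ \mathbf{0} \in \mathcal{T}_x \iff \Psi(x) \in \widehat{X} \qquad (x \in \mathbb{Z}[\beta] \cap X), \]
and then to deduce finiteness of $P$ from the boundedness of $\widehat{X}$ together with Lemma~\ref{l:zd}.

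First I would unwind the definitions. Since $\mathcal{T}_x = \Phi(x) + \mathcal{D}_x$ with $\mathcal{D}_x$ as in~(\ref{e:Dx}), the origin lies in $\mathcal{T}_x$ if and only if there is a left-infinite sequence $w = (w_k)_{k\le0} \in {\vphantom{A}}^{\omega}\!A$ with $w \cdot b(x) \in \mathcal{S}$ and $\varphi(w) = -\Phi(x)$. Using the identity $\psi(\cdots u_{-1} u_0 u_1 u_2 \cdots) = (\decdot u_1 u_2 \cdots)\mathbf{v}_1 - \varphi(\cdots u_{-1} u_0)$ and $\Psi(x) = x\,\mathbf{v}_1 + \Phi(x)$ from~(\ref{e:PhiPsi}), such a $w$ satisfies $\psi(w \cdot b(x)) = x\,\mathbf{v}_1 + \Phi(x) = \Psi(x)$, so $\Psi(x) \in \psi(\mathcal{S}) = \widehat{X}$. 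Conversely, suppose $\Psi(x) = \psi(u)$ for some $u = (u_k)_{k\in\mathbb{Z}} \in \mathcal{S}$. Applying $\pi_1$ gives $\decdot u_1 u_2 \cdots = \pi_1(\Psi(x)) = x$. As $u \in \mathcal{S}$, the sequence $u_1 u_2 \cdots$ is $T$-admissible, hence equals $b(z)$ for some $z \in X$; but then $z = \decdot b(z) = \decdot u_1 u_2 \cdots = x$ by~(\ref{q:xtonT}), so $u_1 u_2 \cdots = b(x)$. Writing $w = (u_k)_{k\le0}$, we obtain $w \cdot b(x) \in \mathcal{S}$ and $\varphi(w) = x\,\mathbf{v}_1 - \psi(u) = -\Phi(x)$, i.e.\ $\mathbf{0} \in \mathcal{T}_x$. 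This proves the displayed equivalence.

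Now, since $x \in \mathbb{Z}[\beta] \subset \mathbb{Q}(\beta)$, Theorem~\ref{t:purelyperiodic} tells us that $\Psi(x) \in \widehat{X}$ holds precisely when the $T$-expansion of $x$ is purely periodic, that is, precisely when $x \in P$. This gives the first statement. For finiteness, observe that $\Psi(P) \subseteq \widehat{X}$ by the equivalence just shown, while $\Psi(P) \subseteq \Psi(\mathbb{Z}[\beta]) = \mathbb{Z}^d$ by Lemma~\ref{l:zd}. Since $\widehat{X} \subseteq \widehat{Y}$ and $\widehat{Y}$ is compact by Lemma~\ref{l:compact}, $\widehat{X}$ is bounded, so $\widehat{X} \cap \mathbb{Z}^d$ is finite; as $\Psi$ is injective, $P$ is finite.

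The argument is mostly bookkeeping once Theorem~\ref{t:purelyperiodic} and the shape description of $\widehat{X}$ are in place. The only step requiring a little care is the converse direction of the equivalence, where one must check that the positive-index part of a sequence $u \in \mathcal{S}$ with $\psi(u) = \Psi(x)$ is forced to be exactly $b(x)$; this uses that a $T$-admissible sequence is the $T$-expansion of its own value.
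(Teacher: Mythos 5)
Your proof is correct, and the core of it is the same as the paper's: both reduce the membership $\mathbf{0} \in \mathcal{T}_x$ to the condition $\Psi(x) \in \widehat{X}$ and then invoke Theorem~\ref{t:purelyperiodic}. You spell out the unwinding of $\mathcal{D}_x$ and $\psi$ in more detail (in particular the check that the right half of a sequence $u \in \mathcal{S}$ with $\psi(u) = \Psi(x)$ must equal $b(x)$, which the paper leaves implicit in its one-line appeal to~(\ref{e:Dx})), but the idea is identical. The only genuine divergence is the finiteness of~$P$: the paper deduces it from the local finiteness of the family $\mathcal{T}$ (Corollary~\ref{c:locallyfinite}, which rests on the Delone-set machinery of Lemma~\ref{l:udrd}), whereas you observe directly that $\Psi(P) \subseteq \widehat{X} \cap \mathbb{Z}^d$ is a bounded subset of a lattice, using Lemma~\ref{l:zd} and the compactness of $\widehat{Y}$ from Lemma~\ref{l:compact}. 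Your route is slightly more elementary and self-contained; the paper's is a one-line consequence of a property it needs anyway for the tiling results. Both are valid.
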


\begin{proof}
Let $x \in \mathbb{Z}[\beta] \cap X$.
By definition, we have $\mathcal{T}_x = \Phi(x) + \mathcal{D}_x$, thus $\mathbf{0} \in \mathcal{T}_x$ if and only if $\Phi(x) \in -\mathcal{D}_x$, i.e., $\Psi(x) \in x \mathbf{v}_1 - \mathcal{D}_x$.
By (\ref{e:Dx}) and Theorem~\ref{t:purelyperiodic}, this is equivalent with $x \in P$.
The finiteness of $P$ follows from the local finiteness of~$\mathcal{T}$.
\end{proof}

Therefore, $\mathbf{0}$~is an exclusive point if and only if $P$ consists only of one element.
This generalizes the (F) property which was introduced in~\cite{FrougnySolomyak92}, see also~\cite{Akiyama99}.
If $\mathbf{0}$ is contained in more than one tile, then it is more difficult to determine the covering degree~$m$. Corollary~\ref{c:intiles} provides an easy way to determine the number of tiles to which a point belongs.
We restrict to points $\Phi(z)$, $z \in \mathbb{Z}[\beta] \cap [0,\infty)$, because of the following lemma.

\begin{lemma}[\cite{Akiyama99}] \label{l:inftydense}
The set $\Phi(\mathbb{Z}[\beta] \cap [0,\infty))$ is dense in~$H$.
\end{lemma}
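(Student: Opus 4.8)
The plan is to use that $\Phi$ conjugates multiplication by $\beta$ on $\mathbb{Z}[\beta]$ to multiplication by $M_\beta$ on $H$, and that the latter is contracting, together with the relative density supplied by Lemma~\ref{l:udrd}. Indeed, $\Phi$ is $\mathbb{Q}$-linear and $M_\beta \mathbf{v}_j = \beta_j \mathbf{v}_j$, so $\Phi(\beta^n x) = M_\beta^n \Phi(x)$ for all $x \in \mathbb{Z}[\beta]$ and $n \ge 0$; moreover $\beta^n x \ge 0$ whenever $x \ge 0$, so multiplying by powers of $\beta$ keeps us inside $\mathbb{Z}[\beta] \cap [0,\infty)$. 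Since $M_\beta$ is contracting on the finite-dimensional space $H$, the operator norm of $M_\beta^n$ on $H$ tends to $0$, and $M_\beta$ is invertible on $H$.

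First I would apply Lemma~\ref{l:udrd} with $E = [0,1)$: the set $\Phi(\mathbb{Z}[\beta] \cap [0,1))$ is relatively dense in $H$, say with radius $R > 0$. Then, given a target $\mathbf{y} \in H$ and $\varepsilon > 0$, I would choose $n$ so large that $\|M_\beta^n \mathbf{v}\| < \varepsilon$ for every $\mathbf{v} \in H$ with $\|\mathbf{v}\| \le R$, put $\mathbf{y}' = (M_\beta|_H)^{-n}\mathbf{y} \in H$, and pick $x \in \mathbb{Z}[\beta] \cap [0,1)$ with $\|\Phi(x) - \mathbf{y}'\| < R$. Setting $z = \beta^n x \in \mathbb{Z}[\beta] \cap [0,\infty)$, we get
\[
\|\Phi(z) - \mathbf{y}\| = \big\|M_\beta^n\Phi(x) - M_\beta^n \mathbf{y}'\big\| = \big\|M_\beta^n(\Phi(x) - \mathbf{y}')\big\| < \varepsilon,
\]
so $\mathbf{y}$ lies in the closure of $\Phi(\mathbb{Z}[\beta]\cap[0,\infty))$. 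As $\mathbf{y}$ and $\varepsilon$ are arbitrary, this set is dense in $H$.

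There is no real obstacle; the argument is short once Lemma~\ref{l:udrd} is available. The two points that deserve care are that the approximants must stay nonnegative, which is exactly why one pulls back to the bounded interval $[0,1)$ of nonnegative numbers (one cannot simply shift by an integer, since $\Phi$ does not vanish on $\mathbb{Z}$), and that ``contracting on $H$'' is used in the strong sense that the powers $M_\beta^n$ shrink every vector of $H$ uniformly — this is precisely the mechanism turning the image of a single uniformly discrete, relatively dense set into a dense set after applying large powers of $M_\beta$. Invertibility of $M_\beta$ on $H$, needed to define $\mathbf{y}'$, holds because $\beta$ is a unit, so no conjugate $\beta_j$ is zero.
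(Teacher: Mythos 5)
Your argument is correct and is essentially the paper's proof, which likewise applies Lemma~\ref{l:udrd} to $\Phi(\mathbb{Z}[\beta]\cap[0,1))$ and uses the identity $\Phi(\mathbb{Z}[\beta]\cap[0,\beta^n)) = M_\beta^n\Phi(\mathbb{Z}[\beta]\cap[0,1))$ together with the contraction of $M_\beta$ on $H$; you have merely written out the $\varepsilon$--$R$ details that the paper leaves implicit. No issues.
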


\begin{proof}
By Lemma~\ref{l:udrd}, $\Phi(\mathbb{Z}[\beta] \cap [0,1))$ is relatively dense in~$H$. 
We have
\[ 
\Phi\big(\mathbb{Z}[\beta] \cap [0,\infty)\big) = \bigcup_{n\ge0} \Phi\big(\mathbb{Z}[\beta] \cap [0,\beta^n)\big) = \bigcup_{n\ge0} M_{\beta}^n \Phi\big(\mathbb{Z}[\beta] \cap[0,1)\big). 
\]
Since $M_{\beta}$ is contracting, we obtain that $\Phi(\mathbb{Z}[\beta] \cap [0,\infty))$ is dense in~$H$.
\end{proof}

\begin{prop} \label{p:inTx}
A~point~$\Phi(z)$, $z \in \mathbb{Z}[\beta] \cap [0,\infty)$, lies in the tile~$\mathcal{T}_x$, $x \in \mathbb{Z}[\beta] \cap X$, if and only if there exists some $y \in P$ and some $k \ge 0$ such that
\begin{equation} \label{e:pk}
T^k (y + \beta^{-k} z) = x \quad\mbox{and}\quad [T^j y, T^j y + \beta^{-k-n+j} z] \subset X_{b_1(T^j y)} \quad\mbox{for all}\ 0 \le j < n,
\end{equation}
where $n \ge 1$ is the period length of~$y$, i.e., $T^n y = y$.
\end{prop}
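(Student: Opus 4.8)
The plan is to translate the statement into one about the prototiles $\mathcal{D}_x$ and then prove the two implications by producing, respectively inspecting, two-sided admissible sequences. Since $\mathcal{T}_x = \Phi(x) + \mathcal{D}_x$ and $\Phi$ is $\mathbb{Q}$-linear, $\Phi(z) \in \mathcal{T}_x$ is equivalent to $\Phi(z-x) \in \mathcal{D}_x$, i.e.\ to the existence of a left-infinite word $w$ with $w \cdot b(x) \in \mathcal{S}$ and $\varphi(w) = \Phi(z-x)$. The identities I will lean on are $\varphi\big({}^{\omega}(d_1\cdots d_n)\big) = -\Phi(y)$ whenever $b(y) = (d_1\cdots d_n)^{\omega}$ is purely periodic (this is exactly the computation in the proof of Theorem~\ref{t:purelyperiodic}), together with $\varphi(va) = M_\beta\varphi(v) + \Phi(a)$ and $\Phi(\beta^m v) = M_\beta^m \Phi(v)$.

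For sufficiency, assume $y \in P$ with period $n$, $b(y) = (d_1\cdots d_n)^{\omega}$, and $k$ satisfy (\ref{e:pk}). I would set $u_1\cdots u_k = b_1(y+\beta^{-k}z)\cdots b_k(y+\beta^{-k}z)$ so that $b(y+\beta^{-k}z) = u_1\cdots u_k\, b(x)$, and note that the interval conditions in (\ref{e:pk}) say precisely that $T^j y + \beta^{-k-mn+j}z \in X_{d_{j+1}}$ for all $0 \le j < n$ and all $m \ge 1$ (since $\beta^{-k-mn+j}z \le \beta^{-k-n+j}z$ and $z \ge 0$). Hence, with $q^{(m)} := y + \beta^{-k-mn}z$ and $q^{(0)} := y + \beta^{-k}z$, induction on $m$ gives $b(q^{(m)}) = (d_1\cdots d_n)^m\, u_1\cdots u_k\, b(x)$ and $T^n q^{(m)} = q^{(m-1)}$. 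Consequently every left-truncation of the two-sided word $u := {}^{\omega}(d_1\cdots d_n)\cdot u_1\cdots u_k\cdot b(x)$ is the $T$-expansion of one of the points $T^i q^{(m)}$, $T^i q^{(0)}$ or $T^i x$, hence $T$-admissible, so $u \in \mathcal{S}$ and $w := {}^{\omega}(d_1\cdots d_n)\cdot u_1\cdots u_k$ belongs to the index set of $\mathcal{D}_x$. Finally $\varphi(w) = \varphi(u_1\cdots u_k) + M_\beta^k\varphi\big({}^{\omega}(d_1\cdots d_n)\big) = \varphi(u_1\cdots u_k) - M_\beta^k\Phi(y)$, and applying $\Phi$ to $z - x = \sum_{l=1}^k u_l\beta^{k-l} - \beta^k y$ (which follows from $y + \beta^{-k}z = \decdot u_1\cdots u_k b(x)$) gives $\Phi(z-x) = \varphi(w)$; therefore $\Phi(z) \in \mathcal{T}_x$.

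For necessity, I would iterate the subdivision (\ref{e:decomposition}) to get $\mathcal{D}_x = \bigcup_{y'\in T^{-k}\{x\}}\big(M_\beta^k\mathcal{D}_{y'} + \varphi(b_1(y')\cdots b_k(y'))\big)$ for every $k$; applying $M_\beta^{-k}$ and using $M_\beta^{-k}\varphi(b_1(y')\cdots b_k(y')) = \Phi(y' - \beta^{-k}x)$, the hypothesis $\Phi(z-x) \in \mathcal{D}_x$ turns into: for every $k$ there is $y'_k \in T^{-k}\{x\}$ with $\Phi(\beta^{-k}z - y'_k) \in \mathcal{D}_{y'_k}$. Put $y_k := y'_k - \beta^{-k}z$; since $A \subset \mathbb{Z}[\beta]$ and $\beta$ is a unit, $y_k \in \mathbb{Z}[\beta]$, and since $y'_k \in X$ and $z \ge 0$ the $y_k$ stay in a fixed bounded interval. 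The crucial observation is that $\Phi(y_k) = -\Phi(\beta^{-k}z - y'_k)$ lies in one of the finitely many compact sets $-\mathcal{D}_{y'_k}$ ($\mathcal{V}$ being finite), so $\{\Phi(y_k)\}$ is bounded; by Lemma~\ref{l:udrd} and the injectivity of $\Phi$ on $\mathbb{Z}[\beta]$, the set $\{y_k : k \ge 0\}$ is then finite, so some $y^*$ equals $y_k$ for all $k$ in an infinite set $K$. For $k \in K$ large, $y'_k = y^* + \beta^{-k}z$ lies in $J_{y^*}$ (the $y'_k$ decrease to $y^*\in X$ and $\mathcal{V}$ is finite), whence $\mathcal{D}_{y'_k} = \mathcal{D}_{y^*}$ by Proposition~\ref{p:S}, so $\Phi(-y^*) \in \mathcal{D}_{y^*}$, i.e.\ $\mathbf{0} \in \mathcal{T}_{y^*}$, and Lemma~\ref{l:P} yields $y^* \in P$. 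Letting $n$ be the period of $y^*$ and using that each $T^j y^*$ lies strictly below the right endpoint of the subinterval of $X_{b_1(T^j y^*)}$ containing it, for $k \in K$ large enough (simultaneously over the finitely many $0 \le j < n$) one gets $[T^j y^*, T^j y^* + \beta^{-k-n+j}z] \subset X_{b_1(T^j y^*)}$; together with $T^k(y^* + \beta^{-k}z) = T^k y'_k = x$ this is (\ref{e:pk}) with $y = y^*$.

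The main obstacle is the necessity direction. The naive idea of letting $k \to \infty$ so that ``$\Phi(\beta^{-k}z) \to \mathbf{0}$'' fails, because $M_\beta^{-1}$ is \emph{expanding} on $H$ and $\Phi(\beta^{-k}z)$ actually grows; the right substitute is to track $y_k = y'_k - \beta^{-k}z$, whose $\Phi$-image stays bounded precisely because it sits inside one of the finitely many compact prototiles, and to extract a recurrent value via the Delone (uniform discreteness) property of Lemma~\ref{l:udrd}. Once $y^* \in P$ is in hand, the remaining items — that $y^* \in X$ and $y'_k \in J_{y^*}$ for large $k \in K$, and that the strict inequalities $T^j y^* < r$ can be turned into (\ref{e:pk}) — are routine bookkeeping relying on $\mathcal{V}$ being finite and $X$ being a finite union of left-closed, right-open intervals.
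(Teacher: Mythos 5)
Your proof is correct, and both directions take a genuinely different route from the paper's. For sufficiency, the paper produces the points $\mathbf{z}_j = \Phi(z) + M_{\beta}^{k+jn}\big(\Phi(y)+\varphi(w^{(j)})\big) \in \mathcal{T}_{T^{k+jn}(y+\beta^{-k-jn}z)} = \mathcal{T}_x$ and lets $j \to \infty$, relying on the contraction of $M_{\beta}$ on $H$ and the compactness of the tile; you instead exhibit the admissible two-sided word ${\vphantom{(}}^{\omega}(d_1\cdots d_n)\, u_1\cdots u_k \cdot b(x)$ explicitly and check $\varphi\big({\vphantom{(}}^{\omega}(d_1\cdots d_n)\, u_1\cdots u_k\big) = \Phi(z-x)$ via the computation from the proof of Theorem~\ref{t:purelyperiodic}, so that $\Phi(z)$ is attained rather than approximated. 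For necessity, the paper fixes a single $w$ with $\varphi(w)=\Phi(z-x)$, sets $x_k = \decdot w_{-k+1}\cdots w_0\, b(x)$, notes that $\Phi(x_k-\beta^{-k}z) = -\varphi(w^{(k)})$ is bounded, and extracts a value $y$ whose occurrence set has bounded gaps, getting the periodicity $T^n y = y$ and the interval condition simultaneously from two successive occurrences $k$, $k+n$; you instead route through the iterated subdivision~(\ref{e:decomposition}), Proposition~\ref{p:S} and Lemma~\ref{l:P} to first identify $y^* \in P$ (so periodicity comes for free from Theorem~\ref{t:purelyperiodic}) and only then verify the interval condition, which for a point already known to be periodic is just a matter of taking $k \in K$ large. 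Your version leans more heavily on the machinery of Section~\ref{s:shape} (in particular $\mathcal{D}_{y'_k} = \mathcal{D}_{y^*}$ for $y'_k \in J_{y^*}$, which uses the finiteness of $\mathcal{V}$), but in exchange it avoids the paper's somewhat delicate step of arranging bounded gaps and checking the interval condition for a pair of successive occurrences before periodicity is available. Both arguments share the same core mechanism: the quantity $\Phi(x_k - \beta^{-k}z)$ (your $\Phi(y_k)$) stays inside the finitely many compact prototiles, hence takes only finitely many values by Lemma~\ref{l:udrd} and the injectivity of $\Phi$.
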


\begin{proof}
Let $\Phi(z) \in \mathcal{T}_x$, which means that $\Phi(z) = \Phi(x) + \varphi(w)$ for some $w \in {\vphantom{A}}^{\omega}\!A$ with $w \cdot b(x) \in \mathcal{S}$, and set $x_k = \decdot w_{-k+1} \cdots w_0 b(x)$, $w^{(k)} = \cdots w_{-k-1} w_{-k}$, for $k \ge 0$. 
Then
\begin{equation} \label{e:xk}
\Phi(x_k - \beta^{-k} z) = \Phi(x_k) - M_{\beta}^{-k} \big(\Phi(x) + \varphi(w)\big) = M_{\beta}^{-k} \big(\varphi(w_{-k+1} \cdots w_0) - \varphi(w)\big) = -\varphi\big(w^{(k)}\big),
\end{equation}
therefore $\Phi(x_k - \beta^{-k} z)$ is bounded, hence the set $\{x_k - \beta^{-k} z:\, k \ge 0\}$ is finite by Lemma~\ref{l:udrd} and the injectivity of $\Phi$.
This means that there exists some $y$ such that $x_k - \beta^{-k} z = y$ for infinitely many $k \ge 0$.
Since $x_k \in X$ and $X$ is left-closed, we obtain $y \in X$.
Moreover, we can choose $y$ such that $\{k \ge 0:\, x_k - \beta^{-k} z = y\}$ has bounded gaps.
Therefore, there exist two successive elements $k$ and $k + n$ of this set such that $[T^j y, T^j y + \beta^{-k-n+j} z] \subset X_{b_1(T^j y)}$ for all $0 \le j < n$.
We have 
\[
T^n y = T^n (y + \beta^{-k-n} z) - \beta^{-k} z =  T^n x_{k+n} - \beta^{-k} z = x_k - \beta^{-k} z = y,
\]
thus $y \in P$ and $T^k (y + \beta^{-k} z) = T^k x_k = x$.

For the other direction, assume that (\ref{e:pk}) holds. 
Then we have $T^{j n}(y + \beta^{-k-j n} z) = y + \beta^{-k} z$ and thus $T^{k+j n}(y + \beta^{-k-j n} z) = x$ for all $j \ge 0$.
Let $w^{(j)} \in {\vphantom{A}}^{\omega}\!A$, $j \ge 0$, be sequences with $w^{(j)} \cdot b(y + \beta^{-k-j n} z) \in \mathcal{S}$, and set 
\[ 
\mathbf{z}_j = M_{\beta}^{k+j n} \big(\Phi(y + \beta^{-k-j n} z) + \varphi\big(w^{(j)}\big)\big) = \Phi(z) + M_{\beta}^{k+j n} \big(\Phi(y) + \varphi\big(w^{(j)}\big)\big). 
\]
Then we have $\mathbf{z}_j \in \mathcal{T}_{T^{k+j n}(y+\beta^{-k-j n}z)} = \mathcal{T}_x$ and $\lim_{j\to\infty} \mathbf{z}_j = \Phi(z)$, thus $\Phi(z) \in \mathcal{T}_x$.
\end{proof}

\begin{cor} \label{c:intiles}
Let $z \in \mathbb{Z}[\beta] \cap [0,\infty)$.
Choose $k \ge 0$ with $y + \beta^{-k} z \in X$, $[y, y + \beta^{-k-1} z] \subset X_{b_1(y)}$ for all $y \in P$. 
Then $\Phi(z)$ lies exactly in the tiles $\mathcal{T}_{T^k(y+\beta^{-k}z)}$, $y \in P$.
\end{cor}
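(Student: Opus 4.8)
The plan is to deduce the corollary from Proposition~\ref{p:inTx}. Write $k^*$ for the exponent fixed in the statement; such a $k^*$ exists because $P$ is finite by Lemma~\ref{l:P}, each $y \in P$ lies in some interval $[\ell_i,r_i) \subseteq X_{b_1(y)}$, so there is an $\varepsilon > 0$ with $[y, y+\varepsilon] \subset X_{b_1(y)}$ for all $y \in P$, and then any $k$ with $\beta^{-k}z \le \varepsilon$ works (recall $z \ge 0$ and $\beta > 1$). Set $\mathcal{Q} = \{x \in \mathbb{Z}[\beta] \cap X :\, \Phi(z) \in \mathcal{T}_x\}$; I will show $\mathcal{Q} = \{T^{k^*}(y + \beta^{-k^*}z) :\, y \in P\}$ (both sides being sets of tiles, with no injectivity claimed).

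For the inclusion $\supseteq$, fix $y \in P$ with period $n$, so that $T^j y \in P$ for every $j \ge 0$. Whenever $0 \le j < n$ we have $n - j \ge 1$, hence $\beta^{-k^*-n+j}z \le \beta^{-k^*-1}z$, and therefore
\[
[T^j y,\, T^j y + \beta^{-k^*-n+j}z] \ \subseteq\ [T^j y,\, T^j y + \beta^{-k^*-1}z] \ \subset\ X_{b_1(T^j y)}
\]
by the choice of $k^*$. Together with $y + \beta^{-k^*}z \in X$, this is exactly condition~(\ref{e:pk}) with $x = T^{k^*}(y + \beta^{-k^*}z)$, so Proposition~\ref{p:inTx} gives $\Phi(z) \in \mathcal{T}_{T^{k^*}(y+\beta^{-k^*}z)}$.

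For the inclusion $\subseteq$, let $x \in \mathcal{Q}$. By Proposition~\ref{p:inTx} there are $y_0 \in P$ of period $n_0$ and $k_0 \ge 0$ satisfying~(\ref{e:pk}), i.e.\ $T^{k_0}(y_0 + \beta^{-k_0}z) = x$ together with the interval conditions. First I raise $k_0$ above $k^*$: writing an index $i$ in the range $0 \le i < \ell n_0$ as $i = p n_0 + j$ with $0 \le j < n_0$, $0 \le p < \ell$, one has $-k_0 - \ell n_0 + i = -k_0 - (\ell-p)n_0 + j \le -k_0 - n_0 + j$, so all intervals occurring only shrink, and a direct computation using~(\ref{e:pk}) and $T^{n_0} y_0 = y_0$ gives $T^{\ell n_0}(y_0 + \beta^{-k_0-\ell n_0}z) = y_0 + \beta^{-k_0}z$, hence $x = T^{k_0+\ell n_0}(y_0 + \beta^{-(k_0+\ell n_0)}z)$; choosing $\ell$ with $k_0 + \ell n_0 \ge k^*$ we may thus assume $k_0 \ge k^*$. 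Now set $r = k_0 - k^*$. For $0 \le i < r$ we have $-k_0 + i \le -k^*-1$, so $\beta^{-k_0+i}z \le \beta^{-k^*-1}z$ and $[T^i y_0,\, T^i y_0 + \beta^{-k_0+i}z] \subset X_{b_1(T^i y_0)}$ by the choice of $k^*$; inductively this gives $T^i(y_0 + \beta^{-k_0}z) = T^i y_0 + \beta^{-k_0+i}z$ for $0 \le i \le r$, in particular $T^r(y_0 + \beta^{-k_0}z) = T^r y_0 + \beta^{-k^*}z$. Since $T^r y_0 \in P$, we conclude $x = T^{k^*}\big(T^r(y_0+\beta^{-k_0}z)\big) = T^{k^*}(T^r y_0 + \beta^{-k^*}z) \in \{T^{k^*}(y+\beta^{-k^*}z) :\, y \in P\}$.

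The one delicate point is the bookkeeping in the last paragraph: Proposition~\ref{p:inTx} produces an exponent $k_0$ a priori unrelated to the $k^*$ fixed in the corollary. The two observations that reconcile them are that adding a multiple of the period of $y_0$ to $k_0$ preserves~(\ref{e:pk}) because the relevant intervals only get smaller, and that the hypothesis of the corollary controls precisely the intervals $[T^i y,\, T^i y + \beta^{-k^*-1}z]$ needed to strip off the first $k_0 - k^*$ digits of $y_0 + \beta^{-k_0}z$, this stripping replacing $y_0$ by $T^{k_0-k^*}y_0$, which again lies in $P$. Everything else is a routine application of Proposition~\ref{p:inTx}.
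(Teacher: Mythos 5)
Your proof is correct and follows the same route as the paper, which disposes of the corollary in one line by invoking Proposition~\ref{p:inTx} together with the fact that $T^j y \in P$ for all $y \in P$, $j \ge 0$. You have merely made explicit the bookkeeping that reconciles the exponent produced by Proposition~\ref{p:inTx} with the $k$ fixed in the statement (raising it by multiples of the period, then stripping digits down to $k$), which is exactly the argument the paper leaves implicit.
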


\begin{proof}
This follows from Proposition~\ref{p:inTx}, since $y \in P$ implies $T^j y \in P$ for all $j \ge 0$.
\end{proof}

The proof of the following proposition shows how to construct an exclusive point from points with weaker properties.
This generalizes the (W) property and Proposition~2 in~\cite{Akiyama02}, where $x = 0$, since $T_{\beta}^k z = 0$ means that the $\beta$-expansion of $z$ is finite.

\begin{prop} \label{p:W}
Let $\varepsilon = \min_{y\in P} \big(\!\min\{z > y:\, z \not\in X_{b_1(y)}\} - y\big)\, \beta$.
Then $\mathcal{T}_x$, $x \in P$, contains an exclusive point if and only if, for every $y \in P$, there exists some $z \in \mathbb{Z}[\beta] \cap [0, \varepsilon)$ and some $k \ge 0$ such that
\begin{equation} \label{e:p0}
T^k (y + z) = T^k (x + z) = x.
\end{equation}
\end{prop}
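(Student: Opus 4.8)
The plan is to first translate the statement into a purely dynamical condition on $P$, and then prove the non-trivial implication by an induction on $|P|$. For the reduction: since $\beta$ is a Pisot unit, $\mathbb{Z}[\beta]=\mathbb{Z}[\beta^{-1}]$, so $\beta^{-k}z$ runs through all of $\mathbb{Z}[\beta]\cap[0,\infty)$. The set of exclusive points of $H$ is open, being the complement of $\bigcup_{x_1\ne x_2}(\mathcal{T}_{x_1}\cap\mathcal{T}_{x_2})$, which is closed by local finiteness; together with Lemma~\ref{l:closureinterior} (so that $\inn(\mathcal{T}_x)$ is dense in $\mathcal{T}_x$) and the density of $\Phi(\mathbb{Z}[\beta]\cap[0,\infty))$ from Lemma~\ref{l:inftydense}, this shows that $\mathcal{T}_x$ contains an exclusive point if and only if $\Phi(z)$ is exclusive and lies in $\mathcal{T}_x$ for some $z\in\mathbb{Z}[\beta]\cap[0,\infty)$. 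By Corollary~\ref{c:intiles}, for all sufficiently large $k$ this amounts to $T^k(y+\beta^{-k}z)=x$ for every $y\in P$. Hence everything reduces to: \emph{there exist $z\in\mathbb{Z}[\beta]\cap[0,\infty)$ and $k\ge0$ with $T^k(y+\beta^{-k}z)=x$ for all $y\in P$ if and only if~(\ref{e:p0}) holds for every $y\in P$}. The implication $\Rightarrow$ is then immediate: choose $k$ so large that in addition $\beta^{-k}z<\varepsilon$, and take $z_y=\beta^{-k}z$, $k_y=k$; as $x\in P$, we also have $T^{k_y}(x+z_y)=x$.

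For the implication $\Leftarrow$, I would prove by induction on $|Q|$ the statement: \emph{for every $Q\subseteq P$ with $x\in Q$ and every $\eta>0$ there are $\delta\in\mathbb{Z}[\beta]\cap[0,\eta)$ and $k\ge1$ with $T^k(w+\delta)=x$ for all $w\in Q$}. The case $|Q|=1$ is trivial. For the inductive step choose $y_1\in Q\setminus\{x\}$ and, using~(\ref{e:p0}), pick $z_1\in\mathbb{Z}[\beta]\cap[0,\varepsilon)$ and $k_1$ with $T^{k_1}(y_1+z_1)=T^{k_1}(x+z_1)=x$. The crucial point is that enlarging $k_1$ by a multiple of the period of $x$ preserves these two equalities, so we may assume $k_1$ exceeds the preperiods of the (eventually periodic) orbits of $w+z_1$, $w\in Q$; then each $\xi_w:=T^{k_1}(w+z_1)$ is purely periodic, hence $\xi_w\in P$, and $Q':=\{\xi_w:w\in Q\}$ satisfies $x\in Q'$ and $|Q'|<|Q|$ because $\xi_{y_1}=\xi_x=x$. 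Feeding a small enough $\eta'$ into the induction hypothesis for $Q'$ gives $\delta'$ and $k'$ with $T^{k'}(v+\delta')=x$ for all $v\in Q'$; setting $\delta:=z_1+\beta^{-k_1}\delta'$ and $k:=k_1+k'$, one checks (taking $\eta'$ small) that over the first $k_1$ iterations the perturbation $\beta^{-k_1}\delta'$ stays inside the interval $X_a$ containing the corresponding iterate of $w+z_1$ — these iterates form a fixed finite subset of $\mathbb{Z}[\beta]\cap X$, hence lie at a positive distance below the right endpoints of their intervals — so that $T^{k_1}(w+\delta)=\xi_w+\delta'$ and therefore $T^k(w+\delta)=T^{k'}(\xi_w+\delta')=x$, while also $\delta<\eta$.

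Applying the claim with $Q=P$ and $\eta=\varepsilon$ produces $\delta\in\mathbb{Z}[\beta]\cap[0,\varepsilon)$ and $k$ with $T^k(w+\delta)=x$ for all $w\in P$; replacing $(\delta,k)$ by $(\beta^{-L}\delta,\,k+L)$ for a large common multiple $L$ of the periods occurring in $P$ makes $\delta$ small enough that Corollary~\ref{c:intiles} applies to $z=\beta^{k}\delta$ and yields an exclusive point $\Phi(z)\in\mathcal{T}_x$. The main obstacle is precisely the inductive step: a perturbation routing $y_1$ and $x$ to $x$ sends the remaining points of $P$ to uncontrolled positions, and what makes the argument work is the observation that, after pushing $k_1$ past the preperiods, those positions fall back into the finite set $P$, allowing the induction to close on the strictly smaller set $Q'$. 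Everything else — that perturbations are scaled by $\beta$ at each iteration, that ``tracking'' through a macroscopic routing is legitimate because the orbits in question avoid the discontinuities of~$T$, and that $\beta$ being a unit lets $\beta^{-k}z$ and $\beta^{k}\delta$ move freely — is routine bookkeeping.
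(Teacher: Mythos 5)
Your proof follows essentially the same route as the paper's: the reduction via Lemma~\ref{l:inftydense}, Lemma~\ref{l:closureinterior} and Corollary~\ref{c:intiles}; the use of Theorem~\ref{t:frankrobinson} to push iterates of perturbed points back into the finite set $P$; and the composition of perturbations whose later contributions are taken small enough not to disturb earlier itineraries. Your reorganization as an induction on $|Q|$ (routing $y_1$ and $x$ to the same point so that $Q'=\{T^{k_1}(w+z_1):w\in Q\}$ is a strictly smaller subset of $P$ containing $x$) is a cleaner packaging of the paper's linear recursion over an enumeration of $P$ with maintained invariants (i)--(iv), but the mechanism is identical. One point should be made explicit in the inductive step: besides enlarging $k_1$ past the preperiods, you must also \emph{shrink} $z_1$, replacing $(z_1,k_1)$ by $(\beta^{-n}z_1,k_1+n)$ with $n$ a common multiple of the periods of $x$ and $y_1$ (this preserves~(\ref{e:p0}) precisely because of how $\varepsilon$ is defined). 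This is needed both to guarantee that $w+z_1\in X_{b_1(w)}$ for the remaining $w\in Q$ --- a priori $z_1<\varepsilon$ only gives $[w,w+\beta^{-1}z_1]\subset X_{b_1(w)}$, so $\xi_w=T^{k_1}(w+z_1)$ need not even be defined --- and to obtain $\delta=z_1+\beta^{-k_1}\delta'<\eta$. You state the replacement trick only in the form of enlarging $k_1$ inside the induction and deploy the shrinking version only at the very end; once it is invoked at the right place, the argument is complete.
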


\begin{proof}
If $\mathcal{T}_x$ contains an exclusive point, then it contains an exclusive point $\Phi(z')$, $z' \in \mathbb{Z}[\beta] \cap [0, \infty)$ by Lemma~\ref{l:inftydense}. 
By Corollary~\ref{c:intiles}, we have some $k \ge 0$ with $T^k(y + \beta^{-k} z') = x$ for all $y \in P$, and $[y, y + \beta^{-k-1} z] \subset X_{b_1(y)}$.
This implies $\beta^{-k} z' < \varepsilon$, hence we can choose $z = \beta^{-k} z'$.

For the converse, let $P = \{x,y_1,\ldots,y_h\}$ and assume that, for every $y \in P$, there exists some $z \in \mathbb{Z}[\beta] \cap [0,\varepsilon)$ and some $k \ge 0$ such that (\ref{e:p0}) holds. 
It suffices to consider the case $h \ge 2$, since $\mathbf{0}$ is an exclusive point of $\mathcal{T}_x$ in case $h = 0$ and, in case $h = 1$, Corollary~\ref{c:intiles} shows that $\beta^k z$ is an exclusive point of $\mathcal{T}_x$ if $z \in \mathbb{Z}[\beta] \cap [0, \varepsilon)$ and $k$ are such that $T^k (y_1 + z) = T^k (x + z) = x$.
If $h \ge 2$, then we will recursively construct a point $z_h$ for which $T^k (y + \beta^{-k} z_h) = x$ for all $y \in P$, with some $k \ge 0$ satisfying the conditions of Corollary~\ref{c:intiles}.

First note that, if (\ref{e:p0}) holds for some $k,\, z$, then it also holds for $z' = \beta^{-n} z$ and $k' = k + n$ if  $n$ is a multiple of the period lengths of $x$ and $y$, since $T^n (y + \beta^{-n} z) = T^n y + z = y + z$ and $T^n (x + \beta^{-n} z) = x + z$ by the choice of~$\varepsilon$. 
Therefore, we can find arbitrarily small $z$ such that (\ref{e:p0}) holds. 
For every such $z$, we can find arbitrarily large $k$ such that (\ref{e:p0}) holds since $x \in P$. 

Choose, as a first step, $z_1' \in \mathbb{Z}[\beta] \cap [0, \varepsilon)$ and $k_1 \ge 1$ such that
\begin{itemize}
\itemsep5pt
\item 
$T^{k_1} (y_1 + z_1') = T^{k_1} (x + z_1') = x$, which can be done by~(\ref{e:p0}),
\item 
$[y, y + \beta^{-1} z_1'] \subset X_{b_1(y)}$ for all $y \in P$, which can be done by choosing $z_1'$ sufficiently small,
\item 
$T^{k_1} (y_2 + z_1') \in P$, which can be done by choosing $k_1$ sufficiently large since the $T$-expansion of $y_2 + z_1'$ is eventually periodic by Theorem~\ref{t:frankrobinson}.
\end{itemize}
Now, suppose that, for $1 \le n < h $, we have $k_n \ge 0$ and $z_n' \in \mathbb{Z}[\beta] \cap [0, \varepsilon)$ that satisfy all of the following, where $z_0 = 0$, $z_n = \beta^{k_n} (z_{n-1} + z_n')$ and $s_n = k_1 + \cdots + k_n$.
\begin{itemize}
\itemsep5pt
\item[(i)] 
$T^{k_n} \big(T^{s_{n-1}} (y_n + \beta^{-s_{n-1}} z_{n-1}) + z_n'\big) = T^{k_n} (x + z_n') = x$,
\item[(ii)] 
$[y, y + \beta^{-s_n-1} z_n] \subset X_{b_1(y)}$ for all $y \in P$,
\item[(iii)] 
$T^{s_{n-1}} (y + \beta^{-s_n} z_n) = T^{s_{n-1}} (y + \beta^{-s_{n-1}} z_{n-1}) + z_n'$ for all $y \in P$, 
\item[(iv)] 
$T^{s_n} (y_{n+1} + \beta^{-s_n} z_n) \in P$.
\end{itemize}
Then by (iv) and (\ref{e:p0}), we have arbitrarily small $z_{n+1}' \in \mathbb{Z}[\beta] \cap [0,\varepsilon)$ and arbitrarily large $k_{n+1} \ge 0$ such that
\[ 
T^{k_{n+1}} \big(T^{s_n}(y_{n+1} + \beta^{-s_n} z_n) + z_{n+1}'\big) = T^{k_{n+1}} (x + z_{n+1}') = x. 
\]
If we set $z_{n+1} = \beta^{k_{n+1}} (z_n + z_{n+1}')$ and $s_{n+1} = s_n + k_{n+1}$, then by choosing $z_{n+1}$ small enough, and by (ii), we can get
\[ 
[y, y + \beta^{-s_{n+1}-1} z_{n+1}] = [y, y + \beta^{-s_n-1} z_n + \beta^{-s_n-1} z_{n+1}'] \subseteq X_{b_1(y)}
\]
for all $y \in P$. 
By choosing $z_{n+1}'$ small enough, we also have
\[ 
T^{s_n} (y + \beta^{-s_{n+1}} z_{n+1}) = T^{s_n} (y + \beta^{-s_n} z_n + \beta^{-s_n} z_{n+1}') = T^{s_n} (y + \beta^{-s_n} z_n) + z_{n+1}' 
\] 
for all $y \in P$.
If $n + 1 < h$, then, by Theorem~\ref{t:frankrobinson}, we can choose $k_{n+1}$ large enough such that 
\[ 
T^{s_{n+1}} (y_{n+2} + \beta^{-s_{n+1}} z_{n+1}) = T^{k_{n+1}} T^{s_n} \big(y_{n+2} + \beta^{-s_n} (z_n + z_{n+1}')\big) \in P. 
\]

For $n = 1$, we have already chosen $z_1' \in \mathbb{Z}[\beta] \cap [0,\varepsilon)$ and $k_1 \ge 1$ with the properties (i)--(iv).
Inductively, we obtain therefore some $z_h \in \mathbb{Z}[\beta] \cap [0,\infty)$ and $s_h \ge 1$ satisfying (i)--(iii).
Note that, by (ii) and (iii), $z = z_h$ satisfies all the conditions of Corollary~\ref{c:intiles} with $k = s_h$. 
Furthermore, by (iii) and (i), we have, for $1 \le n \le h$,
\begin{align*}
T^{s_h} (y_n + \beta^{-s_h} z_h) & = T^{k_h} T^{s_{h-1}} (y_n + \beta^{-s_h} z_h) = T^{k_h} \big(T^{s_{h-1}} (y_n + \beta^{-s_{h-1}} z_{h-1}) + z_h'\big) \\
& = T^{k_h} \big(\cdots \big(T^{k_{n+1}} \big(T^{k_n} \big(T^{s_{n-1}} (y_n + \beta^{-s_{n-1}} z_{n-1}) + z_n'\big) + z_{n+1}'\big) \cdots\big) + z_h'\big) \\
& = T^{k_h} (\cdots (T^{k_{n+1}} (x + z_{n+1}') \cdots) + z_h') = x.
\end{align*}
Similarly, we obtain
\[ 
T^{s_h} (x + \beta^{-s_h} z_h) = T^{k_h} \big(\cdots \big(T^{k_2} \big(T^{k_1} (x + z_1') + z_2'\big) \cdots\big) + z_h'\big) = x. 
\]
Hence, by Corollary~\ref{c:intiles}, $z_h$ is an exclusive point of $\mathcal{T}_{x}$.
\end{proof}

This leads us to define the following generalizations of the (F) and the (W) property, where $\varepsilon = \min_{y\in P} \big(\!\min\{z > y:\, z \not\in X_{b_1(y)}\} - y\big)\, \beta$ as in Proposition~\ref{p:W}.
Recall that $P$ is the set of points in $\mathbb{Z}[\beta] \cap X$ with purely periodic $T$-expansion.
\begin{align*}
\mbox{(F)}:\ &\ P\ \mbox{consists of only one element}. \\
\mbox{(W)}:\ &\ \exists\, x \in P:\, \forall\, y \in P\ \exists\, z \in \mathbb{Z}[\beta] \cap [0,\varepsilon),\, k \ge 0:\, T^k (y + z) = T^k (x + z) = x.
\end{align*}
Clearly, (F) implies (W).
We have the following corollary of Proposition~\ref{p:W}.

\begin{cor} \label{c:W}
Let $T$ satisfy the assumptions of Theorem~\ref{t:mtiling}.
Then the family $\{\mathcal{T}_x\}_{x\in\mathbb{Z}[\beta]\cap X}$ forms a tiling of $H$ if and only if (W) holds.
\end{cor}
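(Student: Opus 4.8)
The plan is to deduce everything from three already-established facts: Theorem~\ref{t:mtiling} (the family is a multiple tiling under the stated hypotheses), Remark~\ref{r:tiling} (a multiple tiling is a tiling iff some point lies in exactly one tile), and Proposition~\ref{p:W} (which characterizes, for $x \in P$, when $\mathcal{T}_x$ contains an exclusive point). Note that the constant $\varepsilon$ in the definition of~(W) is precisely the one appearing in Proposition~\ref{p:W}. First I would record that $P \ne \emptyset$: by Lemma~\ref{l:cover} the origin $\mathbf{0}$ lies in at least one tile, and by Lemma~\ref{l:P} this tile is $\mathcal{T}_x$ for some $x \in P$.

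For the implication (W)~$\Rightarrow$ tiling, the argument is immediate: (W) provides some $x \in P$ such that for every $y \in P$ there are $z \in \mathbb{Z}[\beta] \cap [0,\varepsilon)$ and $k \ge 0$ with $T^k(y+z) = T^k(x+z) = x$, which is exactly the hypothesis of Proposition~\ref{p:W}. Hence $\mathcal{T}_x$ contains an exclusive point, and by Remark~\ref{r:tiling} the family $\{\mathcal{T}_x\}_{x \in \mathbb{Z}[\beta] \cap X}$ is a tiling.

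For the converse, suppose the family is a tiling, i.e.\ its covering degree is~$1$. Then by Proposition~\ref{p:itorao} almost every point of~$H$ lies in exactly one tile, so the exclusive points form a set of full $\lambda^{d-1}$-measure. Now fix any $x \in P$ (possible since $P \ne \emptyset$). Because the support of $\mu$ is assumed to be~$X$, every set $\mathcal{D}_x$ with $x \in X$ has positive $\lambda^{d-1}$-measure (this is how the support was computed in Section~\ref{s:shape}), so $\lambda^{d-1}(\mathcal{T}_x) = \lambda^{d-1}(\mathcal{D}_x) > 0$; hence $\mathcal{T}_x$ must contain an exclusive point. Applying Proposition~\ref{p:W} to this $x$ yields: for every $y \in P$ there exist $z \in \mathbb{Z}[\beta] \cap [0,\varepsilon)$ and $k \ge 0$ with $T^k(y+z) = T^k(x+z) = x$. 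This is exactly~(W).

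The only delicate step is the one in the converse direction: upgrading the bare existence of an exclusive point somewhere in~$H$ (which is what ``tiling'' gives us through Remark~\ref{r:tiling} and Proposition~\ref{p:itorao}) to the existence of an exclusive point inside one of the distinguished tiles $\mathcal{T}_x$, $x \in P$, so that Proposition~\ref{p:W} can be invoked. I expect this to be handled entirely by the measure argument sketched above, using that $P$ is nonempty, that each tile has positive $(d-1)$-dimensional measure once $X$ is the support of~$\mu$, and that the exclusive points have full measure. Everything else is a direct application of the cited statements, so no new technical machinery should be needed.
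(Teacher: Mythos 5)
Your proposal is correct and is exactly the argument the paper intends: the corollary is stated without proof as an immediate consequence of Proposition~\ref{p:W} together with Remark~\ref{r:tiling}, and the details you supply (nonemptiness of $P$ via Lemmas~\ref{l:cover} and~\ref{l:P}, and the measure argument using $\lambda^{d-1}(\mathcal{D}_x)>0$ when $X$ is the support of~$\mu$ to locate an exclusive point inside some $\mathcal{T}_x$ with $x\in P$) are the natural way to fill in the omitted steps.
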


\begin{remark}
For the greedy $\beta$-transformation, the condition (W) given here is similar to the property (W$'$) defined in~\cite{Akiyama02}.
Note that it is required in (W$'$) that there exists $z \in \mathbb{Z}[\beta] \cap [0,\varepsilon)$ and $k \ge 0$ satisfying (\ref{e:p0}) for every $\varepsilon > 0$.
Corollary~\ref{c:W} shows that it is sufficient to consider 
\[
\varepsilon = \min_{y\in P} \big(\min(\lfloor \beta y \rfloor + 1, \beta) - \beta y\big).
\]
\end{remark}

\subsection{Tiling of the torus} \label{s:torus}

Similarly to Ito and Rao (\cite{ItoRao06}), we can relate the tiling property of $\{\mathcal{T}_x\}_{x\in\mathbb{Z}[\beta]\cap X}$ to the tiling property of $\{\mathbf{x} + \widehat{Y}\}_{\mathbf{x}\in\mathbb{Z}^d}$, where $\widehat{Y}$ is the closure of the natural extension domain defined in Section~\ref{sec:geom-real-natur}.
Recall that $\bigcup_{\mathbf{x}\in\mathbb{Z}^d} (\mathbf{x} + \widehat{Y}) = \mathbb{R}^d$ by Lemma~\ref{l:positivemeasure}.

\begin{prop} \label{p:tilings}
Let $T$ satisfy the assumptions of Theorem~\ref{t:mtiling}.
Then the family $\{\mathbf{x} + \widehat{Y}\}_{\mathbf{x}\in\mathbb{Z}^d}$ forms a tiling of $\mathbb{R}^d$ if and only if the family $\{\mathcal{T}_x\}_{x\in\mathbb{Z}[\beta]\cap X}$ forms a tiling of~$H$.
\end{prop}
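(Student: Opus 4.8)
The plan is to show that the covering degree of the multiple tiling $\{\mathcal{T}_x\}_{x\in\mathbb{Z}[\beta]\cap X}$ equals $\lambda^d(\widehat{Y})$, and that the family $\{\mathbf{x}+\widehat{Y}\}_{\mathbf{x}\in\mathbb{Z}^d}$ tiles $\mathbb{R}^d$ precisely when $\lambda^d(\widehat{Y})=1$; the proposition then follows at once, since by Remark~\ref{r:tiling} the family $\{\mathcal{T}_x\}$ tiles $H$ exactly when its covering degree is~$1$.

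The conceptual link between the two families is the pointwise identity: for $x\in\mathbb{Z}[\beta]\cap X$ and $\mathbf{p}\in H$,
\[ \mathbf{p}\in\mathcal{T}_x \quad\Longleftrightarrow\quad \Psi(x)-\mathbf{p}\in\widehat{X}. \]
Indeed $\mathcal{T}_x=\Phi(x)+\mathcal{D}_x$, so $\mathbf{p}\in\mathcal{T}_x$ means $\mathbf{p}-\Phi(x)\in\mathcal{D}_x$; by~(\ref{e:Dx}) the slice $\widehat{X}\cap(x\mathbf{v}_1+H)$ equals $x\mathbf{v}_1-\mathcal{D}_x$, so the condition is $x\mathbf{v}_1-(\mathbf{p}-\Phi(x))=\Psi(x)-\mathbf{p}\in\widehat{X}$. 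Since $\Psi$ maps $\mathbb{Z}[\beta]$ bijectively onto $\mathbb{Z}^d$ (Lemma~\ref{l:zd}), this identifies the number of tiles $\mathcal{T}_x$ containing $\mathbf{p}\in H$ with the number of lattice translates $\mathbf{x}+\widehat{X}$, $\mathbf{x}\in\mathbb{Z}^d$, containing $-\mathbf{p}$. In particular the same bijection will be used below for the lattice-point count.

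Next I would compute $\lambda^d(\widehat{Y})$ and the covering degree $m$ of $\{\mathcal{T}_x\}$ in the same terms. Because $\mathcal{V}$ is finite, (\ref{e:shape}) expresses $\widehat{X}$ as a union of the sets $J_x\mathbf{v}_1-\mathcal{D}_x$, $x\in\mathcal{V}$, which lie over the pairwise disjoint intervals $J_x$ and are therefore disjoint; writing $\lambda^d$ as the product of Lebesgue measure along $\mathbb{R}\mathbf{v}_1$ and along $H$ up to a constant $c>0$, and using Lemma~\ref{l:compact},
\[ \lambda^d(\widehat{Y})=\lambda^d(\widehat{X})=c\sum_{x\in\mathcal{V}}\lambda^1(J_x)\,\lambda^{d-1}(\mathcal{D}_x). \]
On the other hand, $m$ exists (Proposition~\ref{p:itorao}), and integrating the multiplicity function of $\{\mathcal{T}_x\}$ over a large ball $B(\mathbf{0},R)\subset H$ gives $m\,\lambda^{d-1}(B(\mathbf{0},R))=\sum_{x\in\mathbb{Z}[\beta]\cap X}\lambda^{d-1}(\mathcal{T}_x\cap B(\mathbf{0},R))$; since the tiles are bounded and $\Phi(\mathbb{Z}[\beta]\cap X)$ is a Delone set (Lemma~\ref{l:udrd}), this equals $\sum_{\Phi(x)\in B(\mathbf{0},R)}\lambda^{d-1}(\mathcal{D}_x)+O(R^{d-2})$. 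Grouping the sum according to the finitely many shapes $\mathcal{D}_x$, $x\in\mathcal{V}$ (Proposition~\ref{p:S}), and using that $\Psi$ identifies $\{y\in\mathbb{Z}[\beta]\cap J_x:\Phi(y)\in B(\mathbf{0},R)\}$ with $\mathbb{Z}^d\cap(J_x\mathbf{v}_1+B(\mathbf{0},R))$, whose cardinality is $c\,\lambda^1(J_x)\,\lambda^{d-1}(B(\mathbf{0},R))+o(R^{d-1})$ by standard lattice-point asymptotics, one obtains $m=c\sum_{x\in\mathcal{V}}\lambda^1(J_x)\,\lambda^{d-1}(\mathcal{D}_x)=\lambda^d(\widehat{Y})$.

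Finally I would conclude: by Lemma~\ref{l:positivemeasure} the compact set $\widehat{Y}$ and its $\mathbb{Z}^d$-translates cover $\mathbb{R}^d$, and $\mathbb{Z}^d$ has covolume~$1$, so $\{\mathbf{x}+\widehat{Y}\}$ tiles $\mathbb{R}^d$ (that is, almost every point is covered exactly once) if and only if $\lambda^d(\widehat{Y})=1$; together with $m=\lambda^d(\widehat{Y})$ and Remark~\ref{r:tiling} this gives the equivalence. The main obstacle is the third step — pinning both $\lambda^d(\widehat{Y})$ and the degree $m$ to the common value $c\sum_{x\in\mathcal{V}}\lambda^1(J_x)\lambda^{d-1}(\mathcal{D}_x)$ — since it requires a careful count in which the boundary contributions (negligible by Proposition~\ref{p:boundary}) and the overlaps of the pieces of $\widehat{X}$ must be shown to vanish in the limit. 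The same identity $m=\lambda^d(\widehat{Y})$ can alternatively be derived from the pointwise identity of the second step together with the ergodicity of the toral automorphism induced by $M_\beta$, whose eigenvalues $\beta,\beta_2,\dots,\beta_d$ are not roots of unity, at the cost of controlling the exceptional set on the single hyperplane~$H$.
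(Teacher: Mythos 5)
Your argument is correct in substance but takes a genuinely different route from the paper. The paper proves the equivalence by slicing: if two tiles $\mathcal{T}_x,\mathcal{T}_{x'}$ overlap in positive $\lambda^{d-1}$-measure, then the prisms $(J_x-x)\mathbf{v}_1-\mathcal{T}_x\subseteq\widehat{Y}-\Psi(x)$ and $(J_{x'}-x')\mathbf{v}_1-\mathcal{T}_{x'}\subseteq\widehat{Y}-\Psi(x')$ overlap in positive $\lambda^d$-measure; conversely, the intersection of $\widehat{X}-\Psi(x)$ with each hyperplane $y\mathbf{v}_1+H$, $y\in\mathbb{Z}[\beta]$, is exactly $\Psi(y)-\mathcal{T}_{x+y}$, so a tiling of $H$ forces all these slices of pairwise intersections to be $\lambda^{d-1}$-null, and the density of $\mathbb{Z}[\beta]$ in $\mathbb{R}$ together with the prism shape (\ref{e:shape}) then kills the $d$-dimensional intersections. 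Your pointwise identity $\mathbf{p}\in\mathcal{T}_x\Leftrightarrow\Psi(x)-\mathbf{p}\in\widehat{X}$ is precisely this slicing at $y=0$, but instead of exploiting it fibrewise you globalize it into the quantitative identity $m=\lambda^d(\widehat{Y})$, which is strictly stronger than the proposition (it identifies, for instance, the covering degree $4$ of Example~\ref{x:gm4} with the volume of its natural extension domain) and is a clean statement the paper never makes explicit. The price is the density computation in your third step.

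One caveat there: the count $\#\bigl(\mathbb{Z}^d\cap(J_x\mathbf{v}_1+B(\mathbf{0},R))\bigr)$ is a lattice-point count in a slab that is \emph{thin} in the $\mathbf{v}_1$-direction and expanding only in the $d-1$ directions of~$H$; the naive estimate ``volume plus a term of the order of the surface area'' gives an error of order $R^{d-1}$, the same order as the main term, so ``standard lattice-point asymptotics'' do not suffice as stated. What you actually need is the uniform density of the regular model set $\Phi(\mathbb{Z}[\beta]\cap J_x)$ (equivalently, Weyl equidistribution of $\pi_1$ along the lattice), which holds because the window $J_x$ is an interval and hence has null boundary; this is standard in the cut-and-project framework already invoked in Lemma~\ref{l:udrd}, but it should be cited or proved rather than asserted. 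With that input, and the observation that the constant $c$ in both of your formulas is the same Jacobian of $(t,\mathbf{h})\mapsto t\mathbf{v}_1+\mathbf{h}$, the chain $m=c\sum_{x\in\mathcal{V}}\lambda^1(J_x)\,\lambda^{d-1}(\mathcal{D}_x)=\lambda^d(\widehat{Y})$ closes, and your final step (translates of a compact set covering $\mathbb{R}^d$, with multiplicity function integrating to $\lambda^d(\widehat{Y})$ over a fundamental domain, tile if and only if that volume equals~$1$) is correct.
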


\begin{proof}
Assume first that $\{\mathcal{T}_x\}_{x\in\mathbb{Z}[\beta]\cap X}$, is not a tiling of~$H$. 
Then $\lambda^{d-1}(\mathcal{T}_x \cap \mathcal{T}_{x'}) > 0$ for some $x,\, x'\in \mathbb{Z}[\beta] \cap X$, $x \ne x'$, which implies
\[
\lambda^d\big(\big((J_x - x) \mathbf{v}_1 - \mathcal{T}_x\big) \cap \big((J_{x'} - x') \mathbf{v}_1 - \mathcal{T}_{x'}\big)\big) = \lambda^d\big(\big((J_x - x) \cap (J_{x'} - x')\big) \mathbf{v}_1 - (\mathcal{T}_x \cap \mathcal{T}_{x'})\big)> 0.
\]
Since $(J_x - x) \mathbf{v}_1 - \mathcal{T}_x = J_x \mathbf{v}_1 - \mathcal{D}_x - \Psi(x) \subseteq \widehat{Y} - \Psi(x)$ and 
$(J_{x'} - x') \mathbf{v}_1 - \mathcal{T}_{x'} \subseteq \widehat{Y} - \Psi(x')$, we obtain $\lambda^d((\widehat{Y} - \Psi(x)) \cap (\widehat{Y} - \Psi(x'))) > 0$, hence $\{\mathbf{x} + \widehat{Y}\}_{\mathbf{x}\in\mathbb{Z}^d}$ is not a tiling.

Assume now that $\{\mathcal{T}_x\}_{x\in\mathbb{Z}[\beta]\cap X}$ forms a tiling of~$H$.
Consider a hyperplane $y \mathbf{v}_1 + H$, $y \in \mathbb{Z}[\beta]$.
We have $(\widehat{X} - \Psi(x)) \cap (y \mathbf{v}_1 + H) \ne \emptyset$ for $x \in \mathbb{Z}[\beta]$ if and only if $x + y \in X$. Then
\[ 
\big(\widehat{X} - \Psi(x)\big) \cap (y \mathbf{v}_1 + H) = y \mathbf{v}_1 - \Phi(x) - \mathcal{D}_{x+y} = y \mathbf{v}_1 + \Phi(y) - \mathcal{T}_{x+y} = \Psi(y) - \mathcal{T}_{x+y}. 
\]
Since $\{\Psi(y) - \mathcal{T}_{x+y}\}_{x\in\mathbb{Z}[\beta]\cap(X-y)}$ forms a tiling of $y\mathbf{v}_1 + H$, we have 
\[
\lambda^{d-1}\big(\big(\widehat{X} - \Psi(x)\big) \cap \big(\widehat{X} - \Psi(x')\big) \cap \big(y \mathbf{v}_1 + H\big)\big) = 0\quad \mbox{for all}\ x,x' \in \mathbb{Z}[\beta],\, x \ne x',\, y \in \mathbb{Z}[\beta].
\]
Since $\mathbb{Z}[\beta]$ is dense in $\mathbb{R}$ and $\widehat{X}$ has the shape given in~(\ref{e:shape}), we obtain that $\lambda^d((\widehat{X} - \Psi(x)) \cap (\widehat{X} - \Psi(x'))) = 0$, thus $\lambda^d((\mathbf{x} + \widehat{Y}) \cap (\mathbf{x}' + \widehat{Y})) = 0$ for all $\mathbf{x}, \mathbf{x}' \in \mathbb{Z}^d$ with $\mathbf{x} \ne \mathbf{x}'$.
All other tiling conditions for $\{\mathbf{x} + \widehat{Y}\}_{\mathbf{x}\in\mathbb{Z}^d}$ follow from the compactness of $\widehat{Y}$, from Lemmas~\ref{l:positivemeasure} and~\ref{l:closureinterior}.
\end{proof}

Note that Lemma~\ref{l:positivemeasure} (for $q = 1$) also follows from the fact that $\bigcup_{\mathbf{x}\in\mathbb{Z}^d} (\mathbf{x} + \widehat{X})$ contains, for every $y \in \mathbb{Z}[\beta]$, $\bigcup_{x\in\mathbb{Z}[\beta]\cap(X-y)} (\Psi(y) - \mathcal{T}_{x+y})$ and thus $y \mathbf{v}_1 + H$ by Lemma~\ref{l:cover}.

\begin{remark}
The transformation $\widehat{T}$ induces a toral automorphism since $\widehat{T} \mathbf{x} \equiv M_{\beta} \mathbf{x} \pmod{\mathbb{Z}^d}$.
If furthermore $\bar{\mathcal{S}}$ is a shift of finite type and $\{\mathbf{x} + \widehat{Y}\}_{\mathbf{x}\in\mathbb{Z}^d}$ forms a tiling of $\mathbb{R}^d$, then $\{\widehat{X}_a\}_{a\in A}$ is a Markov partition of the torus $\mathbb{T}^d = \mathbb{R}^d/\mathbb{Z}^d$.
\end{remark}

The next lemma shows that, when $\{\mathbf{x} + \widehat{Y}\}_{\mathbf{x}\in\mathbb{Z}^d}$ forms a tiling of~$\mathbb{R}^d$, the partition $\{\widehat{X}_a\}_{a\in A}$ can be used to determine the $k$-th digit $b_k(x)$ in the $T$-expansion of $x$ up to an error term of order~$\rho^k$, with $\rho = \max_{2\le j\le d} |\beta_j| <1$.
This can be used e.g.\ to obtain distribution properties of the $k$-th digit on polynomial sequences, see~\cite{Steiner02}.

\begin{lemma} \label{l:kthdigit}
For every $x \in X$, $k \ge 1$, we have $\beta^{k-1} x \mathbf{v}_1 \in \widehat{X}_{b_k(x)} + M_{\beta}^{k-1} \mathcal{D}_x \pmod{\mathbb{Z}^d}$.
If $\mathbf{0} \in \mathcal{D}_x$, then we have $\beta^{k-1} x \mathbf{v}_1 \in \widehat{X}_{b_k(x)} \pmod{\mathbb{Z}^d}$.
\end{lemma}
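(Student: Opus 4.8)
The plan is to produce, for a given $x \in X$ and $k \ge 1$, explicit witnesses for the asserted congruence by transporting a single two-sided admissible sequence along the natural extension map. First I will fix any $w = (w_j)_{j\le 0} \in {\vphantom{A}}^{\omega}\!A$ with $w \cdot b(x) \in \mathcal{S}$ (such a $w$ exists because $\mathcal{D}_x \neq \emptyset$, by the surjectivity of $T$), and write $s = w \cdot b(x) \in \mathcal{S}$ for the resulting two-sided sequence. Since $s$ agrees with $b(x)$ on positive indices, $\decdot s_1 s_2 \cdots = x$, so $\psi(s) = x\mathbf{v}_1 - \varphi(w)$ lies in $x\mathbf{v}_1 - \mathcal{D}_x$ and $\varphi(w) \in \mathcal{D}_x$. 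Because $\mathcal{S}$ is shift invariant, $\sigma^{k-1}s \in \mathcal{S}$, and its first letter is $s_k = b_k(x)$, hence $\psi(\sigma^{k-1}s) \in \widehat{X}_{b_k(x)}$.

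Next I will apply the identity $\psi(\sigma u) = \widehat{T}\psi(u)$, established for $u \in \mathcal{S}$ right after the definition of $\widehat{T}$, along the orbit $\psi(s), \psi(\sigma s), \dots, \psi(\sigma^{k-1}s)$, noting that for $0 \le j \le k-2$ the point $\psi(\sigma^j s)$ lies in $\widehat{X}_{s_{j+1}} = \widehat{X}_{b_{j+1}(x)}$. Unrolling the rule $\widehat{T}\mathbf{y} = M_\beta \mathbf{y} - \Psi(a)$ valid on $\widehat{X}_a$ yields
\[
\psi(\sigma^{k-1}s) = \widehat{T}^{k-1}\psi(s) = M_\beta^{k-1}\psi(s) - \sum_{i=1}^{k-1} M_\beta^{k-1-i}\,\Psi\big(b_i(x)\big).
\]
By the standing assumption $A \subset \mathbb{Z}[\beta]$ of Section~\ref{s:tilings}, Lemma~\ref{l:zd} gives $\Psi(b_i(x)) \in \mathbb{Z}^d$, and $M_\beta$ is an integer matrix (the companion matrix), so the whole sum lies in $\mathbb{Z}^d$; therefore $\psi(\sigma^{k-1}s) \equiv M_\beta^{k-1}\psi(s) \pmod{\mathbb{Z}^d}$. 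Using $M_\beta \mathbf{v}_1 = \beta \mathbf{v}_1$ and $\psi(s) = x\mathbf{v}_1 - \varphi(w)$, the right-hand side equals $\beta^{k-1}x\mathbf{v}_1 - M_\beta^{k-1}\varphi(w)$, so rearranging gives
\[
\beta^{k-1} x\, \mathbf{v}_1 \equiv \psi(\sigma^{k-1}s) + M_\beta^{k-1}\varphi(w) \pmod{\mathbb{Z}^d},
\]
with $\psi(\sigma^{k-1}s) \in \widehat{X}_{b_k(x)}$ and $M_\beta^{k-1}\varphi(w) \in M_\beta^{k-1}\mathcal{D}_x$, which is the first assertion. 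For the second assertion, if $\mathbf{0} \in \mathcal{D}_x$ then, by the very definition $\mathcal{D}_x = \{\varphi(w):\, w\cdot b(x)\in\mathcal{S}\}$, the sequence $w$ above can be chosen with $\varphi(w) = \mathbf{0}$; then $M_\beta^{k-1}\varphi(w) = \mathbf{0}$ and the congruence collapses to $\beta^{k-1} x\, \mathbf{v}_1 \in \widehat{X}_{b_k(x)} \pmod{\mathbb{Z}^d}$.

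There is no genuinely hard step here; it is a matter of bookkeeping. The two points needing care are that the orbit $\psi(s),\dots,\psi(\sigma^{k-2}s)$ really passes through the slices $\widehat{X}_{b_1(x)},\dots,\widehat{X}_{b_{k-1}(x)}$ in the correct order (immediate since $s$ coincides with $b(x)$ on positive indices), and that the correction vector $\sum_{i=1}^{k-1} M_\beta^{k-1-i}\Psi(b_i(x))$ is integral — which is exactly where $A \subset \mathbb{Z}[\beta]$ and the integrality of $M_\beta$ are used. In contrast to the natural-extension results of Section~\ref{s:natext}, here we do not need $|\!\det M_\beta| = 1$.
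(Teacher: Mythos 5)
Your proof is correct and follows essentially the same route as the paper: both arguments pick a left extension $w$ with $w \cdot b(x) \in \mathcal{S}$, iterate the identity $\psi(\sigma u) = \widehat{T}\psi(u) \equiv M_\beta \psi(u) \pmod{\mathbb{Z}^d}$ (whose integrality rests on $\Psi(A) \subset \mathbb{Z}^d$ and $M_\beta$ being an integer matrix, exactly as you spell out), and then rearrange $M_\beta^{k-1}\psi(w\cdot b(x)) = \beta^{k-1}x\,\mathbf{v}_1 - M_\beta^{k-1}\varphi(w)$, choosing $\varphi(w) = \mathbf{0}$ for the second assertion. The only difference is that you unroll the telescoping sum explicitly where the paper compresses it into a single congruence chain.
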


\begin{proof}
By definition, we have $T^{k-1} x \in X_{b_k(x)}$. 
Since $T$ is surjective, there exists some $w \in {\vphantom{A}}^{\omega}\!A$ with $w \cdot b(x) \in \mathcal{S}$, and thus $\varphi(w) \in \mathcal{D}_x$.
Since $\psi(\sigma u) = \widehat{T} \psi(u) \equiv M_{\beta} \psi(u) \pmod{\mathbb{Z}^d}$, we obtain 
\[ 
\beta^{k-1} x \mathbf{v}_1 - M_{\beta}^{k-1} \varphi(w) \equiv M_{\beta}^{k-1} \psi\big(w \cdot b(x)\big) \equiv \psi(w b_1(x) \cdots b_{k-1}(x) \cdot b(T^{k-1}x)\big) \in \widehat{X}_{b_k(x)} 
\] 
modulo $\mathbb{Z}^d$.
If $\mathbf{0} \in \mathcal{D}_x$, then we can choose $w$ with $\varphi(w) = \mathbf{0}$.
\end{proof}

In certain cases, the partition $\{\widehat{X}_a\}_{a\in A}$ determines exactly every digit $b_k(x)$. 

\begin{theorem}
Let $T$ satisfy the assumptions of Theorem~\ref{t:mtiling} and assume that (W) holds.
Then we have, for every $x \in X$ such that $\mathbf{0}$ is an inner point of $\mathcal{D}_x$, and for every $k \ge 1$, 
\[ 
b_k(x) = a \quad\mbox{if and only if}\quad \beta^{k-1} x \mathbf{v}_1 \in \widehat{X}_a \pmod{\mathbb{Z}^d}. 
\]
\end{theorem}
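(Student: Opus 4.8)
The plan is to prove the two implications separately. The forward implication ``$b_k(x)=a\Rightarrow \beta^{k-1}x\mathbf{v}_1\in\widehat{X}_a\pmod{\mathbb{Z}^d}$'' is already contained in Lemma~\ref{l:kthdigit}: since $\mathbf{0}$ is an inner point of $\mathcal{D}_x$, in particular $\mathbf{0}\in\mathcal{D}_x$, so the second assertion of that lemma applies. For the converse I would first record that, by (W) together with Corollary~\ref{c:W} and Proposition~\ref{p:tilings}, the family $\{\mathbf{x}+\widehat{Y}\}_{\mathbf{x}\in\mathbb{Z}^d}$ is a \emph{tiling} of $\mathbb{R}^d$. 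Combined with $\widehat{Y}=\overline{\inn(\widehat{Y})}$ (which follows from Lemma~\ref{l:closureinterior} and the shape (\ref{e:shape}) of $\widehat{X}$, using that the support of $\mu$ is $X$), this yields the key dichotomy: every inner point of $\widehat{Y}$ lies in exactly one $\mathbb{Z}^d$-translate of $\widehat{Y}$, for if $\mathbf{q}\in\inn(\widehat{Y})\cap(\mathbf{x}+\widehat{Y})$ with $\mathbf{x}\neq\mathbf{0}$, then $\inn(\widehat{Y})\cap(\mathbf{x}+\inn(\widehat{Y}))$ is a non-empty open set contained in $\widehat{Y}\cap(\mathbf{x}+\widehat{Y})$, contradicting that distinct translates overlap in a null set.

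The heart of the argument is then the geometric claim that $\mathbf{p}:=\widehat{T}^{k-1}(x\mathbf{v}_1)$ is an inner point of $\widehat{Y}$. Granting this, the theorem follows: one has $\mathbf{p}\equiv\beta^{k-1}x\mathbf{v}_1\pmod{\mathbb{Z}^d}$ since $\widehat{T}\mathbf{x}\equiv M_\beta\mathbf{x}\pmod{\mathbb{Z}^d}$; moreover $\mathbf{0}\in\mathcal{D}_x$ gives $x\mathbf{v}_1\in\widehat{X}$, hence $\mathbf{p}\in\widehat{T}^{k-1}\widehat{X}=\widehat{X}$, and $\pi_1(\mathbf{p})=T^{k-1}x\in X_{b_k(x)}$, so $\mathbf{p}\in\widehat{X}_{b_k(x)}$. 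Now if $\beta^{k-1}x\mathbf{v}_1\in\widehat{X}_a\pmod{\mathbb{Z}^d}$, pick $\mathbf{m}\in\mathbb{Z}^d$ with $\mathbf{p}+\mathbf{m}\in\widehat{X}_a\subseteq\widehat{Y}$, so that $\mathbf{p}$ lies both in $\widehat{Y}$ and in $\widehat{Y}-\mathbf{m}$; by the dichotomy $\mathbf{m}=\mathbf{0}$, hence $\mathbf{p}\in\widehat{X}_a\cap\widehat{X}_{b_k(x)}$, and since $\pi_1$ sends these into the disjoint sets $X_a$ and $X_{b_k(x)}$, we conclude $a=b_k(x)$.

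To establish the geometric claim I would proceed as follows. Iterating the subdivision (\ref{e:decomposition}) $k-1$ times gives $M_\beta^{k-1}\mathcal{D}_x+\varphi(b_1(x)\cdots b_{k-1}(x))\subseteq\mathcal{D}_{T^{k-1}x}$; since $\mathbf{0}\in\inn(\mathcal{D}_x)$ and $M_\beta^{k-1}$ is a linear automorphism of $H$, the point $\varphi(b_1(x)\cdots b_{k-1}(x))$ is an inner point (in $H$) of $\mathcal{D}_{T^{k-1}x}$, i.e.\ $\mathbf{p}=(T^{k-1}x)\mathbf{v}_1-\varphi(b_1(x)\cdots b_{k-1}(x))$ lies in the relative interior of the slice $(T^{k-1}x)\mathbf{v}_1-\mathcal{D}_{T^{k-1}x}$ of $\widehat{X}$. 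It remains to ``thicken'' in the $\mathbf{v}_1$-direction. By Proposition~\ref{p:S}, $\widehat{Y}\supseteq\overline{J_v\mathbf{v}_1-\mathcal{D}_v}$ for every $v\in\mathcal{V}$, and $T^{k-1}x$ lies in one of the intervals $J_v$; whenever $T^{k-1}x\notin\mathcal{V}$, so that $T^{k-1}x$ is an interior point of $J_v$, this exhibits at once a full-dimensional neighbourhood of $\mathbf{p}$ inside $\widehat{Y}$. The delicate case is $T^{k-1}x\in\mathcal{V}$: if there is a constancy interval $J_{v'}$ immediately to the left of $T^{k-1}x$, one glues the two half-boxes, using that $\varphi(b_1(x)\cdots b_{k-1}(x))$ remains an interior point of $\mathcal{D}_{v'}$ (a one-sided continuity property of $x'\mapsto\mathcal{D}_{x'}$ as $x'\uparrow T^{k-1}x$); and if $T^{k-1}x$ is the left endpoint of $X$ itself, $\mathbf{p}$ may genuinely lie on $\partial\widehat{Y}$, and one instead argues directly, as in the proof of Proposition~\ref{p:W}, via the characterisation of purely periodic points (Theorem~\ref{t:purelyperiodic}) and the finiteness of $P$ (Lemma~\ref{l:P}), observing that $\varphi(b_1(x)\cdots b_{k-1}(x))=\Phi(z_0)$ with $z_0=\beta^{k-1}x-T^{k-1}x\in\mathbb{Z}[\beta]$.

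The main obstacle is precisely this longitudinal thickening at points of $\mathcal{V}$: the transverse interiority supplied by $\mathbf{0}\in\inn(\mathcal{D}_x)$ is easy, but converting it into genuine interiority of $\mathbf{p}$ in $\mathbb{R}^d$ — or, in the left-endpoint case, dispensing with interiority entirely — requires careful bookkeeping of the finitely many discontinuities of $T$ and of the behaviour of the sets $\mathcal{D}_{x'}$ in the neighbourhood of $\mathcal{V}$.
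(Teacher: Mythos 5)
Your forward implication and the reduction of the converse to the point $\mathbf{p}=(T^{k-1}x)\mathbf{v}_1-\varphi(b_1(x)\cdots b_{k-1}(x))$ agree with the paper, but the converse then hinges on your ``geometric claim'' that $\mathbf{p}$ is an inner point of $\widehat{Y}$, and this is exactly where the proposal breaks down. You establish interiority only when $T^{k-1}x$ lies in the interior of some $J_v$; when $T^{k-1}x\in\mathcal{V}$ you appeal to ``a one-sided continuity property of $x'\mapsto\mathcal{D}_{x'}$ as $x'\uparrow T^{k-1}x$'', but no such property is available -- the whole point of $\mathcal{V}$ is that $\mathcal{D}_{x'}$ genuinely changes there, and there is no reason why an inner point of $\mathcal{D}_{T^{k-1}x}$ should even belong to $\mathcal{D}_{v'}$ for the constancy interval immediately to the left. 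Your fallback for the left-endpoint case (``argue directly as in the proof of Proposition~\ref{p:W} via Theorem~\ref{t:purelyperiodic} and Lemma~\ref{l:P}'') is a gesture at tools rather than an argument. As you yourself concede, $\mathbf{p}$ may genuinely lie on $\partial\widehat{Y}$, so the claim is not merely unproven but false in general, and the inner-point dichotomy cannot be applied.

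The repair is to weaken what you ask of $\mathbf{p}$: the tiling property of $\{\mathbf{x}+\widehat{Y}\}_{\mathbf{x}\in\mathbb{Z}^d}$ forbids two translates from overlapping in a set of \emph{positive measure}, so it suffices to show that $\widehat{X}_a\cap\widehat{X}_{b_k(x)}\pmod{\mathbb{Z}^d}$ has positive Lebesgue measure, not that $\mathbf{p}$ is interior. This needs only one-sided thickening: since every $J_v\cap X_a$ is left-closed and right-open, both $\mathbf{p}$ and its integer translate $\mathbf{p}+\mathbf{m}$ admit ``right half-neighbourhoods'' of the form $[\,T^{k-1}x,\,T^{k-1}x+\varepsilon)\mathbf{v}_1-U$ with $U$ open in $H$, contained in $\widehat{X}_{b_k(x)}$ and in $\widehat{X}_a-\mathbf{m}$ respectively. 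For the first one uses your transverse observation that $\varphi(b_1(x)\cdots b_{k-1}(x))$ is interior to $\mathcal{D}_{T^{k-1}x}$; for the second one uses Lemma~\ref{l:closureinterior} together with the shape (\ref{e:shape}), which supplies interior points of the relevant $\mathcal{D}_y$ arbitrarily close to the transverse coordinate of $\mathbf{p}+\mathbf{m}$. The two half-neighbourhoods intersect in a set of positive measure; since $\widehat{X}_a\cap\widehat{X}_{b_k(x)}=\emptyset$ in $\mathbb{R}^d$ we must have $\mathbf{m}\ne\mathbf{0}$, contradicting the tiling property obtained from (W) via Corollary~\ref{c:W} and Proposition~\ref{p:tilings}. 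This is the route the paper takes, and it entirely sidesteps the longitudinal-thickening problem you correctly identified as the main obstacle but did not overcome.
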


\begin{proof}
By Lemma~\ref{l:kthdigit}, $b_k(x) = a$ implies $\beta^{k-1} x \mathbf{v}_1 \in \widehat{X}_a \pmod{\mathbb{Z}^d}$.
Assume that $\beta^{k-1} x \mathbf{v}_1 \in \widehat{X}_a \pmod{\mathbb{Z}^d}$ for some $a \ne b_k(x)$, i.e., that $\beta^{k-1} x \mathbf{v}_1 \in \widehat{X}_a \cap \widehat{X}_{b_k(x)} \pmod{\mathbb{Z}^d}$.
{}From the proof of Lemma~\ref{l:kthdigit}, we see that $\beta^{k-1} x \mathbf{v}_1 \equiv (T^{k-1} x) \mathbf{v}_1 - \varphi(b_1(x) \cdots b_{k-1}(x)) \pmod{\mathbb{Z}^d}$.
Furthermore, $\varphi(b_1(x) \cdots b_{k-1}(x))$ is an inner point of $\mathcal{D}_{T^{k-1}x}$ since $\mathcal{D}_{T^{k-1}x}$ contains $\varphi(b_1(x) \cdots b_{k-1}(x)) + M_{\beta}^{k-1} \mathcal{D}_x$ and $\mathbf{0}$ is an inner point of $\mathcal{D}_x$.
By Lemma~\ref{l:closureinterior} and since $\widehat{X}$ has the shape given in~(\ref{e:shape}), we obtain that $\widehat{X}_a \cap \widehat{X}_{b_k(x)} \pmod{\mathbb{Z}^d}$ has positive Lebesgue measure.
Since $\widehat{X}_a \cap \widehat{X}_{b_k(x)} = \emptyset$ in~$\mathbb{R}^d$, this implies that $\{\mathbf{x} + \widehat{Y}\}_{\mathbf{x}\in\mathbb{Z}^d}$ is not a tiling.
By Proposition~\ref{p:tilings} and Corollary~\ref{c:W}, this contradicts the (W) property.
\end{proof}

\subsection{Examples of (multiple) tilings} \label{sec:exampl-mult-tilings}

Consider now the multiple tilings defined by the transformations in the examples of Section~\ref{sec:exampl-natur-extens}.
In Figure~\ref{f:torus}, we see the natural extension domain $\widehat{X}$ and its translation by the vectors $(1,0)^t$, $(0,1)^t$ and $(1,1)^t$ for Examples~\ref{x:gm} and~\ref{x:gm2}.
The third picture in Figure~\ref{f:torus} shows $\widehat{X}$ and its translations by the vectors $(2,0)^t$, $(0,2)^t$ and $(2,2)^t$ for Example~\ref{x:gm4}.
It follows that $\mathcal{T}$ is a tiling for Examples~\ref{x:gm} and~\ref{x:gm2}, while the covering degree is~$4$ for Example~\ref{x:gm4}.

\begin{figure}[ht]
\centering
\includegraphics{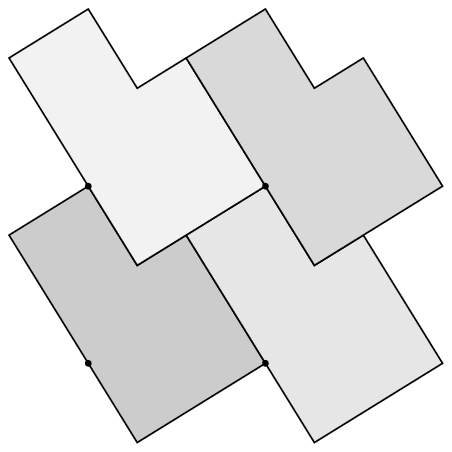}
\quad
\includegraphics{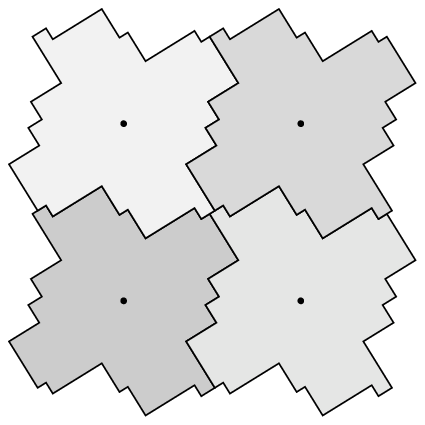}
\quad
\includegraphics{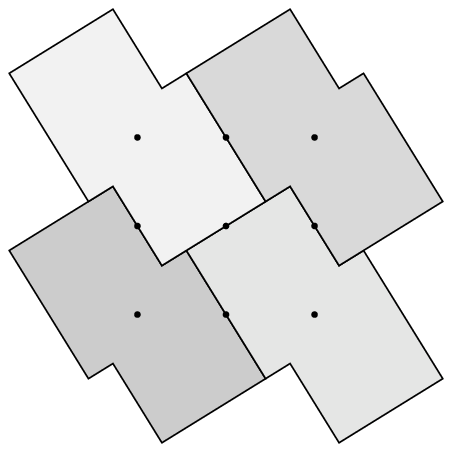}
\caption{Translations of the natural extension domains of Examples~\ref{x:gm} and~\ref{x:gm2} by integer vectors, and of Example~\ref{x:gm4} by vectors in $2 \mathbb{Z}^2$.}
\label{f:torus}
\end{figure}

Indeed, $P = \{0\}$ and thus (F) holds in Examples~\ref{x:gm} and~\ref{x:gm2}.
For Example~\ref{x:gm}, this was first proved in \cite{FrougnySolomyak92}.
For Example~\ref{x:gm2}, this follows from the fact that, for any $x \in \mathbb{Z}[\beta] \cap X$, every $\beta$-expansion of minimal weight is finite, thus $T^n x = 0$ for some $n \ge 0$.
In the following parapraph, we give a short direct proof for $P = \{0\}$ in these two examples.

For general transformations with $\beta = (1+\sqrt{5})/2$ and $A \subseteq \{-1,0,1\}$, note that $|\beta_2| = 1/\beta$, thus $\varphi(w) \in \big[\frac{-1}{1-1/\beta}, \frac{1}{1-1/\beta}\big] \mathbf{v}_2$ for any $w \in {\vphantom{A}}^{\omega}\!A$.
This gives $\mathcal{D}_x \subseteq \big[\frac{-1}{1-1/\beta}, \frac{1}{1-1/\beta}\big] \mathbf{v}_2$ for any $x \in X$.
By Lemma~\ref{l:P}, we obtain $|\Gamma_2(y)| \le \frac{1}{1-1/\beta} = \beta^2$ for any $y \in P$.
The only $x \in \mathbb{Z}[\beta]$ with $|x| < 1$, $|\Gamma_2(x)| \le \beta^2$, are $x = 0$, $x = \pm (\beta - 1) = \pm 1/\beta$, $x = \pm (\beta - 2) = \pm 1/\beta^2$. 
In Examples~\ref{x:gm} and~\ref{x:gm2}, we have $T^2 (1/\beta^2) = T (1/\beta) = 0$, $T^2 (-1/\beta^2) = T (-1/\beta) = 0$, thus $P = \{0\}$, and $\mathcal{T}$ is a tiling.

In Example~\ref{x:gm4}, we have $P = \{-1, -1/\beta, -1/\beta^2, 0, 1/\beta^2\}$.
Using Corollary~\ref{c:intiles}, it is easy to find a point lying in exactly $4$ tiles, e.g.~$\Phi(1)$.
Hence the covering degree is at most~$4$. 
It is slightly harder to show that some set with non-empty interior is covered $4$ times.
Recall that $\mathcal{V} = \{-1, -1/\beta, 1/\beta\}$.
By~(\ref{e:decomposition}), we have the decompositions
\[ 
\mathcal{D}_{-1} = -\frac{\mathcal{D}_0}{\beta} + \mathbf{v}_2,\quad \mathcal{D}_{-1/\beta} = \Big(-\frac{\mathcal{D}_{-1}}{\beta} - \mathbf{v}_2\Big) + \Big(-\frac{\mathcal{D}_{1/\beta^3}}{\beta} + \mathbf{v}_2\Big),\quad \mathcal{D}_{1/\beta} = -\frac{\mathcal{D}_{-1/\beta^3}}{\beta} - \mathbf{v}_2,
\]
The equalities $\mathcal{D}_0 = \mathcal{D}_{1/\beta^3} = \mathcal{D}_{-1/\beta^3} = \mathcal{D}_{-1/\beta}$ provide a GIFS for the sets $\mathcal{D}_{-1}$, $\mathcal{D}_{-1/\beta}$, $\mathcal{D}_{1/\beta}$, cf.~(\ref{e:GIFS}), with the unique solution of non-empty compact sets
\[ 
\mathcal{D}_{-1} = [-1/\beta, \beta^2] \mathbf{v}_2,\quad \mathcal{D}_{-1/\beta} = [-\beta^2, \beta^2] \mathbf{v}_2,\quad \mathcal{D}_{1/\beta} = [-\beta^2, 1/\beta] \mathbf{v}_2. 
\]
Therefore, $[0,\beta] \mathbf{v}_2$ is covered $4$ times by $\mathcal{T}_0 = [-\beta^2, \beta^2] \mathbf{v}_2$, $\mathcal{T}_{-1} = [-\beta, \beta] \mathbf{v}_2$, $\mathcal{T}_{-1/\beta} = [-1, \beta^3] \mathbf{v}_2$ and $\mathcal{T}_{1/\beta^2} = [0,2\beta^2] \mathbf{v}_2$.
Note that an important difference between this transformation and all other examples we are considering is that we do not have $\beta^{-1} x \in T^{-1} \{x\}$ here.

\medskip
We can generalize Example~\ref{x:gm2} in the following way.
Let $\beta$ be the golden ratio and take any $\alpha \in \big(\frac{\beta}{\beta^2+1}, \frac{1}{2}\big]$. 
It was shown in~\cite{FrougnySteiner08} that such a pair $(\beta, \alpha)$ gives a minimal weight transformation. 
Let $T$ be this transformation, i.e., set $A = \{-1,0,1\}$, $X_{-1} = [-\beta \alpha, -\alpha)$, $X_0 = [-\alpha, \alpha)$, $X_1 = [\alpha, \beta \alpha)$.
One can show that $\widetilde{T}^5 \alpha = T^5 \alpha$,  $\tilde{b}_1(\alpha) \cdots \tilde{b}_5(\alpha) = 01001$, $b_1(\alpha) \cdots b_5(\alpha) = 100(-1)0$, as in Example~\ref{x:gm2}. 
By symmetry, we also have $T^5 (-\alpha) = \widetilde{T}^5 (-\alpha)$, hence $\mathcal{V}$ is finite.
Furthermore, we have $T^2 (1/\beta^2) = T (1/\beta) = 0$, $T^2 (-1/\beta^2) = T (-1/\beta) = 0$, thus $P = \{0\}$, and $\mathcal{T}$ is a tiling for any choice of $\alpha \in \big(\frac{\beta}{\beta^2+1}, \frac{1}{2}\big]$.

If $\alpha \not\in \mathbb{Q}(\beta)$, then $b(\alpha)$ is aperiodic by Theorem~\ref{t:frankrobinson}.
By Proposition~\ref{p:sofic}, this implies that $\bar{\mathcal{S}}$ is not sofic, hence there are many possibilities to obtain tilings from non-sofic shifts.
Moreover, $\mathcal{T}$~is not a self-affine tiling in the sense of \cite{Praggastis99,Solomyak97} if $\alpha \not\in \mathbb{Q}(\beta)$, because there exists no coloring $c(\mathcal{T})$ with finitely many colors $c(\mathcal{T}_x)$, $x \in X$, and the following two properties if $c(\mathcal{T}_x) = c(\mathcal{T}_y)$:
\begin{itemize}
\item
$\mathcal{T}_x$ is a translate of $\mathcal{T}_y$, i.e., $\mathcal{D}_x = \mathcal{D}_y$,
\item
the colors of the tiles constituting $M_\beta^{-1} \mathcal{T}_x$ are equal to the colors in $M_\beta^{-1} \mathcal{T}_y$.
\end{itemize}
Indeed, there exists some $x \in \mathcal{V}$ such that $\{T^k \alpha:\, k \ge 1\} \cap J_x$ is infinite.
Take $y, z \in J_x$ such that $y < T^k(\alpha) \le z$ for some $k \ge 1$, and let $k$ be minimal with this property. 
Then the subdivision of $\mathcal{D}_y$ given by iterating (\ref{e:decomposition}) $k$ times is different from that of $\mathcal{D}_z$, which implies that the number of colors must be infinite.

\subsection{Symmetric $\beta$-transformations} \label{sec:symm-beta-transf}

Recall that the symmetric $\beta$-transformation is defined in~\cite{AkiyamaScheicher07} by $T x = \beta x - \lfloor \beta x + 1/2 \rfloor$ on $X = [-1/2, 1/2)$.
If $\beta \le 3$, then this means in our setting that $A = \{-1,0,1\}$, $X_{-1} = \big[-\frac{1}{2}, -\frac{1}{2\beta}\big)$, $X_0 = \big[-\frac{1}{2\beta}, \frac{1}{2\beta}\big)$ $X_1 = \big[\frac{1}{2\beta}, \frac{1}{2}\big)$.   

The case $\beta < 2$ plays a special role. 
In this case, we have $T^{-1} \{0\} = \{0\}$, thus $\mathcal{D}_0 = \{\mathbf{0}\}$. 
Indeed, the support $X'$ of the invariant measure given by the natural extension in Section~\ref{sec:geom-real-natur} is contained in $\big[-\frac{1}{2}, \frac{\beta}{2} - 1\big) \cup \big[1 - \frac{\beta}{2}, \frac{1}{2}\big)$.
Recall that it is natural to restrict $T$ to~$X'$ when we consider (multiple) tilings.
Then $0 \not\in P$. Since $P$ is non-empty, we have by the symmetry of the transformation that if $x \in P$, then also $-x \in P$, which implies that (F) cannot hold. (The fact that $T$ is not symmetric on the endpoints of the intervals plays no role since the endpoints are not in $\mathbb{Z}[\beta]$.)

\subsubsection{Quadratic Pisot units}
If $\beta$ is a quadratic Pisot unit, then Theorem~3.8 in~\cite{AkiyamaScheicher07} shows that (F) holds for the symmetric $\beta$-transformation if and only if $\beta > 2$.

The only quadratic Pisot unit with $\beta < 2$ is the golden ratio $\beta = (1+\sqrt{5})/2$.
The transformation and its natural extension is given in Example~\ref{x:gm3} and Figure~\ref{f:xhat3}, and the translations of~$\widehat{X}$ by integer vectors are depicted in Figure~\ref{f:torus2}.
By the considerations in Section~\ref{sec:exampl-mult-tilings}, we obtain $P = \{\pm 1/\beta^2\}$.
Here, (W) holds since $z = \frac{1}{\beta^5} < \varepsilon = \min \big\{\big(\frac{1}{2} - \frac{1}{\beta^2}\big) \beta, \big(-\frac{1}{2\beta} + \frac{1}{\beta^2}\big) \beta\} = \frac{1}{2\beta^3}$ gives
\begin{align*}
& T^3 (1/\beta^2 + z) = T^3 (2/\beta^3) = T^2 (-1/\beta^3) = T (-1/\beta^2) = 1/\beta^2, \\
& T^3 (-1/\beta^2 + z) = T^3 (-2/\beta^4) = T^2 (-2/\beta^3) = T (1/\beta^3) = 1/\beta^2.
\end{align*}
Hence, Corollary~\ref{c:W} implies that the symmetric $\beta$-transformation with a quadratic Pisot unit $\beta$ always gives a tiling.

\begin{figure}[ht]
\centering
\includegraphics{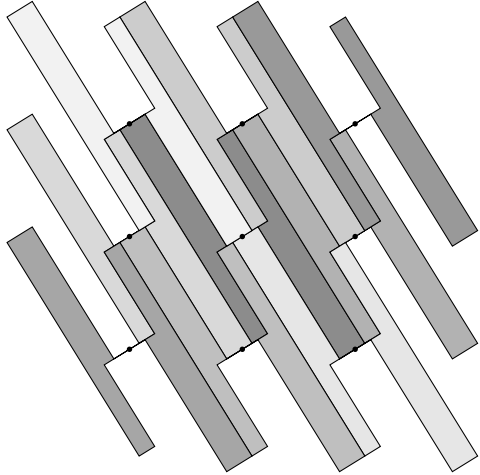}
\caption{The natural extension domain and its translations by integer vectors for the symmetric $\beta$-transformation with $\beta = (1+\sqrt{5})/2$.}
\label{f:torus2}
\end{figure}

\subsubsection{Cubic Pisot units} \label{sec:cubic-pisot-units}
By Lemma~1 in~\cite{Akiyama00}, $\beta > 1$ is a cubic Pisot unit if and only if it is the root of an irreducible polynomial $x^3 - c_1 x^2 - c_2 x - c_3 \in \mathbb{Z}[x]$ with $|c_3| = 1$ and $|c_2 - 1| < c_1 + c_3$.
By Theorem~3.8 in~\cite{AkiyamaScheicher07}, a cubic Pisot unit satisfies (F) with respect to the symmetric $\beta$-transformation if and only if 
\[ 
\beta > 2 \quad\mbox{and}\quad |\beta - c_1| < \frac{c_3}{\beta} + \frac{1}{2}. 
\]

In the rest of the paper, we consider the $4$ cubic Pisot units $\beta < 2$. 
They are given by the polynomials $x^3 - x - 1$ (smallest Pisot number), $x^3 - x^2 - 1$, $x^3 - 2 x^2 + x - 1$ (square of the smallest Pisot number), and $x^3 - x^2 - x - 1$ (Tribonacci number).

\smallskip
Let first $\beta$ be the smallest Pisot number, i.e., $\beta^3 = \beta + 1$.
We will show that $\mathcal{T}$ is a double tiling.
Here, the support of the invariant measure $\mu$ is
\[ X' = \Big[-\frac{1}{2}, \frac{\beta^2}{2} - \beta\Big) \cup \Big[\beta^2 - \frac{\beta}{2} - \frac{3}{2}, \frac{\beta}{2} - 1\Big) \cup \Big[1 - \frac{\beta}{2}, \frac{3}{2} + \frac{\beta}{2} - \beta^2\Big) \cup \Big[\beta - \frac{\beta^2}{2}, \frac{1}{2}\Big). \]
The set $P$ consists of the $8$ points in the orbit of $x = 3 - 2 \beta$, with $b(x) = (01\bar{1}10\bar{1}1\bar{1})^{\omega}$, where we write again $\bar{1}$ instead of~$-1$.
For
\[ 
Y = \Big[\beta^2 - \frac{\beta}{2} - \frac{3}{2}, \frac{\beta}{2} - 1\Big) \cup \Big[1 - \frac{\beta}{2}, \frac{3}{2} + \frac{\beta}{2} - \beta^2\Big),
\]
we have $T^2 x \in Y$ for every $x \in Y$.
Therefore, we consider the induced transformation $T_Y$ on~$Y$.
Since $T_Y x = T^2 x = \beta^2 x - \beta b_1(x) - b_2(x)$, $T_Y$~is a right-continuous $\beta^2$-transformation with $A_Y = \{-\beta+1, -1, 1, \beta-1\} = \{-1/\beta^4, -1, 1, 1/\beta^4\}$, $Y_{-1/\beta^4} = \big[\beta^2 - \frac{\beta}{2} - \frac{3}{2}, -\frac{1}{2\beta}\big)$, $Y_{-1} = \big[-\frac{1}{2\beta}, \frac{\beta}{2} - 1\big)$, $Y_1 = \big[1 - \frac{\beta}{2}, \frac{1}{2\beta}\big)$, $Y_{1/\beta^4} = \big[\frac{1}{2\beta}, \frac{3}{2} + \frac{\beta}{2} - \beta^2\big)$.
We have $T^{-1} Y \cap X' = X' \setminus Y$ and $T^{-2} Y \cap X' = Y$.
Therefore, for every $x \in \mathbb{Z}[\beta] \cap Y$, the tile $\mathcal{T}_x$ defined by $T$ is equal to the tile $\mathcal{T}_x^Y$ defined by~$T_Y$.
This implies 
\[ 
\bigcup_{x\in\mathbb{Z}[\beta]\cap Y} \mathcal{T}_x = \bigcup_{x\in\mathbb{Z}[\beta]\cap Y} \mathcal{T}_x^Y = H. 
\]
Since every $x \in Y$ has a unique preimage $T^{-1} x \in X' \setminus Y$, we obtain $\mathcal{T}_x = M_\beta \mathcal{T}_{T^{-1} x}$.
Hence, we also have $\bigcup_{x\in\mathbb{Z}[\beta]\cap(X'\setminus Y)} \mathcal{T}_x = H$, and $\mathcal{T}$ is a multiple tiling of degree at least~$2$.
By Corollary~\ref{c:intiles}, $\Phi(2)$ lies in $\mathcal{T}_{3-2\beta}$ and $\mathcal{T}_{2\beta^2-3\beta}$.
Thus, $\mathcal{T}$ is a double tiling and $\{\mathcal{T}_x^Y\}_{x\in\mathbb{Z}[\beta]\cap Y}$ a tiling, see Figure~\ref{f:half011}.

\begin{figure}[ht]
\centering
\includegraphics[width=.48\textwidth]{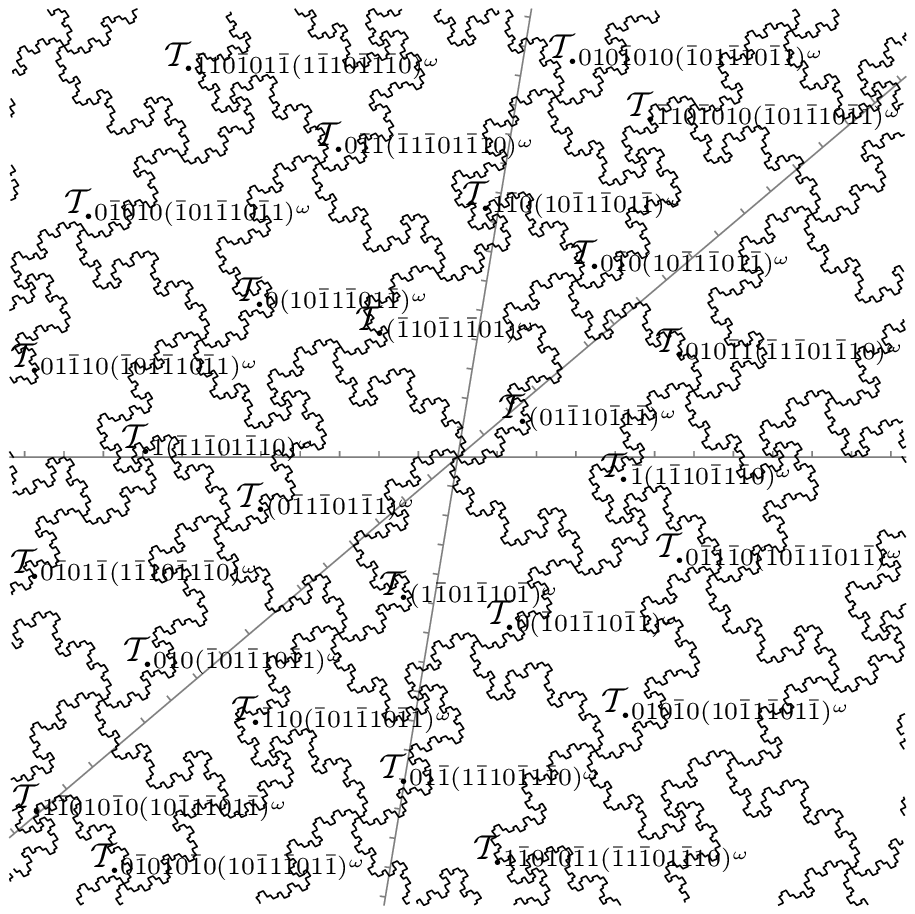}
\quad
\includegraphics[width=.48\textwidth]{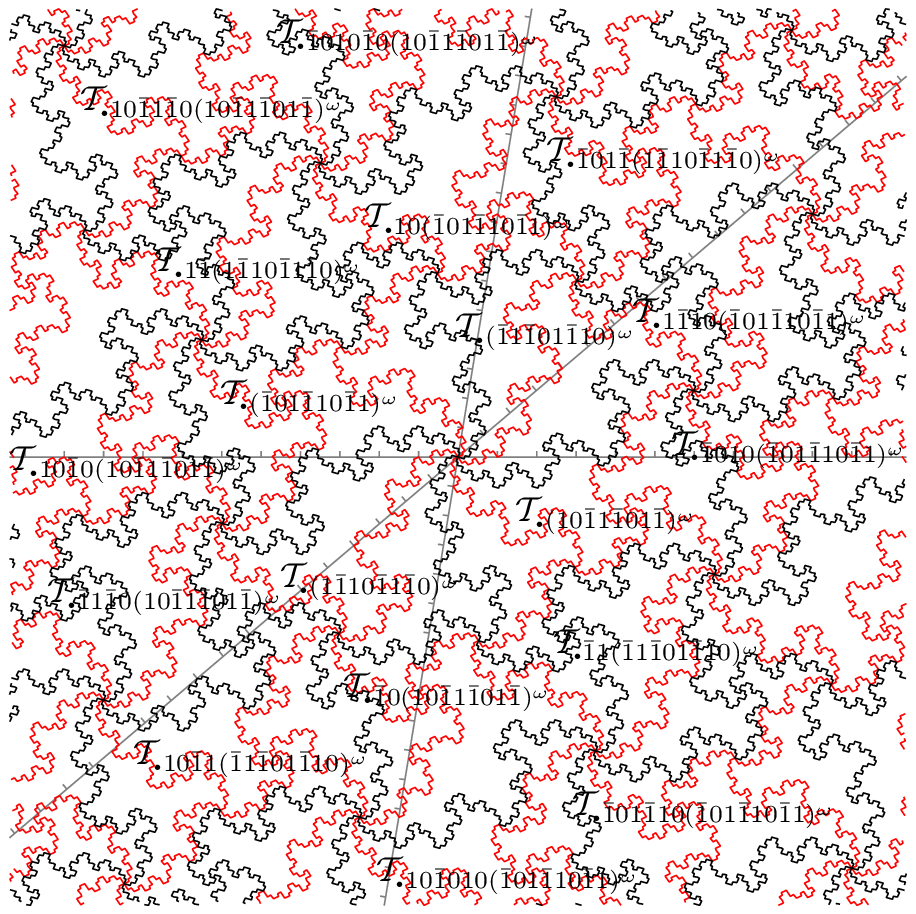}
\caption{The tiling $\{\mathcal{T}_x^Y\}_{x\in\mathbb{Z}[\beta]\cap Y} = \{\mathcal{T}_x\}_{x\in\mathbb{Z}[\beta]\cap Y}$ and the double tiling $\{\mathcal{T}_x\}_{x\in\mathbb{Z}[\beta]\cap X'}$ from the symmetric $\beta$-transformation, $\beta^3 = \beta + 1$.
The tics on the gray lines represent integer multiples of $\Phi(1)$, $\Phi(\beta)$ and $\Phi(\beta^2)$, respectively.}
\label{f:half011}
\end{figure}

\begin{remark}
It is conjectured that the greedy $\beta$-transformation $T_{\beta} x = \beta x - \lfloor \beta x \rfloor$ on $X = [0, 1)$ produces a tiling for every Pisot unit~$\beta$.
This is a version of the \emph{Pisot conjecture}, and it is known to be true for all cubic Pisot units (\cite{AkiyamaRaoSteiner04}). 
It is therefore quite surprising that this conjecture does not hold when we shift $X$ from $[0, 1)$ to $[-1/2, 1/2)$ and set $T x = \beta x - \lfloor \beta x + 1/2 \rfloor$.
\end{remark}

\begin{remark} \label{r:zerotiles}
If we do not restrict $T$ to the support of the invariant measure, then $\{0, \pm 1/\beta^3\}$ are purely periodic points in $\mathbb{Z}[\beta] \cap X$ for the symmetric $\beta$-transformation with the smallest Pisot number. 
We have $\mathcal{D}_x = \{0\}$ for every $x \in \big[\frac{\beta}{2}-1, 1-\frac{\beta}{2}\big)$, while 
\[
\{w \in {\vphantom{A}}^{\omega}\!A:\, w \cdot b(x) \in \mathcal{S}\} = \bigcup_{k\ge0} \{{\vphantom{0}}^{\omega}0 (1\bar{1})^k,\, {\vphantom{0}}^{\omega}0\bar{1} (1\bar{1})^k\} \cup \{{\vphantom{(}}^{\omega} (1\bar{1})\}\quad \mbox{for all}\ x \in \big[\textstyle{\frac{3}{2} + \frac{\beta}{2} - \beta^2, \beta - \frac{\beta^2}{2}}\big),
\]
hence $\mathcal{D}_x$ consists of countably many points for these~$x$. 
For $x \in \big[\frac{\beta^2}{2} - \beta, \beta^2  - \frac{\beta}{2} - \frac{3}{2}\big)$, $\mathcal{D}_x$~is given by symmetry. 
\end{remark}

\medskip
If $\beta$ is the square of the smallest Pisot number, i.e., $\beta^3 = 2 \beta^2 - \beta + 1$, then the support of the invariant measure is $\big[-\frac{1}{2}, \frac{\beta}{2} - 1\big) \cup \big[1 - \frac{\beta}{2}, \frac{1}{2}\big)$ and $P = \{\pm (2 - \beta), \pm (2 \beta - \beta^2)\}$, with $b(2-\beta) = (010\bar{1})^{\omega}$.
By Corollary~\ref{c:intiles}, $\Phi(4)$ is an exclusive point of $\mathcal{T}_{3+2\beta-2\beta^2}$, see Figure~\ref{f:halftilings}.
Note that $M_\beta^{-3} \mathcal{T}$ seems to be equal to the tiling $\{\mathcal{T}_x^Y\}_{x\in\mathbb{Z}[\beta]\cap Y}$ defined above, but we do not prove this relation in this paper.

\begin{figure}[ht]
\centering
\includegraphics[width=.48\textwidth]{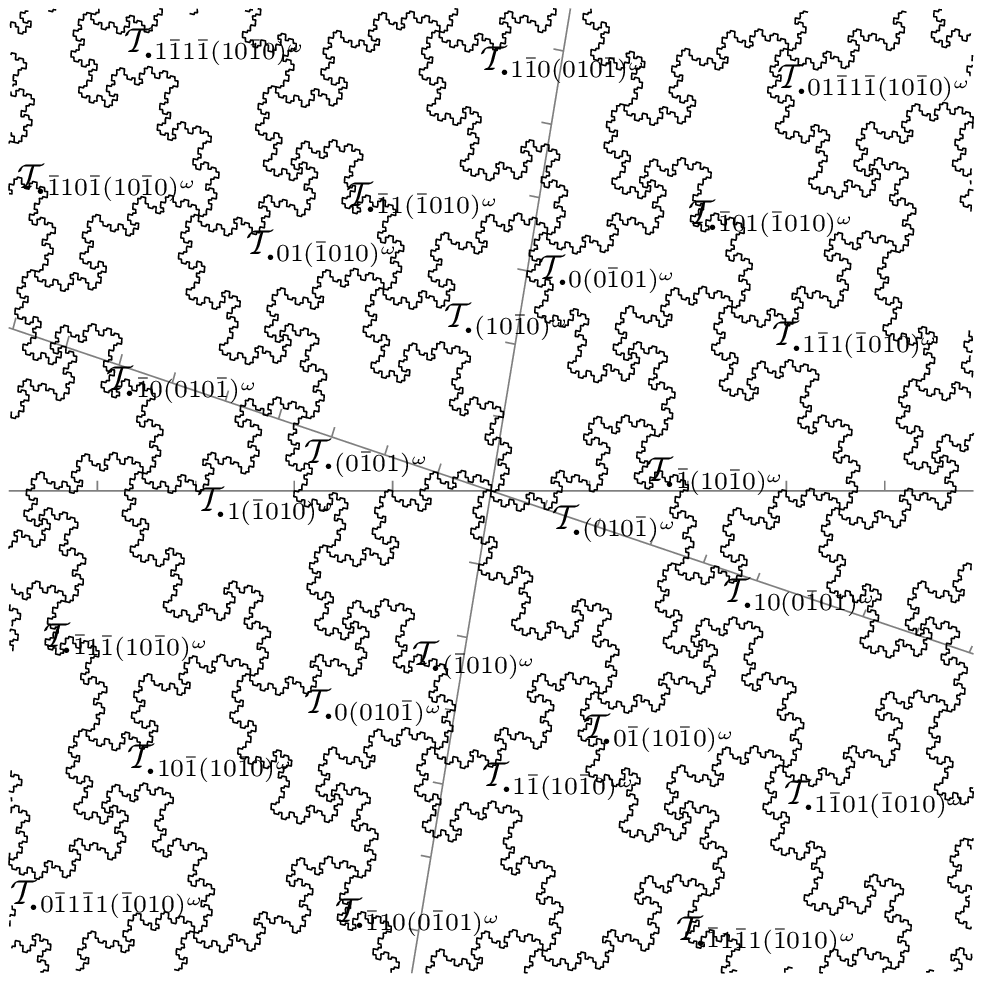}
\quad
\includegraphics[width=.48\textwidth]{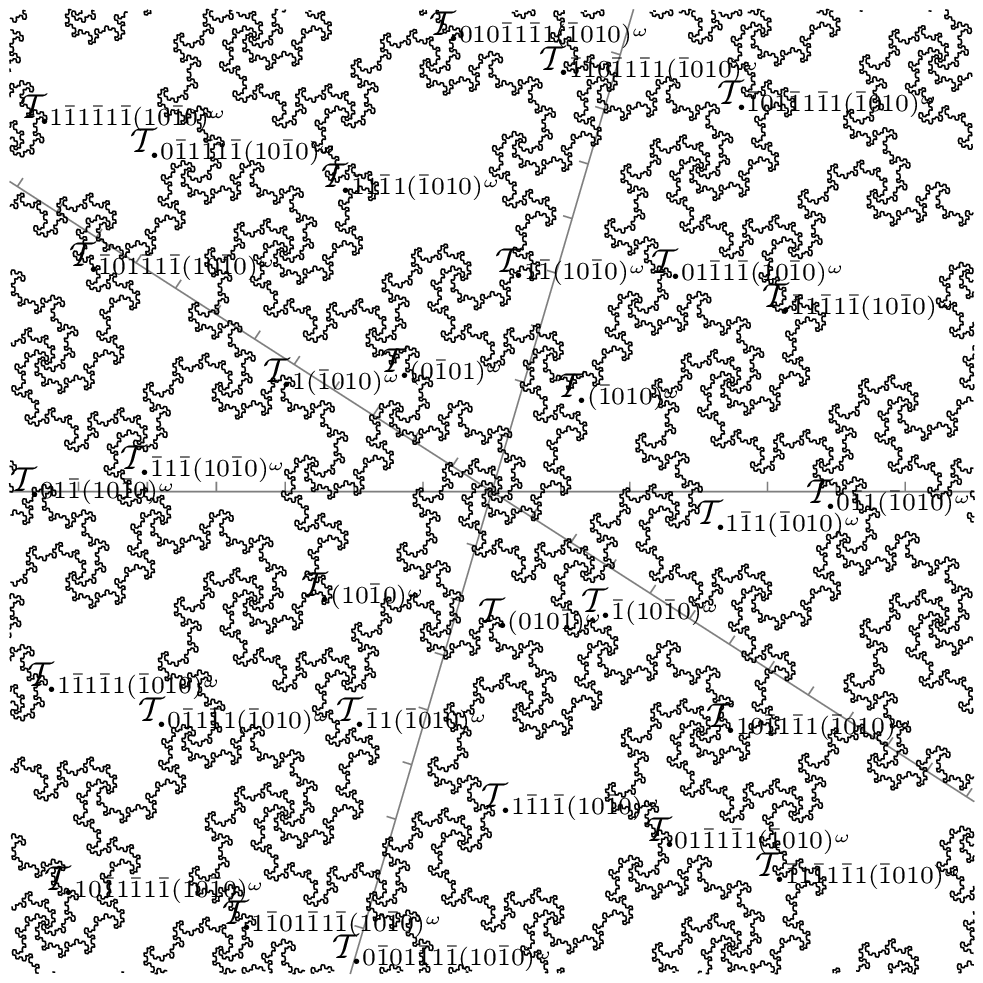}
\caption{Tilings from the symmetric $\beta$-transformation, $\beta^3 = 2\beta^2 - \beta + 1$ (left) and $\beta^3 = \beta^2 + 1$ (right).}
\label{f:halftilings}
\end{figure}

\medskip
The case $\beta^3 = \beta^2 + 1$ is very similar to the preceding one. 
We have $P = \{\pm 1/\beta^2, \pm 1/\beta^3\}$, $b(1/\beta^3) = (010\bar{1})^{\omega}$, and $\Phi(4)$ is an exclusive point of $\mathcal{T}_{4-\beta-\beta^2}$, see Figure~\ref{f:halftilings}.

\medskip
Finally, let $\beta$ be the Tribonacci number, i.e., $\beta^3 = \beta^2 + \beta + 1$. 
The support of the invariant measure is $X' = \big[-\frac{1}{2}, -\frac{1}{2\beta^3}\big) \cup \big[\frac{1}{2\beta^3}, \frac{1}{2}\big)$ and $P = \big\{\pm 1/\beta^3, \pm 1/\beta^2, \pm (1 - 1/\beta)\big\}$, with $b(1/\beta^3) = (01\bar{1})^{\omega}$, $b(-1/\beta^3) = (0\bar{1}1)^{\omega}$.
The degree of the multiple tiling is at most~$2$ since $\Phi(4)$ lies in the tiles $\mathcal{T}_{3-\beta^2}$ and $\mathcal{T}_{4-2\beta^2}$, see Figure~\ref{f:counterex}.

\begin{figure}[ht]
\centering
\includegraphics{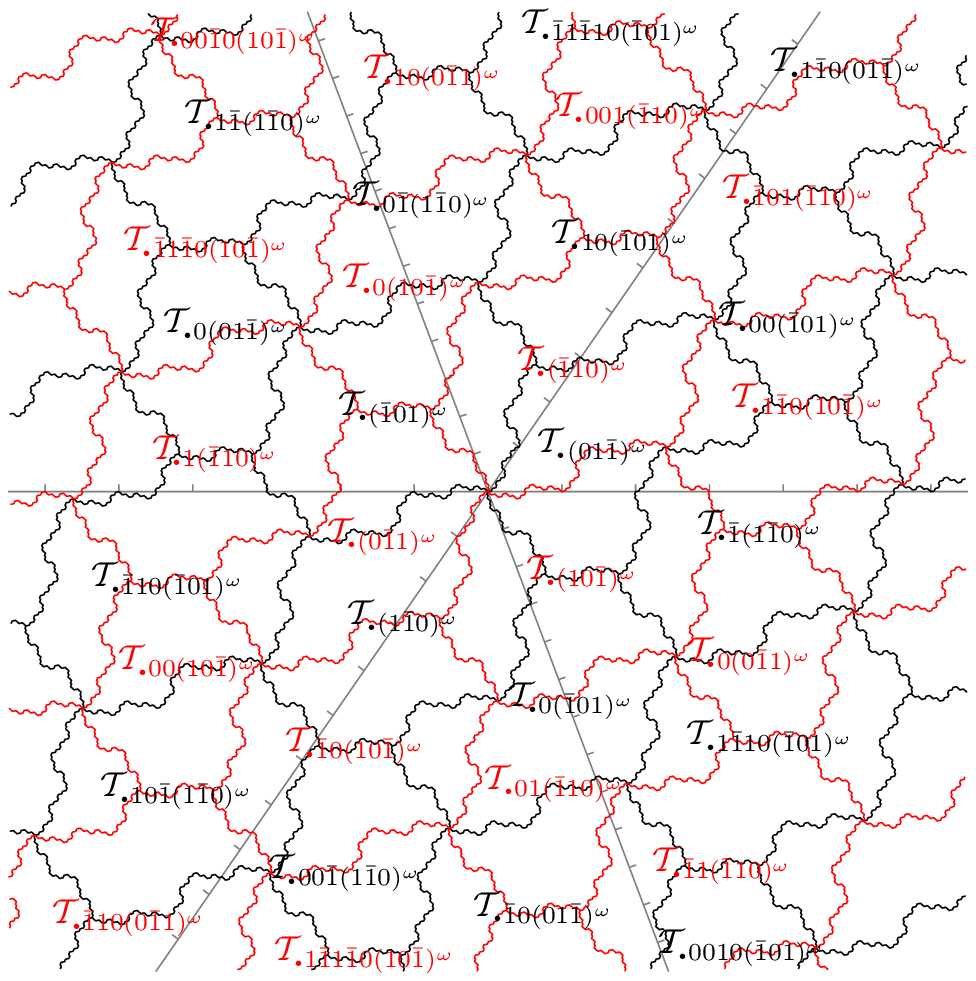}
\caption{Double tiling from the symmetric $\beta$-transformation, $\beta^3=\beta^2+\beta+1$.}
\label{f:counterex}
\end{figure}

Proving that $\mathcal{T}$ is not a tiling is more complicated than for the smallest Pisot number and for Example~\ref{x:gm4}. 
We use the description of $T$-admissible sequences in Theorem~\ref{t:admissible} and the transducer in Figure~\ref{fig:double} to show that the subtile $M_{\beta}^5 \mathcal{T}_{\decdot\bar{1}0(10\bar{1})^{\omega}}$ of $\mathcal{T}_{\decdot(10\bar{1})^{\omega}}$ is also contained in $\mathcal{T}_{\decdot(01\bar{1})^{\omega}}$.

By Remark~\ref{r:fullintervals}, $u = u_1 u_2 \cdots \in A^\omega$ is $T$-admissible if and only if 
\[
b(-1/2) = (\bar{1}001)^{\omega} \preceq u_k u_{k+1} \cdots \prec (100\bar{1})^{\omega} = \widetilde{b}(1/2)\quad \mbox{for all}\ k \ge 1.
\]
Therefore, $\bar{\mathcal{S}}$ is a shift of finite type with forbidden sequences $11,101,1000,1001,\bar{1}\bar{1},\bar{1}0\bar{1},\bar{1}000,\bar{1}00\bar{1}$.
The set $\mathcal{S}$ is obtained from $\bar{\mathcal{S}}$ by excluding the sequences ending with $(100\bar{1})^{\omega}$. 
Restricting $T$ to $X'$ means that we have to exclude 
\[
b\big({\textstyle \frac{-1}{2\beta^3}}\big) = 000(\bar{1}001)^{\omega} \preceq u_k u_{k+1} \cdots \prec 000(100\bar{1})^{\omega} = \tilde{b}\big({\textstyle \frac{1}{2\beta^3}}\big)\quad \mbox{for all}\ k \ge 1,
\]
i.e., the sequence $000$ is forbidden as well.

\begin{figure}[ht]
\includegraphics[scale=.8]{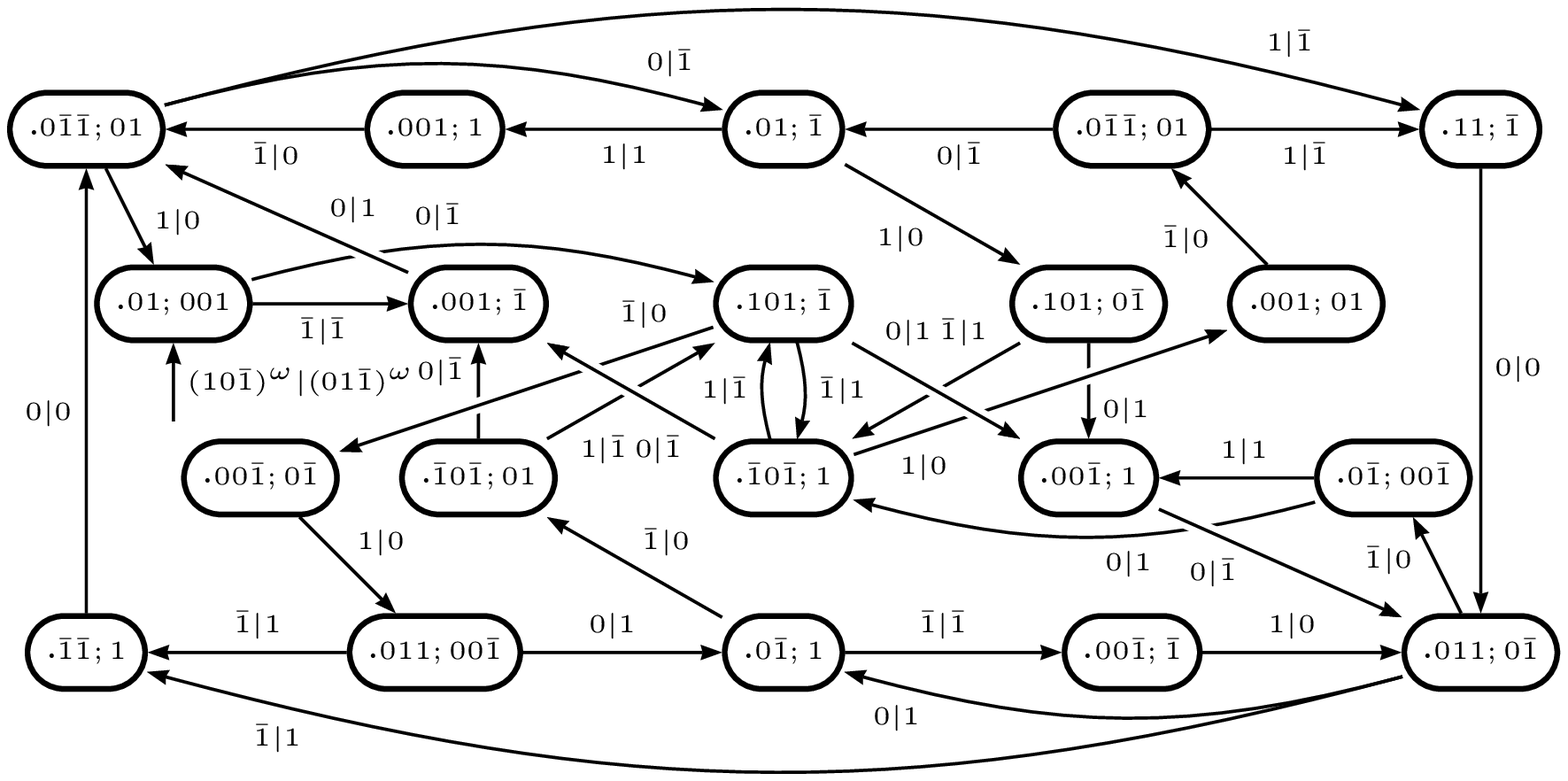}
\caption{Transducer showing that $\mathcal{T}$ is a double tiling, $\beta^3 = \beta^2 + \beta + 1$.}
\label{fig:double}
\end{figure}

The nodes of the transducer in Figure~\ref{fig:double} have labels $(x;v)$, where $x = \decdot u$ stands for the real number $\decdot u 0^{\omega}$ and $v$ is given by the outputs of the incoming paths. 
E.g., in $(\decdot 01; 001)$ we have $x = 1/\beta^2$, and there exists an incoming path with edges $\stackrel{1|0}{\longleftarrow} \stackrel{0|0}{\longleftarrow} \stackrel{\bar{1}|1}{\longleftarrow}$, which implies that the output of the outgoing transition can only be~$\bar{1}$.
(The incoming path $\stackrel{1|0}{\longleftarrow} \stackrel{0|1}{\longleftarrow}$ imposes less restrictions on the following output.) 
For every transition $(x; v) \stackrel{a|a'}{\longrightarrow} (x'; v')$, we have $x' = (x + a - a') / \beta$.
Consider a sequence of transitions starting in $(\decdot 01; 001)$,
\[ (\decdot 01; 001) = (x_0; v_0) \stackrel{w_0|w_0'}{\longrightarrow} (x_1; v_1) \stackrel{w_{-1}|w_{-1}'}{\longrightarrow} (x_2; v_2) \stackrel{w_{-2}|w_{-2}'}{\longrightarrow} \cdots \]
Since $\decdot (10\bar{1})^{\omega} = 1 - 1/\beta$ and $\decdot (01\bar{1})^{\omega} = 1/\beta^3$, we have $\decdot (10\bar{1})^{\omega} - \decdot(01\bar{1})^{\omega} = 1/\beta^2 = x_0$.
By the construction of the transducer, we obtain $x_{k+1} = \decdot w_{-k} \cdots w_0 (10\bar{1})^{\omega} - \decdot w_{-k}' \cdots w_0' (01\bar{1})^{\omega}$ for every $k \ge 0$.
This means that 
\[ 
\varphi(w_{-k} \cdots w_0) + \Phi(\decdot (10\bar{1})^{\omega}) = \varphi(w_{-k}' \cdots w_0') + \Phi(\decdot (01\bar{1})^{\omega}) + M_{\beta}^{k+1} \Phi(x_{k+1}). 
\]
It can be easily verified that $\cdots w_{-1}'w_0'\cdot (01\bar{1})^{\omega} \in \mathcal{S}$ since the forbidden sequences given above are avoided in the output of the transducer.
Since $x_k$ is bounded and $M_{\beta}$ is contracting on $H$, we obtain therefore that $\varphi(\cdots w_{-1} w_0) + \Phi(\decdot (10\bar{1})^{\omega}) = \varphi(\cdots w_{-1}' w_0') + \Phi(\decdot (01\bar{1})^{\omega}) \in \mathcal{T}_{\decdot(01\bar{1})^{\omega}}$.

For proving $M_{\beta}^5 \mathcal{T}_{\decdot\bar{1}0(10\bar{1})^{\omega}} \subseteq \mathcal{T}_{\decdot(01\bar{1})^{\omega}}$, it remains to show that every sequence $\bar{1}010\bar{1} w_{-5} w_{-6} \cdots$ satisfying $\cdots w_{-6} w_{-5} \bar{1}010\bar{1} \cdot (10\bar{1})^{\omega} \in \mathcal{S}$ is the input of a path in the transducer.
The paths with input $\bar{1}010\bar{1}$ starting in $(\decdot 01; 001)$  lead to the set of states
\[ 
\bar{Q}_1 = \{(\decdot \bar{1}0\bar{1}; 1),\, (\decdot 00\bar{1}; 0\bar{1}),\, (\decdot 0\bar{1}; 0\bar{1}),\, (\decdot \bar{1}\bar{1}; 1)\}. 
\]
We show that every path $\bar{1}010\bar{1} w_{-5} \cdots w_{-k}$, $k \ge 5$, with $w_{-k} = 1$ leads to one of the sets
\begin{align*}
Q_1 & = \{(\decdot 101; \bar{1}),\, (\decdot 001; 01),\, (\decdot 01; 001),\, (\decdot 11; \bar{1})\} \\
Q_2 & = \{(\decdot 101; \bar{1}),\, (\decdot 001; 01),\, (\decdot 00\bar{1}; 1),\, (\decdot 011; 00\bar{1})\}, \\
Q_3 & = \{(\decdot 01; 001),\, (\decdot 11; \bar{1}),\, (\decdot 101; 0\bar{1}),\, (\decdot 001; 1)\}, \\
Q_4 & = \{(\decdot 101; \bar{1}),\, (\decdot 001; 01),\, (\decdot 011; 00\bar{1}),\, (\decdot 11; \bar{1})\}, \\
Q_5 & = \{(\decdot 00\bar{1}; 1),\, (\decdot 101; \bar{1}),\, (\decdot011; 0\bar{1})\},
\end{align*}
and that every path $\bar{1}010\bar{1} w_{-5} \cdots w_{-k}$, $k \ge 5$, with $w_{-k} = \bar{1}$, leads to one of the sets $\bar{Q}_1$, $\bar{Q}_2$, $\bar{Q}_3$, $\bar{Q}_4$, $\bar{Q}_5$, where $\bar{Q}_i$ is defined by exchanging $1$ and $\bar{1}$ in $Q_i$.
We have the transitions
\begin{gather*}
\bar{Q}_1 \stackrel{1}{\longrightarrow} Q_2,\, \bar{Q}_1 \stackrel{01}{\longrightarrow} Q_1,\, \bar{Q}_1 \stackrel{001}{\longrightarrow} Q_3,\, \bar{Q}_2 \stackrel{1}{\longrightarrow} Q_4,\, \bar{Q}_2 \stackrel{01}{\longrightarrow} Q_3,\, \bar{Q}_2 \stackrel{001}{\longrightarrow} Q_3,\, \bar{Q}_3 \stackrel{1}{\longrightarrow} Q_5,\, \bar{Q}_3 \stackrel{01}{\longrightarrow} Q_1, \\
\bar{Q}_3 \stackrel{001}{\longrightarrow} Q_3,\, \bar{Q}_4 \stackrel{1}{\longrightarrow} Q_4,\, \bar{Q}_4 \stackrel{01}{\longrightarrow} Q_3,\, \bar{Q}_4 \stackrel{001}{\longrightarrow} Q_3,\, \bar{Q}_5 \stackrel{1}{\longrightarrow} Q_1,\, \bar{Q}_5 \stackrel{01}{\longrightarrow} Q_3,\, \bar{Q}_5 \stackrel{001}{\longrightarrow} Q_3.
\end{gather*} 
Together with the symmetric transitions $Q_i \stackrel{0^n\bar{1}}{\longrightarrow} \bar{Q}_j$, this shows inductively the assertion.
Since $\lambda^{d-1}(\mathcal{D}_{\decdot\bar{1}0(10\bar{1})^{\omega}}) > 0$, we have shown that $\mathcal{T}$ is not a tiling, but a double tiling.

\begin{remark}
In the transducer in Figure~\ref{fig:double}, every output gives a $T$-admissible sequence.
It is possible to construct a similar (but larger) transducer where both the input and the output give $T$-admissible sequences. 
This means that almost every point in $\mathcal{T}_{\decdot(10\bar{1})^{\omega}} \cap \mathcal{T}_{\decdot(01\bar{1})^{\omega}}$ is represented by a unique path in the new transducer.
Then this intersection has positive measure if and only if the largest eigenvalue of the new transducer is equal to~$\beta$, cf. Corollary~5.3 in~\cite{Siegel04}.  
Since the set of differences $y - x$ with $\mathcal{T}_x \cap \mathcal{T}_y \ne \emptyset$ is finite by Proposition~\ref{p:quasiperiodic}, this provides an effective method for deciding the tiling property whenever $\bar{\mathcal{S}}$ is a sofic shift, cf.\ Theorem~4.1 in~\cite{SiegelThuswaldner} and Theorem~5.4 in~\cite{Siegel04}. 
\end{remark}

\bibliographystyle{alpha}
\bibliography{mtiling}
\end{document}